\documentclass[final, reqno]{amsart} 
\usepackage{mathtools, amsthm, amssymb}
\usepackage[margin=1in]{geometry}
\usepackage{mathrsfs}
\usepackage{tikz-cd}

\usepackage{hyperref}

\usepackage{thmtools}
\makeatletter
\IfFormatAtLeastTF{2024-11-01}{
  \renewcommand*\@addtoreset[2]{%
    \bgroup
      \edef\aliasctr@@truelist{\aliasctr@follow{#2}}%
      \let\@elt\relax
      \expandafter\@cons\aliasctr@@truelist{{#1}}%
    \egroup
    \expandafter\xdef\csname theH#1\endcsname{%
      \expandafter\noexpand\csname theH#2\endcsname.%
      \noexpand\the\noexpand\value{#1}}%
  }
}{}
\makeatother

\usepackage[capitalise]{cleveref}

\usepackage{microtype}

\newcommand{\resp}{{\sfcode`\.1000 resp.}}
\newcommand{\ie}{{\sfcode`\.1000 i.e.}}
\newcommand{\eg}{{\sfcode`\.1000 e.g.}}
\newcommand{\cf}{{\sfcode`\.1000 cf.}}

\DeclareFontFamily{U}{min}{}
\DeclareFontShape{U}{min}{m}{n}{<-> udmj30}{}
\newcommand\yo{\!\text{\usefont{U}{min}{m}{n}\symbol{'210}}\!}

\DeclareMathOperator\Ind{Ind}

\DeclareMathOperator\Res{Res}

\newcommand\pt{\mathrm{pt}}

\DeclareMathOperator\SH{\mathbf{SH}}

\DeclareMathOperator\Fun{Fun}

\DeclareMathOperator\Op{Op}

\DeclareMathOperator\cofib{cofib}

\DeclareMathOperator\Spec{Spec}

\DeclareMathOperator\Psh{Psh}
\DeclareMathOperator\Shv{Shv}

\DeclareMathOperator\aff{\mathbb{A}}
\DeclareMathOperator\proj{\mathbb{P}}

\newtheorem*{thm*}{Theorem}
\newtheorem{thm}{Theorem}[section]
\Crefname{thm}{Theorem}{Theorems}
\newtheorem{thmX}{Theorem}
\Crefname{thmX}{Theorem}{Theorems}
\newtheorem{prp}[thm]{Proposition}
\Crefname{prp}{Proposition}{Propositions}
\newtheorem{lem}[thm]{Lemma}
\Crefname{lem}{Lemma}{Lemmas}
\newtheorem{cor}[thm]{Corollary}
\Crefname{cor}{Corollary}{Corrolaries}

\theoremstyle{remark}
\newtheorem{rmk}[thm]{Remark}
\Crefname{rmk}{Remark}{Remarks}

\theoremstyle{definition}
\newtheorem{defn}[thm]{Definition}
\Crefname{defn}{Definition}{Definitions}
\newtheorem{exa}[thm]{Example}
\Crefname{exa}{Example}{Examples}

\Crefname{warn}{Warning}{Warnings}
\newtheorem{nota}[thm]{Notation}
\Crefname{nota}{Notation}{Notations}

\Crefname{cnstr}{Construction}{Constructions}

\Crefname{setng}{Setting}{Settings}
\newtheorem{ass}[thm]{Assumption}
\Crefname{ass}{Assumption}{Assumptions}

\newcommand\id{\mathrm{id}}

\newcommand\comps{\mathbb{C}}
\newcommand\reals{\mathbb{R}}

\newcommand\fin{\mathrm{fin}} 
\newcommand\prpr{\mathrm{prpr}} 

\newcommand\lred{\mathrm{lred}}
\newcommand\nice{\mathrm{nice}}

\newcommand\open{\mathrm{open}}

\newcommand\Nis{\mathrm{Nis}}

\newcommand\Sm{\mathrm{Sm}}

\DeclareMathOperator\B{\mathbf{B}\!}

\newcommand\an{\mathrm{an}}
\newcommand\hol{\mathrm{hol}}
\newcommand\alg{\mathrm{alg}}
\newcommand\diff{\mathrm{diff}}

\newcommand\htpy{\mathrm{htpy}}

\newcommand\op{\mathrm{op}}
\newcommand\univ{\mathrm{univ}}
\newcommand\red{\mathrm{red}}

\DeclareMathOperator\Coh{Coh}

\DeclareMathOperator\PF{PF}

\newcommand\Sch{\mathrm{Sch}}
\newcommand\Mfld{\mathrm{Mfld}}
\newcommand\DiffStk{\mathrm{DiffStk}}
\newcommand\HolStk{\mathrm{HolStk}}
\newcommand\AlgStk{\mathrm{AlgStk}}
\newcommand\An{\mathrm{An}}

\newcommand\spaces{\mathcal{S}}

\newcommand\Cat{\mathbf{Cat}}
\newcommand\PrL{{\mathbf{Pr}^{\mathbf{L}}}}
\newcommand\PrR{{\mathbf{Pr}^{\mathbf{R}}}}

\newcommand\CAlg{\mathrm{CAlg}}

\DeclareMathOperator\Pic{Pic}

\DeclareMathOperator\Sym{Sym}

\DeclarePairedDelimiter\cls\lbrack\rbrack

\newcommand\stackslink[1]{\href{https://stacks.math.columbia.edu/tag/#1}{#1}}
\newcommand\stackscite[1]{\cite[Tag \stackslink{#1}]{stacks-project}}

\title[The Localization Theorem for Motivic Homotopy Theories]{The Localization Theorem for the Motivic Homotopy Theory of Complex Analytic Stacks and other Geometric Settings}
\author{Roy Magen}
\address{Institute of Mathematics and Informatics, Bulgarian Academy of Sciences \\ Bulgaria, Sofia 1113, Acad. G. Bonchev Str., Bl. 8}
\thanks{Supported by the Bulgarian Ministry of Education and Science, Scientific Programme "Enhancing the Research Capacity in Mathematical Sciences (PIKOM)", No. DO1-67/05.05.2022}

\begin{document}

\begin{abstract}
	We prove the analog of the Morel-Voevodsky localization theorem over complex analytic stacks, which is used in \cite{6FF} to establish a 6-functor formalism of complex analytic motivic homotopy theory and produce an analytification map that is compatible with the six operations. Along the way, we establish general techniques for proving this theorem over other geometric settings, which also apply, for example, to the settings of algebraic stacks and differentiable stacks.
\end{abstract}
\maketitle

\setcounter{tocdepth}{3}
\tableofcontents

\section{Introduction}

The Morel-Voevodsky localization theorem is a crucial result in motivic homotopy theory that underlies many important properties such as cdh descent and Grothendieck's six operations. This result was first shown for the motivic homotopy theory of schemes in \cite[Theorem 3.2.21]{A1htpysch}, which can be formulated as follows:
\begin{thm*}[Morel-Voevodsky localization theorem]
	Let $i : Z \to S$ be a closed immersion of Noetherian schemes with open complement $j : U \to S$. Then
	\[
		j_! j^* F \to F \to i_* i^* F
	\]
	is an exact triangle in $\SH(S)$ for any $F \in \SH(S)$, where $\SH(S)$ is the stable motivic homotopy category over $S$.

	In fact, we have that
	\[
		\SH(Z) \xrightarrow{i_*} \SH(S) \xrightarrow{j^*} \SH(U)
	\]
	is a fibre sequence of triangulated categories.\footnote{It follows that $\SH(Z), \SH(U)$ can be seen as a recollement of $\SH(S)$ in the sense of \cite[Definition A.8.1]{ha}.}
\end{thm*}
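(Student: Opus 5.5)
The plan follows the Morel–Voevodsky strategy: prove the unstable pointed statement first, then stabilize. Since $j^*$ preserves colimits and $j_!$ is fully faithful, the triangle statement is equivalent to two claims. First, $i_*$ is fully faithful, so that $i^*i_*\simeq \id$. Second, the essential image of $i_*$ is exactly the kernel of $j^*$, equivalently the counit/unit square gives a cofibre sequence $j_!j^*F\to F\to i_*i^*F$. Given these, the fibre sequence $\SH(Z)\to\SH(S)\to\SH(U)$ follows formally. Conservativity of the pair $(i^*,j^*)$ is immediate once the triangle is known.

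Both functors $F\mapsto i_*i^*F$ and $F\mapsto F/j_!j^*F$ commute with colimits. To see this for $i_*$, note that $i_*$ preserves Nisnevich sheaves and $\aff^1$-invariance, and it commutes with filtered colimits and with cofibres in the pointed or stable setting. This reduces the check to generators, namely suspension spectra of $X_+$ for $X\to S$ smooth, together with their $\proj^1$-desuspensions. The base-change properties $i^*p_\sharp\simeq p'_\sharp i'^*$ and $j^*i_*\simeq *$ let us further replace $S$ by $X$. So the key case is $F=\Sigma^\infty X_+$ with $X$ smooth over $S$, and we must show that $X/(X_U)\to i_*i^*X$ is an equivalence. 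Unwinding definitions, $i_*i^*X$ evaluated on smooth $Y/S$ is the motivic localization of the presheaf $Y\mapsto \operatorname{colim}$ of $\operatorname{Hom}_Z(Y_Z,X_Z)$ over Nisnevich/Henselian neighbourhoods. The claim then becomes that, for $Y$ Henselian local along $Z$, the map from $\{Y\to X\}$ modulo maps landing in $X_U$ to $\operatorname{Hom}(Y_Z,X_Z)$ is a motivic equivalence.

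The main obstacle is this last unstable comparison. I would first prove it Nisnevich-locally on $X$, using an étale map $X\to\aff^n_S$, to reduce to $X=\aff^n_S$. For affine space, the fibre of the map over a given $Z$-point is the space of lifts of a map $Y_Z\to\aff^n$ to $Y$. On a Henselian pair lifts exist, and the space of lifts is an affine space (torsor under the ideal of $Z$), hence $\aff^1$-contractible via the straight-line homotopy. Lifts and homotopies are allowed to meet $U$ freely because we have quotiented by $X_U$. Passing back from $\aff^n$ to an étale $X\to\aff^n$ uses the fact that étale maps induce equivalences on Henselian neighbourhoods of $Z$, together with Nisnevich descent. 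Once the unstable pointed statement holds, stabilization follows because $i_*$, $j_!$, and $j^*$ commute with $\proj^1$-suspension by the projection formula for $\Sigma_{\proj^1}$. The Noetherian hypothesis enters only in using Nisnevich points (Henselizations) and finite-dimensionality for hypercompleteness.
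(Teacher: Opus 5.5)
Your skeleton (unstable pointed case first, reduce to fibres over $Z$-points, compare with affine space by étale coordinates and Henselian lifting, straight-line contraction, then stabilize) is the Morel--Voevodsky strategy that the paper abstracts. As written, however, it has genuine gaps.

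\textbf{Full faithfulness of $i_*$ is never proved.} The triangle does \emph{not} imply it. Applied to $F=i_*M$, the triangle only shows that $i_*$ of the counit $i^*i_*M\to M$ is invertible. So full faithfulness is equivalent to conservativity of $i_*$ (\cite[Proposition 4.13]{6FF}), and your proposal never proves this. Conservativity needs its own geometric input: every smooth $Z$-scheme is Nisnevich-locally the restriction of a smooth $S$-scheme (for instance, lift étale maps to $\aff^n_Z$ to étale maps to $\aff^n_S$ near $Z$), so that $H(Z)$ is generated under colimits by the image of $i^*$.

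Without this you get neither the fibre sequence $\SH(Z)\to\SH(S)\to\SH(U)$ nor your stabilization step. The unstable formula $i_*(\proj^1_Z\wedge M)\simeq\proj^1_S\wedge i_*M$ is not a formal projection formula. Gluing alone only gives $\proj^1_S\wedge i_*M\simeq i_*(\proj^1_Z\wedge i^*i_*M)$. The paper obtains linearity of $i_*$ from $i$ being closed, i.e.\ gluing \emph{and} conservativity, in the proof of \Cref{prp:stabilize gluing}.

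\textbf{The unstable core asserts key steps and misformulates one.}

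First, you identify $i_*i^*X$ with the motivic localization of the naive presheaf $Y\mapsto\operatorname{Hom}_Z(Y_Z,X_Z)$, and you let the gluing square commute with the colimits you use. Both require that $i_*$ on presheaves preserve motivic equivalences between pointed presheaves. That $i_*$ preserves $\aff^1$-invariant Nisnevich sheaves says nothing about this. The real input is that Nisnevich covers of $Z$ are refined by restrictions of Nisnevich covers of $S$ (\cite[Lemma 3.1.6]{locspalg}), which is hypothesis (2)(b)(i) of \Cref{thm:global pseudotop gluing}. (The colimit over neighbourhoods you wrote is the formula for $i^*$ of a general presheaf, not for $i_*i^*$ of a representable.)

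Second, and more seriously, motivic equivalences cannot be checked at Henselian local $Y$. At such a $Y$ meeting $Z$, the map $X(Y)\to\operatorname{Hom}_Z(Y_Z,X_Z)$ is not injective: for $X=\aff^n_S$ its fibres are torsors under $I^{\oplus n}$, with $I$ the ideal of $Y_Z$. Moreover, the straight-line contraction uses values on non-local schemes such as $\aff^1\times Y'$.

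What you actually need is that for each $b:\yo(Y)\to i_*X_Z$ the fibre \emph{presheaf} is motivically contractible, and you must say where that presheaf lives. In $\Psh(\Sm_S)$ it is a presheaf on $(\Sm_S)_{/Y}$, not on $\Sm_Y$. Identifying it with ``the same problem over the base $Y$'' is the Cartesian projection formula for smooth maps, which fails (\Cref{lem:qadm is adm iff Huniv is LC}). This is exactly the gap in the original proof flagged in \Cref{rmk:the mistake}.

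The paper handles this as follows:
\begin{enumerate}
\item It forms the fibres $\hat\theta_i(X,t)$ in $\Psh(\mathcal C_{/S})$, where slice projections are locally Cartesian (\Cref{lem:check equiv on fibres for LCPF}).
\item It proves excision for these fibres (\Cref{thm:gluing is adm invar}).
\item It transports the resulting local equivalences back to smooth objects via \Cref{lem:res local equiv}.
\end{enumerate}
Your Henselian-lifting comparison could replace the topos-theoretic excision, but only once it is carried out on fibre presheaves in a setting where assembling them is justified. That could be directly in $\Psh(\Sm_S)_{/Y}$, or in $\Psh(\mathcal C_{/S})$ as the paper does.

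\textbf{Minor point.} Your ``replace $S$ by $X$'' step would require $p_\sharp i'_*\simeq i_*p'_\sharp$, which is itself a consequence of localization, so it should be dropped. The target statement you then state on $S$ is the correct one.
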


This localization property of $\SH$ is incredibly important, and the problem of establishing analogs of it in various settings remains a topic of ongoing study. Before discussing the various examples, let us mention our main application, which follows from \Cref{thm:hol gluing}:
\begin{thmX}[Complex analytic localization theorem] \label{thmX:main}
	Let $i : Z \to S$ be an embedding of suitable\footnotemark{} complex analytic stacks with open complement $j : U \to S$. Then
	\[
		j_! j^* F \to F \to i_* i^* F
	\]
	is an exact triangle in $\SH^\hol(S)$ for any $F \in \SH^\hol(S)$, where $\SH^\hol(S)$ is the stable motivic homotopy category over $S$.

	In fact, we have that
	\[
		\SH^\hol(Z) \xrightarrow{i_*} \SH^\hol(S) \xrightarrow{j^*} \SH^\hol(U)
	\]
	is a fibre sequence of triangulated categories.
	\footnotetext{For example, it suffices to assume that $S$ has an open cover by global quotients of the form $[X/G]$, where $G$ is a finite group acting on a complex space $X$.}
\end{thmX}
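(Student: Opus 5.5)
The plan is to reduce the statement to the unstable, pointed localization statement and then pass to spectra. First I will reduce to the second assertion: the exact triangle follows formally from the fibre sequence. Indeed, once $i_*$ is fully faithful with essential image equal to the kernel of $j^*$, the cofibre of the counit $j_!j^*F \to F$ is killed by $j^*$ because $j^*j_! \simeq \id$ for an open immersion. Hence this cofibre lies in the image of $i_*$, and comparing with the unit $F \to i_*i^*F$ identifies it with $i_*i^*F$. So the real content is twofold: $i_*$ is fully faithful, equivalently the counit $i^*i_* \to \id$ is an equivalence, and $j^*F \simeq 0$ forces $F \simeq i_*i^*F$.

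Second, I will pass from stable to unstable. $\SH^\hol(S)$ is obtained from pointed $\mathbb{A}^1$- (here $\mathbb{D}$- or $\mathbb{A}^1$-) invariant Nisnevich-type sheaves on smooth complex analytic stacks over $S$ by $\proj^1$-stabilization. The functors $i_*, i^*, j^*, j_!$ commute with stabilization, since $i_*$ commutes with $\proj^1$-loops and filtered colimits. It therefore suffices to prove that for pointed unstable sheaves the square expressing $F$ as the pushout of $i_*i^*F \leftarrow j_\sharp j^*F \to F$ is cocartesian, and that $i^*i_* \simeq \id$. Both conditions are preserved by colimits in $F$ because $i_*$ preserves the relevant colimits. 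So I will check them on representables $X \to S$ smooth, following Morel–Voevodsky. The key computation is that $i_*i^*(X/X_U)$ agrees with $X/X_U$, i.e. that the sheaf $T \mapsto \{$sections of $X\times_S T$ over $T_Z\}$ on smooth $T/S$ is motivically equivalent to the quotient $X/(X\setminus X_Z)$.

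Third comes the geometric heart, which I expect to be the main obstacle. Given a smooth $T \to S$ and a section $s_Z$ of $X_{T_Z}\to T_Z$, I must lift it, after passing to a Nisnevich-type cover of $T$ and up to homotopy, to a section over a neighborhood of $T_Z$. In the analytic setting this should be easier than in algebra. Smooth morphisms of complex spaces are locally products with polydiscs, and a section over the closed subspace $T_Z$ extends to an open neighborhood, e.g.\ by Steinness of small neighborhoods and Cartan's Theorem B, or by contracting a tubular neighborhood. The space of such extensions is contractible via linear homotopies in local polydisc coordinates, which are $\mathbb{D}$-homotopies. Thus the stalks of $i_*i^*h_X$ and of $h_X/h_{X\setminus X_Z}$ agree after $\mathbb{D}$-localization, and the result reduces to a stalkwise check at points of $T$, using that the topology is generated by local analytic germs.

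Finally, I will extend from complex spaces to the stacks in the footnote. For $S$ covered by quotients $[X/G]$ with $G$ finite, I will use descent of $\SH^\hol$ along such covers to reduce to $S=[X/G]$. There I will identify the relevant sheaves with $G$-equivariant sheaves on $X$, and I will use $G$-invariant Stein neighborhoods, which exist because $G$ is finite, to run the extension argument equivariantly. Glueing the local recollements along the cover gives the global fibre sequence, since fully faithfulness of $i_*$ and the kernel condition can be checked locally. The delicate point is ensuring that the analytic lifting works equivariantly and in families. I would first try averaging extensions over $G$, using convexity in local coordinates.
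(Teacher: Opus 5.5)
Your outline has the same overall shape as the paper's proof: a formal reduction of the triangle to the recollement, a passage from unstable to stable, a reduction to smooth representables, and then a Morel--Voevodsky-type fibrewise contractibility. The geometric heart, however, fails for $\SH^\hol$ as it is defined.

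\textbf{The homotopy step does not work.} In $H^\hol$ the homotopy relation is generated by vector bundle torsors, i.e.\ by homotopies $\comps \times T \to Y$ parametrized by the whole affine line. The disc $\mathbb D$ is not inverted.
\begin{itemize}
\item A linear homotopy in polydisc coordinates is only defined for parameters in a bounded convex set. By Liouville, a bounded domain $D \subseteq \comps^n$ receives no non-constant holomorphic map from $\comps$.
\item In fact $D$ is Brody hyperbolic, so $\mathrm{Hol}(-,D)$ is an analytic Nisnevich sheaf with $\mathrm{Hol}(M,D) \to \mathrm{Hol}(V,D)$ bijective for every vector bundle torsor $V \to M$. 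Hence $D$ is a non-contractible $\tau_\hol$-local object. Maps that are $\mathbb D$-homotopic, e.g.\ $\id_D$ and a constant map, remain distinct in $H^\hol$.
\item So ``the space of extensions is contractible via linear homotopies in local polydisc coordinates'' gives nothing. Contracting a tubular neighbourhood has the same defect.
\item The ``stalkwise check'' is also unavailable. Homotopy localization is not computed on stalks, and stalkwise homotopies do not assemble into a $\tau_\hol$-local equivalence unless they are defined globally.
\item The correct fibrewise statement is over sections $t$: $\tau_\hol$-local contractibility of $\hat\theta_i(X,t)$ (\Cref{thm:global pseudotop gluing}). Even this needs care in comparing presheaves on the submersive site with presheaves on all of $\mathcal C_{/S}$.
\end{itemize}

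\textbf{The missing idea is a global linear structure} on which the scaling homotopy $(\lambda,v)\mapsto \lambda v$, $\lambda\in\comps$, exists. The paper gets this in four steps.
\begin{itemize}
\item It proves an excision theorem (\Cref{thm:gluing is adm invar}). Up to analytic Nisnevich local equivalence, $\hat\theta_i(X,t)$ is unchanged when $X$ is replaced by a local biholomorphism $X'\to X$ through which $t$ lifts with $Z\times_X X' \simeq Z$.
\item It reduces to $t = s\circ i$ for a section $s$ of $X\to S$. This is not done by extending $t$ to an open neighbourhood of $Z$. Instead it extends the immersion $Z\to X_Z$ to a submersion $\hat V\to S$ inside $X$ (\Cref{prp:lifting qsections}). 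This gives an \'etale neighbourhood $S'$ of $Z$, and $\{S', S\setminus Z\}$ is an analytic Nisnevich cover.
\item \Cref{prp:hol sec has bihol nbhd in vb} then gives a $G$-equivariant local biholomorphism from a neighbourhood of $s$ to a vector bundle $V$, with $s$ exactly the preimage of the zero section.
\item Excision therefore replaces the tube by all of $V$, where the explicit $\comps$-nullhomotopy exists.
\end{itemize}
Equivariance is obtained by averaging in $\Coh^G$ on Stein spaces, not in local coordinates, which are not $G$-invariant.

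\textbf{Further unsupported steps.}
\begin{itemize}
\item Full faithfulness of $i_*$ cannot be checked on representables over $S$. It requires that submersions to $Z$ locally lift to submersions to $S$ (\Cref{lem:lifting submersions}, via Heinzner's equivariant embedding).
\item Over quotient stacks, stabilization inverts Thom spaces of all equivariant vector bundles, not just $\proj^1$. You therefore need the $A$-lifts hypothesis of \Cref{prp:stabilize gluing}.
\item Unstably, $i_*$ is only known to preserve $\tau_\hol$-local equivalences between reduced presheaves (\Cref{lem:hol emb}). It is not known to commute with colimits before gluing is established.
\item $\SH^\hol$ is only constructed on reduced stacks. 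The non-reduced case in the footnote needs the extension via $(-)_\red$.
\end{itemize}
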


This result is used in \cite{6FF} to show that $\SH^\hol$ admits the structure of a 6-functor formalism and establish cdh descent for it, as well as to show that the natural realisation maps $\SH(X) \to \SH^\hol(X^\an)$ are compatible with the six operations.

Let us now mention some examples of the localization property in algebraic geometry:
\begin{enumerate}

	\item The localization property is one of the axioms listed in \cite{voe-crit} to be used in the construction of a 6-functor formalism, and indeed, this was used crucially by Ayoub in his thesis \cite{Ayoub6I,Ayoub6II} to establish the formalism of Grothendieck's six operations for the stable motivic homotopy theory of quasi-projective schemes.

	\item The localization property was studied extensively in \cite{tri-cat-mixed-motives,Deglise_2007}, where the authors were also particularly interested in the setting of sheaves with transfers. The situation in this case is more delicate, and the problem of showing the result in general for Voevodsky motives remains open. Nevertheless, the results proven in that paper allowed the authors to extend Ayoub's construction of the six operations to schemes that are not necessarily quasi-projective. The localization property is also used to establish cdh descent in this setting.

	\item In \cite{sixopsequiv}, Hoyois established the six operations for an \emph{equivariant} version of $\SH$ using an equivariant version of the localization theorem also established in that paper. The localization property also allowed Hoyois to establish cdh descent for this 6-functor formalism, and deduce that equivariant homotopy $K$-theory satisfies cdh descent in \cite{cdh-equiv-k}. The localization property in the equivariant setting was also used in \cite{SixAlgSt} to deduce the localization property for stacks and thence obtain a 6-functor formalism of stable motivic homotopy theory for derived algebraic stacks that satisfies cdh descent.

	\item In \cite{locspalg}, Khan showed the localization theorem for spectral algebraic spaces, allowing him to prove a version of derived invariance of small \'{e}tale sites in the setting of $\aff^1$-invariant sheaves on lisse-Nisnevich sites. This was also used in \cite{SixAlgSp} to obtain a 6-functor formalism of stable motivic homotopy theory for derived algebraic spaces, and establish cdh descent for it.

	\item In \cite{FrLoc}, Hoyois established the localization property for schemes and \emph{framed correspondences}. This can be seen as a variation of the case of finite correspondences, where the result is still open in general. In this setting, the localization property then allowed Hoyois to show that the theory of \emph{framed} stable motivic homotopy theory coincides with the ordinary one.
	
	\item The localization property is also relevant to the study of mixed Hodge modules -- see \cite{MotivicHodge,mhm,ruimy2026norimotivesandmixed,Hodge_realis}.

\end{enumerate}

The localization property is also a topic of study outside of algebraic geometry. Apart from \Cref{thmX:main} and its applications in \cite{6FF}, we have the following examples:
\begin{enumerate}

	\item Cnossen establishes the localization property in the setting of \emph{differentiable} stacks in \cite{TwAmb}, allowing him to establish a 6-functor formalism of stable motivic homotopy theory on differentiable stacks.

	\item Volpe establishes the localization property for sheaves on topological spaces in \cite{top6FF}, allowing him to establish a 6-functor formalism for sheaves on topological spaces. This was further studied in \cite{cts6FF,cond6FF}. 

	\item The role of the localization property in establishing cdh descent and constructing 6-functor formalisms for general geometric settings was studied in \cite{6FF}.

\end{enumerate}

Due to the great interest in establishing the localization property in diverse settings, we have decided to prove \Cref{thmX:main} by first producing some general techniques for establishing gluing that do not rely on the specific geometric setting. Indeed, the proof of the localization property usually follows the following series of reductions:
\begin{enumerate}

	\item Before establishing the localization property for $\SH$, one establishes the so-called \emph{gluing property} for the unstable version $H$ of $\SH$. It is then possible to deduce the localization property for $\SH$ using \Cref{lem:gluing for ptd,prp:stabilize gluing}.

	\item In order to show that $H$ has the gluing property for $i : Z \to S$, one first shows that for certain ``smooth'' maps $X \to S$, and maps $t : Z \to X$ over $S$, the presheaf $\hat\theta_i(X,t)$ is ``contractible'' in an appropriate sense. One then deduces that $H$ has the gluing property using \Cref{thm:global pseudotop gluing}, or other results from \Cref{S:red to fib gluing}.

	\item When $X \to S$ is actually some sort of ``vector bundle'', so that it admits a section $0 : S \to X$ that is actually a ``homotopy inverse'', it is easy to show that $\hat\theta_i(X, 0 \circ i)$ is contractible. We must use the properties of our specific geometric setting to show that it is possible to relate a sufficiently general $X \to S$ (and map $t : Z \to X$) to this case via some zigzag of ``\'{e}tale neighbourhoods''. \Cref{thm:gluing is adm invar} then allows us to deduce that $\hat\theta_i(X,t)$ is contractible.

\end{enumerate}
This strategy is discussed in more detail in \Cref{S:examples}. We have endeavored to make our results general enough so that they should be useful for establishing the localization property for many different types of geometric settings. Indeed, apart from proving \Cref{thmX:main} in \Cref{thm:hol gluing}, as a demonstration of the wider applicability of our techniques, we also re-establish the localization property in the settings of algebraic and differentiable stacks in \Cref{thm:alg gluing,thm:diff gluing}.

\subsection{Outline}

First let us briefly discuss some of the dependencies between various parts of this paper:
\begin{itemize}

	\item The language of pullback formalisms from \Cref{S:pbf} is useful for many of the results in this paper, but not strictly necessary for the main results in \Cref{S:main gluing}. The notion of LC pullback formalisms from \Cref{S:LCPBF} is only used in the proofs of the results of \Cref{S:red to fib gluing}. The basic notions of pseudotopologies from \Cref{S:pseudotop} are useful for understanding the general setting for constructing unstable motivic homotopy theories, particularly \Cref{prp:univ invar}.

	\item In \Cref{S:basic gluing}, we introduce the abstract notions needed for studying the localization property. The results we study in this section apply in the setting of general pullback formalisms. The results of \Cref{S:stabilizing}, namely \Cref{lem:gluing for ptd,prp:stabilize gluing} are routinely used in applications to deduce the localization property for ``stable motivic homotopy theories'' from the analogous property for the unstabilized versions. Some of the results of \Cref{S:red}, such as \Cref{cor:gluing locality globally on base}, are also sometimes useful in applications, but the main result, \Cref{prp:intrinsic gens gluing}, is mostly used in the proofs of \Cref{S:red to fib gluing}.

	\item In \Cref{S:main gluing}, we study more specialized techniques for establishing the localization property for ``unstable motivic homotopy theories''. In \Cref{S:red to fib gluing}, we describe the reduction to the study of presheaves $\hat \theta_i(X,t)$, and in \Cref{thm:gluing is adm invar}, we describe the key excision result enjoyed by these presheaves. The proof of \Cref{thm:gluing is adm invar} also depends on \Cref{S:excision}.

	\item Finally, in \Cref{S:examples}, we demonstrate how to apply our general results to the study of motivic homotopy theories for differentiable, algebraic, and complex analytic stacks.

\end{itemize}

Our goal is to study the localization property for general notions of motivic homotopy theory. These are generally constructed as follows. We are given the following data:
\begin{itemize}

	\item an $\infty$-category $\mathcal C$ of geometric objects (\eg{} schemes),

	\item a notion of ``quasi-admissible maps'' (\eg{} smooth maps) in $\mathcal C$,

	\item a notion $\tau$ of ``local homotopy equivalence'' (\eg{} the union of $\aff^1$-homotopy equivalence with Nisnevich local equivalence),

	\item for each $S \in \mathcal C$, some collection $\mathcal A_S$ of pointed objects of $\mathcal C_{/S}$ (\eg{} $(\proj^1_S, \{\infty\} \times S)$).

\end{itemize}
We then construct a presheaf of symmetric monoidal presentable $\infty$-categories on $\mathcal C$ by the following procedure:
\begin{enumerate}

	\item We first consider the presheaf given by $S \mapsto \mathcal C_S$, where $\mathcal C_S \subseteq \mathcal C_{/S}$ is the full subcategory given by the quasi-admissible maps to $S$. The functoriality is given by base change.

	\item This induces a presheaf of presentable $\infty$-categories $S \mapsto \Psh(\mathcal C_S)$. This even takes values in symmetric monoidal presentable $\infty$-categories via the Cartesian monoidal structures. We denote this presheaf by $H^\univ$.

	\item This then induces the presheaf $S \mapsto \Psh^\tau(\mathcal C_S)$, where $\Psh^\tau$ refers to $\tau$-invariant presheaves, \ie{} those presheaves respecting the notion of local equivalence encoded by $\tau$ (\eg{} $\aff^1$-invariant Nisnevich sheaves). This presheaf is denoted by $H^\tau$.

	\item Next, we consider the presheaf $H^\tau_\bullet$, where $H^\tau_\bullet(S)$ is the $\infty$-category of pointed objects in $H^\tau(S)$, equipped with the symmetric monoidal category given by the smash product $\wedge$.

	\item Finally, we formally adjoin $\wedge$-inverses of objects coming from the collections $\mathcal A_S$ for $S \in \mathcal C$. We will denote this presheaf by $\SH^\tau$.

\end{enumerate}

If $f : X \to Y$ is a quasi-admissible map, then taking slice projections induces a functor $\mathcal C_X \to \mathcal C_Y$, and it is easy to see that this induces a functor $f_\sharp : H^\univ(X) \to H^\univ(Y)$ that is left adjoint to the functor $f^* = H^\univ(f)$ induced by base change along $f$. In fact, we also have the following properties:
\begin{description}

	\item[Projection formula] the map $f_\sharp(- \otimes f^*) \to f_\sharp \otimes -$ is an equivalence.

	\item[Base change] for any map $q : Y' \to Y$, the map $f'_\sharp p^* \to q^* f_\sharp$ is an equivalence, where $f' = f \times_Y Y'$ is the base change of $f$ along $q$, and $p = X \times_Y q$ is the base change of $q$ along $f$.

\end{description}

Under some hypotheses on $\tau$ and $\{\mathcal A_S\}_{S \in \mathcal C}$ which are studied in \cite{Fundamentals,UnivFF}, the presheaves $H^\tau$ and $\SH^\tau$ also enjoy these properties. The result concerning $H^\tau$ is recalled in \Cref{prp:univ invar}. In general, presheaves satisfying these conditions are called \textbf{pullback formalisms}, and we go over some of their properties in \Cref{S:pbf}.

The localization property is studied at the generality of pullback formalisms in \Cref{S:basic gluing}, but then in \Cref{S:main gluing}, we consider more specialized tools for pullback formalisms of the form $H^\tau$. The tools for deducing the localization property for $\SH^\tau$ from $H^\tau$ are given in \Cref{S:stabilizing}. Finally, we apply these general results in specific geometric settings in \Cref{S:examples}. A more specialized overview for how this is generally done is given at the beginning of that \lcnamecref{S:examples}.

In fact, in order to express the localization property in a way that is sensible for more general pullback formalisms, including $H^\tau$, we consider the condition that for a particular ``closed'' map $i : Z \to S$ with ``open'' (\ie{} quasi-admissible) complement $j : U \to S$, the \textbf{gluing square}
\[
	\begin{tikzcd}
		j_\sharp j^* \ar[d] \ar[r] & \id \ar[d] \\
		j_\sharp j^* i_* i^* \ar[r] & i_* i^*
	\end{tikzcd}
\]
is coCartesian. This is referred to as \textbf{gluing}.

Often for $T \in \mathcal C$, and $Y \in \mathcal C_T$, we write $\cls{Y;T} \in H^\tau(T) = \Psh^\tau(\mathcal C_T)$ for the $\tau$-invariant presheaf determined by $Y$ (the ``$\tau$-sheafification'' of the presheaf represented by $Y$), and we can even make sense of $\cls{Y}$ for a general pullback formalism $D$ by setting $\cls{Y;T} \coloneqq (Y \to T)_\sharp(1)$, where $1 \in D(Y)$ is the monoidal unit. For $X \in \mathcal C_S$, the gluing square above evaluates to the following square in $H^\tau(S)$:
\begin{equation} \label{eqn:eval gluing square}
	\begin{tikzcd}
		\cls{X \times_S U;S} \ar[d] \ar[r] & \cls{X;S} \ar[d] \\
		\cls{U;S} \ar[r] & i_* \cls{X \times_S Z;Z}
	\end{tikzcd}
.\end{equation}

The consequences of gluing for general pullback formalisms were studied in \cite{6FF}, for algebro-geometric settings in \cite{tri-cat-mixed-motives,MotivicHodge,SixAlgSt,sixopsequiv,Ayoub6I,voe-crit,FrLoc,A1htpysch}, and in a differentiable setting in \cite{TwAmb}. In the present work we focus on the techniques needed to prove this property.

The gluing property is reviewed in \Cref{S:basic gluing}, where we also establish some general techniques that are useful for proving it. \Cref{lem:gluing locality on base} says that we can work locally on $S$, and under somewhat more specialized hypotheses, \Cref{cor:gluing locality globally on base} says that it suffices to show that \eqref{eqn:eval gluing square} is coCartesian, where $X \to S$ can be chosen among some ``generating''  quasi-admissible maps to $S$.

The techniques for deducing the gluing property for the stabilized version $\SH^\tau$ are given in \Cref{S:stabilizing}. First we use \Cref{lem:gluing for ptd} to show that $H^\tau_\bullet$ has gluing, and then we apply \Cref{prp:stabilize gluing} to deduce it for $\SH^\tau$, where the key hypothesis is that $i$ ``has $\mathcal A$-lifts'', which roughly says that the elements of $\mathcal A_Z$ are ``generated'' by the objects of the form $X \times_S Z$ for $X \in \mathcal A_S$.

The results of \Cref{S:basic gluing} actually hold in the context of quite general pullback formalisms, but in \Cref{S:main gluing}, we consider more specialized techniques that hold in the specific case of $H^\tau$. The results of \Cref{S:red to fib gluing} show that the problem of showing gluing for $H^\tau$ reduces to the problem of showing that certain presheaves $\hat \theta_i(X,t)$ are ``$\tau$-locally contractible''. These presheaves $\hat \theta_i(X,t)$ should be thought of as a ``local'' or ``fibrewise'' version of the gluing square: given $X \in \mathcal C_{/S}$, a map $t : Z \to X$ corresponds to a section of $X \times_S Z \to Z$, \ie{} a global section $\pt \to \yo_{\mathcal C_{/Z}}(X \times_S Z)$ in $\Psh(\mathcal C_{/Z})$. This corresponds to a global section $\pt \to i_* \yo_{\mathcal C_{/Z}}(X \times_S Z)$ in $\Psh(\mathcal C_{/S})$. We then define $\hat \theta_i(X,t)$ to be given by the base change along $\pt \to i_* \yo_{\mathcal C_{/Z}}(X \times_S Z)$ of the coCartesian gap of the square
\[
	\begin{tikzcd}
		\yo_{\mathcal C_{/S}}(X \times_S U) \ar[d] \ar[r] & \yo_{\mathcal C_{/S}}(X) \ar[d] \\
		\yo_{\mathcal C_{/S}}(U) \ar[r] & i_* \yo_{\mathcal C_{/Z}}(X \times_S Z)
	\end{tikzcd}
,\]
so $\hat \theta_i(X,t)$ can be described as follows:
\[
	\hat \theta_i(X,t) = \pt \times_{i_*(X \times_S Z)} (U \coprod_{X \times_S U} X) \in \Psh(\mathcal C_{/S})
.\]

Although this may seem like a complicated definition, in certain cases, it is easy to show that this becomes ``$\tau$-locally contractible'' by means of an explicit nullhomotopy. The example to keep in mind is the case of $\hat \theta_i(V, 0 \circ i)$, where $V \to S$ is some vector bundle, and $0 : S \to V$ is the zero section. Also see \cite[Lemma 4.17]{sixopsequiv} and \cite[Lemma II.5.2.11]{TwAmb} for specific examples.

Furthermore, \Cref{thm:gluing is adm invar} establishes a key tool for studying the presheaves $\hat \theta_i(X,t)$: it roughly says that, up to $\tau$-local equivalence, these presheaves are invariant under taking ``\'{e}tale neighbourhoods'' of $t$ in $X$. Thus, introducing the datum of the map $t$ allows us to work locally on $X$ in a way that was not available to us when working the plain gluing square. Combined with the observation of the previous paragraph, this gives us a good supply of $i,X,t$ for which $\hat \theta_i(X,t)$ is $\tau$-locally contractible, which we can then use to show that $H^\tau$ has gluing using the results of \Cref{S:red to fib gluing}. An overview of the techniques needed to carry out this strategy is given in the beginning of \Cref{S:examples}, and of course the full details are given in the various examples considered in that \lcnamecref{S:examples}.

\subsubsection*{Filling in a subtle detail}
It is worth mentioning that the proof of the localization property is often somewhat delicate, and in fact, some subtle errors have been found in the proofs appearing in the literature. Already in the original localization theorem of \cite[Theorem 3.2.21]{A1htpysch}, there is a mistake that has even been repeated several times in the literature when proving other versions of the localization property. This mistake involves using a property that holds for presheaves on categories $\Sch_S$ of schemes over a base scheme $S$, but does not hold if we instead consider presheaves on categories $\Sm_S$ of schemes \emph{smooth} over $S$. See \Cref{rmk:the mistake} and \cite[Remark 3.24]{SixAlgSt} for more details.

This error is fixable, and has been corrected in most cases and avoided in more recent proofs such as that of \cite[Corollary 5]{FrLoc} and \cite[Theorem 3.15]{SixAlgSt}, simply by working with presheaves over these larger categories (analogous to $\Sch_S$ instead of $\Sm_S$). This should lead to a statement that is at least as strong, but in principle one should ask how to deduce the statement over $\Sm_S$ from the statement over $\Sch_S$. Indeed, the arguments proceed by showing that certain maps are $\tau$-local equivalences in these presheaf categories, but in general, it may be possible to have a $\tau$-local equivalence between presheaves on $\Sch_S$ that does not restrict to a $\tau$-local equivalence of presheaves on $\Sm_S$. This could happen if some smooth scheme over $S$ were $\tau$-locally equivalent to a presheaf on $\Sch_S$ that does not come from a presheaf on $\Sm_S$. This cannot happen in the cases of interest because the notion of $\tau$-local equivalence given by $\tau$ is generated by Nisnevich covering sieves and $\aff^1$-homotopy equivalence, both of which are given in some sense by \emph{smooth} maps.

The general context of our results demands that we be more explicit about this deduction, and therefore also gives us the opportunity to fill in this detail for the existing proofs of the localization property.
Indeed, the reductions given in \Cref{S:red to fib gluing} make precise the above subtle point about moving between presheaves on $\Sm_S$ and $\Sch_S$ by using \Cref{lem:res local equiv}. \Cref{exa:counter} shows that this result does not hold in complete generality, \eg{} if we allowed for cdh covers or constructible covers as well as Nisnevich covers, or if instead of presheaves on $\Sm_S$, we were interested in presheaves on the category of schemes \emph{proper} over $S$.

\subsection{Acknowledgements}

The author would like to thank Andrew Blumberg and Johan de Jong for their support as PhD advisors, and Elden Elmanto for his encouragement and advice.

\subsection{Notations and Conventions}

Throughout this article, we will make heavy use of the machinery of $\infty$-categories as developed in \cite{htt} and \cite{ha}. Therefore, all of our language will be implicitly $\infty$-categorical:
\begin{enumerate}

	\item We say ``category'' to mean ``$\infty$-category''. Note that then functors, adjoints, and (co)limits must all be understood in the context of $\infty$-categories.

	\item Following \cite[Remark 3.0.0.5]{htt}, we will write $\Cat$ to denote the category of small categories, and $\widehat{\Cat}$ to denote the category of all categories.

	\item We write $\spaces$ for the category of small spaces/$\infty$-groupoids/anima (see \cite[\S1.2.16]{htt}).

	\item Unless otherwise specified, presheaves and sheaves are always implicitly assumed to take values in $\spaces$. Given a category $\mathcal C$, we write $\Psh(\mathcal C)$ to denote that category of presheaves on $\mathcal C$, and if $\mathcal C$ is equipped with a Grothendieck topology that is understood from context, we write $\Shv(\mathcal C)$ to denote the category of sheaves on $\mathcal C$.

	\item Given a category $\mathcal C$, we write $\mathcal C(-,-)$ for the hom functor $\mathcal C^\op \times \mathcal C \to \widehat{\spaces}$. $\mathcal C$ is locally small if this functor takes values in $\spaces$.

	\item The very large categories $\PrL, \PrR$ of presentable categories are defined in \cite[Definition 5.5.3.1]{htt}. These are the categories of presentable categories and left adjoint functors or right adjoint functors respectively. Note that $\PrL$ is equipped with the structure of a symmetric monoidal category as in \cite[Proposition 4.8.1.15]{ha}.

	\item For any symmetric monoidal category $\mathcal C$, we write $\CAlg(\mathcal C)$ for the category of commutative algebra objects in $\mathcal C$ -- see \cite[Definition 2.1.3.1]{ha}. In particular, $\CAlg(\Cat)$ is the category of symmetric monoidal categories, and $\CAlg(\PrL)$ is the category of symmetric monoidal presentable categories where the monoidal product preserves small colimits in each variable.

	\item A ``zero object'' of a category $\mathcal C$ is an object that is both initial and terminal. We will say that a category is pointed if it has a zero object. See \cite[Definition 1.1.1.1]{ha}.

\end{enumerate}

We will also use the following notations and conventions:
\begin{enumerate}

	\item All quotients by group actions are assumed to be stacky. Often these quotients are denoted by $[X/G]$, but we will instead simply write $X/G$.

	\item The abbreviation ``qcqs'' will be used to mean ``quasi-compact and quasi-separated'' in any context where these adjectives make sense.

	\item Whenever we say ``limit-preserving'' or ``colimit-preserving'', we are referring only to \emph{small} limits and colimits.

	\item If $f : X \to Y$ and $g : X' \to Y$ are maps in a category $\mathcal C$, and the fibred product $X \times_Y X'$ exists, we will sometimes write $f^{-1}(g) : X \times_Y X' \to X$ for the base change of $g$ along $f$ in $\mathcal C$.

	\item Given some implicitly specified ambient category, we will write $\pt$ for a terminal object of that category.

	\item Given some implicitly specified ambient monoidal category, we will write $1$ for a monoidal unit of that category.

	\item Given a locally small category $\mathcal C$, we will write $\yo : \mathcal C \to \Psh(\mathcal C)$ for the Yoneda embedding of $\mathcal C$. We generally do not include $\mathcal C$ in the notation as it is often clear from context which category's Yoneda embedding we are considering.

	\item Following \cite[Notation 1.2.8.4]{htt}, for any simplicial set $K$, we write $K^\triangleright$ for the simplicial set obtained by adjoining a terminal cone point to $K$. We will also write $\infty$ to denote the cone point of $K^\triangleright$.

\end{enumerate}

\section{Preliminaries} \label{S:prelims}

\subsection{Pullback Formalisms} \label{S:pbf}

We recall some basic notions about pullback contexts and pullback formalisms from \cite[\S1]{Fundamentals}. This is also summarized in greater detail in \cite[\S2.2]{6FF}.
\begin{defn}
	A \emph{pullback context} is a category $\mathcal C$ equipped with a collection of maps called \emph{quasi-admissible maps}, which is stable under composition and base change, and which contains all equivalences.

	For any $S \in \mathcal C$, we write $\mathcal C_S$ to denote the full subcategory of $\mathcal C_{/S}$ consisting of quasi-admissible maps to $S$.

	Say $\mathcal C$ is \emph{quasi-small} if $\mathcal C_S$ is equivalent to a small category for all $S \in \mathcal C$.
\end{defn}

\begin{defn} \label{defn:anodyne subctx}
	Given a pullback context $\mathcal C$, we say that a full subcategory $\mathcal C' \subseteq \mathcal C$ is a \emph{(full) anodyne pullback subcontext} if every quasi-admissible map to an object of $\mathcal C'$ has domain given by an object of $\mathcal C'$. The category $\mathcal C'$ then inherits the structure of a pullback context in which a map is quasi-admissible exactly when it is quasi-admissible in $\mathcal C$.
\end{defn}

\begin{rmk} \label{rmk:anodyne subctx}
	As in \cite[Remark 1.2.7]{Fundamentals}, we have that \cite[Definition 1.2.5]{Fundamentals} specializes to \Cref{defn:anodyne subctx} in the case of a full subcategory inclusion, so in the setting of \Cref{defn:anodyne subctx}, the inclusion preserves base changes along quasi-admissible maps.
\end{rmk}

\begin{defn}
	A \emph{pullback formalism} on a pullback context $\mathcal C$ is a presheaf $D : \mathcal C^\op \to \CAlg(\PrL)$ such that for any quasi-admissible map $f : X \to Y$,
	\begin{enumerate}

		\item the functor $f^* \coloneqq D(f)$ admits a left adjoint $f_\sharp$,

		\item $D$ satisfies the ``quasi-admissible projection formula'': for any $M \in D(X)$ and $N \in D(Y)$, the map
			\[
				f_\sharp(M \otimes f^* N) \to f_\sharp M \otimes N
			\]
			is an equivalence, and

		\item $D$ satisfies ``quasi-admissible base change'': if
			\[
				\begin{tikzcd}
					X' \ar[d, "p"'] \ar[r, "f'"] & Y' \ar[d, "q"] \\
					X \ar[r, "f"'] & Y
				\end{tikzcd}
			\]
			is a Cartesian square in $\mathcal C$, then the natural map
			\[
				f'_\sharp p^* \to q^* f_\sharp
			\]
			is an equivalence.

	\end{enumerate}
	If $D'$ is also a pullback formalism on $\mathcal C$, then a transformation $\phi : D \to D'$ is a \emph{morphism of pullback formalisms} if for any quasi-admissible $f$, the natural map
	\[
		f_\sharp \phi \to \phi f_\sharp
	\]
	is an equivalence.

	We write $\PF(\mathcal C)$ to denote the \emph{category of pullback formalisms}, which is a subcategory of $\Fun(\mathcal C^\op, \CAlg(\PrL))$.
\end{defn}

\begin{exa} \label{exa:Huniv}
	Given a quasi-small pullback context $\mathcal C$, the initial pullback formalism of \cite[Theorem 1.4.3]{Fundamentals} or \cite[Corollary 4.9]{UnivFF}, $H^\univ$, sends any $S \in \mathcal C$ to the presheaf category $\Psh(\mathcal C_S)$ equipped with the Cartesian monoidal structure. We will see variations of this in \Cref{exa:completed Huniv is LC}, \Cref{prp:univ invar}, and \Cref{exa:completed invar PF}.
\end{exa}

\begin{nota}
	If $D$ is a pullback formalism on a pullback context $\mathcal C$, then for any quasi-admissible map $X \to S$ in $\mathcal C$, we write
	\[
		\cls{X} = \cls{X;S} = \cls{X;S}_D \coloneqq (X \to S)_\sharp(1)
	.\]
\end{nota}

\begin{defn}
	If $D$ is a pullback formalism on a pullback context $\mathcal C$, we say that $D$ is \emph{geometrically generated} if for any object $S \in \mathcal C$, $D(S)$ is generated under small colimits by objects of the form $[X]$ for $X \in \mathcal C_S$.
\end{defn}

The following notions were are studied in \cite[\S2.3]{Fundamentals}.
\begin{defn}
	If $\mathcal C$ is a category, and $D : \mathcal C^\op \to \widehat{\Cat}$ is a presheaf, then we say that a map $f$ in $\mathcal C$ is \emph{$D$-acyclic} if $f^* = D(f)$ is fully faithful. More generally, a diagram $X : K^\triangleright \to \mathcal C$ is $D$-acyclic if
	\[
		D(X(\infty)) \to \varprojlim_{a \in K} D(X(a))
	\]
	is fully faithful.
\end{defn}

\subsubsection{Pseudocovers}

Fix a pullback context $\mathcal C$, and pullback formalism $D$ on $\mathcal C$.

\begin{defn} \label{defn:covers dense generators}
	\hfill
	\begin{itemize}

		\item A (small) $D$-pseudocover for an object $S \in \mathcal C$ is a (small) family of maps $\{S_k \to S\}_k$ such that the functor
			\[
				D(S) \to \prod_k D(S_k)
			\]
			is conservative.

		\item Given a map $i : Z \to S$ in $\mathcal C$, a (small) $D$-pseudocover of $i$ is a (small) family of maps $\{\sigma_k : S_k \to S\}$ such that $\{\sigma_k\}_k \cup i^\complement$ is a $D$-pseudocover of $S$, where $i^\complement$ is the collection of all maps $X \to S$ such that $X \times_S Z$ is initial.\footnote{This only gives a useful definition of $i^\complement$ when $\mathcal C$ has an initial object $I$, and every map to $I$ is an equivalence. \cf{} \cite[Definition 4.1]{6FF}.}

	\end{itemize}
\end{defn}

We recall the following result from \cite[Theorem 2.4.3]{Fundamentals}:
\begin{thm} \label{thm:D-topology}
	\hfill
	\begin{enumerate}

		\item Quasi-admissible $D$-pseudocovers are stable under base change.

		\item A pullback formalism $D'$ has descent along quasi-admissible $D$-pseudocovers if it admits a morphism from $D$.

	\end{enumerate}
\end{thm}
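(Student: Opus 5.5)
The plan is to prove the two parts separately. Part (1) is a formal adjunction argument using base change and the projection formula. Part (2) applies Lurie's Beck--Chevalley descent criterion \cite[Corollary 4.7.5.3]{ha} (in its comonadic/left-adjoint form). The only non-formal input there is conservativity, which I would transfer from $D$ to $D'$ through the unit object.

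For (1), let $\{f_k : S_k \to S\}_k$ be a quasi-admissible $D$-pseudocover and $q : T \to S$ any map. Write $p_k : T_k = S_k \times_S T \to T$ and $q_k : T_k \to S_k$. Since $q^*$ is a left adjoint between presentable categories, it has a right adjoint $q_*$. Passing quasi-admissible base change $q^* f_{k\sharp} \simeq p_{k\sharp} q_k^*$ to right adjoints gives $f_k^* q_* \simeq q_{k*} p_k^*$. By adjunction, the projection formula for $p_k$ gives $p_k^* \underline{\mathrm{Hom}}(P,M) \simeq \underline{\mathrm{Hom}}(p_k^*P, p_k^*M)$; internal homs exist because $D(T) \in \CAlg(\PrL)$. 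Now let $\varphi : M \to N$ in $D(T)$ be such that every $p_k^*\varphi$ is an equivalence, and fix any $P \in D(T)$. Then $p_k^*\underline{\mathrm{Hom}}(P,\varphi)$ is an equivalence, so $f_k^* q_* \underline{\mathrm{Hom}}(P,\varphi)$ is an equivalence for every $k$. By the pseudocover hypothesis, $q_*\underline{\mathrm{Hom}}(P,\varphi)$ is an equivalence. Taking $\mathrm{Map}(1_S,-)$ then shows $\mathrm{Map}(P,M) \to \mathrm{Map}(P,N)$ is an equivalence. Since $P$ is arbitrary, $\varphi$ is an equivalence by Yoneda.

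For (2), let $\{f_k\}$ be a quasi-admissible $D$-pseudocover of $S$ and form its Čech nerve. By part (1) all its terms are again quasi-admissible over $S$. The restriction functors in the cosimplicial diagram $D'(\check C_\bullet)$ have left adjoints $(-)_\sharp$ satisfying base change, since $D'$ is a pullback formalism. Hence by Lurie's criterion, $D'(S) \to \varprojlim D'(\check C_\bullet)$ is an equivalence as soon as the family $\{f_k^*\}$ is jointly conservative on $D'(S)$. The remaining limit-preservation condition is automatic, because each $f_k^*$ is a right adjoint. Applying the same criterion to $D$ itself, where conservativity is the hypothesis, shows that $D$ has descent. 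Unwinding the comonadic description of the inverse equivalence then yields a bar-construction formula in $D(S)$:
\[
	1_S \simeq \bigl|\, [\check C_n; S]_D \,\bigr| .
\]

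Now apply the morphism $D \to D'$. It is symmetric monoidal, colimit-preserving, and commutes with $(-)_\sharp$, so the same formula holds for $1_S$ in $D'(S)$. Tensoring with $M \in D'(S)$ and using the projection formula gives $M \simeq |\,(\check C_n \to S)_\sharp (\check C_n \to S)^* M\,|$, naturally in $M$. Every map $\check C_n \to S$ factors through some $f_k$, so if $f_k^*\psi$ is an equivalence for all $k$, then every term of this realization is an equivalence, and hence so is $\psi$. This gives the required conservativity in $D'$, and therefore descent for $D'$.

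I expect the main obstacle to be extracting the formula $1_S \simeq |[\check C_\bullet]|$ from descent for $D$, especially for an infinite family. There the Čech nerve should be built from the family as a coproduct-like object, or $D(S) \to \prod_k D(S_k)$ should be treated as a single functor with left adjoint $\bigoplus_k f_{k\sharp}$. The care needed is in checking that Lurie's criterion applies in this generality and that the unit of the comonadic adjunction is computed by this colimit.
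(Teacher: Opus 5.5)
The paper gives no proof of this statement. It is quoted from \cite[Theorem 2.4.3]{Fundamentals}, so there is nothing in the text to match, and your proposal is a correct self-contained argument.

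Part (1) works as written. The projection formula for $p_k$ gives $p_k^*\underline{\mathrm{Hom}}(P,-)\simeq\underline{\mathrm{Hom}}(p_k^*P,p_k^*(-))$. The mate of quasi-admissible base change, $f_k^*q_*\simeq q_{k*}p_k^*$, holds for arbitrary $q$.

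Part (2) is in effect the paper's \Cref{prp:cover is pseudocover} run in a non-circular order. The paper deduces ``$1_S$ is a colimit of the objects $\cls{S_{k_0}\times_S\dotsb\times_S S_{k_n};S}$'' from descent, that is, from this very theorem. You instead:
\begin{itemize}
\item obtain that formula independently;
\item transport it along $D\to D'$, which is symmetric monoidal, colimit-preserving and compatible with $(-)_\sharp$;
\item conclude conservativity in $D'$ exactly as in \Cref{prp:monoidal crit for cons family};
\item then invoke Lurie's criterion.
\end{itemize}
What this buys is a clean principle: a quasi-admissible family is a $D$-pseudocover if and only if the unit is a colimit of its \v{C}ech terms. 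That condition is visibly transported both by morphisms of pullback formalisms and by $q^*$. Transport by $q^*$ would give an alternative proof of (1) along the same lines. Your internal-Hom argument for (1) has the advantage of needing no bar construction.

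The obstacle you anticipate is not really there, because you do not need descent for $D$ to get the formula. Take $G=(f_k^*)_k\colon D(S)\to\prod_k D(S_k)$ with left adjoint $F=\coprod_k f_{k\sharp}$. The augmented simplicial object $(FG)^{\bullet+1}M\to M$ becomes split after applying $G$. Since $G$ is conservative and colimit-preserving, this is already a colimit diagram in $D(S)$. Quasi-admissible base change identifies $(FG)^{n+1}1_S$ with $\coprod_{k_0,\dotsc,k_n}\cls{S_{k_0}\times_S\dotsb\times_S S_{k_n};S}$, which also handles infinite families. This is the monadic bar resolution, not a comonadic one.

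Two smaller slips. First, in the form of Lurie's criterion that uses the left adjoints $(-)_\sharp$ and base change, the side condition is about geometric realizations of split simplicial objects. It holds because each $f_k^*$ is a morphism in $\PrL$, hence colimit-preserving, not because $f_k^*$ is a right adjoint. Second, quasi-admissibility of the \v{C}ech terms over $S$ comes from stability of quasi-admissible maps under base change and composition, not from part (1).
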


\begin{prp} \label{prp:cover is pseudocover}
	If $\kappa$ is an infinite cardinal, then a $\kappa$-small family of quasi-admissible maps $\{X_i \to S\}_i$ is a $D$-pseudocover if and only if $1 \in D(S)$ is a $\kappa$-small colimit of objects of the form $\cls{X_{i_1} \times_S \dotsb \times_S X_{i_n}}$ for $n \geq 1$ and indices $i_1, \dotsc, i_n$.
	\begin{proof}
		For the ``only if'' direction, since $D$ has descent along $\{X_i \to S\}_i$ by \Cref{thm:D-topology}, taking the \v{C}ech nerve exhibits $D(S)$ as a $\kappa$-small limit of a diagram whose vertices are given by the $D(X_{i_1} \times_S \dotsb \times_S X_{i_n})$, where the map from $D(S)$ is given by $D(X_{i_1} \times_S \dotsb \times_S X_{i_n} \to S)$, which preserves the monoidal unit. Using \cite[Lemma D.4.7(i)]{HM6FF}, we find that the monoidal unit is equivalent to a $\kappa$-small colimit of objects of the form $(X_{i_1} \times_S \dotsb \times_S X_{i_n} \to S)_\sharp(1)$.

		For the converse, we use \Cref{prp:monoidal crit for cons family} to see that the family of quasi-admissible maps $\{X_{i_1} \times_S \dotsb \times_S X_{i_n} \to S\}_{n, i_1, \dotsc, i_n}$ is a $D$-pseudocover, whence $\{X_i \to S\}_i$ is also a $D$-pseudocover.
	\end{proof}
\end{prp}

\subsection{Locally Cartesian Pullback Formalisms} \label{S:LCPBF}

In this section we will consider a certain property of pullback formalisms, which can be seen as a strong version of the projection formula with respect to Cartesian monoidal structures. This property is only used to prove \Cref{lem:pseudotop gluing}, which underlies many of the results of \Cref{S:red to fib gluing}, but it is not needed to understand the statements of the results from that section.

If $D$ is a pullback formalism on a pullback context $\mathcal C$, the projection formula for quasi-admissible maps states that if $f : X \to Y$ is a quasi-admissible map in $\mathcal C$, then for $A \in D(X)$ and $B \in D(Y)$, we have an equivalence
\[
	f_\sharp(A \otimes f^* B) \to f_\sharp A \otimes B
.\]
Since $f^*$ has a left adjoint, it preserves limits, so it is symmetric monoidal with respect to the Cartesian monoidal structures, and we may also consider the natural map
\[
	f_\sharp(A \times f^* B) \to p_\sharp A \times f^* B
.\]
In fact, given maps $f_\sharp A \to B' \gets B$ in $D(Y)$, we obtain a natural map
\[
	f_\sharp(A \times_{f^* B'} f^* B) \to f_\sharp A \times_{B'} B
\]
as in Definition \ref{defn:LC adj}.

The property of $D$ being locally Cartesian is a strong form the projection formula for Cartesian monoidal structures, in which this map is required to be an equivalence:
\begin{defn}
	Given a pullback context $\mathcal C$, and a presheaf of categories $D$ on $\mathcal C$, say that $D$ is \emph{locally Cartesian} (LC) if it sends every quasi-admissible map to a right Cartesian functor (\Cref{defn:LC adj}), and for every $S \in \mathcal C$, $D(S)$ has universal colimits.
\end{defn}

Our main example of LC pullback formalism is the following, in which the strong form of the Cartesian projection formula holds because the functors $f_\sharp$ are slice projections:
\begin{exa} \label{exa:completed Huniv is LC}
	Following \cite[Example 1.2.12]{Fundamentals}, if $\mathcal C$ is a presentable category with universal colimits, then if $\mathcal C$ is a pullback context, we have a pullback formalism $\bar H^\univ = \bar H^\univ_{\mathcal C}$ on $\mathcal C$ given by sending any map $f : X \to Y$ to the functor $\mathcal C_{/Y} \to \mathcal C_{/X}$ given by base change along $f$. Note that this does not depend on the quasi-admissibility structure on $\mathcal C$, since this actually holds for the quasi-admissibility structure on $\mathcal C$ in which all maps are quasi-admissible.
	
	In fact, $\bar H^\univ$ is a LC pullback formalism, since every slice $\mathcal C_{/S}$ of $\mathcal C$ has universal colimits, and base change functors are right Cartesian by \Cref{cor:slice proj is LC}.

	In particular, if $\mathcal C$ is a small pullback context, we can consider the pullback formalism $\hat H^\univ_{\mathcal C} = \hat H^\univ$ given by restricting $\bar H^\univ_{\Psh(\mathcal C)}$ along the Yoneda embedding, and $\hat H^\univ$ is a LC pullback formalism on $\mathcal C$.
\end{exa}

%
The main reason that LC pullback formalisms will be useful for us is because they admit a particular strategy for checking that maps are equivalences. Roughly speaking, if $D$ is a LC pullback formalism on a pullback context $\mathcal C$, and $S \in \mathcal C$ is an object, then we can reduce the problem of checking that a map $\zeta : X \to Y$ in $D(S)$ is an equivalence to checking that for quasi-admissible maps $\sigma : S' \to S$, certain pullbacks of $\sigma^* \zeta$ are equivalences:
\begin{lem} \label{lem:check equiv on fibres for LCPF}
	Let $D$ be a LC pullback formalism on a pullback context $\mathcal C$, $\phi : D \to D'$ be a morphism of pullback formalisms, and $S \in \mathcal C$. Suppose that we have an equivalence $\varinjlim_a Y_a \to Y$ in $D(S)$ where $\{Y_a\}_a$ is a small diagram, and that for each $a$, $Y_a \simeq (\sigma_a)_\sharp(M_a)$ for some quasi-admissible $\sigma_a : S_a \to S$, and $M_a \in D(S_a)$.

	For any map $\zeta : X \to Y$ in $D(S)$, write $\zeta_a$ for the base change of $\sigma_a^* \zeta$ along the map $M_a \to \sigma_a^* Y$ corresponding to
	\[
		(\sigma_a)_\sharp M_a \simeq Y_a \to \varinjlim_a Y_a \to Y
	.\]

	If $\phi(\zeta_a)$ is an equivalence for all $a$, then $\phi(\zeta)$ is an equivalence.
	\begin{proof}
		For any quasi-admissible map $\sigma$ to $S$, since the adjunction $\sigma_\sharp \dashv \sigma^*$ is LC (for $D$), we have that the base change of $\zeta$ along a map $\sigma_\sharp(V) \to Y$ is $\sigma_\sharp$ of the base change of $\sigma^*(\zeta)$ along a map $V \to \sigma^* Y$. Therefore, for each $a$, we have that
		\[
			Y_a \times_Y \zeta \simeq (\sigma_a)_\sharp \zeta_a
		.\]
		Since $D(S)$ has universal colimits, and $\varinjlim_a Y_a \to Y$ is an equivalence, we have that $\zeta$ is a colimit of maps of the form $(\sigma_a)_\sharp(\zeta_a)$.

		Since $\phi$ respects quasi-admissibility, we have that
		\[
			\phi((\sigma_a)_\sharp(\zeta_a)) \simeq (\sigma_a)_\sharp(\phi(\zeta_a))
		.\]
		Since $\phi(\zeta_a)$ is an equivalence, so is $\phi((\sigma_a)_\sharp(\zeta_a))$, and since $\phi$ also preserves colimits, $\phi(\zeta)$ is a colimit of equivalences $\phi((\sigma_a)_\sharp(\zeta_a))$, so it is an equivalence.
	\end{proof}
\end{lem}

Unfortunately, it is not so easy to produce LC pullback formalisms apart from by considering objectwise LC localizations of $\hat H^\univ$. Results such as \Cref{prp:LC iff slice localizations} and \Cref{prp:characterize slice proj} show that satisfying the LC condition places strong restrictions on our adjunctions. The following result shows that $H^\univ$ is often not LC:

\begin{lem} \label{lem:qadm is adm iff Huniv is LC}
	Let $\mathcal C$ be a quasi-small pullback context. If quasi-admissible maps have quasi-admissible diagonals, then $H^\univ$ is a LC pullback formalism, and the converse holds if quasi-admissible maps are closed under retracts.\footnote{It is possible to slightly weaken this condition.}
\end{lem}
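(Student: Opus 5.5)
The plan is to verify the two conditions in the definition of LC directly. Throughout, $H^\univ(S)=\Psh(\mathcal C_S)$ and $f$ is a quasi-admissible map $X\to Y$.

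\emph{Universal colimits.} Each $H^\univ(S)=\Psh(\mathcal C_S)$ is a presheaf category, so it has universal colimits. This half is automatic.

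\emph{Description of $f_\sharp$ and $f^*$.} Let $u\colon \mathcal C_X\to\mathcal C_Y$ be post-composition with $f$. Let $v\colon\mathcal C_Y\to\mathcal C_X$ be base change along $f$. Then $u\dashv v$, and $f^*$ is restriction along $u$. The left adjoint $f_\sharp$ is left Kan extension along $u$; since $u\dashv v$, it is also restriction along $v$. In particular $f_\sharp$ preserves all limits, hence fibre products.

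\emph{Reduction to the counit square.} For $f_\sharp A\to B'\gets B$, we have
\[
f_\sharp(A\times_{f^*B'}f^*B)\simeq f_\sharp A\times_{f_\sharp f^*B'}f_\sharp f^*B .
\]
So the comparison map to $f_\sharp A\times_{B'}B$ is an equivalence for all $A$ as soon as the naturality square of the counit,
\[
\begin{tikzcd} f_\sharp f^*B \ar[r]\ar[d] & B \ar[d]\\ f_\sharp f^*B'\ar[r] & B'\end{tikzcd}
\]
is Cartesian. Conversely, taking $A=f^*B'$ shows this Cartesianness is also necessary.

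\emph{Reducing to representables.} Both $f_\sharp$ and $f^*$ preserve colimits, and colimits in $\Psh(\mathcal C_Y)$ are universal. Hence, by pulling back along maps from representables, I first reduce to $B'=\yo(Y'')$ with $Y''\in\mathcal C_Y$. Writing $B$ as a colimit of representables over $B'$, I then reduce to $B=\yo(W)$ with a map $W\to Y''$ in $\mathcal C_Y$.

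\emph{The key computation.} Here $f_\sharp f^*\yo(W)\simeq\yo(W\times_YX)$, and the square becomes the image under $\yo$ of
\[
\begin{tikzcd} W\times_Y X \ar[r]\ar[d] & W\ar[d]\\ Y''\times_YX\ar[r]&Y''\end{tikzcd}
\]
This square is Cartesian in $\mathcal C$. The question is whether $\yo\colon\mathcal C_Y\to\Psh(\mathcal C_Y)$ carries it to a Cartesian square, and this is the main point. $\yo$ preserves those fibre products that exist in $\mathcal C_Y$ and are computed in $\mathcal C_{/Y}$. Such a fibre product exists if the map $W\to Y''$ between objects quasi-admissible over $Y$ is itself quasi-admissible. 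That map factors as
\[
W\to W\times_YY''\to Y'' .
\]
The second map is a base change of $Y''\to Y$, and the first is a base change of the diagonal of $Y''\to Y$. So quasi-admissibility of diagonals gives exactly what is needed, and the forward direction follows.

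\emph{Converse.} Assume $H^\univ$ is LC. Apply the Cartesianness of the counit square above with $f=(Y''\to Y)$, $B'=\yo(Y'')$ and $B=\yo(Y'')$, regarded over $B'$ via the diagonal. This shows that the presheaf $\yo(Y'')\times_{\yo(Y'')\times\yo(Y'')}(\cdots)$, which represents the diagonal's fibre products on $\mathcal C_Y$, is representable by an object of $\mathcal C_Y$. From this I would try to exhibit the diagonal of $Y''\to Y$ as a retract of a quasi-admissible map; closure under retracts then finishes.

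I expect this last step to be the main obstacle. The issue is that Yoneda only detects maps between quasi-admissible objects, so the representing object must be compared with $Y''\times_Y Y''$ carefully via the identity section.
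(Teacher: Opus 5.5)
There is a genuine gap, and it is where the content of the lemma lies.

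\textbf{Forward direction.} Your identification of $f_\sharp$ with restriction along $v$ is wrong. From $f^*=u^*\simeq v_!$, passing to right adjoints gives $v^*\simeq u_*$. That is the \emph{right} adjoint of $f^*$, not $f_\sharp=u_!$. In particular $f_\sharp$ does not preserve limits: already $f_\sharp(\pt)=\yo(X)$ is not terminal unless $f$ is an equivalence. Whether $f_\sharp$ preserves fibre products is precisely what the diagonal hypothesis controls.

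Conversely, your key computation does not need that hypothesis at all. $W\times_Y X$ is always quasi-admissible over $Y$, so the square is Cartesian in the full subcategory $\mathcal C_Y\subseteq\mathcal C_{/Y}$, and $\yo$ preserves it; the fibre product never fails to exist. Combined with your (valid) reduction to representables, this shows the counit of $f_\sharp\dashv f^*$ is Cartesian in \emph{every} quasi-small pullback context. So your argument as written would prove that $H^\univ$ is always LC. That is false: smooth morphisms of schemes are closed under retracts (\Cref{rmk:smoothness retracts}) but do not have smooth diagonals, so the converse rules out LC there.

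The repair is to move your factorization $W\to W\times_Y Y''\to Y''$ to the right place. It shows that every map in $\mathcal C_{/Y}$ between objects of $\mathcal C_Y$ is quasi-admissible. Hence $u$ is equivalent to the slice projection $(\mathcal C_Y)_{/X}\to\mathcal C_Y$, and $f_\sharp$ to the slice projection $\Psh(\mathcal C_Y)_{/\yo(X)}\to\Psh(\mathcal C_Y)$. The latter is left Cartesian by \Cref{cor:slice proj is LC}. This is essentially the paper's forward argument, and it makes the counit computation unnecessary.

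\textbf{Converse.} The same observation shows your plan cannot succeed. Cartesianness of the counit square holds unconditionally, so nothing extracted from it can force diagonals to be quasi-admissible; you have to use the full LC condition. The paper does this via \Cref{prp:LC iff slice localizations} together with conservativity of $\sigma_\sharp$:
\begin{itemize}
\item For quasi-admissible $\sigma\colon S'\to S$ and $X\in\mathcal C_{S'}$, LC makes $\Psh(\mathcal C_{S'})_{/\yo(X)}\to\Psh(\mathcal C_S)_{/\sigma_\sharp\yo(X)}$ an equivalence. This functor is $\Psh((\mathcal C_{S'})_{/X})\to\Psh((\mathcal C_S)_{/X})$.
\item By \Cref{lem:crit for equiv of psh cats}, taking $X=S'$, every object of $(\mathcal C_S)_{/S'}$ is a retract of one whose map to $S'$ is quasi-admissible.
\item Closure under retracts then makes every map in $\mathcal C_S$ to $S'$ quasi-admissible.
\item Taking $S'=P\times_S P$ for a quasi-admissible $P\to S$, and the diagonal $P\to P\times_S P$, finishes the proof.
\end{itemize}
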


Fortunately, we will often be able to use \Cref{lem:res local equiv} to reduce to the case of an LC pullback formalism. This fact will be crucial for proving the results of \Cref{S:red to fib gluing}.

\begin{rmk} \label{rmk:smoothness retracts}
	When quasi-admissibility is defined by some lifting property, it is easy to see that quasi-admissible maps are closed under retracts. For example, for schemes locally of finite presentation over a base, smoothness is equivalent to formal smoothness, so smoothness of maps is closed under retracts.
\end{rmk}

\begin{rmk} \label{rmk:the mistake}
	\Cref{lem:qadm is adm iff Huniv is LC} shows that the usual pullback formalism of unstable motivic homotopy on schemes does \emph{not} satisfy the Cartesian projection formula, as is implicitly assumed in the proof of \cite[Theorem 3.2.21]{A1htpysch}. This mistake was repeated several times in the literature, although it was later fixed in many cases. Also see \cite[Remark 3.24]{SixAlgSt}.
\end{rmk}

\begin{proof}[Proof of \Cref{lem:qadm is adm iff Huniv is LC}]
	For any $S \in \mathcal C$, $H^\univ(S)$ has universal colimits since it is a presheaf category, so we just need to consider the condition that for every quasi-admissible $\sigma$, $\sigma_\sharp$ is left Cartesian.

	Note that for any quasi-admissible $\sigma : S' \to S$, \Cref{prp:LC iff slice localizations} says that since $\sigma_\sharp$ is conservative, it is left Cartesian if and only if for all $P \in \Psh(\mathcal C_{S'})$, the functor
	\[
		\Psh(\mathcal C_{S'})_{/P} \to \Psh(\mathcal C_S)_{/\sigma_\sharp P}
	\]
	is an equivalence.

	Since $\sigma_\sharp$ preserves colimits, and $P$ is a colimit of objects of the form $\yo(X)$ for $X \in \mathcal C_{S'}$, \cite[Theorem 6.1.3.9]{htt} says that this functor is a limit of functors of the form
	\[
		\Psh(\mathcal C_{S'})_{/\yo(X)} \to \Psh(\mathcal C_S)_{/\sigma_\sharp \yo(X)}
	\]
	for $X \in \mathcal C_{S'}$. Thus, it is equivalent to show that if the diagonal of every quasi-admissible map is quasi-admissible, then functors of this form are equivalences, and that the converse holds if quasi-admissible maps are closed under retracts.

	Indeed, this functor is equivalent to the colimit-preserving functor
	\[
		\Psh((\mathcal C_{S'})_{/X}) \to \Psh((\mathcal C_S)_{/X})
	\]
	induced by 
	\[
		(\mathcal C_{S'})_{/X} \to (\mathcal C_S)_{/X}
	.\]

	Thus, \Cref{lem:crit for equiv of psh cats} says that 
	\[
		\Psh(\mathcal C_{S'})_{/\yo(X)} \to \Psh(\mathcal C_S)_{/\sigma_\sharp \yo(X)}
	\]
	is an equivalence if and only if
	\[
		(\mathcal C_{S'})_{/X} \to (\mathcal C_S)_{/X}
	\]
	is fully faithful and every object in the codomain is a retract of an object in the domain.

	If the diagonal of every quasi-admissible map is quasi-admissible, then this functor is actually equivalent to the identity of the category $\mathcal C_X$, so we find that
	\[
		\Psh(\mathcal C_{S'})_{/\yo(X)} \to \Psh(\mathcal C_S)_{/\sigma_\sharp \yo(X)}
	\]
	is an equivalence.

	Conversely, assume that for all quasi-admissible maps $S' \to S$, the functor
	\[
		\mathcal C_{S'} \simeq (\mathcal C_{S'})_{/S'} \to (\mathcal C_S)_{/S'}
	\]
	is fully faithful and every object in the codomain is a retract of an object in the domain. If quasi-admissible maps are closed under retracts, it follows that for any $Y \to S'$ in $\mathcal C_S$, the map $Y \to S'$ is quasi-admissible. In particular, for any quasi-admissible $X \to S$, we can take $S' \to S$ to be the map $X \times_S X \to S$, and $Y \to S'$ to be the diagonal $X \to X \times_S X$. This shows that for any quasi-admissible map $X \to S$, the diagonal $X \to X \times_S X$ is quasi-admissible, as desired.
\end{proof}

\subsection{Pseudotopologies} \label{S:pseudotop}

The notion of pseudotopologies was introduced in \cite{Fundamentals}. These can be thought of as a structure that encodes general invariance properties, such as descent, homotopy invariance, and compatibility with colimits -- see \cite[Examples 2.1.4 and 2.1.5]{Fundamentals}. We begin with a brief overview of some notions from \cite[\S2.1, 2.3, 3.2]{Fundamentals} (see there for a more detailed introduction).

\begin{defn}
	Let $\mathcal C$ be a locally small category.
	\begin{enumerate}

		\item A \emph{pseudosieve} on $S \in \mathcal C$ is a map to $\yo(X)$ in $\Psh(\mathcal C)$.

		\item A \emph{pseudotopology} $\tau$ on $\mathcal C$, consists of, for each $X \in \mathcal C$, the specification of a collection of pseudosieves on $X \in \mathcal C$, called ($\tau$-)acyclic pseudosieves, such that for any acyclic pseudosieve $\tilde Y \to \yo(Y)$, and map $X \to Y$, the base change of $g$ along $\yo(X \to Y)$ is an acyclic pseudosieve on $X$. We may also view $\tau$ simply as the collection of maps in $\Psh(\mathcal C)$ that are $\tau$-acyclic pseudosieves, so we sometimes say that a map $P \to \yo(X)$ is \emph{in $\tau$} to mean that it is a $\tau$-acyclic pseudosieve.

		\item A pseudotopology $\tau$ on $\mathcal C$ is \emph{small} if for every $X \in \mathcal C$, the collection of $\tau$-acyclic pseudosieves on $S$ is small.

		\item Given a pseudotopology $\tau$ on $\mathcal C$, a map $P \to Q$ in $\Psh(\mathcal C)$ is said to be \emph{$\tau$-acyclic} if for any $Y \in \mathcal C$ and map $\yo(Y) \to Q$, the base change $P \times_Q \yo(Y) \to \yo(Y)$ is a $\tau$-acyclic pseudosieve.

		\item Given a pseudotopology $\tau$ on $\mathcal C$, we define the category $\Psh^\tau(\mathcal C)$ of \emph{$\tau$-local presheaves} to be the full subcategory of objects of $\Psh(\mathcal C)$ that are local with respect to the $\tau$-acyclic pseudosieves.

		\item Given a pseudotopology $\tau$ on $\mathcal C$, we define the collection of \emph{$\tau$-local equivalences} in $\Psh(\mathcal C)$ to be the maps $P \to Q$ in $\Psh(\mathcal C)$ such that for any $\tau$-local presheaf $R$ on $\mathcal C$, the map
			\[
				\Psh(\mathcal C)(P \to Q, R)
			\]
			is an equivalence.

	\end{enumerate}
	
\end{defn}

Although pseudotopologies are more general than Grothendieck topologies, we still have the following good properties of the corresponding ``pseudosheafification'' functors given in \cite[Proposition 2.1.9]{Fundamentals}.
\begin{prp}
	If $\tau$ is a small pseudotopology on a small category $\mathcal C$, then $\Psh^\tau(\mathcal C)$ is a presentable category with universal colimits, and the inclusion $\Psh^\tau(\mathcal C) \subseteq \Psh(\mathcal C)$ has an accessible left adjoint which is a locally Cartesian localization (see \Cref{defn:LC adj} or \cite[\S C.1]{Fundamentals}).
\end{prp}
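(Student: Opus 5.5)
The plan is to present $\Psh^\tau(\mathcal C)$ as an accessible localization of $\Psh(\mathcal C)$ at a small set of maps, and then to check that this localization is locally Cartesian. Since $\mathcal C$ is small and $\tau$ is small, the collection $W_0$ of all $\tau$-acyclic pseudosieves $P \to \yo(X)$, as $X$ ranges over $\mathcal C$, is a small set of morphisms in the presentable category $\Psh(\mathcal C)$. By definition, $\Psh^\tau(\mathcal C)$ is the full subcategory of $W_0$-local objects. By \cite[Proposition 5.5.4.15]{htt}, this subcategory is presentable, and the inclusion admits an accessible left adjoint $L$. This settles presentability and the existence of the accessible left adjoint.

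Next I would show that the localization is locally Cartesian, meaning that $L$ preserves pullbacks along maps between local objects, or equivalently that $L$-equivalences are stable under base change. The key input is that the class of $\tau$-acyclic pseudosieves is stable under base change along representables by the definition of a pseudotopology. From this, the class of $\tau$-acyclic maps in $\Psh(\mathcal C)$, defined by testing against representables, is stable under arbitrary base change. Indeed, the base change of $P \to Q$ along $Q' \to Q$, followed by a further base change along $\yo(Y) \to Q'$, is the base change of $P \to Q$ along the composite $\yo(Y) \to Q$.

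I would then identify the $\tau$-local equivalences with the strongly saturated class generated by $W_0$, and show that this class is stable under base change. The argument runs as follows.
\begin{enumerate}
\item Fix a map $g : Q' \to Q$ and consider the class $W_g$ of maps $f$ over $Q$ whose base change along $g$ is a local equivalence.
\item Because colimits in $\Psh(\mathcal C)$ are universal, base change along $g$ preserves colimits in the arrow category $\Fun(\Delta^1, \Psh(\mathcal C))$.
\item It follows that the class of maps whose base changes along every $g$ lie in the saturation is itself strongly saturated.
\item This class contains the generators. The base change of an acyclic pseudosieve along any map to $\yo(X)$ is again $\tau$-acyclic. Writing the source of $g$ as a colimit of representables, such a base change is a colimit of acyclic pseudosieves, hence lies in the saturation.
\item Being strongly saturated and containing $W_0$, this class contains the whole saturation, which is what we need.
\end{enumerate}
This is the main obstacle. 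One must carefully handle the interaction between saturation (closure under pushouts, colimits in the arrow category, and 2-out-of-3) and base change, and universality of colimits in the presheaf topos is exactly what makes this work. This is essentially the argument of \cite[\S C.1]{Fundamentals}, or the analogue of the proof that topological localizations are left exact \cite[Proposition 6.2.1.1]{htt}, restricted to pullbacks over local objects.

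Finally, universal colimits in $\Psh^\tau(\mathcal C)$ follow from the locally Cartesian property. Colimits in $\Psh^\tau(\mathcal C)$ are computed as $L$ applied to colimits in $\Psh(\mathcal C)$. Given a colimit diagram $\varinjlim_a P_a$ in $\Psh^\tau(\mathcal C)$ and a map $R \to L(\varinjlim P_a)$ of local objects, I would pull back in $\Psh(\mathcal C)$, use universality of colimits there, and then apply $L$. Because $L$ commutes with pullbacks along maps between local objects, together with the fact that $\varinjlim P_a \to L\varinjlim P_a$ is an $L$-equivalence whose base changes remain $L$-equivalences, this gives $R \simeq L\varinjlim_a (R \times P_a)$, where the fibre products are taken in the local category.
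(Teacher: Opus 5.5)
Your treatment of presentability and of the accessible left adjoint (via \cite[Proposition 5.5.4.15]{htt}) is fine, and so is your deduction of universal colimits from the locally Cartesian property. The central step fails, though. In general the $\tau$-local equivalences are \emph{not} stable under arbitrary base change, and the class in your step 3 (maps all of whose base changes lie in the saturation) is not strongly saturated. Universality of colimits gives closure under pushouts and under colimits in $\Fun(\Delta^1,\Psh(\mathcal C))$. It does not give the cancellation half of 2-out-of-3, i.e.\ deducing $f$ from $hf$ and $h$.

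Here is a concrete counterexample. Let $\mathcal C=\pt$, so that $\Psh(\mathcal C)\simeq\spaces$, and take the single acyclic pseudosieve $\pt\amalg\pt\to\yo(\pt)$. The local objects are the $(-1)$-truncated spaces, and $L$ is $(-1)$-truncation, which is indeed locally Cartesian. The composite $\pt\to\pt\amalg\pt\to\pt$ is $\mathrm{id}_\pt$, and both it and $\pt\amalg\pt\to\pt$ lie in your class. The summand inclusion $\pt\to\pt\amalg\pt$ does not: although it is a local equivalence, its pullback along the other summand inclusion is $\emptyset\to\pt$, which is not.

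In fact, stability of all $L$-equivalences under base change is equivalent to left exactness of $L$ (\cite[\S6.2.1]{htt}). That fails for the non-monomorphic pseudosieves of interest here, e.g.\ $\tau_\reals$. The analogy with topological localizations breaks exactly because pseudosieves need not be monomorphisms. For the same reason, your gloss ``or equivalently that $L$-equivalences are stable under base change'' is wrong: the locally Cartesian condition only involves base change along maps between local objects.

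To repair the argument, keep the map fixed as in your step 1, but take it to be a map $p\colon X\to Y$ between $\tau$-local presheaves, and work in $\Psh(\mathcal C)_{/Y}$.
\begin{enumerate}
\item Since $p^*$ preserves colimits, the maps over $Y$ whose pullback along $p$ is a local equivalence form a strongly saturated class. 2-out-of-3 is now automatic from the functoriality of $p^*$.
\item Because $Y$ is local, comparing fibres over the equivalence $\mathrm{Map}(\yo(S),Y)\simeq\mathrm{Map}(P,Y)$ shows the following. The local equivalences over $Y$ are exactly the strongly saturated class generated by the maps $P\to\yo(S)$ over $Y$, with $P\to\yo(S)$ acyclic and $\yo(S)\to Y$ arbitrary.
\item This identification genuinely needs $Y$ local. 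In the example above, over the non-local base $\pt\amalg\pt$, these generators do not produce $\pt\to\pt\amalg\pt$.
\item Pulling such a generator back along $p$ gives a base change of an acyclic pseudosieve. That is a $\tau$-acyclic map, hence a local equivalence by the correct part of your argument.
\item Applying this to the unit $Z\to LZ$ over $Y$ gives $L(X\times_Y Z)\simeq X\times_Y LZ$, which is the locally Cartesian property. Equivalently, $\tau$-local objects are closed under $p_*$ for such $p$.
\end{enumerate}
Your derivation of universal colimits then goes through, since it only needs base change along $R\to L\varinjlim_a P_a$, which is a map between local objects.
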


The following result is given by \cite[Lemma 2.1.13]{Fundamentals}.
\begin{lem} \label{lem:res pseudotop}
	Let $F : \mathcal C \to \mathcal D$ be a morphism of locally small pullback contexts,\footnote{This means that $\mathcal C$ and $\mathcal D$ are equipped with quasi-admissibility structures, and $F : \mathcal C \to \mathcal D$ is a functor that preserves quasi-admissible maps and base changes along quasi-admissible maps.}
	Let $\rho$ be a pseudotopology on $\mathcal D$ such that for every $S \in \mathcal C$, every $\rho$-acyclic pseudosieve on $F(S)$ is \emph{weakly $\yo$-quasi-admissible} in the sense of \cite[Definition 2.1.11]{Fundamentals}, \ie{} it is of the form $\varinjlim \yo Y \to \yo(F(S))$ for some small diagram $Y : L \to \mathcal D_{F(S)}$.

	If $\mathcal C$ is small so that there is a colimit-preserving extension $F_! : \Psh(\mathcal C) \to \Psh(\mathcal D)$, then there is a (unique) pseudotopology $\tau$ on $\mathcal C$ such that every map $P \to Q$ in $\Psh(\mathcal C)$ is $\tau$-acyclic if and only if it is sent to a $\rho$-acyclic map by $F_!$.
\end{lem}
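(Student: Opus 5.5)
The plan is to define $\tau$ by the only possible formula and then check that it is a pseudotopology with the stated property. Uniqueness is immediate. A pseudotopology is determined by its acyclic pseudosieves, and these are exactly the $\tau$-acyclic maps with representable target. So any $\tau$ with the stated property must declare $P \to \yo(S)$ acyclic precisely when $F_!P \to F_!\yo(S) \simeq \yo(F(S))$ is $\rho$-acyclic. For existence we take this as the definition of $\tau$.

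There are then two things to verify.
\begin{enumerate}
	\item The collection $\tau$ is stable under base change along maps $\yo(X) \to \yo(S)$ in $\mathcal C$, so that it is a pseudotopology.
	\item A general map $P \to Q$ in $\Psh(\mathcal C)$ is $\tau$-acyclic if and only if $F_!P \to F_!Q$ is $\rho$-acyclic.
\end{enumerate}

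For (2), I would first reduce $\rho$-acyclicity of $F_!P \to F_!Q$ to base changes along maps $\yo(F(Y)) \to F_!Q$. Write $Q \simeq \varinjlim_k \yo(Y_k)$, so that $F_!Q \simeq \varinjlim_k \yo(F(Y_k))$ because $F_!$ preserves colimits. Any map $\yo(D) \to F_!Q$ corresponds to a point of $(F_!Q)(D) = \varinjlim_k \Hom(D, F(Y_k))$. It therefore factors through some $\yo(F(Y_k))$. Since $\rho$-acyclic pseudosieves are stable under base change, it suffices to test base changes along the maps $\yo(F(Y)) \to F_!Q$ coming from $\yo(Y) \to Q$. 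Comparing with the definition of $\tau$-acyclicity, (2) then reduces to the claim that for every $\yo(Y) \to Q$ the comparison map
\[
	c \colon F_!\bigl(P \times_Q \yo(Y)\bigr) \to F_!P \times_{F_!Q} \yo(F(Y))
\]
is compatible with $\rho$-acyclicity over $\yo(F(Y))$. Concretely, the source should be $\rho$-acyclic over $\yo(F(Y))$ if and only if the target is. Once this claim is established, (1) follows from it in the special case $Q = \yo(S)$.

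The main obstacle is this claim, since $F_!$ does not preserve fibre products in general. This is exactly where I would use the two hypotheses: that $F$ preserves quasi-admissible maps and base changes along them, and that $\rho$-acyclic pseudosieves on objects $F(S)$ are weakly $\yo$-quasi-admissible. First I would use universality of colimits in $\Psh(\mathcal C)$ and $\Psh(\mathcal D)$, and that $F_!$ preserves colimits, to write $P$ as a colimit of representables over $Q$. This reduces the comparison to the case where $P$ and $Q$ are representable. The remaining issue is that one side need not be a colimit of representables of the form $\yo(Y)$ with $Y \to F(S)$ quasi-admissible. My first attempt would be to show that for pseudosieves of this weakly quasi-admissible shape, $c$ becomes an equivalence. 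The reason to expect this is that the relevant fibre products are then base changes along quasi-admissible maps, and $F$ and its Yoneda extension commute with those by assumption. If that fails in general, I would instead show that $c$ is an equivalence after base change along every $\rho$-acyclic pseudosieve of this form. This weaker statement is enough to transport $\rho$-acyclicity in both directions.
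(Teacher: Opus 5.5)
The paper does not prove this lemma; it quotes \cite[Lemma 2.1.13]{Fundamentals}, so your argument has to stand on its own. Several parts are fine: uniqueness, the definition of $\tau$ on pseudosieves, and the reduction in (2) to base changes along maps $\yo(F(Y)) \to F_!Q$ induced by $\yo(Y) \to Q$.

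The gap is the step you flag yourself, and your proposed way around it does not work. The hypothesis only says that $F_!P \to \yo(F(S))$ is a colimit of $\yo(Y_l)$ with $Y_l \to F(S)$ quasi-admissible \emph{in $\mathcal D$}. To commute $F_!$ with base change along $\yo(X) \to \yo(S)$, you need $P$ itself to be a colimit of $\yo(P_k)$ with $P_k \to S$ quasi-admissible \emph{in $\mathcal C$}, because $F$ only preserves base changes along quasi-admissible maps of $\mathcal C$. Nothing in the hypotheses transports the presentation back: the $Y_l$ need not come from $\mathcal C$, and $F_!$ need not reflect equivalences over $\yo(S)$. The reduction to representable $P$ is also illegitimate. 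Acyclicity is neither inherited from nor detected on the terms of a colimit decomposition of the source. And for $P = \yo(A)$ with $A \to S$ not quasi-admissible, $c$ is typically not an equivalence.

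In fact this step cannot be completed under the hypotheses as listed here, with the footnote's notion of morphism. The reason is that $\tau$-acyclic maps are stable under arbitrary base change, while the class of maps that $F_!$ sends to $\rho$-acyclic maps need not be. Here is an example.
\begin{itemize}
\item Let $\mathcal C$ be the poset $a \to b \leftarrow c$ with only identities quasi-admissible.
\item Let $\mathcal D$ be the terminal category $*$, and $F$ the unique functor, which is trivially a morphism of pullback contexts.
\item Let $\rho$ be the pseudotopology whose acyclic pseudosieves are the equivalences. Every pseudosieve on $*$ is weakly $\yo$-quasi-admissible.
\end{itemize}
Here $F_!P \simeq P(a) \coprod_{P(b)} P(c)$, so $F_!(\yo(a) \to \yo(b)) \simeq \id_{\pt}$ is $\rho$-acyclic. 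Its base change along $\yo(c) \to \yo(b)$ is $\emptyset \to \yo(c)$, which $F_!$ sends to $\emptyset \to \pt$, and that is not $\rho$-acyclic. So no $\tau$ as in the statement exists, and your fallback (checking $c$ after base change along $\rho$-acyclic pseudosieves) fails here as well.

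What your argument is missing is an extra hypothesis that lets you lift the presentation to $\mathcal C$. For example, ask that $F$ induce equivalences $\mathcal C_S \simeq \mathcal D_{F(S)}$ and fully faithful functors $\mathcal C_{/S} \to \mathcal D_{/F(S)}$. This holds in the cases where the paper actually uses the lemma: the full anodyne inclusions of \Cref{lem:res local equiv}, and $\mathcal C_S, \mathcal C_{/S} \to \mathcal C$. Under it, $F_!$ is fully faithful on $\Psh(\mathcal C)_{/\yo(S)}$. The presentation $F_!P \simeq \varinjlim \yo(F(Y'_l))$, with $Y'_l \to S$ quasi-admissible, therefore lifts to $P \simeq \varinjlim \yo(Y'_l)$. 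The comparison $F_!(P \times_{\yo(S)} \yo(X)) \simeq F_!P \times_{\yo(F(S))} \yo(F(X))$ then follows from $F$ preserving quasi-admissible base change.
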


If $\tau$ is a pseudotopology on a locally small pullback context $\mathcal C$, for every object $S \in \mathcal C$, \Cref{lem:res pseudotop} allows us to view $\tau$ as a pseudotopology on $\mathcal C_S$ and $\mathcal C_{/S}$, and the following result lets us compare $\tau$-local equivalences in $\Psh(\mathcal C_S)$ to $\tau$-local equivalences in $\Psh(\mathcal C_{/S})$. This is useful because it often lets us reduce questions about equivalences in the pullback formalism $H^\tau$ to questions about $\tau$-local equivalences in the LC pullback formalism $\hat H^\univ$, where we can use \Cref{lem:check equiv on fibres for LCPF}.
\begin{lem} \label{lem:res local equiv}
	Let $\mathcal C'$ be a full anodyne pullback subcontext of a small pullback context $\mathcal C$. Let $\tau$ be a pseudotopology on $\mathcal C$ such that for every $S' \in \mathcal C'$, every $\tau$-acyclic pseudosieve on $S'$ is weakly $\yo$-quasi-admissible.

	If $\tau'$ is the induced pseudotopology on $\mathcal C'$ coming from \Cref{lem:res pseudotop}, then the restriction functor $\Psh(\mathcal C) \to \Psh(\mathcal C')$ sends $\tau$-acyclic maps to $\tau'$-local equivalences.
	\begin{proof}
		By \cite[Proposition 5.1.6.10]{htt}, we may view $\Psh(\mathcal C')$ as a full subcategory of $\Psh(\mathcal C)$ via the (fully faithful) left adjoint of the restriction functor. Since $\mathcal C' \to \mathcal C$ is anodyne, \cite[Lemma 2.1.12]{Fundamentals} implies that every $\tau$-acyclic map in $\Psh(\mathcal C)$ to an object of $\Psh(\mathcal C')$ is some $\tau'$-acyclic map in $\Psh(\mathcal C')$. We conclude by \Cref{lem:compat loc ind}.
	\end{proof}
\end{lem}

\begin{exa} \label{exa:counter}
	In the setting of \Cref{lem:res local equiv}, if not all $\tau$-acyclic pseudosieves on objects of $\mathcal C'$ are weakly $\yo$-quasi-admissible, then we can still try to define a notion of $\tau$-local equivalences in $\Psh(\mathcal C')$ by simply considering all maps whose images in $\Psh(\mathcal C)$ are $\tau$-local equivalences, and we can ask if every $\tau$-acyclic pseudosieve restricts to a $\tau$-local equivalence. 

	Our main application of \Cref{lem:res local equiv} is to show that if $\tau$ is a pseudotopology on $\mathcal C$, then for any $S \in \mathcal C$, restriction along $\mathcal C_S \subseteq \mathcal C_{/S}$ sends $\tau$-acyclic pseudosieves to $\tau$-local equivalences. In particular, the following should be true: if $\mathcal U \subseteq \yo(S) \in \Psh(\mathcal C_{/S})$ is a $\tau$-covering sieve, \ie{} $\mathcal U \to \yo(S)$ is a monomorphism that is a $\tau$-acyclic pseudosieve, then the map
	\[
		\varinjlim_{\substack{X \to S \text{ in $\mathcal U$} \\ \text{quasi-admissible}}} \yo(X) \to \yo(S)
	\]
	should be a $\tau$-local equivalence.

	In particular, this map should be sent to an effective epimorphism in $\Psh^\tau(\mathcal C_{/S})$. If $\tau$ is actually a Grothendieck topology, then by \Cref{lem:eff epi from colim vs eff epi from diagram,lem:characterize covering families by eff epi}, we see that the map above is sent to an effective epimorphism in $\Psh^\tau(\mathcal C_{/S})$ if and only if the subsieve $\mathcal U'$ of $\mathcal U$ generated by the quasi-admissible $X \to S$ in $\mathcal U$ is a $\tau$-covering sieve.

	It is not too difficult to come up with examples where this fails if not all $\tau$-covering sieves of $S$ contain $\tau$-covering families that consisting of quasi-admissible maps. In all of the following examples, $\mathcal C$ is the category of schemes, $k$ is a field, and $\tau$ is some Grothendieck topology such that every $\tau$-covering family is jointly surjective.
	\begin{description}

		\item[Smooth maps and cdh covers] Suppose that the quasi-admissible maps are the smooth maps, and $\tau$ contains the proper cdh topology, so for any closed immersion $Z \to X$, if $\tilde X \to X$ is the blow-up of $X$ at $Z$, then $\{Z, \tilde X \to X\}$ is a $\tau$-covering family. Suppose $S$ is the scheme $\Spec k[x,y]/(xy)$, and let $Z \to S$ be given by $\Spec k[x,y]/(x) = Z(x) \subseteq S$, so that the blow-up of $Z$ in $S$ is precisely the closed subscheme $\tilde S = \Spec k[x,y]/(y) = Z(y) \subseteq S$.\footnote{Thanks to Noah Olander for this example.}

			If we let $\mathcal U$ be the sieve generated by $Z,\tilde S \to S$, then $\mathcal U$ is a $\tau$-covering sieve, and $\mathcal U'$ is the sieve on $S$ generated by all smooth maps to $S$ that factor through $Z(x)$ or $Z(y)$, but since smooth maps are open, it follows that no such smooth map can contain the point $Z(x,y)$ in its image, so $\mathcal U'$ is not a covering sieve.

		\item[Smooth maps and constructible covers] Suppose that the quasi-admissible maps are the smooth maps, and that $\tau$ contains the constructible topology, \ie{} for any closed immersion $Z \to X$, the family $\{Z, X \setminus Z \to X\}$ is $\tau$-covering. Let $\Spec k \cong s \to S$ be some closed immersion over $k$, and let $\mathcal U$ be the sieve generated by $\{s, S \setminus s \to S\}$.

			If $S$ is connected, then $s \to S$ is not open, so since smooth maps are open, no smooth map to $S$ can factor through the inclusion $s \to S$. Therefore, $\mathcal U'$ is the sieve on $S$ generated by all smooth maps to $S$ that land in $S \setminus s$. Hence, $\mathcal U'$ is the sieve generated by $S \setminus s \to S$, which is not a covering sieve.

		\item[Proper maps and Zariski covers] Suppose that the quasi-admissible maps are the \emph{proper} maps, and that every Zariski cover is a $\tau$-cover. Let $\mathcal U$ be the sieve generated by all open subsets $U \subsetneq S$ -- this is a covering sieve as long as $S$ contains at least 2 closed points. In this case, $\mathcal U'$ is generated by the proper maps $X \to S$ that are not surjective.

			If $S = \proj^1_k$ and $k$ is algebraically closed, the only closed subsets $Z \subsetneq S$ are finite sets of closed points, so $\mathcal U'$ is a sieve consisting of maps $X \to S$ whose images are finite sets of closed points. It is easy to see that this is not a covering sieve since the generic point is not in the image of any of these maps.

	\end{description}
\end{exa}

Given a pullback context $\mathcal C$, we will need the definition of $\yo$-quasi-admissible maps in $\Psh(\mathcal C)$ from \cite[Definition 2.2.1]{Fundamentals}, which is somewhat cumbersome to state. Rather than recalling the full definition here, we will mention some results that describe and produce $\yo$-quasi-admissible maps:
\begin{lem}[$\yo$-quasi-admissible maps] \label{lem:yo-qadm}
	Let $\mathcal C$ be a locally small pullback context.
	\begin{enumerate}

		\item $\yo : \mathcal C \to \Psh(\mathcal C)$ sends quasi-admissible maps to $\yo$-quasi-admissible maps.

		\item If $\mathcal C$ is small, then for any sieve $\mathcal U$ on $S \in \mathcal C$, the map $\mathcal U \to \yo(S)$ is quasi-admissible if and only if $\mathcal U$ is generated by quasi-admissible maps to $S$.

		\item If $\mathcal C$ is small, then for any $S \in \mathcal C$, every $\yo$-quasi-admissible map to $\yo(S)$ is weakly $\yo$-quasi-admissible, \ie{} it is of the form
			\[
				\varinjlim \yo X \to \yo(S)
			\]
			for some small diagram $X : K \to \mathcal C_S$.

		\item For any small diagram $X : K^\triangleright \to \mathcal C$ that sends every edge to a quasi-admissible map, the map
			\[
				\varinjlim_{a \in K} \yo(X(a)) \to \yo(X(\infty))
			\]
			is $\yo$-quasi-admissible.

		\item For any pullback formalism $D \in \PF(\mathcal C)$, we can view $D$ as a limit-preserving presheaf on $\Psh(\mathcal C)$ that sends every $\yo$-quasi-admissible map to a right adjoint functor.

	\end{enumerate}
	\begin{proof}\hfill
		\begin{enumerate}

			\item \cite[Proposition 2.2.7]{Fundamentals}.

			\item \cite[Lemma 2.4.5]{Fundamentals}.

			\item \cite[Lemma 2.2.4]{Fundamentals}.

			\item Follows from \cite[Proposition 2.2.5]{Fundamentals} and the first point.

			\item \cite[Definition 2.2.1]{Fundamentals}.

		\end{enumerate}
		
	\end{proof}
\end{lem}

We now recall \cite[Definition 3.2.2]{Fundamentals}:
\begin{defn}
	A pseudotopology on a pullback context is said to be \emph{quasi-admissible} if all acyclic pseudosieves are $\yo$-quasi-admissible.
\end{defn}

\begin{exa} \label{exa:red pseudotop}
	For any locally small category $\mathcal C$, we can consider the pseudotopology $\red$ on $\mathcal C$, where the $\red$-acyclic pseudosieves are the maps of the form $\emptyset \to \yo(S)$ when $S \in \mathcal C$ admits a map to an initial object. Note that for any quasi-admissibility structure on $\mathcal C$, $\red$ is a quasi-admissible pseudotopology by \Cref{lem:yo-qadm}, specifically \cite[Lemma 2.2.5 and Proposition 2.2.7, or Lemma 2.4.5]{Fundamentals}.

	A presheaf on $\mathcal C$ is $\red$-local if and only if it sends initial objects of $\mathcal C$ to terminal objects. In fact, if $D$ is a pullback formalism on $\mathcal C$, then $D$ is $\red$-invariant (\ie{} the $\red$-acyclic pseudosieves are $D$-acyclic) if and only if $D(I) \simeq \pt$ whenever $I$ is an initial object of $\mathcal C$.
\end{exa}
For the remainder of this section, we fix a quasi-admissible pseudotopology $\tau$ on a locally small pullback context $\mathcal C$. 

\begin{defn}
	For $D \in \PF(\mathcal C)$, write $\mathcal W_S^D$ for the collection of maps in $D(S)$ of the form $\sigma_\sharp(u_\sharp 1 \to 1)$, where $\sigma : S' \to S$ is quasi-admissible, $u$ is a $\tau$-acyclic pseudosieve, and $u_\sharp 1 \to 1$ is the counit of $u_\sharp \dashv u^*$ evaluated at the monoidal unit $1 \in D(S')$.

	Say $\tau$ is \emph{$D$-small} if $\mathcal W_S^D$ is generated under small colimits by a small subcollection for all $S \in \mathcal C$.
\end{defn}

\begin{rmk}
	If $\mathcal C$ is quasi-small, and $\tau$ is small, then $\mathcal W_S^D$ is small for all $S \in \mathcal C$ and $D \in \PF(\mathcal C)$. In particular, $\tau$ is $D$-small for all $D \in \PF(\mathcal C)$.
\end{rmk}

The following result is given in \cite[Proposition 3.2.5]{Fundamentals}:
\begin{prp} \label{prp:invariance localization}
	For any pullback formalism $D$ on $\mathcal C$ such that $\tau$ is $D$-small, there is a morphism of pullback formalisms $L_\tau : D \to D^\tau$ such that the natural functor $\PF(\mathcal C)_{D^\tau/} \to \PF(\mathcal C)_{D/}$ is fully faithful with essential image given by those pullback formalisms that are $\tau$-invariant, \ie{} they send $\tau$-acyclic pseudosieves to fully faithful functors.

	Furthermore, if $D$ is geometrically generated, then for any $S \in \mathcal C$, the functor $L^\tau : D(S) \to D^\tau(S)$ is a localization along $\mathcal W_S^D$.
\end{prp}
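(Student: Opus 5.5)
The plan is to build $D^\tau$ objectwise, as a monoidal Bousfield localization of $D$, and then verify that these localizations glue into a pullback formalism with the stated universal property.

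\textbf{Step 1: the localizing class.} For $S \in \mathcal C$ I first enlarge $\mathcal W_S^D$ to a class that is stable under tensor products. Let $\mathcal W'_S$ be the class of maps
\[
	\sigma_\sharp(u_\sharp u^* N \to N),
\]
where $\sigma : S' \to S$ is quasi-admissible, $u$ is a $\tau$-acyclic pseudosieve on $S'$, and $N \in D(S')$ is arbitrary. This makes sense because, by \Cref{lem:yo-qadm}(5), $D$ extends to $\Psh(\mathcal C)$ and $u^*$ has a left adjoint $u_\sharp$. The quasi-admissible projection formula, applied to $\sigma$ and to $u$, gives
\[
	\sigma_\sharp(u_\sharp u^* N \to N) \otimes M \simeq \sigma_\sharp\bigl(u_\sharp u^*(N \otimes \sigma^* M) \to N \otimes \sigma^* M\bigr),
\]
so $\mathcal W'_S$ is closed under tensoring with arbitrary objects.

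D-smallness of $\tau$ (for $\mathcal W_S^D$), together with presentability of $D(S)$, should let me choose a small generating set for $\mathcal W'_S$. I then set $D^\tau(S) \subseteq D(S)$ to be the full subcategory of $\mathcal W'_S$-local objects. This is a presentable localization, and \cite[Proposition 2.2.1.9]{ha} makes it symmetric monoidal with a symmetric monoidal localization functor $L_\tau$.

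\textbf{Step 2: functoriality and the pullback formalism axioms.}
\begin{itemize}
	\item For any $f : T \to S$, $f^*$ carries $\mathcal W'_S$ into $\mathcal W'_T$. This uses quasi-admissible base change for $\sigma$, base change for $u_\sharp$ along the $\yo$-quasi-admissible map $u$ (\Cref{lem:yo-qadm}(5)), and the fact that $\tau$-acyclic pseudosieves are stable under pullback.
	\item For quasi-admissible $f$, $f_\sharp$ carries $\mathcal W'$ into $\mathcal W'$, since $f_\sharp\sigma_\sharp \simeq (f\sigma)_\sharp$.
\end{itemize}
Consequently both $f^*$ and $f_\sharp$ descend to the localizations, and their right adjoints preserve local objects. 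To get a functor $\mathcal C^\op \to \CAlg(\PrL)$ coherently, I will realize $D^\tau$ as the subfunctor of $D$ viewed in $\PrR$ (via right adjoints) spanned by local objects, and then pass back to $\PrL$.

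The remaining axioms follow formally from their counterparts for $D$. The adjunction $f_\sharp \dashv f^*$, the projection formula and quasi-admissible base change all hold in $D^\tau$ because $L_\tau$ is essentially surjective, symmetric monoidal, and commutes with $f_\sharp$ and $f^*$. The same commutation shows that $L_\tau : D \to D^\tau$ is a morphism of pullback formalisms.

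\textbf{Step 3: $\tau$-invariance and the universal property.} For a $\tau$-acyclic pseudosieve $u$, the counit $u_\sharp u^* N \to N$ lies in $\mathcal W'$, so it becomes an equivalence after localization. Hence $u^*$ is fully faithful on $D^\tau$, i.e. $D^\tau$ is $\tau$-invariant.

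Conversely, let $\phi : D \to E$ be a morphism of pullback formalisms with $E$ $\tau$-invariant. Then
\[
	\phi\,\sigma_\sharp(u_\sharp u^* N \to N) \simeq \sigma_\sharp(u_\sharp u^* \phi N \to \phi N),
\]
and the right-hand side is an equivalence because counits of fully faithful right adjoints in $E$ are equivalences. So each component $\phi_S$ inverts $\mathcal W'_S$ and factors uniquely through $L_\tau$. Since the factorization is compatible with $f_\sharp$, it is a morphism of pullback formalisms. Uniqueness of these factorizations, naturally in $\phi$, gives full faithfulness of $\PF(\mathcal C)_{D^\tau/} \to \PF(\mathcal C)_{D/}$, with essential image the $\tau$-invariant pullback formalisms.

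\textbf{Step 4: the geometrically generated case.} Suppose $N = \cls{X;S'} = \rho_\sharp 1$ for a quasi-admissible $\rho : X \to S'$. By base change,
\[
	u_\sharp u^* \rho_\sharp 1 \to \rho_\sharp 1 \;\simeq\; \rho_\sharp(u'_\sharp 1 \to 1),
\]
where $u'$ is the base change of $u$ along $\rho$. So $\sigma_\sharp$ of this map lies in $\mathcal W_S^D$. Since $D(S')$ is generated under colimits by such $N$, and all functors involved preserve colimits, the strongly saturated classes generated by $\mathcal W'_S$ and by $\mathcal W_S^D$ coincide. Hence $L_\tau$ is a localization along $\mathcal W_S^D$.

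\textbf{Main obstacle.} I expect the hardest part to be the coherence in Step 2: assembling the objectwise localizations into an honest functor to $\CAlg(\PrL)$. This also requires handling $u_\sharp$ for pseudosieves that are not representable, which I would treat through the extension of $D$ to $\Psh(\mathcal C)$. A secondary point is that, outside the geometrically generated case, the smallness of $\mathcal W'_S$ needs care.
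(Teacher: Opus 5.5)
\textbf{Overall.} The paper does not prove this statement; it imports it from \cite[Proposition 3.2.5]{Fundamentals}, so your proposal has to stand on its own. Its architecture is sound. Enlarging $\mathcal W^D_S$ to $\mathcal W'_S$ is the right move: your Step 3 argument shows that any morphism to a $\tau$-invariant formalism compatible with $\sigma_\sharp$ must invert all of $\mathcal W'_S$. Your checks that $\mathcal W'$ is stable under $\otimes$, $f^*$ and $f_\sharp$ are correct. The $\tau$-invariance and universal-property arguments are right. Step 4 correctly identifies the strongly saturated closures of $\mathcal W'_S$ and $\mathcal W^D_S$ when $D$ is geometrically generated.

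\textbf{The gap.} It is the sentence in Step 1 asserting that $D$-smallness together with presentability of $D(S)$ gives a small generating set for $\mathcal W'_S$. Everything downstream depends on this: without it you have no guarantee that the local objects form an accessible localization, so $D^\tau$ need not land in $\CAlg(\PrL)$. The reason you give does not suffice.

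\textbf{Why it does not suffice.} $D$-smallness only controls the maps with $N = 1$.
\begin{itemize}
\item For a \emph{fixed} quasi-admissible $\sigma : S' \to S$ you can absorb the variable $N$. Indeed $u_\sharp u^* N \to N \simeq (u_\sharp 1 \to 1) \otimes N$ with $u_\sharp 1 \to 1 \in \mathcal W^D_{S'}$. So $D$-smallness at $S'$, plus a small set of colimit generators of $D(S')$, yields a small set generating $\sigma_\sharp$ of that piece.
\item However, $\mathcal W'_S$ is the union of these pieces over \emph{all} quasi-admissible $\sigma$ with target $S$, i.e.\ over $\mathcal C_S$. This index is essentially small when $\mathcal C$ is quasi-small, but not in general.
\item Avoiding quasi-smallness is exactly what $D$-smallness is for: cf.\ \Cref{exa:completed invar PF}, where the context is $\Psh(\mathcal C)$ with all maps quasi-admissible.
\end{itemize}

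\textbf{How to repair it, and what remains.} In the geometrically generated case your own Step 4 closes the gap. It shows $\overline{\mathcal W'_S} = \overline{\mathcal W^D_S}$ as strongly saturated classes before any localization is formed. So run that argument first, then invoke $D$-smallness to get small generation. For $D$ that is neither geometrically generated nor over a quasi-small context, you need an additional idea to reduce the union over $\sigma$ to a small one, or a different construction of $D^\tau$. As written, that case is not established.

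\textbf{A smaller point.} Realizing $D^\tau$ as a subfunctor of $D$ in $\PrR$ only assembles a functor to $\PrL$. To get values in $\CAlg(\PrL)$ you also need a parametrized version of \cite[Proposition 2.2.1.9]{ha} over $\mathcal C^\op$. This is routine but should be stated.
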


The following result is given in \cite[Corollary 3.2.6]{Fundamentals}, and allows us to construct pullback formalisms of the form $S \mapsto \Psh^\tau(\mathcal C_S)$:
\begin{prp} \label{prp:univ invar}
	Suppose $\mathcal C$ is quasi-small, and $\tau$ is small. The full subcategory of $\PF(\mathcal C)$ consisting of $\tau$-invariant pullback formalisms (as in \Cref{prp:invariance localization}) has an initial object $H^\tau$, and for any $S \in \mathcal C$, the functor $H^\univ(S) \to H^\tau(S)$ is an accessible locally Cartesian localization of $\Psh(\mathcal C_S)$ along all maps that lie over $\tau$-acyclic pseudosieves in $\Psh(\mathcal C)$.
\end{prp}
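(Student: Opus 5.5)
The plan is to obtain $H^\tau$ by applying \Cref{prp:invariance localization} to the initial pullback formalism $D = H^\univ$ of \Cref{exa:Huniv}, and then to identify the resulting localizations concretely.

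\emph{Initiality.} First I check that $\tau$ is $H^\univ$-small. Since $\mathcal C$ is quasi-small and $\tau$ is small, the remark following the definition of $D$-smallness shows that each $\mathcal W_S^{H^\univ}$ is small, which is enough. \Cref{prp:invariance localization} then gives a morphism $L_\tau : H^\univ \to (H^\univ)^\tau \eqqcolon H^\tau$. It also identifies $\PF(\mathcal C)_{H^\tau/}$ with the full subcategory of $\PF(\mathcal C)_{H^\univ/}$ on the $\tau$-invariant pullback formalisms. Because $H^\univ$ is initial in $\PF(\mathcal C)$, the forgetful functor $\PF(\mathcal C)_{H^\univ/} \to \PF(\mathcal C)$ is an equivalence. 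Hence $\PF(\mathcal C)_{H^\tau/}$ is equivalent to the full subcategory of $\tau$-invariant objects of $\PF(\mathcal C)$. Since $H^\tau$ is itself $\tau$-invariant, it follows that $H^\tau$ is initial in that subcategory.

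\emph{Describing $H^\univ(S) \to H^\tau(S)$.} The formalism $H^\univ$ is geometrically generated, because $\Psh(\mathcal C_S)$ is generated under colimits by representables and these are exactly the objects $\cls{X;S}$. So the second part of \Cref{prp:invariance localization} says that $\Psh(\mathcal C_S) \to H^\tau(S)$ is the localization along $\mathcal W_S$. Accessibility is immediate, since $\mathcal W_S$ is small and $\Psh(\mathcal C_S)$ is presentable.

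It remains to show that the strong saturation of $\mathcal W_S$ is the class of maps lying over $\tau$-acyclic pseudosieves, and that the localization is locally Cartesian. I expect this identification to be the main obstacle. Take a quasi-admissible $\sigma : S' \to S$ and a $\tau$-acyclic pseudosieve $u : P \to \yo(S')$. Here $u$ is $\yo$-quasi-admissible because $\tau$ is quasi-admissible, and by \Cref{lem:yo-qadm}(3) it is weakly $\yo$-quasi-admissible, so $P \simeq \varinjlim \yo X$ for some diagram $X : K \to \mathcal C_{S'}$. Using \Cref{lem:yo-qadm}(5) to make sense of $u_\sharp$, I would compute that $u_\sharp 1 \to 1$ is exactly the map $\varinjlim \yo X \to \yo(S')$, now regarded in $\Psh(\mathcal C_{S'})$. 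Applying $\sigma_\sharp$ then gives the same map regarded in $\Psh(\mathcal C_S)$. In this way each generator in $\mathcal W_S$ is a map of presheaves on $\mathcal C_S$ whose image in $\Psh(\mathcal C)$ is a $\tau$-acyclic pseudosieve over a quasi-admissible object. Conversely, a general map lying over $\tau$-acyclic pseudosieves is recovered from such generators by base change along representables and colimits. This uses that pseudosieves are stable under base change and that $\Psh(\mathcal C_S)$ has universal colimits.

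\emph{Locally Cartesian.} The generating class is stable under base change along maps from representables, and $\Psh(\mathcal C_S)$ has universal colimits. I would therefore invoke the criterion of \cite[\S C.1]{Fundamentals}: a localization generated by a base-change-stable class in a category with universal colimits is locally Cartesian.
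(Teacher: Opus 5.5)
The paper gives no argument for this statement. It quotes it from \cite[Corollary 3.2.6]{Fundamentals}, a corollary of \cite[Proposition 3.2.5]{Fundamentals}, which is \Cref{prp:invariance localization} here and is exactly your starting point. Your derivation is therefore presumably the intended one, and it is correct: the initiality argument via $\PF(\mathcal C)_{H^\univ/} \simeq \PF(\mathcal C)$ and your computation of $\sigma_\sharp(u_\sharp 1 \to 1)$ as $\varinjlim_k \yo(Y_k) \to \yo(S')$ in $\Psh(\mathcal C_S)$ both work.

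Two places need one more line each.

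\textbf{The converse identification.} You need no base-change-and-colimits argument here. Also, the claim to make is that the two classes generate the same strongly saturated class, not that the saturation of $\mathcal W_S$ equals the class. For $X \in \mathcal C_S$, the functor
$\Psh(\mathcal C_S)_{/\yo(X)} \simeq \Psh((\mathcal C_S)_{/X}) \to \Psh(\mathcal C_{/X}) \simeq \Psh(\mathcal C)_{/\yo(X)}$
is left Kan extension along a fully faithful functor, hence fully faithful. Now take a map $P \to \yo(X)$ lying over a $\tau$-acyclic pseudosieve. By weak $\yo$-quasi-admissibility, that pseudosieve has the form $\varinjlim_k \yo(Y_k) \to \yo(X)$ with $Y_k \in \mathcal C_X \subseteq \mathcal C_S$. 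Full faithfulness then makes $P \to \yo(X)$ equivalent in $\Psh(\mathcal C_S)$ to $\varinjlim_k \yo(Y_k) \to \yo(X)$. This is literally an element of $\mathcal W_S$, with $\sigma = (X \to S)$, so the two classes coincide.

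\textbf{Base-change stability inside $\Psh(\mathcal C_S)$.} This is not automatic, because $\Psh(\mathcal C_S) \to \Psh(\mathcal C)$ does not preserve pullbacks in general; this is exactly the phenomenon behind \Cref{rmk:the mistake}. It holds here only because each $Y_k \to X$ is quasi-admissible. For any $X' \to X$ in $\mathcal C_S$ this gives $\yo(Y_k) \times_{\yo(X)} \yo(X') \simeq \yo(Y_k \times_X X')$ in $\Psh(\mathcal C_S)$ with $Y_k \times_X X' \in \mathcal C_S$. So the base change lies over the base-changed pseudosieve, which is $\tau$-acyclic.

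Equivalently, $\mathcal W_S$ is precisely the set of acyclic pseudosieves of the pseudotopology on $\mathcal C_S$ supplied by \Cref{lem:res pseudotop}. The locally Cartesian claim is then \cite[Proposition 2.1.9]{Fundamentals}, recalled in \Cref{S:pseudotop}, which is the precise form of the criterion you invoke. With these two lines added, your proposal is a complete proof.
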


\begin{exa} \label{exa:completed invar PF}
	Recall the definition of $\hat H^\univ$ from \Cref{exa:completed Huniv is LC}.
	If $\mathcal C$ is small and $\tau$ is a small pseudotopology, then there is morphism of pullback formalisms $\hat L_\tau : \hat H^\univ \to \hat H^\tau$ such that for any $S \in \mathcal C$, the functor $\hat L_\tau : \hat H^\univ(S) \to \hat H^\tau(S)$ is equivalent to the localization $\Psh(\mathcal C_{/S}) \to \Psh^\tau(\mathcal C_{/S})$ along all maps in $\Psh(\mathcal C_{/S})$ whose image under $\Psh(\mathcal C_{/S}) \to \Psh(\mathcal C)$ is a $\tau$-acyclic pseudosieve.

	In fact, we have that $\hat L_\tau$ extends to a morphism of pullback formalisms on the pullback context $\Psh(\mathcal C)$ in which all maps are quasi-admissible.
	\begin{proof}
		Equip $\Psh(\mathcal C)$ with the quasi-admissibility structure in which all maps are quasi-admissible. Define a pseudotopology $\hat \tau$ on $\Psh(\mathcal C)$ by setting the $\hat \tau$-acyclic pseudosieves to be the maps of the form $\yo(f)$, for $f$ a $\tau$-acyclic map. By our choice of quasi-admissibility structure, $\hat \tau$ is a quasi-admissible pseudotopology by \Cref{lem:yo-qadm} (\cite[Proposition 2.2.7]{Fundamentals}).

		Recall the pullback formalism $\bar H^\univ = \bar H^\univ_{\Psh(\mathcal C)}$ of \Cref{exa:completed Huniv is LC}. Note that for any $P \in \Psh(\mathcal C)$, the collection $\mathcal W_P^{\bar H^\univ}$ of maps in $\bar H^\univ(P)$ consists of maps in $\Psh(\mathcal C)_{/P} = \bar H^\univ(P)$ that lie over $\tau$-acyclic maps in $\Psh(\mathcal C)$.

		Since $\mathcal C$ is small, any map in $\mathcal W_P^{\bar H^\univ}$ is a small colimit of maps lying over $\tau$-acyclic pseudosieves in $\Psh(\mathcal C)$, so since $\tau$ is small, $\mathcal W_P^{\bar H^\univ}$ is generated under small colimits by a small subcollection.

		Note that with our choice of quasi-admissibility structure on $\Psh(\mathcal C)$, the pullback formalism $\bar H^\univ$ is actually geometrically generated, so by \Cref{prp:invariance localization}, there is a morphism of pullback formalisms on $\Psh(\mathcal C)$
		\[
			L_{\hat \tau} : \bar H^\univ \to (\bar H^\univ)^{\hat \tau}
		\]
		such that for any $P \in \Psh(\mathcal C)$, the functor $\bar H^\univ(P) \to (\hat H^\univ)^\tau(P)$ is the localization of $\bar H^\univ(P) \simeq \Psh(\mathcal C)_{/P}$ along maps in $\mathcal W_P^{\bar H^\univ}$, which we have already seen is generated under small colimits by maps lying over $\tau$-acyclic pseudosieves in $\Psh(\mathcal C)$.

		Thus, if we define
		\[
			\hat L_\tau : \hat H^\univ \to \hat H^\tau
		\]
		to be $L_{\hat \tau}$ restricted to $\mathcal C$, we get that $\hat L_\tau$ actually defines a morphism of pullback formalisms on $\Psh(\mathcal C)$ (by construction), and for any $S \in \mathcal C$, the functor $\hat L_\tau : \hat H^\univ(S) \to \hat H^\tau(S)$ is the localization $\Psh(\mathcal C_{/S}) \to \Psh^\tau(\mathcal C_{/S})$ of $\Psh(\mathcal C_{/S})$ along maps lying over $\tau$-acyclic maps in $\Psh(\mathcal C)$.
	\end{proof}
\end{exa}

\section{Basic Techniques for Establishing Gluing} \label{S:basic gluing}


The ``localization property'' that we discussed in the introduction can be seen as a particular case of ``gluing''. This term was already used in the original localization theorem of \cite[Theorem 3.2.21]{A1htpysch}. In this section, we begin our study of the gluing property defined in \cite[\S4]{6FF}. As the applications of the gluing property have already been studied in \cite[\S4]{6FF}, we will focus only on techniques for establishing it.

Fix a pullback context $\mathcal C$ with a \emph{strict initial object}, which is an initial object $\emptyset$ such that every map to $\emptyset$ is invertible.

We recall that if $i : Z \to S$ is a map in a pullback context $\mathcal C$, we say that $i$ is \emph{closed} if it has a quasi-admissible complement. A \emph{complement} of $i$ is a monomorphism $j : U \to S$ such that a map $X \to S$ factors through $j$ if and only if $\emptyset$ is a pullback of $i$ with $X \to S$. The notion of complements is discussed further in \cite[Definition 4.1]{6FF}.

Given a pullback formalism $D$ on $\mathcal C$, the \emph{gluing square for $i$ $\square_i$} is defined to be the following natural square of endofunctors of $D(S)$:
\[
	\begin{tikzcd}
		j_\sharp j^* \ar[d] \ar[r] & \id \ar[d] \\
		j_\sharp j^* i_* i^* \ar[r] & i_* i^*
	\end{tikzcd}
	\qquad\text{or equivalently}\qquad
	\begin{tikzcd}
		\cls{U} \otimes - \ar[d] \ar[r] & \id \ar[d] \\
		\cls{U} \otimes i_* i^* \ar[r] & i_* i^*
	\end{tikzcd}
.\]
We say that \emph{$D$ has gluing for $i$} if this square is coCartesian. When $D$ is not clear from context, we may write $\square_i^D$ to denote this square.

Assume now that $D$ is \emph{reduced}, \ie{} $D(\emptyset) \simeq \pt$. Using \Cref{exa:red pseudotop}, this is equivalent to the condition that $D \in \PF^\red(\mathcal C)$.

In this case, \cite[Lemma 4.6]{6FF} shows that all not-necessarily-commutative squares as below with the same bottom-left corner, and top and right arrows, commute in an essentially unique way and are equivalent to $\square_i$:
\[
	\begin{tikzcd}
		j_\sharp j^* \ar[d] \ar[r] & \id \ar[d] \\
		j_\sharp \pt \ar[r] & i_* i^*
	\end{tikzcd}
	\qquad\text{or equivalently}\qquad
	\begin{tikzcd}
		\cls{U} \otimes - \ar[d] \ar[r] & \id \ar[d] \\
		\cls{U} \otimes \pt \ar[r] & i_* i^*
	\end{tikzcd}
.\]

In fact, we will mostly consider gluing for reduced pullback formalisms, and the only non-reduced pullback formalisms for which we will consider $\square_i$ are $H^\univ$ and $\hat H^\univ$. Note that in these cases, $\square_i$ and the above squares still agree when evaluated on reduced presheaves.

We also recall from \cite[Proposition 4.13]{6FF} that if $D$ has gluing for $i$, then the following are equivalent:
\begin{enumerate}

	\item $i_* : D(Z) \to D(S)$ is conservative.

	\item $i_* : D(Z) \to D(S)$ is fully faithful.

	\item $D(Z) \xrightarrow{i_*} D(S) \xrightarrow{j^*} D(U)$ is a fibre sequence of categories.

\end{enumerate}
In this case $i$ is said to be \emph{$D$-closed}. This property has very strong consequences that are studied in \cite[\S4.2 and 4.3]{6FF}, and is used for constructing 6-functor formalisms in \cite[\S6]{6FF}.

The following result is given in \cite[Lemma 4.4, Remark 4.7, and Lemma 4.16]{6FF}:
\begin{lem} \label{lem:gluing locality on base}
	Suppose that $i$ admits a $D$-pseudocover by quasi-admissible maps $S' \to S$ such that $D$ has gluing for $i \times_S S'$ (\resp{} $i \times_S S'$ is $D$-closed). Then $D$ has gluing for $i$ (\resp{} $i$ is $D$-closed).
\end{lem}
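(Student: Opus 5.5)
Let $\{\sigma_k : S_k \to S\}_k$ be the given quasi-admissible $D$-pseudocover of $i$, write $i_k = i \times_S S_k : Z_k \to S_k$ with complement $j_k : U_k \to S_k$, and let $i^\complement$ be the collection of maps $X \to S$ such that $X \times_S Z \simeq \emptyset$. By definition, the family $\{\sigma_k\}_k \cup i^\complement$ is conservative on $D(S)$. The plan is to check that the gap map of $\square_i$ (the comparison from the pushout to $i_*i^*$) becomes an equivalence after applying $\sigma_k^*$ for each $k$ and $f^*$ for each $f \in i^\complement$.

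\textbf{Step 1: the maps $\sigma_k$.} Pullback $\sigma_k^*$ preserves colimits, so it suffices to identify $\sigma_k^*\square_i$ with $\square_{i_k}\sigma_k^*$.
\begin{itemize}
\item Quasi-admissible base change for $j$ gives $\sigma_k^* j_\sharp j^* \simeq (j_k)_\sharp j_k^* \sigma_k^*$.
\item For the other corners we need $\sigma_k^* i_* \simeq (i_k)_* \tau_k^*$, where $\tau_k : Z_k \to Z$ is the base change of $\sigma_k$. This is the adjoint of quasi-admissible base change $\tau_{k\sharp} i_k^* \simeq i^*\sigma_{k\sharp}$, taking right adjoints on both sides.
\item These identifications should be compatible with the structure maps of the square.
\end{itemize}
Granting this, $\sigma_k^*\square_i \simeq \square_{i_k}\sigma_k^*$, which is coCartesian by hypothesis.

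\textbf{Step 2: the maps in $i^\complement$.} Let $f : X \to S$ satisfy $X \times_S Z \simeq \emptyset$. The same base-change formulas apply, but $f$ need not be quasi-admissible, so I would first reduce to the quasi-admissible case. One route is the reduced form of the square: for $D$ reduced, $\square_i$ is equivalent to the square
\[
j_\sharp j^* \to \id, \qquad j_\sharp \pt \to i_* i^*.
\]
If $f$ is quasi-admissible, then $f^* i_* \simeq \emptyset_* (\cdots) \simeq \pt$ because $D(\emptyset) \simeq \pt$. Moreover $f$ factors through $j$, so $f^* j_\sharp j^* \simeq f^*$ and $f^* j_\sharp \pt \simeq \pt$. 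The square therefore becomes
\[
f^* \to f^*, \qquad \pt \to \pt,
\]
which is coCartesian.

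For general $f$, I would try to replace $i^\complement$ by the single quasi-admissible map $j$. Every $f \in i^\complement$ factors through $j$, so conservativity of the family $\{\sigma_k\} \cup i^\complement$ should imply conservativity of $\{\sigma_k\} \cup \{j\}$: a map inverted by $j^*$ is also inverted by every $f^* = f'^*j^*$. The $j$ case is then handled by the computation above with $f = j$.

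\textbf{Step 3: the $D$-closed variant.} The relevant criterion is that $i_*$ is conservative, given gluing. Suppose $\alpha$ in $D(Z)$ has $i_*\alpha$ invertible.
\begin{itemize}
\item Then $\sigma_k^* i_*\alpha \simeq (i_k)_*\tau_k^*\alpha$ is invertible, so $\tau_k^*\alpha$ is invertible because $(i_k)_*$ is conservative.
\item We then need $\{\tau_k\}_k$ to be a $D$-pseudocover of $Z$. This should follow because the pseudocover of $i$ restricts on $Z$: base changing the conservative family to $Z$ via $i^*$ kills $i^\complement$. I would argue this via the gluing square, using the identification $i^* \simeq$ cofibre of $j_\sharp j^* \to \id$ pulled back.
\end{itemize}

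\textbf{Main obstacle.} I expect the hardest step to be verifying that the identifications in Step 1 are coherent with the maps in the square. The same coherence issue reappears in Step 3, in proving that conservativity passes to $Z$.
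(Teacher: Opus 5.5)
Your gluing argument in Steps 1--2 is correct. The coherence you worry about in Step 1 is not a real obstacle: as in \Cref{rmk:pullback gluing}, the Beck--Chevalley maps already assemble into a map of squares $\sigma_k^*\square_i\to\square_{i_k}\sigma_k^*$, and this map is an objectwise equivalence. Replacing $i^\complement$ by the single quasi-admissible map $j$ is exactly the right way to handle the non-quasi-admissible members of $i^\complement$. You can even skip the computation with $\pt$: both rows of $\square_i$ are counits of $j_\sharp\dashv j^*$, and $j^*$ inverts these because $j_\sharp$ is fully faithful. The paper gives no argument of its own for this lemma; it cites \cite[Lemma 4.4, Remark 4.7, Lemma 4.16]{6FF}. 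Your Steps 1--2 are a faithful reconstruction of the gluing half.

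The gap is in Step 3, at the one claim the closedness argument rests on: that $\{\tau_k : Z_k\to Z\}_k$ is a $D$-pseudocover of $Z$. The route you sketch through the gluing square cannot establish this.
\begin{itemize}
\item The gluing square describes $i_*i^*F$ for $F\in D(S)$, not $i^*F$, and it is a cofibre only when $D(S)$ is pointed.
\item To detect equivalences in $D(Z)$ this way, you would have to transport maps of $D(Z)$ to $D(S)$ along $i_*$. That requires already knowing that $i_*$ reflects equivalences, which is the statement you are proving.
\end{itemize}

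What you need instead is \Cref{thm:D-topology}(1).
\begin{itemize}
\item By your Step 2, $\{\sigma_k\}_k\cup\{j\}$ is a $D$-pseudocover of $S$ consisting of \emph{quasi-admissible} maps.
\item Its base change $\{\tau_k\}_k\cup\{\emptyset\to Z\}$ along the non-quasi-admissible map $i$ is therefore a $D$-pseudocover of $Z$.
\item Since $D(\emptyset)\simeq\pt$, the member $\emptyset\to Z$ detects nothing, so $\{\tau_k\}_k$ alone is conservative.
\end{itemize}

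If you want to unwind this concretely:
\begin{itemize}
\item Use \Cref{prp:cover is pseudocover} to write $1\in D(S)$ as a colimit of objects $\cls{X_1\times_S\dotsb\times_S X_n}$ with $X_m\in\{S_k\}_k\cup\{U\}$.
\item Apply $i^*$, using quasi-admissible base change.
\item Every term involving $U$ becomes $\cls{\emptyset;Z}$, which is initial by reducedness.
\item Conclude with \Cref{prp:monoidal crit for cons family}.
\end{itemize}

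With this in place your argument closes. If $i_*\alpha$ is invertible, then so is each $(i_k)_*\tau_k^*\alpha\simeq\sigma_k^*i_*\alpha$. Since each $(i_k)_*$ is conservative, each $\tau_k^*\alpha$ is invertible, and hence $\alpha$ is invertible.
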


\begin{rmk} \label{rmk:gluing on fundamental}
	As in \cite[Remark 4.9]{6FF}, for any $X \in \mathcal C_S$, we have that the boundary of $\square_i(\cls{X})$ is equivalent to any (a priori) not-necessarily-commutative square
	\[
		\begin{tikzcd}
			\cls{X \times_S U} \ar[d] \ar[r] & \cls{X} \ar[d] \\
			\pt \otimes \cls{U} \ar[r] & i_* \cls{X \times_S Z}
		\end{tikzcd}
	\]
	such that the top arrow is equivalent to $\cls{X \times_S U \to X}$, and the right arrow is the unit $\cls{X} \to i_* i^* \cls{X}$.
\end{rmk}

\subsection{Reduced Presheaves} \label{S:red}

As before, we fix a pullback context $\mathcal C$ with a strict initial object. Recall the quasi-admissible pseudotopology $\red$ of \Cref{exa:red pseudotop}: the $\red$-acyclic pseudosieves are precisely those of the form $\emptyset \to \yo(I)$ when $I$ is an initial object of $\mathcal C$. Assume $\mathcal C$ is quasi-small, so that the initial pullback formalism $H^\univ$ of \Cref{exa:Huniv} exists.

\begin{rmk} \label{rmk:red PF}
	Using \Cref{prp:univ invar}, we have that the following are equivalent:
	\begin{enumerate}

		\item $D$ is reduced, \ie{} $D(I) \simeq \pt$ for any initial object $I \in \mathcal C$.

		\item $D$ is a $\red$-invariant pullback formalism.

		\item There is a morphism of pullback formalisms $H^\red \to D$.

		\item There is a contractible space of morphisms of pullback formalisms $H^\red \to D$.

	\end{enumerate}
\end{rmk}
Most of the  pullback formalisms we will be interested in studying are reduced.

Fix a reduced pullback formalism $D$ on $\mathcal C$. For any $S \in \mathcal C$, say a map in $H^\univ(S)$ is a \emph{$D$-equivalence} if it is sent to an equivalence by the functor $H^\univ(S) \to D(S)$.

It turns out that when $H^\red \to D$ is a localization of pullback formalisms, and $i_* : H^\univ(Z) \to H^\univ(S)$ preserves $D$-equivalences between reduced presheaves, we have especially strong tools for establishing that $D$ has gluing for $i$. This section serves to establish some of the basic results of this type. The results of this section will also be used to establish the important reduction given in \Cref{S:red to fib gluing}.

\begin{rmk}
	Many of these results are easier to prove if we instead assume that $i_* : H^\univ(Z) \to H^\univ(S)$ preserves \emph{all} $D$-equivalences, but we are not always able to show this. This often does hold when the $D$-equivalences are given by some notion of homotopy, but when we also have local equivalences coming from a Grothendieck topology, the condition that $i_*$ preserves local equivalences is generally established using \cite[Lemma 3.1.6]{locspalg}, which only guarantees that $i_*$ preserves locally equivalences between \emph{reduced} presheaves.
\end{rmk}

The main application of this section is the following:
\begin{prp} \label{prp:intrinsic gens gluing}
	Let $\mathcal K$ be a collection of closed maps in $\mathcal C$, and let $\mathcal G \subseteq \mathcal C$ be a full subcategory such that for any map $i : Z \to S$ in $\mathcal K$, the following conditions hold:
	\begin{enumerate}

		\item Any base change of $i$ along a quasi-admissible map is still in $\mathcal K$.

		\item The unit $1 \in D(S)$ is a small colimit of objects of the form $p_\sharp(1)$, where $p$ is a quasi-admissible map $X \to S$ such that either $X \in \mathcal G$, or $X \times_S Z$ is initial.

		\item If $S \in \mathcal G$, then
			\begin{enumerate}

				\item the functor $i_* : H^\univ(Z) \to H^\univ(S)$ preserves $D$-equivalences between reduced presheaves,

				\item the functor $H^\univ(S) \to D(S)$ is a localization, and

				\item for any quasi-admissible map $X \to S$, if $X \in \mathcal G$, then this functor sends $\square_i^{H^\univ}(\yo X)$ to a coCartesian square in $D(S)$.

			\end{enumerate}

	\end{enumerate}
	Then $D$ has gluing for all maps in $\mathcal K$.
\end{prp}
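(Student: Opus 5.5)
Fix $i : Z \to S$ in $\mathcal K$ with complement $j : U \to S$. We first reduce to $S \in \mathcal G$ using \Cref{lem:gluing locality on base}. By condition (2) and \Cref{prp:cover is pseudocover} (for a large enough cardinal $\kappa$), the quasi-admissible maps $X \to S$ with $X \in \mathcal G$ or $X \times_S Z \simeq \emptyset$ form a $D$-pseudocover of $S$. The maps of the second kind lie in $i^\complement$. Hence the maps with $X \in \mathcal G$ form a $D$-pseudocover of $i$ in the sense of \Cref{defn:covers dense generators}. By condition (1), each base change $i \times_S X$ is again in $\mathcal K$, and its base lies in $\mathcal G$. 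So it suffices to prove that $D$ has gluing for $i$ when $S \in \mathcal G$.

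Now assume $S \in \mathcal G$ and write $L : H^\univ(S) \to D(S)$ for the localization of (3b). Since $D$ is reduced, the morphism $H^\univ \to D$ factors through $H^\red$, and the gluing square can be presented using $\pt$ in the bottom-left corner, as in \cite[Lemma 4.6]{6FF}. The functors $j_\sharp j^*$ and $\id$ commute with $L$, because $L$ comes from a morphism of pullback formalisms. The key point is to compare $L i_* i^*$ with $i_* i^* L$ on $D(S)$. Represent $F \in D(S)$ as $L(P)$ with $P$ reduced; we can take $P$ to be the local object itself, which is reduced since local objects of $D$ are. Then $i^* L P \simeq L_Z i^* P$. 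The unit $i^* P \to \iota_Z L_Z i^* P$ is a $D$-equivalence between reduced presheaves, since $i^*P$ is reduced because $i^*$ preserves the initial object. Applying (3a), $i_*$ carries this unit to a $D$-equivalence. Hence $L(i_* i^* P) \simeq i_* i^* L P$ compatibly with the unit maps. It follows that $L$ carries $\square_i^{H^\univ}(P)$, in its reduced form, to $\square_i^D(LP)$.

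Since every object of $D(S)$ has the form $LP$, it remains to show that the class of reduced $P$ for which $L\square_i^{H^\univ}(P)$ is coCartesian contains everything. We know it contains $\yo X$ for $X \in \mathcal G$ quasi-admissible over $S$, by (3c). It also contains $\yo X$ whenever $X \times_S Z = \emptyset$: in that case $i_* i^* \yo X$ is terminal among reduced presheaves and $X \to S$ factors through $U$, so the square is trivially coCartesian after $L$.

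Passing to colimits is the main obstacle. The functor $i_*$ need not preserve colimits, so we argue in $D(S)$ instead. There, $\square_i^D$ is a square of endofunctors in which $j_\sharp j^*$ and $\id$ preserve colimits. The remaining corner $i_* i^*$ also preserves colimits once gluing is known, but that is circular. To avoid this, we use condition (2) to write $1 \simeq \varinjlim \cls{X_a}$. For any $F \in D(S)$ we then have $F \simeq \varinjlim (X_a \to S)_\sharp(X_a \to S)^* F$. Quasi-admissible base change gives $i^*(X_a \to S)_\sharp \simeq (X_a\times_S Z \to Z)_\sharp i_a^*$. We then reduce to checking gluing after pulling back along each $X_a$, that is, to showing that the cofibre of the gap map vanishes after applying each $(X_a\to S)^*$. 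This is exactly the pseudocover argument of \Cref{lem:gluing locality on base} again, but applied to the objects $\yo(X') \to \yo(X_a)$ for $X' \in \mathcal G$. We iterate using that $D(S)$ is generated by the $\cls{X}$; this uses (2) together with the localization (3b). What remains is a check that the gap map is an equivalence on generators $\cls{X}$, $X \in \mathcal G$, via \Cref{rmk:gluing on fundamental} and (3c), combined with conservativity of the pseudocover. I expect the careful bookkeeping of this colimit/conservativity step to be the hardest part.
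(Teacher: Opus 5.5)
Your first reduction, via \Cref{prp:cover is pseudocover} and \Cref{lem:gluing locality on base} to the case $S \in \mathcal G$, is exactly the paper's. Your identification of $L\square_i^{H^\univ}(P)$ with $\square_i^D(LP)$ for local $P$ is the same comparison the paper makes at the end of \Cref{lem:gluing colimits}. (Small slip: if $X \times_S Z$ is initial, then $i_* i^* \yo X \simeq i_* \yo(\emptyset) \simeq \yo(U)$ by \Cref{lem:reduced pushforward of empty along closed map}, not a terminal presheaf. The square is coCartesian because its top and bottom arrows are equivalences.)

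\textbf{The gap: passing from generators to colimits.} The proof breaks at the passage from the generators $\cls{X}$ to all of $D(S)$.
\begin{itemize}
\item Your premise that ``$i_*$ need not preserve colimits'' is false where it matters. The functor $i_* : H^\univ(Z) \to H^\univ(S)$ is restriction along the base-change functor $\mathcal C_S \to \mathcal C_Z$, so it preserves all small colimits. Hence so does $\square_i^{H^\univ}$, whose corners are built from $i_*$, $i^*$ and $\yo(U) \times -$.
\item The route you take instead, through $D(S)$, is circular. Pulling back along $X_a \to S$ turns gluing at $F$ into gluing for $i \times_S X_a$ at the arbitrary object $(X_a \to S)^* F$. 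The new base is again in $\mathcal G$, so iterating never reaches a generator.
\item Knowing the gap map is invertible at each $\cls{X}$ says nothing about $D(S)$-colimits of them unless $\square_i^D$ commutes with those colimits. That is exactly what you concede you cannot get.
\item ``The cofibre of the gap map vanishes'' has no meaning in an unpointed $D(S)$.
\end{itemize}

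\textbf{The missing idea (\Cref{lem:gluing colimits}).} Do the colimit argument in $H^\univ(S)$, and use (3a) to control the discrepancy with colimits in $D(S)$.
\begin{enumerate}
\item Since $i^*$ and $\cls{U} \otimes -$ preserve all $D$-equivalences, and $i_*$ preserves those between reduced presheaves, $L\square_i^{H^\univ}$ inverts $D$-equivalences between reduced presheaves.
\item Write $\iota$ for the inclusion $D(S) \subseteq H^\univ(S)$. A colimit $F = \varinjlim_a F_a$ in $D(S)$ is $LP$, where $P = \varinjlim_a \iota F_a$ is computed in $H^\univ(S)$.
\item To make $P$ reduced, adjoin the initial reduced presheaf $\yo(\emptyset)$ as a cone point. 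This changes $P$ only up to $D$-equivalence, because it is an $L_\red$-equivalence and $D$ is reduced. The square $\square_i^{H^\univ}(\yo\emptyset)$ is coCartesian by \Cref{lem:reduced pushforward of empty along closed map}. Colimits of reduced presheaves over the now weakly contractible index category are reduced.
\item Now $P \to \iota L P$ is a $D$-equivalence between reduced presheaves. So $L\square_i^{H^\univ}(\iota F) \simeq L\square_i^{H^\univ}(P)$, which is a colimit of the coCartesian squares $L\square_i^{H^\univ}(\iota F_a)$ and $L\square_i^{H^\univ}(\yo\emptyset)$.
\end{enumerate}
Thus the class of objects at which gluing holds is closed under small colimits in $D(S)$. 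Then \Cref{lem:strong gens gluing}, \Cref{lem:reduced pushforward of empty along closed map} and (3c) finish the proof via \Cref{cor:gluing locality globally on base}. The weak-contractibility trick is what makes (3a) applicable at all, since an arbitrary $H^\univ$-colimit of reduced presheaves need not be reduced.
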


We will prove \Cref{prp:intrinsic gens gluing} at the end of this section.

For now, we note the following result that will be useful in the proof of \Cref{prp:intrinsic gens gluing}:
\begin{lem} \label{lem:strong gens gluing}
	Let $\mathcal K$ be a collection of closed maps in $\mathcal C$ that is stable under quasi-admissible base change, and let $\mathcal G \subseteq \mathcal C$ be a full subcategory such that for any map $i : Z \to S$ in $\mathcal K$, the unit $1 \in D(S)$ is a small colimit of objects of the form $p_\sharp(1)$, where $p$ is a quasi-admissible map $X \to S$ such that either $X \in \mathcal G$, or $X \times_S Z$ is initial.

	Then for any map $i : Z \to S$ in $\mathcal K$, if $D$ is geometrically generated at $S$,\footnote{\ie{} $H^\univ(S) \to D(S)$ is essentially surjective} we have that $D(S)$ is generated under small colimits by objects of the form $p_\sharp(1)$, where $p$ is a quasi-admissible map $X \to S$ such that either $X \in \mathcal G$, or $X \times_S Z$ is initial.
	\begin{proof}
		Since $D$ is geometrically generated at $S$, it suffices to show that for any quasi-admissible $\sigma : S' \to S$, $\sigma_\sharp(1) \in D(S)$ is a small colimit of objects of the form $p_\sharp(1)$ as above. Indeed, since $\mathcal K$ is stable under quasi-admissible base change, we have that the base change $i' : Z' \to S'$ of $i$ is in $\mathcal K$, so $1 \in D(S')$ is a small colimit of objects of the form $p'_\sharp(1)$ where $p'$ is a quasi-admissible map $X \to S'$ such that either $X \in \mathcal G$, or $X \times_{S'} Z' = X \times_S Z$ is initial.

		It follows that $\sigma_\sharp(1)$ is a small colimit of objects of the form $p_\sharp(1)$, where $p = \sigma \circ p'$ with $p'$ as above.
	\end{proof}
\end{lem}

For the remainder of the section, fix a closed map $i : Z \to S$ in $\mathcal C$, and write $j : U \to S$ for its quasi-admissible complement.

\begin{lem} \label{lem:reduced pushforward of empty along closed map}
	If $I$ is an (strict) initial object of $\mathcal C$, the functor $i_* : H^\univ(Z) \to H^\univ(S)$ sends $\yo(I)$ to the presheaf represented by $U \in \mathcal C_S$.

	In particular, for any $X \in \mathcal C_S$, if $X \times_S Z$ is initial, then $\square_i^{H^\univ}(\yo X)$ is coCartesian.
	\begin{proof}
		The functor $i_* : H^\univ(Z) \to H^\univ(S)$ is the functor $\Psh(\mathcal C_Z) \to \Psh(\mathcal C_S)$ given by precomposing by the functor $\mathcal C_S \to \mathcal C_Z$ given by base change along $i$. Since $I$ is a strict initial object of $\mathcal C$, it follows that $i_*$ sends $\yo(I)$ to the presheaf that sends $X \in \mathcal C_S$ to $\pt$ if $X \times_S Z$ is initial, and $\emptyset$ otherwise. This is precisely the presheaf represented by the complement of $i$.

		If $X \in \mathcal C_S$ is such that $X \times_S Z$ is initial, the arrow $X \times_S U \to X$ is an equivalence, so the top horizontal arrow of $\square_i(\yo X)$ is an equivalence, and since the space of maps $\yo(U) \simeq j_\sharp j^* i_* i^* \yo(X) \to i_* \yo(X \times_S Z)$ is contractible, we have just shown that the bottom arrow of $\square_i(\yo X)$ is an equivalence, so $\square_i(\yo X)$ is coCartesian.
	\end{proof}
\end{lem}

We will need the following assumption:
\begin{ass}
	Assume that $i_* : H^\univ(Z) \to H^\univ(S)$ preserves $D$-equivalences between reduced presheaves.
\end{ass}

\begin{rmk}
	The condition that $i_* : H^\univ(Z) \to H^\univ(S)$ preserves $D$-equivalences between reduced presheaves is equivalent to the condition that $i_* : H^\red(Z) \to H^\red(S)$ preserves $\phi$-equivalences, where $\phi : H^\red \to D$ is the unique morphism of pullback formalisms, and $\phi$-equivalences are maps that are sent to equivalences by $\phi$.
\end{rmk}

\begin{rmk}
	Let $\phi : D^\flat \to D$ be a morphism of pullback formalisms on $\mathcal C$. We can check the condition that $i_* : D^\flat(Z) \to D^\flat(S)$ preserves $\phi$-equivalences locally:

	If there is a quasi-admissible $D$-pseudocover of $S$ such that for each base change $i : Z' \to S'$ along a map $S' \to S$ in the pseudocover, $i'_* : D^\flat(Z') \to D^\flat(S')$ preserves $\phi$-equivalences, then $i_* : D^\flat(Z) \to D^\flat(S)$ preserves $\phi$-equivalences.
	\begin{proof}
		By quasi-admissible base change we have a commutative square
		\[
			\begin{tikzcd}
				D^\flat(Z) \ar[d] \ar[r, "i_*"] & D^\flat(S) \ar[d] \\
				\prod_\sigma D^\flat(Z') \ar[r] & \prod_\sigma D^\flat(S')
			\end{tikzcd}
		,\]
		where the products are indexed over elements $\sigma : S' \to S$ of the $D$-pseudocover, and $Z' \to S'$ is the base change of $i$ along $\sigma$. Given a morphism $f$ in $D^\flat(Z)$ that is a $\phi$-equivalence, we have that all of the $(Z' \to Z)^*(f)$ are $\phi$-equivalences, so by our assumption we know that the image of $f$ in each $D^\flat(S')$ is a $\phi$-equivalence. Thus, $\phi i^* f$ is sent to an equivalence in $D(S')$ for each $S' \to S$ in the pseudocover, so it must be an equivalence.
	\end{proof}
\end{rmk}

We will also need an additional assumption:
\begin{ass}
	Assume that the functor $H^\univ(S) \to D(S)$ is a localization.
\end{ass}

In order to emphasize the fact that $H^\univ(S) \to D(S)$ is a localization, we will denote this functor by $L$, and we view $D(S)$ as a full subcategory $L(H^\univ(S))$ of $H^\univ(S)$.

\begin{lem} \label{lem:gluing colimits}

	If $X_0 \in \Psh(\mathcal C_S) \simeq H^\univ(S)$ is a \emph{reduced} presheaf such that $LX_0 \in D(S)$ is a small colimit in $D(S)$ of tensor products of objects $X \in D(S)$ such that $L\square_i(X)$ is coCartesian, then $\square_i^D(LX_0)$ is coCartesian.
	\begin{proof}
		Note that since $i^*$ and $\cls{U} \otimes -$ preserve all $D$-equivalences, and $i_*$ preserves $D$-equivalences between reduced presheaves, we have that $\square_i$ preserves $D$-equivalences between reduced presheaves.
		Thus, for any reduced presheaf $X$ on $\mathcal C_S$, we have an equivalence
		\[
			L\square_i^{H^\univ}(X) \simeq L\square_i^{H^\univ}(LX)
		.\]
		In particular, it suffices to assume $X_0 = LX_0$.

		Using the fact that $\yo(U)$ is $(-1)$-truncated in $H^\univ(S)$, we have that $\yo(U) \times -$, which is equivalent to $\cls{U} \otimes -$, commutes with binary products. It follows that $\square_i$ is compatible with binary products as a functor from $H^\univ(S)$ to commutative squares in $H^\univ(S)$, so since $L : H^\univ(S) \to D(S)$ is monoidal, we have that for any $X,Y \in H^\red(S) \subseteq H^\univ(S)$, 
		\[
			L\square_i(LX \otimes LY) \simeq L\square_i(L(X \times Y)) \simeq L\square_i(X \times Y) \simeq L(\square_i(X) \times \square_i(Y)) \simeq L \square_i(X) \otimes L \square_i(Y)
		.\]
		We also have that if $1_{D(S)}$ is the monoidal unit of $D(S)$, then since $1_{D(S)} \simeq L\pt$ is a reduced presheaf,
		\[
			L\square_i(1_{D(S)}) \simeq L\square_i(L\pt) \simeq L\square_i(\pt)
		,\]
		and this is coCartesian because $\square_i(\pt)$ is always coCartesian simply because the unit $\pt \to i_* i^* \pt$ is always an equivalence.

		Therefore, since the monoidal structure of $D(S)$ is compatible with pushouts in each variable, \emph{it suffices to assume that $X_0$ is $L$ of a small colimit of objects $X \in D(S) \subseteq H^\univ(S)$ such that $L\square_i(X)$ is coCartesian.}

		Since $L : H^\univ(S) \to D(S)$ is a localization, and restricts to a localization on $H^\red(S) \subseteq H^\univ(S)$, it follows that $X_0 \in D(S)$ is equivalent to $L$ of a small colimit in $H^\univ(S)$ of reduced presheaves $X \in H^\red(S) \subseteq H^\univ(S)$ such that $L\square_i^{H^\univ}(X)$ is coCartesian.

		Note that the initial reduced presheaf defines a cone over this diagram, allowing us to extend the diagram to a small weakly contractible category and only change the colimit up to $D$-equivalence (in fact, it only changes it up to an $L_\red$-equivalence). Furthermore, by \Cref{lem:reduced pushforward of empty along closed map}, we know that $\square_i^{H^\univ}$ is coCartesian at the initial reduced presheaf, so we have reduced to the case that $X_0$ is equivalent to $L$ of a small weakly contractible colimit in $H^\univ(S)$ of reduced presheaves $X \in H^\red(S) \subseteq H^\univ(S)$ such that $L \square_i^{H^\univ}(X)$ is coCartesian.

		Since weakly contractible colimits of reduced presheaves are reduced, and $\square_i^{H^\univ}$ preserves $D$-equivalences between reduced presheaves, it follows that $L \square_i(X_0)$ is equivalent to $L \square_i$ of a small colimit of objects $X$ such that $L \square_i X$ is coCartesian. Therefore, $L \square_i(X_0)$ is coCartesian since $\square_i^{H^\univ}$ is colimit-preserving as a functor from $H^\univ(S)$ to squares in $H^\univ(S)$ (this is because $i_* : H^\univ(Z) \to H^\univ(S)$ is colimit-preserving).

		Finally, since $i_*$ preserves $D$-equivalences between reduced presheaves, and $X_0 \in D(S) \subseteq H^\univ(S)$ is a reduced presheaf, it follows from \cite[Lemma F.5(2)]{TwAmb} that $L$ sends the unit map $X_0 \to i_* i^* X_0$ in $H^\univ(S)$ to the corresponding unit map in $D(S)$, whence $L \square_i(X_0) \simeq \square_i^D(X_0)$.
	\end{proof}
\end{lem}

\begin{cor} \label{cor:gluing locality globally on base}
	Suppose that $D(S)$ is generated under small colimits by objects of the form $\cls{X_1 \times_S \dotsb \times_S X_n}$ for $n \geq 1$ and $X_1, \dotsc, X_n \in \mathcal C_S$ such that for all $k$, $X_k \times_S Z$ is initial, or $L\square^{H^\univ}_i(\cls{X_k})$ is coCartesian. Then $D$ has gluing for $i$.
	\begin{proof}
		Note that if $X \in \mathcal C_S$ satisfies that $X \times_S Z$ is initial, then $\square^{H^\univ}_i(\cls{X})$ is coCartesian by \Cref{lem:reduced pushforward of empty along closed map}. Thus, the result follows from \Cref{lem:gluing colimits}, since
		\[
			\cls{X_1 \times_S \dotsb \times_S X_n} \simeq \cls{X_1} \otimes \dotsb \otimes \cls{X_n}
		.\]
	\end{proof}
\end{cor}

\begin{proof}[Proof of \Cref{prp:intrinsic gens gluing}]
	For any map $i : Z \to S$ in $\mathcal K$, \Cref{prp:cover is pseudocover} shows that $i$ has a $D$-pseudocover by quasi-admissible maps $p : X \to S$ such that $X \in \mathcal G$, and the base change of $i$ along $p$ is in $\mathcal K$.
	It follows from \Cref{lem:gluing locality on base} that we can reduce to showing that $D$ has gluing for $i$ when $S \in \mathcal G$.

	In this case, since $H^\univ(S) \to D(S)$ is essentially surjective, \Cref{lem:strong gens gluing} says that $D(S)$ is generated under small colimits by objects of the form $p_\sharp(1)$, where $p : X \to S$ is a quasi-admissible map such that $X \in \mathcal G$ or $X \times_S Z$ is initial. By \Cref{cor:gluing locality globally on base}, we may reduce to showing that $\square_i^{H^\univ}(\cls X)$ is sent to a coCartesian square in $D(S)$ for all quasi-admissible maps $X \to S$ with $X \in \mathcal G$.
\end{proof}
	
\subsection{Stabilizing}  \label{S:stabilizing}

The procedure of ``stabilizing'' pullback formalisms was studied in \cite[\S4]{Fundamentals}. Intuitively, if $D$ is a pullback formalism on a pullback context $\mathcal C$, and for $S \in \mathcal C$ we think of $D(S)$ as ``$S$-parametrized homotopy types'', then we want to construct a stabilized version of $D$ that is still a pullback formalism, and which sends $S \in \mathcal C$ to an appropriate category of ``$S$-parametrized genuine spectra''. As an example, in the case of the pullback formalism $H^\diff$ of \cite[\S5.2]{Fundamentals}, first studied in \cite[\S II]{TwAmb}, when $G$ is a compact Lie group, the category $H^\diff(\B G)$ of $\B G$-parametrized homotopy types can be identified with the $G$-equivariant homotopy category, and the stabilization $\SH^\diff(\B G)$ is the category of genuine $G$-spectra.

In \Cref{S:red to fib gluing}, we will provide tools for establishing the gluing property for pullback formalisms of ``unstable homotopy'', such as $H^\diff, H^\alg, H^\hol$ from \cite[\S5]{Fundamentals}. In this section, we will be interested in deducing the gluing property for the stabilized pullback formalism from the same property for the unstabilized pullback formalism. In fact, the stabilization process decomposes into two steps: first we use \cite[Proposition 4.2.2]{Fundamentals} to produce a pullback formalism of ``pointed homotopy types'', and then we use \cite[Theorem 4.1.16]{Fundamentals} to adjoin $\otimes$-inverses of certain objects. In the case of genuine $G$-spectra, this corresponds to first taking pointed $G$-equivariant homotopy types, and then freely adjoining $\wedge$-inverses of representation spheres.

Thus, given a pullback formalism $D$, we will achieve our goal by showing how to deduce gluing for the associated ``pointed'' pullback formalism $D_\bullet$, and for the pullback formalism $D[A^{\otimes -1}]$ given by formally adjoining $\otimes$-inverses of certain objects.


For the rest of the section, we \textbf{fix a pullback context $\mathcal C$ with a strict initial object, and $D$ a reduced pullback formalism on $\mathcal C$}.

\subsubsection{Pointing}

In \cite[Proposition 4.2.2]{Fundamentals} we produced a pullback formalism $D_\bullet$ that can be thought of as a ``pointed version'' of $D$, which admits a morphism $D \to D_\bullet$ such that for each $S \in \mathcal C$, the functor $D(S) \to D_\bullet(S)$ is equivalent to the pointing functor
\[
	D(S) \xrightarrow{\pt \coprod -} D(S)_{\pt/}
.\]

In particular, $D_\bullet(S)$ is the category of pointed objects in $D(S)$, that is, objects $F$ of $D(S)$ equipped with a pointing $\pt \to F$.

Let us first consider the gluing property for such pointed objects:
\begin{rmk} \label{rmk:gluing at ptd obj}
	Let $i : Z \to S$ be a closed map in $\mathcal C$, let $j : U \to S$ be a quasi-admissible complement of $i$, and let $\pt \xrightarrow{x_0} F$ be a map in $D(S)$. We know that $\square_i(F)$ is
	\[
		\begin{tikzcd}
			F \otimes \cls U \ar[d] \ar[r] & F \ar[d] \\
			\pt \otimes \cls U \ar[r] & i_* i^* F
		\end{tikzcd}
	.\]

	This is the bottom rectangle in the following diagram:
	\[
		\begin{tikzcd}
			\pt \otimes \cls U \ar[d, "x_0 \otimes \cls U"'] \ar[r] & \pt \ar[d] & \\
			F \otimes \cls U \ar[d] \ar[r] & \cofib(x_0 \otimes \cls U) \ar[d] \ar[r] & F \ar[d] \\
			\pt \otimes \cls U \ar[r] & \pt \ar[r] & i_* i^* F
		\end{tikzcd}
	,\]
	where the top square is coCartesian, and the left rectangle is coCartesian. It follows that the square in the bottom left corner is coCartesian, so the bottom rectangle is coCartesian if and only if the bottom right square is coCartesian. Thus, $D$ has gluing for $i$ at $F$ if and only if 
	\[
		\cofib(x_0 \otimes \cls U) \to F \to i_* i^* F
	\]
	is a cofibre sequence.
\end{rmk}

Now we show that the gluing property propagates along pointing. Note that when stabilizing, we generally start with a pullback formalism $D$ such that $D(S)$ has the Cartesian monoidal structure for all $S \in \mathcal C$, so that the monoidal structure  of $D_\bullet(S)$ is given by ``smash product''. In this case, for any map $f$ in $\mathcal C$, the functor $f^*$ automatically preserves terminal objects, as required in the following result:
\begin{lem} \label{lem:gluing for ptd}
	For any closed map $i : Z \to S$ in $\mathcal C$, if $D$ has gluing for $i$, and $i^* : D(S) \to D(Z)$ preserves terminal objects, then $D_\bullet$ has gluing for $i$.
	\begin{proof}
		Since $i^* : D(S) \to D(Z)$ preserves terminal objects, the induced functor $D(S)_{\pt/} \to D(Z)_{\pt/}$ is equivalent to $i^* : D_\bullet(S) \to D_\bullet(Z)$. It follows that the functor $D_\bullet(S) \simeq D(S)_{\pt/} \to D(S)$ sends the unit $\id \to i_* i^*$ in $D_\bullet(S)$ to the unit $\id \to i_* i^*$ in $D(S)$.

		Furthermore, for any $F \in D_\bullet(S)$, the map $F \otimes \cls{U} \to F$ in $D_\bullet(S)$ is sent to the map $(F \otimes \cls{U})/(\pt \otimes \cls{U}) \to F$ in $D(S)$. Now, since $\pt$ is a zero object in $D_\bullet(S)$, $\pt \otimes - \simeq \pt$, so we have that $D_\bullet$ has gluing for $i$ at $F$ if and only if
		\[
			F \otimes \cls U \to F \to i_* i^* F
		\]
		is a cofibre sequence.

		Note that by the usual description of the monoidal structure of $D_\bullet(S) \simeq D(S)_{\pt/}$ in terms of ``smashing'', and since $\cls{U;S}_{D_\bullet}$ is given by adjoining a basepoint to $\cls{U;S}_D$, we have that the functor $D_\bullet(S) \simeq D(S)_{\pt/} \to D(S)$ sends the above sequence to the sequence
		\[
			(F \otimes \cls U)/(\pt \otimes \cls U) \to F \to i_* i^* F
		.\]
		Since this functor is conservative and preserves cofibre sequences, it also reflects cofibre sequences, so we conclude by \Cref{rmk:gluing at ptd obj}.
	\end{proof}
\end{lem}

\subsubsection{Formal \texorpdfstring{$\otimes$}{⊗}-inversions}

Let us now recall the notions needed to consider formal $\otimes$-inversions. See \cite[\S4.1]{Fundamentals} for more details.
\begin{defn} \label{defn:otimes-gen}
	Given a symmetric monoidal category $\mathcal A \in \CAlg(\Cat)$, and a small collection of objects $A$ of $\mathcal A$, we say an object $a \in \mathcal A$ is \emph{$\otimes$-generated} by $A$ if for every symmetric monoidal functor $F : \mathcal A \to \mathcal B$, if $F$ sends every object of $A$ to a $\otimes$-invertible object of $\mathcal B$, then it sends $a$ to a $\otimes$-invertible object of $\mathcal B$.
\end{defn}

\begin{rmk}
	Although \Cref{defn:otimes-gen} is quite abstract, in applications, one usually verifies this condition by showing that there is some $a' \in \mathcal A$ which is some tensor product of elements of $A$, such that $a \otimes a'$ is also a tensor product of elements of $A$.
\end{rmk}

We fix a family $A = \{A_S\}_{S \in \mathcal C}$ such that for each $S \in \mathcal C$, $A_S$ is a small collection of objects in $D(S)$, and for any map $f : X \to Y$ in $\mathcal C$, the functor $f^* : D(Y) \to D(X)$ sends every object in $A_Y$ to an object that is $\otimes$-generated by $A_X$.

\begin{defn} \label{defn:A-lifts}
	Say a map $f : X \to Y$ in $\mathcal C$ \emph{has $A$-lifts} if every element of $A_X$ is $\otimes$-generated by $f^*(A_Y)$.

	Say an object $S \in \mathcal C$ is $A$-good if every quasi-admissible map to $S$ has $A$-lifts, and $A_S$ is $\otimes$-generated by the collection of symmetric objects in $A_S$ in the sense of \cite[Remark 2.20]{robalo} or \cite[Definition 4.1.11]{Fundamentals}.\footnote{An object $x$ of a symmetric monoidal category is said to be symmetric if for some $n \geq 2$, the cyclic permutation of $x^{\otimes n}$ is equivalent to the identity.}
\end{defn}

Now, suppose that $\mathcal G$ is a collection of $A$-good objects in $\mathcal C$ such that every object in $\mathcal C$ admits a $D$-pseudocover by quasi-admissible maps from objects of $\mathcal G$. Then \cite[Theorem 4.1.16]{Fundamentals} shows that there is a \emph{formal $\otimes$-inversion} $\Sigma_A^\infty : D \to D[A^{\otimes -1}]$ -- this is a morphism of pullback formalisms such that the slice projection $\PF(\mathcal C)_{D[A^{\otimes -1}]/} \to \PF(\mathcal C)_{D/}$ is fully faithful with essential image given by those morphisms $D \to E$ such that for every $S \in \mathcal C$, the functor $D(S) \to E(S)$ sends $A_S$ to $\Pic E(S)$.

\begin{prp} \label{prp:stabilize gluing}
	Suppose that $\mathcal K$ is a collection of closed maps in $\mathcal C$ that is stable under base change along quasi-admissible maps from objects of $\mathcal G$, and for any map $i \in \mathcal K$ with codomain in $\mathcal G$,
	\begin{enumerate}

		\item the domain of $i$ is in $\mathcal G$,

		\item $i$ is $D$-closed, and

		\item $i$ has $A$-lifts.

	\end{enumerate}
	Assume that for any object $S \in \mathcal G$, the category $D(S)$ has a zero object. Then every map in $\mathcal K$ is $D[A^{\otimes -1}]$-closed.
\end{prp}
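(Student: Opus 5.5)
First I reduce to the case where the codomain lies in $\mathcal G$. Take $i : Z \to S$ in $\mathcal K$ and choose a quasi-admissible $D$-pseudocover $\{S_k \to S\}$ with each $S_k \in \mathcal G$. Since $\Sigma_A^\infty : D \to D[A^{\otimes -1}]$ is a morphism of pullback formalisms, \Cref{thm:D-topology} shows that $D[A^{\otimes -1}]$ has descent along this family. I expect this to make it a $D[A^{\otimes -1}]$-pseudocover; alternatively, \Cref{prp:cover is pseudocover} applies, because $\Sigma^\infty_A$ preserves units and $(-)_\sharp$. The base changes $i \times_S S_k$ lie in $\mathcal K$ by assumption, and their codomains $S_k$ lie in $\mathcal G$. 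The $D$-closed version of \Cref{lem:gluing locality on base}, applied to $D[A^{\otimes -1}]$, therefore lets me assume $S \in \mathcal G$. Then $Z \in \mathcal G$ as well, $i$ is $D$-closed, and $i$ has $A$-lifts.

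For $S \in \mathcal G$, the category $D(S)$ is pointed and $S$ is $A$-good. I would use the concrete description from \cite[\S4.1]{Fundamentals}: $D[A^{\otimes -1}](S)$ is the colimit in $\PrL$ of the telescope $D(S) \xrightarrow{-\otimes a} D(S) \to \dotsb$, indexed over tensor products of elements of $A_S$. Equivalently, it is the limit of the tower along the right adjoints $\underline{\mathrm{Hom}}(a,-)$; symmetry of the generating objects is what makes this agree with the formal inversion. The same description holds at $Z$, and $A$-lifts for $i$ mean that we may index the inversion at $Z$ by $i^*A_S$. Then $i^*$ is a map of towers, because $i^*$ is symmetric monoidal, and $i_*$ is levelwise $i_*$ by the projection formula $i_*(M \otimes i^*a) \simeq i_*M \otimes a$. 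This projection formula holds for $D$-closed maps by \cite[\S4.2]{6FF}. Likewise $j^*$ and $j_\sharp$ are compatible levelwise by the quasi-admissible projection formula.

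Now I verify closedness for $D[A^{\otimes -1}]$. Levelwise, $D(Z) \xrightarrow{i_*} D(S) \xrightarrow{j^*} D(U)$ is a fibre sequence with $i_*$ fully faithful. Since the transition functors $- \otimes a$ are compatible with $i_*$, $j^*$ and their adjoints, passing to the limit of the right-adjoint towers gives a functor $i_*$ that is fully faithful, because a limit of fully faithful functors is fully faithful. It also gives a gluing square that is levelwise coCartesian. Here I use that $D(S)$ is pointed, so that the relevant colimits in the stable-type telescope are computed levelwise; the zero-object hypothesis enters exactly at this point. From gluing plus fully faithful $i_*$, \cite[Proposition 4.13]{6FF} yields $D[A^{\otimes -1}]$-closedness.

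The main obstacle is the identification of $i_*$ on the formal inversion with the levelwise $i_*$, which requires the tower description to be compatible with $i$. This is where $A$-lifts and $Z \in \mathcal G$ are needed: they ensure that the inversion over $Z$ is computed by the same indexing system $i^*A_S$. A fallback is to check gluing directly. One shows that $D[A^{\otimes -1}](S)$ is generated under colimits and desuspensions by objects $\Sigma^\infty_A(F) \otimes a^{-1}$. Gluing at $\Sigma^\infty_A F$ follows from gluing in $D$, since $\Sigma^\infty_A$ commutes with $i_*$ on these objects. Gluing then extends to all objects because $\square_i$ is compatible with $\otimes a^{-1}$ via the projection formula, and coCartesian squares are preserved by colimits.
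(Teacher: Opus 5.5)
Your reduction to $S \in \mathcal G$ agrees with the paper. So does your proof that $i_*$ is fully faithful on $D[A^{\otimes -1}]$, as a limit of the fully faithful levelwise functors $i_*$. The paper additionally restricts to the anodyne subcontext of objects admitting quasi-admissible maps to $\mathcal G$. This makes every object $A$-good, so the telescope description of \cite[Proposition 4.1.12]{Fundamentals}, which your ``concrete description'' implicitly needs, actually applies.

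The gap is in the gluing step of your main route. Describe $D[A^{\otimes -1}](S)$ as a limit along $\underline{\mathrm{Hom}}(a,-)$. The right adjoint $i_*$ is then computed levelwise, and so is $j^*$, by the quasi-admissible projection formula. The left adjoints $i^*$ and $j_\sharp$, however, only commute with $-\otimes a$. Computing them levelwise would require equivalences
\[
	i^*\underline{\mathrm{Hom}}(a,-) \simeq \underline{\mathrm{Hom}}(i^*a, i^*(-)), \qquad j_\sharp\underline{\mathrm{Hom}}(j^*a,-) \simeq \underline{\mathrm{Hom}}(a, j_\sharp(-)),
\]
and these are not available. So the squares $\square_i(F_n)$ need not assemble into an object of the limit. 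Consequently the gluing square of $D[A^{\otimes -1}]$ at $F = (F_n)_n$ is not levelwise the gluing square of $D$. Pointedness does not repair this. $D(S)$ is only pointed, not stable, so $\underline{\mathrm{Hom}}(a,-)$ need not preserve cofibres, and colimits in the tower are not computed levelwise. In particular, the zero-object hypothesis does not enter where you say it does.

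The paper instead stays on the left-adjoint side. There $\Sigma^\infty_A$ already commutes with $j_\sharp$, $j^*$ and $i^*$, so only $i_*$ has to be controlled, and pointedness is used twice.
\begin{itemize}
	\item By \cite[Theorem 4.19(2)]{6FF}, $i_*$ is a linear right adjoint. This genuinely needs pointedness: for a Cartesian $D$, linearity at $M = \pt$ would force $a \simeq i_* i^* a$. So your attribution of the projection formula to $D$-closedness alone is not right.
	\item By \cite[Proposition 4.13]{6FF}, $i_*$ preserves colimits.
\end{itemize}
With $A$-lifts, \cite[Proposition 4.1.12(5)]{Fundamentals} then gives that $i_*$ on $D[A^{\otimes -1}]$ is linear and that $\Sigma^\infty_A i_* \simeq i_* \Sigma^\infty_A$. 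By \Cref{lem:gluing when have right proj form}, gluing reduces to $j_\sharp 1 \to 1 \to i_* 1$ being a cofibre sequence. This sequence is $\Sigma^\infty_A$ of the corresponding cofibre sequence in $D(S)$, and $\Sigma^\infty_A$ is colimit-preserving between pointed categories.

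Your fallback is essentially this argument, but it has the following gaps.
\begin{itemize}
	\item It rests on $\Sigma^\infty_A i_* \simeq i_* \Sigma^\infty_A$, which you justify only by the projection formula. That Beck--Chevalley equivalence also needs the levelwise $i_*$ to commute with the sequential colimits defining the insertions into the telescope. This is precisely the missing use of the zero objects.
	\item The same colimit input is needed to know that $\square_i$ commutes with colimits in $D[A^{\otimes -1}](S)$.
	\item $D[A^{\otimes -1}](S)$ need not be stable, so ``desuspensions'' are not available in general; tensoring with $a^{-1}$ suffices.
	\item Once linearity is known, the generation argument is unnecessary, since the gluing sequence at $F$ is $F \otimes (j_\sharp 1 \to 1 \to i_* 1)$.
\end{itemize}
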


The proof of \Cref{prp:stabilize gluing} will make use of the following notion, which is a dual version of the projection formula satisfied by quasi-admissible maps:
\begin{defn}
	A symmetric monoidal functor $i^* : \mathcal B \to \mathcal A$ is said to have a \emph{linear right adjoint} if it has a right adjoint $i_*$ such that the natural map
	\[
		i_* \otimes - \to i_*(- \otimes i^*)
	\]
	is an equivalence.
\end{defn}

\begin{lem} \label{lem:gluing when have right proj form}
	If $i : Z \to S$ is a closed map in $\mathcal C$ such that $i^* : D(S) \to D(Z)$ has a linear right adjoint, and $D(S)$ is pointed, then $D$ has gluing for $i$ if and only if
	\[
		j_\sharp(1) \to 1 \to i_*(1)
	\]
	is a cofibre sequence, where $j$ is a quasi-admissible complement of $i$.
	\begin{proof}
		Since $D(S)$ is pointed, we know that $D$ has gluing for $i$ if and only if for all $F \in D(S)$,
		\[
			\cls{U} \otimes  F \to F \to i_* i^* F
		\]
		is a cofibre sequence.

		Since $i_*$ is a linear right adjoint of $i^*$, this is equivalent to
		\[
			F \otimes (j_\sharp 1 \to 1 \to i_* 1)
		,\]
		so $j_\sharp 1 \to 1 \to i_* 1$ is a cofibre sequence if and only if $j_\sharp j^* F \to F \to i_* i^* F$ is a cofibre sequence for all $F \in D(S)$.
	\end{proof}
\end{lem}

\begin{proof}[Proof of \Cref{prp:stabilize gluing}]
	By \Cref{thm:D-topology} we have that every $D$-pseudocover consisting of quasi-admissible maps is also a $D[A^{\otimes-1}]$-pseudocover, so by \Cref{lem:gluing locality on base}, it suffices to show that every map in $\mathcal K$ to objects of $\mathcal G$ is $D[A^{\otimes -1}]$-closed. Thus, let $i : Z \to S$ be a map in $\mathcal K$ where $S \in \mathcal G$. It follows that $Z \in \mathcal G$, and that $i$ is $D$-closed and has $A$-lifts.

	Let $\mathcal C' \subseteq \mathcal C$ be the full anodyne pullback subcontext consisting of objects that admit quasi-admissible maps to objects of $\mathcal G$. It follows from \cite[Remark 4.1.15]{Fundamentals} that every object of $\mathcal C'$ is $A$-good, and by \cite[Proposition 4.1.13]{Fundamentals}, it suffices to show the result after replacing $\mathcal C$ by $\mathcal C'$, so that we may apply \cite[Proposition 4.1.12]{Fundamentals} to study the formal $\otimes$-inversion $D \to D[A^{\otimes -1}]$.

	Since $D(S)$ is pointed, and $i$ is $D$-closed, \cite[Theorem 4.19(2)]{6FF} shows that $i_* : D(Z) \to D(S)$ is a linear right adjoint of $i^* : D(S) \to D(Z)$. By \cite[Proposition 4.13]{6FF}, since $D(Z)$ and $D(S)$ have zero objects, we have that $i_* : D(Z) \to D(S)$ is colimit-preserving, so since $i$ has $A$-lifts, \cite[Proposition 4.1.12(5)]{Fundamentals} says that $i_* : D[A^{\otimes -1}](Z) \to D[A^{\otimes -1}](S)$ is a linear right adjoint of $i^*$, and the natural map $\Sigma_A^\infty i_* \to i_* \Sigma_A^\infty$ is an equivalence. 
	Since we also know that the natural map $j_\sharp \Sigma_A^\infty \to \Sigma_A^\infty j_\sharp$ is an equivalence, and $\Sigma_A^\infty$ preserves monoidal units, it follows that $\Sigma_A^\infty$ functor sends the sequence
	\[
		j_\sharp 1 \to 1 \to i_* 1
	\]
	in $D(S)$ to the same sequence in $D[A^{\otimes -1}](S)$. Since $D$ has gluing for $i$, this is a cofibre sequence in $D(S)$, so since $\Sigma_A^\infty : D(S) \to D[A^{\otimes -1}](S)$ is a colimit-preserving functor between categories that have zero objects, the same sequence in $D[A^{\otimes -1}](S)$ is also a cofibre sequence.

	Thus, we may conclude that $D[A^{\otimes - 1}]$ has gluing for $i$ by \Cref{lem:gluing when have right proj form}.

	Finally, we will show that $i_*$ is conservative. Using \cite[Proposition 4.1.12(2)]{Fundamentals}, and considering the proof of \cite[Proposition E.0.4]{Fundamentals} or \cite[\S6.1]{sixopsequiv}, we have that $i^* : D[A^{\otimes -1}](S) \to D[A^{\otimes -1}](Z)$ is a colimit of a diagram that sends each vertex to the functor $i^* : D(S) \to D(Z)$. Therefore, the right adjoint $i_* : D[A^{\otimes -1}](Z) \to D[A^{\otimes -1}](S)$ is a limit of a diagram that sends each vertex to the functor $i_* : D(Z) \to D(S)$. We know that this functor is fully faithful by \cite[Proposition 4.13]{6FF}, and limits of fully faithful functors are fully faithful (in particular, conservative), so $i_* : D[A^{\otimes -1}](Z) \to D[A^{\otimes -1}](S)$ is conservative.
\end{proof}

\section{Establishing Gluing for Unstable Homotopy Theories} \label{S:main gluing}

We think of pullback formalisms of the form $H^\tau$ as unstable homotopy theories. For example, unstable motivic homotopy theory is a pullback formalism of this form, and \cite[Theorem II.4.4.16]{TwAmb} shows that usual equivariant homotopy theory is also given by a pullback formalism of this form. By the results of \Cref{S:stabilizing}, in order to show gluing for \emph{stable} homotopy theories, it is sufficient to first show it for unstable ones.

In this section, we will explore more specialized techniques for establishing gluing for these types of pullback formalisms.

Fix a pullback context $\mathcal C$ with a \emph{strict initial object}, which is an initial object $\emptyset$ such that every map to $\emptyset$ is invertible.

We will make use of the following definition:
\begin{defn} \label{defn:fibre gluing}
	Let $i : Z \to S$ be a closed map in $\mathcal C$, and let $D$ be a pullback formalism on $\mathcal C$. For any $F \in D(S)$, and map $t : V \to i_* i^* F$ in $D(S)$, define
	\[
		\Theta_i(F,t) : \theta_i(F,t) \to V
	\]
	to be the base change of the coCartesian gap of $\square_i(F)$ along $t$. Sometimes we will abuse notation and write $\Theta_i(F,t) : \theta_i(F,t) \to V$ when $t$ is actually a map $i^* V \to i^* F$, by identifying it with its adjunct map $V \to i_* i^* F$.
\end{defn}

For LC pullback formalisms, it is easy to reduce the problem of showing gluing to the problem of showing that certain maps $\Theta_i(F,t)$ are equivalences using \Cref{lem:check equiv on fibres for LCPF}. In \Cref{S:red to fib gluing}, we will show that when $D$ is of the form $H^\tau$, we can still reduce to the study of objects $\theta_i(F,t)$. In fact, these results will reduce the problem of gluing in $H^\tau$ to the ``contractibility'' of certain presheaves $\hat \theta_i(F,t) \in \Psh(\mathcal C_{/S})$.

The main utility of this is that we have more tools available to manipulate the presheaves $\hat \theta_i(X,t)$. The most important one is given by \Cref{thm:gluing is adm invar}, which lets us work locally in an ``\'{e}tale neighbourhood'' of $t$ in $X$. We then use this result to reduce $X,t$ to cases where it is easy to establish that $\hat \theta_i(X,t)$ is contractible in the appropriate sense (\cf{} \cite[Lemma 4.17]{sixopsequiv} and \cite[Lemma II.5.2.11]{TwAmb}). 

Before coming to our main results, let us go over some basic properties of the maps $\Theta_i(F,t)$.

\begin{rmk} \label{rmk:pullback gluing}
	Let $\sigma : S' \to S$ be a map in $\mathcal C$, and let $i : Z \to S$ be a closed map in $\mathcal C_S$ such that the base change $i' : Z' \to S'$ of $i$ along $\sigma$ exists and is closed. Then the Beck-Chevalley transformations induce a map $\sigma^* \circ \square_i \to \square_{i'} \circ \sigma^*$. If $\sigma$ is quasi-admissible, then base change implies that this morphism is invertible.

	Furthermore, if $F \in D(S)$, and $t : V \to i_*i^* F$, we get a map $t' : \sigma^*(W) \to i'_* (i')^* \sigma^* F$ as the composite
	\[
		\sigma^*(t) \xrightarrow{\sigma^*(t)} \sigma^*(i_* i^* F) \to i'_* (i')^* \sigma^* F
	,\]
	When $\sigma$ is quasi-admissible, the second map is invertible, and we get an equivalence
	\[
		\sigma^* \Theta_i(F,t) \to \Theta_{i'}(\sigma^* F, t')
	.\]

	We will often simply write $\sigma^* t, \sigma^{-1}(i)$ instead of $t', i'$, so we have an equivalence
	\[
		\sigma^* \Theta_i(F,t) \xrightarrow{\sim} \Theta_{\sigma^{-1}(i)}(\sigma^* F, \sigma^* t)
	.\]
\end{rmk}

\begin{rmk} \label{rmk:gluing relative fundamental class}
	Let $D$ be a LC pullback formalism on $\mathcal C$. Then for any closed $i : Z \to S$, $F \in D(S)$, $t_0 : V \to i_* i^* F$, quasi-admissible $\sigma : S' \to S$, and $t : \sigma_\sharp(W) \to V$, we have that
	\[
		\Theta_i(F,t_0 \circ t) \simeq \sigma_\sharp \Theta_{i'}(\sigma^* F, t')
	,\]
	where $i'$ is the base change of $i$ along $\sigma$, and $t'$ corresponds to the map $W \to (i')_* (i')^* \sigma^* F \simeq \sigma^* (i_* i^* F)$ adjunct to $t_0 \circ t$.
\end{rmk}

\begin{lem} \label{lem:gluing locality and excision on base}
	Let $\phi : D \to D'$ be a morphism of pullback formalisms on $\mathcal C$. Let $i : Z \to S$ be a closed map in $\mathcal C$, and let $\{\sigma_k : S_k \to S\}$ be a quasi-admissible $D'$-pseudocover of $i$ (recall \Cref{defn:covers dense generators}).

	Using the notation of \Cref{rmk:pullback gluing}, for any $t : A \to i_* i^* F$ in $D(S)$, we have that $\phi \Theta_i(F,t)$ is invertible if and only if $\phi \Theta_{\sigma_k^{-1}(i)}(\sigma_k^* F, \sigma_k^* t)$ is invertible for all $k$.
	\begin{proof}
		Note that for any quasi-admissible $\sigma : S' \to S$, using the notation of Remark \ref{rmk:pullback gluing}, we have
		\[
			\sigma^* \phi \Theta_i(F,t) \simeq \phi \sigma^* \Theta_i(F,t) \simeq \phi \Theta_{\sigma^{-1}(i)}(\sigma^* F, \sigma^* t)
		,\]
		and in particular, if $\phi \Theta_{\sigma^{-1}(i)}(\sigma^* F, \sigma^* t)$ is invertible, so is $\sigma^* \phi \Theta_i(F,t)$.

		Thus, by hypothesis we have that for each $k$, $\sigma_k^* \phi \Theta_i(F,t)$ is invertible.

		Now, if $\sigma' : S' \to S$ is disjoint from $i$, then $\sigma'$ factors as
		\[
			S' \xrightarrow{\varsigma} S \setminus Z \xrightarrow{\sigma} S
		,\]
		where $\sigma$ is quasi-admissible since $i$ is closed. Note that $\sigma^* i$ is $\emptyset \to S \setminus Z$, so by \cite[Lemma 4.4]{6FF}, we have that $\Theta_{\sigma^{-1}(i)}(\sigma^* F, \sigma^* t)$ is invertible, and therefore
		\[
			(\sigma')^* \phi \Theta_i(F,t) \simeq \varsigma^* \sigma^* \phi \Theta_i(F,t) \simeq \varsigma^* \phi \Theta_{\sigma^{-1}(i)}(\sigma^*F, \sigma^* t)
		\]
		is invertible.

		Since we have a $D'$-pseudocover of $S$ given by $\{\sigma_k\}_k \cup i^\complement$, it therefore follows that $\phi \Theta_i(F,t)$ is invertible.
	\end{proof}
\end{lem}

\subsection{Reducing to Local Gluing} \label{S:red to fib gluing}

Let $\mathcal C$ be a small pullback context with a strict initial object, and let $\tau$ be a small quasi-admissible pseudotopology on $\mathcal C$ that is reduced -- \ie{} if $I$ is an initial object of $\mathcal C$, then $\emptyset \to \yo(I)$ is a $\tau$-acyclic pseudosieve, and any map to $I$ is invertible.

Before coming to the statement of our main result, we introduce the following notation:
\begin{nota} \label{nota:completed fibre gluing}
	Let $i : Z \to S$ be a map in $\mathcal C$ with complement $U \to S$. Recalling the notation of \Cref{exa:completed Huniv is LC}, we write
	\[
		\hat \square_i \coloneqq \square^{\hat H^\univ}_i \simeq \square^{\bar H^\univ_{\Psh(\mathcal C)}}_{\yo(i)}
	,\]
	which is the square of endofunctors in $\Psh(\mathcal C_{/S}) \simeq \Psh(\mathcal C)_{/\yo(S)}$ given by
	\[
		\hat \square_i(F) = \begin{tikzcd}
			F \times_{\yo(S)} \yo(U) \ar[d] \ar[r] & F \ar[d] \\
			F(\emptyset) \times \yo(U) \ar[r] & i_* (F \times_{\yo(S)} \yo(Z))
		\end{tikzcd}
	,\]
	where $i_* : \Psh(\mathcal C_{/Z}) \to \Psh(\mathcal C_{/S})$ is the right adjoint of
	\[
		\Psh(\mathcal C_{/S}) \simeq \Psh(\mathcal C)_{/\yo(S)} \xrightarrow{- \times_{\yo(S)} \yo(Z)} \Psh(\mathcal C)_{/\yo(Z)} \simeq \Psh(\mathcal C_{/Z})
	.\]

	Similarly, we define
	\[
		\hat \Theta_i : \hat \theta_i \to i_* (- \times_{\yo(S)} \yo(Z))
	\]
	to be the coCartesian gap of $\hat \square_i$, and for any map $t : V \to i_* i^* F$ in $\Psh(\mathcal C_{/S})$, we write
	\[
		\hat \Theta_i(F,t) : \hat \theta_i(F,t) \to V
	\]
	for the base change of $\hat \Theta_i$ along $t$.

	For any $X \in \mathcal C_{/S}$, a map $t : Z \to X$ over $S$ can be seen as a map $t : \pt \to i_* i^* X$ in $\hat H^\univ(S)$, and we write $\hat \theta_i(X,t)$ for $\theta^{\hat H^\univ}_i(\yo(X),t)$, so
	\[
		\hat \theta_i(X,t) \simeq \hat \theta_i(X) \times_{i_*(\yo(X) \times_{\yo(S)} \yo(Z))} \pt
	.\]
\end{nota}

\begin{rmk}
	It is not really necessary for $\mathcal C$ to be small in order to make the definitions of \Cref{nota:completed fibre gluing}.
\end{rmk}

\begin{defn}
	Given a $S \in \mathcal C$, a presheaf in $\Psh(\mathcal C_{/S})$ is said to be \emph{$\tau$-locally contractible} if it is $\tau$-locally equivalent to a terminal presheaf.
\end{defn}

\begin{rmk} \label{rmk:reduce by anodyne}
	Let $F : \mathcal C' \to \mathcal C$ be an anodyne morphism of small pullback contexts, \ie{} it is a functor between pullback contexts that preserves quasi-admissible maps and base changes along quasi-admissible maps, and such that for any $S' \in \mathcal C'$, the induced functor $\mathcal C'_{S'} \to \mathcal C_{F(S')}$ is an equivalence.

	Using \cite[Lemmas 2.1.13 and 2.2.4]{Fundamentals}, there is a canonical quasi-admissible pseudotopology $\tau'$ on $\mathcal C'$ such that the morphism
	\[
		H^{\tau'} \to F^* H^\tau \coloneqq H^\tau \circ F^\op
	\]
	is an equivalence.

	In particular, if $i : Z \to S$ is a closed map in $\mathcal C'$, such that $F(i)$ is also closed in $\mathcal C$, then $H^\tau$ has gluing for $F(i)$ if and only if $H^{\tau'}$ has gluing for $i$.

	Thus, in all of our results, instead of considering $\tau$-local contractibility in categories of the form $\Psh(\mathcal C_{/F(S)})$, it suffices to show $\tau'$-local contractibility in categories of the form $\Psh(\mathcal C'_{/S})$.
\end{rmk}

\begin{thm} \label{thm:global pseudotop gluing}
	Let $\mathcal G \subseteq \mathcal C$ be a full subcategory, and let $\mathcal K$ be a collection of closed maps in $\mathcal C$. Assume the following:
	\begin{enumerate}

		\item $\mathcal G$ satisfies that
			\begin{enumerate}

				\item for any quasi-admissible map $X \to S$, if $S \in \mathcal G$, then $X \in \mathcal G$, and

				\item every object of $\mathcal C$ admits a $H^\tau$-pseudocover consisting of quasi-admissible maps from objects of $\mathcal G$.

			\end{enumerate}

		\item Any map $i : Z \to S$ in $\mathcal K$ satisfies that
			\begin{enumerate}

				\item any base change of $i$ along a quasi-admissible map is still in $\mathcal K$, and

				\item if $S \in \mathcal G$, then
					\begin{enumerate}

						\item the functor $i_* : H^\univ(Z) \to H^\univ(S)$ preserves $\tau$-local equivalences between reduced presheaves, and

						\item for any quasi-admissible maps $X \to S$, if $X \in \mathcal G$, then for any $t : Z \to X$ over $S$, the presheaf
							\[
								\hat \theta_i(X, t) \in \Psh(\mathcal C_{/S})
							\]
							is $\tau$-locally contractible.

					\end{enumerate}

			\end{enumerate}

	\end{enumerate}
	Then $H^\tau$ has gluing for all $i \in \mathcal K$.
\end{thm}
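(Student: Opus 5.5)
The plan is to deduce the theorem from \Cref{prp:intrinsic gens gluing} applied with $D = H^\tau$ and the given $\mathcal G$ and $\mathcal K$. Condition (1) of that proposition is hypothesis 2(a). For condition (2), hypotheses 1(a) and 1(b), together with \Cref{prp:cover is pseudocover}, show that $1 \in H^\tau(S)$ is a small colimit of objects $\cls{X_{i_1}\times_S\dotsb\times_S X_{i_n}}$ with the $X_{i_k}\in\mathcal G$. These fibre products lie in $\mathcal G$ by 1(a), since they are quasi-admissible over $X_{i_1}\in\mathcal G$. Condition 3(a) is hypothesis 2(b)(i). Condition 3(b) is \Cref{prp:univ invar}. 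What remains is condition 3(c): for $S\in\mathcal G$ and quasi-admissible $X\to S$ with $X\in\mathcal G$, the localization $L_\tau$ sends $\square_i^{H^\univ}(\yo X)$ to a coCartesian square.

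To verify 3(c), I would first move to $\Psh(\mathcal C_{/S})$. The inclusion $\mathcal C_S\subseteq\mathcal C_{/S}$ is anodyne, and restriction $\Psh(\mathcal C_{/S})\to\Psh(\mathcal C_S)$ carries $\hat\square_i(\yo X)$ to $\square_i^{H^\univ}(\yo X)$. This holds because restriction commutes with colimits and limits, and $i_*$ in both settings is precomposition with base change along $i$. By \Cref{lem:res local equiv}, applicable since $\tau$ is quasi-admissible so that its pseudosieves are weakly $\yo$-quasi-admissible by \Cref{lem:yo-qadm}, restriction sends $\tau$-acyclic maps to $\tau$-local equivalences, and hence sends $\tau$-local equivalences to $\tau$-local equivalences. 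It therefore suffices to show that the gap map $\hat\Theta_i(\yo X)\colon \hat\theta_i(X)\to i_*(\yo X\times_{\yo S}\yo Z)$ is a $\tau$-local equivalence in $\Psh(\mathcal C_{/S})$. In other words, $\hat L_\tau$ of \Cref{exa:completed invar PF} should invert it.

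To prove this, I would use \Cref{lem:check equiv on fibres for LCPF} with the LC pullback formalism $\hat H^\univ$ and the morphism $\phi=\hat L_\tau$. The target $i_*(\yo X\times\yo Z)$ is the colimit of its representables $\yo(T)\to i_*(\ldots)$ with $T\in\mathcal C_{/S}$, and each such representable is $\sigma_\sharp(\pt)$ for $\sigma\colon T\to S$. This requires $\sigma$ to be quasi-admissible in the extended sense of \Cref{exa:completed invar PF}, where $\hat L_\tau$ is a morphism of pullback formalisms on $\Psh(\mathcal C)$ with all maps quasi-admissible. By \Cref{rmk:gluing relative fundamental class}, the fibres are $\hat\Theta_{i'}(\yo(X\times_S T), t')$ with $t'\colon Z\times_S T\to X\times_S T$. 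This gives the presheaves $\hat\theta_{i'}(X_T,t')$ over a general $T$, rather than over $S\in\mathcal G$ with $X\in\mathcal G$.

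This reduction to fibres over arbitrary $T$ is the main obstacle, because the hypothesis only covers $T\in\mathcal G$ with $X\to T$ quasi-admissible. I would first try working only with quasi-admissible $T\to S$, which keeps $T$ in $\mathcal G$ by 1(a). The idea is to use \Cref{lem:strong gens gluing}-style generation: replace $\yo X$ by a reduced presheaf whose $\tau$-localization is generated by representables from $\mathcal G$ or disjoint from $Z$. Hypothesis 2(b)(i) then lets $i_*$ preserve $L_\tau$-equivalences, so the target $i_*i^*\yo X$ can be resolved, up to $\tau$-local equivalence, by quasi-admissible $T\to S$. Base change along such $T$ preserves membership in $\mathcal K$ and in $\mathcal G$, and the fibres are exactly $\hat\theta_{i'}(X_T,t')$ with $X_T\in\mathcal G$; these are $\tau$-locally contractible by 2(b)(ii). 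Fibres over $T$ disjoint from $Z$ would be handled by \Cref{lem:reduced pushforward of empty along closed map}. Making this resolution compatible with the LC fibre argument, rather than the genuinely LC pullback formalism on all of $\mathcal C_{/S}$, is the delicate point, and it is where I expect most of the work to go.
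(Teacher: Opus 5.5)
Your reduction to \Cref{prp:intrinsic gens gluing} is the paper's route (via \Cref{prp:global pseudotop gluing}). Your checks of conditions (1), (2), 3(a) and 3(b) are fine. You are also right that for quasi-admissible $S'\to S$ the presheaves $\hat\theta_{i\times_S S'}(X\times_S S',t)$ fall under 2(a), 1(a) and 2(b)(ii).

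\textbf{The gap is in 3(c).} You reduce to showing that the full gap map $\hat\Theta_i(\yo X)$ is a $\tau$-local equivalence in $\Psh(\mathcal C_{/S})$. That would suffice, but it cannot be derived from the hypotheses. Feeding it into \Cref{lem:check equiv on fibres for LCPF} produces fibres over arbitrary $T\to S$, where $T$ need not lie in $\mathcal G$ and $i\times_S T$ need not lie in $\mathcal K$.

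Your proposed repair does not address this. The problem is not the source $\yo X$; it is the values of the target $i_*(\yo X\times_{\yo S}\yo Z)$ on non-quasi-admissible $T$. In $H^\univ(S)=\Psh(\mathcal C_S)$, the object $i_*i^*\yo X$ is already, on the nose, the canonical colimit of $\yo(S')$ over quasi-admissible $S'\to S$. So 2(b)(i) plays no role at this step, and neither does reducedness. Moreover, a resolution that is only ``up to $\tau$-local equivalence'' would not help. Base changing the gap map along such a map need not preserve $\tau$-local equivalence, which is why \Cref{lem:pseudotop gluing} requires a \emph{universal} $\tau$-local equivalence. The missing ingredient is a way to transport the quasi-admissible decomposition from $\Psh(\mathcal C_S)$ into $\Psh(\mathcal C_{/S})$, where the LC argument lives.

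\textbf{The missing device (\Cref{lem:pseudotop gluing}).} Do not test $\hat\Theta_i(I\yo X)$ itself. Instead test its base change $\zeta$ along the Beck--Chevalley composite $\lambda\colon I(i_*i^*\yo X)\to i_*I i^*\yo X\simeq i_*i^*I\yo X$, where $I\colon H^\univ\to\hat H^\univ$.

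\begin{enumerate}
\item Let $R$ denote restriction to $\mathcal C_S$. By \Cref{rmk:right BC Huniv}, $R\lambda$ is an equivalence.
\item Hence $R\zeta\simeq R\hat\Theta_i(I\yo X)\simeq\Theta_i(\yo X)$, using \Cref{rmk:restrict completed gluing}.
\item By \Cref{lem:res local equiv}, it therefore suffices that $\hat L_\tau\zeta$ is an equivalence.
\item The target of $\zeta$ is now $I(i_*i^*\yo X)$, which is the colimit of the $\yo(S')=\sigma_\sharp(\pt)$ over quasi-admissible $\sigma\colon S'\to S$.
\item So \Cref{lem:check equiv on fibres for LCPF} together with \Cref{rmk:pullback gluing} produces only the maps $\hat\theta_{i\times_S S'}(X\times_S S',t)\to\pt$ with $S'\to S$ quasi-admissible. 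These are $\tau$-local equivalences by your hypotheses.
\end{enumerate}

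This is \Cref{prp:pseudotop gluing}, and it closes 3(c).
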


This result follows easily from \Cref{prp:cover is pseudocover} and the following more general result:

\begin{prp} \label{prp:global pseudotop gluing}
	Let $\mathcal G \subseteq \mathcal C$ be a full subcategory, and let $\mathcal K$ be a collection of closed maps in $\mathcal C$ such that for any $i : Z \to S$ in $\mathcal K$, the following conditions hold:
	\begin{enumerate}

		\item Any base change of $i$ along a quasi-admissible map is still in $\mathcal K$.

		\item The terminal object of $\Psh(\mathcal C_S)$ is $\tau$-locally equivalent to a small colimit of presheaves that are $\tau$-locally equivalent to presheaves of the form $\yo(X)$ such that $X \to S$ is a quasi-admissible map and either $X \in \mathcal G$ or $X \times_S Z$ is initial.

		\item If $S \in \mathcal G$, then
			\begin{enumerate}

				\item the functor $i_* : H^\univ(Z) \to H^\univ(S)$ preserves $\tau$-local equivalences between reduced presheaves, and

				\item for any quasi-admissible maps $X,S' \to S$, if $X,S' \in \mathcal G$, then for any $t$ corresponding to a map $Z \times_S S' \to X$ over $S$, the presheaf
					\[
						\hat \theta_{i \times_S S'}(X \times_S S', t) \in \Psh(\mathcal C_{/S'})
					\]
					is $\tau$-locally contractible.

			\end{enumerate}

	\end{enumerate}
	Then $H^\tau$ has gluing for all $i \in \mathcal K$.
\end{prp}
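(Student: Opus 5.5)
The plan is to deduce the statement from \Cref{prp:intrinsic gens gluing}, applied with $D = H^\tau$ and the same $\mathcal K$ and $\mathcal G$. Hypothesis (1) of \Cref{prp:intrinsic gens gluing} is our hypothesis (1). For hypothesis (2), recall that $H^\univ(S) \to H^\tau(S)$ is the localization along the $\tau$-local equivalences (\Cref{prp:univ invar}) and that it preserves colimits. Our hypothesis (2) then says that $1 = L\pt$ is a small colimit in $H^\tau(S)$ of objects $\cls{X} = L\yo(X)$ of the required kind. Hypotheses (3a) and (3b) hold by assumption and by \Cref{prp:univ invar}. So the only real work is (3c): for $S \in \mathcal G$ and quasi-admissible $X \to S$ with $X \in \mathcal G$, we must show that $L\square_i^{H^\univ}(\yo X)$ is coCartesian, \ie{} that the gap map $\yo(U) \amalg_{\yo(X \times_S U)} \yo(X) \to i_*\yo(X \times_S Z)$ in $\Psh(\mathcal C_S)$ is a $\tau$-local equivalence.

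To do this, I would first pass to the LC pullback formalism $\hat H^\univ$ on $\mathcal C_{/S}$. Restriction along $\mathcal C_S \subseteq \mathcal C_{/S}$ carries $\hat\square_i(\yo X)$ to $\square_i^{H^\univ}(\yo X)$: restriction commutes with colimits and with $i_*$, and $X \times_S U$ and $X \times_S Z$ restrict to the represented presheaves. By \Cref{lem:res local equiv}, restriction sends $\tau$-acyclic maps to $\tau$-local equivalences. Here $\mathcal C_{S}$ is an anodyne subcontext of $\mathcal C_{/S}$, and $\tau$-acyclic pseudosieves are weakly $\yo$-quasi-admissible by \Cref{lem:yo-qadm}. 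It therefore suffices to show that the gap map $\hat\Theta_i(\yo X)$ in $\Psh(\mathcal C_{/S})$ becomes an equivalence under $\hat L_\tau$, or more precisely that it is $\tau$-acyclic.

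For this I would apply \Cref{lem:check equiv on fibres for LCPF} with $\phi = \hat L_\tau$, writing the target $Y = i_*\yo(X \times_S Z)$ as a colimit of representables. A section $\yo(S') \to Y$ over a map $\sigma: S' \to S$ is a map $t : Z \times_S S' \to X$ over $S$. By \Cref{rmk:gluing relative fundamental class}, the corresponding fibre is $\sigma_\sharp\hat\theta_{i \times_S S'}(X \times_S S', t)$. The difficulty is that \Cref{lem:check equiv on fibres for LCPF} needs quasi-admissible $\sigma_a$ with $S' \in \mathcal G$, while the natural colimit runs over all of $\mathcal C_{/S}$. To handle this I would instead check $\tau$-acyclicity, which by definition is tested on base changes along maps $\yo(S') \to Y$. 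Pulling back along such a map reduces, via \Cref{rmk:pullback gluing}, to showing that $\hat\theta_{i'}(X',t)$ is $\tau$-locally contractible over $S'$.

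For $S'$ not in $\mathcal G$, I would cover $S'$ using hypothesis (2), applied to $i \times_S S'$, which lies in $\mathcal K$ when $S'$ is quasi-admissible, together with \Cref{lem:gluing locality and excision on base}. This reduces to $S' \in \mathcal G$ quasi-admissible over $S$, or to the case disjoint from $Z$, where the map is trivially invertible by \Cref{lem:reduced pushforward of empty along closed map}. In the $\mathcal G$ case, hypothesis (3b) applies directly. This reduction from arbitrary $S'$ to quasi-admissible $S' \in \mathcal G$ is the main obstacle. Making it precise likely requires working with $H^\tau$-pseudocovers only among quasi-admissible test objects and using \Cref{lem:res local equiv} once more to move between $\mathcal C_S$ and $\mathcal C_{/S}$. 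Finally, since $\yo X$ is reduced, \Cref{lem:gluing colimits} identifies $L\square_i^{H^\univ}$ with $\square_i^{H^\tau}$, which concludes the argument.
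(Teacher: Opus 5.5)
Your reduction to condition (3c) of \Cref{prp:intrinsic gens gluing} is what the paper does. Passing to $\hat H^\univ$ via restriction, \Cref{lem:res local equiv} and \Cref{lem:check equiv on fibres for LCPF} is also the right toolkit. But the step you call ``the main obstacle'' is a genuine gap, and your proposed workaround fails, for two reasons.

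First, you conflate $\tau$-acyclic maps with $\tau$-local equivalences. Hypothesis (3b) only says that $\hat\theta_{i'}(X',t)\to\pt$ is a $\tau$-local equivalence. It does not say that $\hat\theta_{i'}(X',t)\to\yo(S')$ is a $\tau$-acyclic pseudosieve, so a test of $\tau$-acyclicity on base changes can never be fed by the hypotheses. The correct target is that $\hat L_\tau$ inverts the map. Restriction then still yields a $\tau$-local equivalence, because it preserves colimits and the $\tau$-local equivalences in $\Psh(\mathcal C_{/S})$ are the strongly saturated class generated by the $\tau$-acyclic maps.

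Second, and more seriously, $\hat\Theta_i(\yo X)$ has target $i_*\yo(X\times_S Z)\in\Psh(\mathcal C_{/S})$, whose sections live over arbitrary $T\to S$. For non-quasi-admissible $T\to S$ nothing in the hypotheses controls $\hat\theta_{i\times_S T}(X\times_S T,t)$:
\begin{itemize}
\item hypothesis (1) only gives stability of $\mathcal K$ under quasi-admissible base change, so $i\times_S T$ need not lie in $\mathcal K$ and hypothesis (2) cannot be applied to it;
\item for any quasi-admissible $S''\to T$, the composite $S''\to S$ is still not quasi-admissible, so (3b) never applies.
\end{itemize}
So you cannot prove from these hypotheses that the full map $\hat\Theta_i(\yo X)$ is a $\tau$-local equivalence. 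This is exactly the $\Sm_S$ versus $\Sch_S$ subtlety discussed in the introduction.

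The missing idea is the content of \Cref{lem:pseudotop gluing}: you never need the whole map, only its restriction $R\hat\Theta_i(I\yo X)\simeq\Theta_i(\yo X)$ (\Cref{rmk:restrict completed gluing}). The argument runs as follows.
\begin{enumerate}
\item Base change $\hat\Theta_i(I\yo X)$ along the Beck--Chevalley map $\lambda : I i_*i^*\yo X\to i_*i^*I\yo X$, and call the result $\zeta$.
\item By \Cref{rmk:right BC Huniv} and $RI\simeq\id$, the map $R\lambda$ is an equivalence. Since $R$ preserves pullbacks, $R\zeta\simeq\Theta_i(\yo X)$.
\item The new target $I(i_*\yo(X\times_S Z))$ is $I$ of a presheaf on $\mathcal C_S$. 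It is therefore a colimit of objects $\yo(S')=\sigma_\sharp(\pt)$ indexed only by quasi-admissible $\sigma:S'\to S$ equipped with maps $Z\times_S S'\to X$ over $S$.
\item Now \Cref{lem:check equiv on fibres for LCPF} applies to $\zeta$ with quasi-admissible $\sigma_a$. Its fibres are the maps $\hat\theta_{i\times_S S'}(X\times_S S',t)\to\pt$.
\item Only at this point, with $S'\to S$ quasi-admissible so that $i\times_S S'\in\mathcal K$, do you use hypothesis (2), via \Cref{prp:cover is pseudocover,thm:D-topology} and \Cref{lem:gluing locality and excision on base}. This reduces to quasi-admissible $S'\in\mathcal G$, where (3b) applies.
\end{enumerate}
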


The main ingredients needed to prove \Cref{prp:global pseudotop gluing} are \Cref{prp:intrinsic gens gluing}, and the following:
\begin{prp} \label{prp:pseudotop gluing}
	Let $i : Z \to S$ be a closed map in $\mathcal C$, and let $X \in \mathcal C_S$. Suppose that for every quasi-admissible map $\sigma : S' \to S$, and map $t : Z \times_S S' \to X$ over $S$, the presheaf
	\[
		\hat \theta_{i \times_S S'}(X \times_S S',t) \in \Psh(\mathcal C_{/S'})
	\]
	is $\tau$-locally contractible. Then $L_\tau \square_i(\cls{X})$ is coCartesian.
\end{prp}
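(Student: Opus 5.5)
Since $\square_i(\cls{X})$ commutes, $L_\tau \square_i(\cls{X})$ is coCartesian exactly when $L_\tau$ sends the coCartesian gap $\Theta_i(\cls{X}) : \theta_i(\cls{X}) \to i_* i^* \cls{X}$ to an equivalence. I will prove this in three steps: pass from $H^\univ(S) = \Psh(\mathcal C_S)$ to the LC pullback formalism $\hat H^\univ(S) = \Psh(\mathcal C_{/S})$, check the equivalence there fibrewise with \Cref{lem:check equiv on fibres for LCPF}, and return to $\mathcal C_S$ with \Cref{lem:res local equiv}.

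\textbf{Step 1: compare the two squares.} Restriction $\Psh(\mathcal C_{/S}) \to \Psh(\mathcal C_S)$ along the inclusion $\mathcal C_S \subseteq \mathcal C_{/S}$ commutes with fibre products with $\yo(U)$ and with colimits. It should also carry $\hat i_*(\yo(X) \times_{\yo(S)} \yo(Z))$ to $i_* \yo(X \times_S Z)$: both are computed by base change along $i$, and $\mathcal C_S \to \mathcal C_Z$ is the restriction of $\mathcal C_{/S} \to \mathcal C_{/Z}$. Hence $\square_i(\yo X)$ is the restriction of $\hat \square_i(\yo X)$, up to the $F(\emptyset) \times \yo(U)$ corner, which agrees after reduction because $\tau$ is reduced. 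So $\Theta_i(\yo X)$ is the restriction of $\hat \Theta_i(\yo X)$. The subcategory $\mathcal C_S \subseteq \mathcal C_{/S}$ is anodyne, and $\tau$ is quasi-admissible, so its acyclic pseudosieves are weakly $\yo$-quasi-admissible by \Cref{lem:yo-qadm}. Therefore \Cref{lem:res local equiv}, together with the fact that restriction preserves colimits, shows it suffices to prove that $\hat\Theta_i(\yo X)$ is a $\tau$-local equivalence in $\Psh(\mathcal C_{/S})$, i.e.\ that $\hat L_\tau \hat\Theta_i(\yo X)$ is invertible for the morphism $\hat L_\tau : \hat H^\univ \to \hat H^\tau$ of \Cref{exa:completed invar PF}.

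\textbf{Step 2: check on fibres.} \Cref{exa:completed Huniv is LC} says $\hat H^\univ$ is LC. Moreover, \Cref{exa:completed invar PF} says $\hat L_\tau$ is a morphism of pullback formalisms even on $\Psh(\mathcal C)$, with all maps quasi-admissible. This lets me apply \Cref{lem:check equiv on fibres for LCPF} with arbitrary maps $\sigma : S' \to S$, not only quasi-admissible ones. Write the target $Y = \hat i_* \hat i^* \yo X$ as the colimit of representables $\yo(S') \to Y$, i.e.\ $\sigma_\sharp(\pt)$ for $\sigma : S' \to S$. By adjunction, such a point is a map $t : Z \times_S S' \to X$ over $S$. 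The fibre of $\hat\Theta_i(\yo X)$ over it is, by \Cref{rmk:gluing relative fundamental class} and \Cref{rmk:pullback gluing}, $\sigma_\sharp$ of $\hat\theta_{i \times_S S'}(X \times_S S', t) \to \pt$.

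\textbf{Step 3: the obstacle.} The hypothesis only covers quasi-admissible $\sigma$, whereas here $S'$ ranges over all of $\mathcal C_{/S}$. I expect this to be the main difficulty. My first attempt is to apply the fibre criterion to $\Theta_i(\yo X)$ directly in $H^\univ(S)$. There the points of $i_* \yo(X \times_S Z)$ are indexed by quasi-admissible $S' \to S$. The LC property fails for $H^\univ$, but \Cref{lem:check equiv on fibres for LCPF} only needs the base-change formula $Y_a \times_Y \zeta \simeq (\sigma_a)_\sharp \zeta_a$ for these particular $\sigma_a$. I would verify that formula by computing in $\Psh(\mathcal C_{/S})$ and restricting, again using \Cref{lem:res local equiv}.

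If that fails, I would instead restrict the comparison of Step 1 to the subcategory of $\mathcal C_{/S}$ consisting of objects admitting quasi-admissible maps to $S$ (anodyne over $\mathcal C_S$), reducing via \Cref{rmk:reduce by anodyne}. Then only quasi-admissible $\sigma$ occur, and the hypothesis makes each fibre $\tau$-locally contractible, so each $\hat L_\tau$ of a fibre map is invertible. Either way, $\hat L_\tau \hat\Theta_i(\yo X)$ is invertible, and hence so is $L_\tau \Theta_i(\cls{X})$.
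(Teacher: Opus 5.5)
There is a genuine gap. It sits exactly at the obstacle you flag in Step 3, and neither repair closes it.

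Steps 1--2 reduce the claim to showing that $\hat\Theta_i(\yo X)$ itself is a $\tau$-local equivalence in $\Psh(\mathcal C_{/S})$. This is stronger than what is needed. Its target $\hat i_*\hat i^*\yo X$ has points over \emph{all} maps $S'\to S$, so the fibre criterion demands contractibility of fibres that the hypothesis does not control.

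Your first repair rests on the identity $Y_a\times_Y\zeta\simeq(\sigma_a)_\sharp\zeta_a$ in $H^\univ(S)$, with $(\sigma_a)_\sharp$ and $\zeta_a$ computed in $H^\univ$. That identity is an instance of the Cartesian projection formula for $H^\univ$, which fails in general (\Cref{lem:qadm is adm iff Huniv is LC}, \Cref{rmk:the mistake}). Computing in $\Psh(\mathcal C_{/S})$ and restricting does not produce it, because restriction does not commute with $(\sigma_a)_\sharp$. Concretely, take $W\in\mathcal C_S$ and a map $W\to S'_a$ over $S$ that is not quasi-admissible. The left-hand side sees the fibre at $W$ directly. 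The left Kan extension $(\sigma_a)_\sharp$ of a presheaf on $\mathcal C_{S'_a}$ sees $W$ only through a colimit over factorizations $W\to V\to S'_a$ with $V\to S'_a$ quasi-admissible.

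Your second repair does not help either. For any anodyne subcontext $\mathcal C'$, the LC formalism $\Psh(\mathcal C'_{/S})$ still contains non-quasi-admissible maps to $S$, and \Cref{rmk:reduce by anodyne} does not change this. If you instead cut the slice down to quasi-admissible maps, you are back in $\Psh(\mathcal C_S)$, which is not LC.

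The missing idea is to discard the non-quasi-admissible points \emph{before} applying the fibre criterion, without changing the restriction to $\mathcal C_S$. Write $R$ for restriction along $\mathcal C_S\subseteq\mathcal C_{/S}$ and $I$ for its left adjoint.

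\begin{enumerate}
\item Base change $\hat\Theta_i(\yo X)$ along the Beck--Chevalley map $\lambda : I\bigl(i_*\yo(X\times_S Z)\bigr)\to\hat i_*\hat i^*\yo X$, and call the result $\zeta'$.
\item By \Cref{rmk:right BC Huniv}, $R\lambda$ is an equivalence. Hence $R\zeta'\simeq R\hat\Theta_i(\yo X)\simeq\Theta_i(\yo X)$ by \Cref{rmk:restrict completed gluing}.
\item The target of $\zeta'$ is now the colimit of $\yo(S')$ indexed only by quasi-admissible $\sigma : S'\to S$ together with maps $t : Z\times_S S'\to X$ over $S$.
\item Apply \Cref{lem:check equiv on fibres for LCPF} in $\hat H^\univ$. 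Only these quasi-admissible $\sigma$ occur, and the fibres are exactly $\hat\Theta_{i\times_S S'}(X\times_S S',t)$. These are $\tau$-local equivalences by hypothesis.
\item Transport the conclusion back to $\Psh(\mathcal C_S)$ with \Cref{lem:res local equiv}.
\end{enumerate}

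This is \Cref{lem:pseudotop gluing} with $F_a=\yo(S')$ and $M_a=\pt$, which is how the paper derives the proposition.

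Equivalently, your first repair can be salvaged by changing the identity you aim for. With $\zeta=\Theta_i(\yo X)$, prove $Y_a\times_Y\zeta\simeq R\bigl((\sigma_a)_\sharp\hat\zeta_a\bigr)$, where $\hat\zeta_a$ is the fibre computed in $\hat H^\univ$. This holds because $R$ preserves pullbacks and $\hat H^\univ$ is LC. It suffices because $(\sigma_a)_\sharp$ and $R$ both preserve $\tau$-local equivalences, and $\zeta$ is the colimit of these pieces by universality of colimits in $\Psh(\mathcal C_S)$.
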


In some cases when we need to have more control of the map $X \to S$, it may be useful to directly use \Cref{prp:pseudotop gluing} rather than applying \Cref{thm:global pseudotop gluing} or even \Cref{prp:global pseudotop gluing}.

\begin{proof}[Proof of \Cref{prp:global pseudotop gluing} using \Cref{prp:pseudotop gluing}]
	By \Cref{prp:intrinsic gens gluing}, it suffices to show that for any quasi-admissible map $X \to S$ where $X,S \in \mathcal G$, and map $i : Z \to S$ in $\mathcal K$, the square $L_\tau \square_i(\yo X)$ is coCartesian.

	By \Cref{prp:pseudotop gluing}, it suffices to show that for every quasi-admissible map $\varsigma : S'' \to S$, and map $t' : Z \times_S S'' \to X$ over $S$, the presheaf
	\[
		\hat \theta_{i \times_S S''}(X \times_S S'', t') \in \Psh(\mathcal C_{/S''})
	\]
	is $\tau$-locally contractible. By \Cref{prp:cover is pseudocover,thm:D-topology}, we have a $\hat H^\tau$-pseudocover of $i \times_S S''$ consisting of quasi-admissible maps $\sigma' : S' \to S''$ such that $S' \in \mathcal G$, so by \Cref{lem:gluing locality and excision on base}, it suffices to show that for each such $\sigma'$,
	\[
		\hat \theta_{i \times_S S'}(X \times_S S', t) \in \Psh(\mathcal C_{/S'})
	\]
	is $\tau$-locally contractible, where $t = \sigma'^*(t')$.
\end{proof}

\Cref{prp:pseudotop gluing} follows from the following more general result:
\begin{lem} \label{lem:pseudotop gluing}
	Let $i : Z \to S$ be a closed map in $\mathcal C$, and let $F \in \Psh(\mathcal C_S) \simeq H^\univ(S)$. Suppose that there is a small diagram $\{F_a\}_a$ in $\Psh(\mathcal C_S)$, and a \emph{universal}\footnotemark{} $\tau$-local equivalence
	\footnotetext{\ie{} all base changes of this map are still $\tau$-local equivalences. For example, this always holds if this map is actually an equivalence, or more generally if it is a $\gamma$-local equivalence for some Grothendieck topology $\gamma$ contained in $\tau$.}
	\[
		\varinjlim_a F_a \to i_* i^* F
	\]
	in $\Psh(\mathcal C_S)$ such that for each $a$, $F_a \in H^\univ(S) \subseteq \hat H^\univ(S)$ is equivalent to $(\sigma_a)_\sharp(M_a)$ for some\footnotemark{} map $\sigma_a : S_a \to S$, and object $M_a \in \Psh(\mathcal C_{/S_a}) \simeq \bar H^\univ_{\Psh(\mathcal C)}(\yo(S_a))$.
	\footnotetext{The functor $(\sigma_a)_\sharp$ is computed for the pullback formalism $\bar H^\univ_{\Psh(\mathcal C)}$ on $\Psh(\mathcal C)$, where all maps are quasi-admissible. In particular, $(\sigma_a)_\sharp$ is simply the slice projection $\Psh(\mathcal C)_{/\yo(S_a)} \to \Psh(\mathcal C)_{/\yo(S)}$, so $\sigma_a$ does not need to be a quasi-admissible map of $\mathcal C$.}

	Suppose that for each $a$, the base change $i_a$ of $i$ along $\sigma_a$ exists. The composite
	\[
		(\sigma_a)_\sharp(M_a) \simeq I F_a \to I \varinjlim_a F_a \to I i_* i^* F \to i_* i^* I F
	\]
	corresponds to a map
	\[
		t_a : M_a \to \sigma_a^* i_* i^* I F \simeq (i_a)_* i_a^* I \sigma_a^* F 
	.\]

	Suppose that for each $a$, the map of presheaves
	\[
		\hat \Theta_{i_a}(I \sigma_a^* F, t_a) : \hat \theta_{i_a}(I \sigma_a^* F, t_a) \to M_a
	\]
	in $\Psh(\mathcal C_{/S_a})$ is a $\tau$-local equivalence. Then $L_\tau \square_i(F)$ is coCartesian.
\end{lem}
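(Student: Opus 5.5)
The plan is to move the question into the LC pullback formalism $\hat H^\univ$ and check invertibility there fibrewise, using \Cref{lem:check equiv on fibres for LCPF}. Here $I : H^\univ(S) = \Psh(\mathcal C_S) \to \Psh(\mathcal C_{/S}) = \hat H^\univ(S)$ denotes the left Kan extension along the anodyne inclusion $\mathcal C_S \subseteq \mathcal C_{/S}$, as in the statement. $I$ is fully faithful and colimit-preserving, and the restriction functor $R$ is a retraction of it. By \Cref{lem:res local equiv}, $R$ sends $\tau$-acyclic maps, and hence $\tau$-local equivalences, in $\Psh(\mathcal C_{/S})$ to $\tau$-local equivalences in $\Psh(\mathcal C_S)$.

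\textbf{Step 1.} Compare the two gluing squares. $I$ commutes with $j_\sharp j^*$, since $\yo(U) \times -$ is computed the same way in both categories and $I$ preserves products with representables over $S$. There is also a comparison map $I i_* i^* \to i_* i^* I$. Applying $R$ to $\hat\square_i(IF)$ gives back $\square_i(F)$: indeed $R i_* i^* I F \simeq i_* i^* F$, because restriction commutes with $i_*$ and $R I \simeq \id$. Hence, writing $G \to i_* i^* F$ for the coCartesian gap of $\square_i(F)$, it suffices to show that the gap of $\hat\square_i(IF)$ becomes a $\tau$-local equivalence after base change along the comparison map $I i_* i^* F \to i_* i^* IF$. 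Applying $R$ then yields that $G \to i_* i^* F$ is a $\tau$-local equivalence. Since $L_\tau$ preserves colimits, $L_\tau$ of $G$ is the pushout of the image square, so $L_\tau \square_i(F)$ is coCartesian.

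\textbf{Step 2.} Fibrewise check. Set $Y = I\varinjlim_a F_a \simeq \varinjlim_a (\sigma_a)_\sharp M_a$. Let $\zeta$ be the base change of $\hat\Theta_i(IF)$ along the composite $Y \to I i_* i^* F \to i_* i^* IF$. By \Cref{lem:check equiv on fibres for LCPF}, applied to $\hat H^\univ$ regarded as a pullback formalism on $\Psh(\mathcal C)$ with all maps quasi-admissible, together with the morphism $\hat L_\tau$ from \Cref{exa:completed invar PF}, the map $\hat L_\tau(\zeta)$ is an equivalence provided each fibre $\zeta_a$ is. By \Cref{rmk:gluing relative fundamental class}, combined with base change \Cref{rmk:pullback gluing} for the slice projection $\sigma_a$, each fibre is exactly
\[
	\zeta_a \simeq \hat\Theta_{i_a}(I\sigma_a^* F, t_a),
\]
which is a $\tau$-local equivalence by hypothesis.

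\textbf{Step 3.} Pass from $Y$ to $I i_* i^* F$. The map $\varinjlim_a F_a \to i_* i^* F$ is a universal $\tau$-local equivalence. Applying $I$ preserves this, since $I$ preserves colimits and pullbacks along maps from representables and sends the generating acyclic pseudosieves to $\tau$-acyclic maps. The base change of the gap along $Y \to I i_* i^* F$ therefore differs from its base change along $I i_* i^* F$ only by a $\tau$-local equivalence, and a 2-out-of-3 argument gives Step 1's requirement. In Step 1 one may need to restrict $R$ first and only then use universality, working in $\Psh(\mathcal C_S)$ where the hypothesis is stated.

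\textbf{Main obstacle.} The hard step is Step 1/3: matching $R\hat\square_i(IF)$ with $\square_i(F)$ and controlling the comparison map $I i_* i^* F \to i_* i^* IF$. This is not an equivalence in general, which is exactly the phenomenon in \Cref{rmk:the mistake}. I would handle it by applying $R$ at the end, so that only $R$ of this comparison map matters, and that is the identity.
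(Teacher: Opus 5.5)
Your architecture matches the paper's proof: you restrict via $R$, base change $\hat\Theta_i(IF)$ along $\lambda\colon Y=I\varinjlim_a F_a\to Ii_*i^*F\to i_*i^*IF$, check $\hat L_\tau\zeta$ fibrewise with \Cref{lem:check equiv on fibres for LCPF} on $\Psh(\mathcal C)$, and identify $\zeta_a\simeq\hat\Theta_{i_a}(I\sigma_a^*F,t_a)$. However, Step 3 does not work as written, and with it the reduction you set up in Step 1 fails. Write $g\colon\varinjlim_aF_a\to i_*i^*F$. Step 3 needs $I(g)$ to be a \emph{universal} $\tau$-local equivalence in $\Psh(\mathcal C_{/S})$, and your justification for this is false: $I$ does not preserve pullbacks along maps from representables. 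For $Y\to W\leftarrow X$ in $\mathcal C_S$, $I(\yo Y\times_{\yo W}\yo X)$ is $I$ of the restriction of $\yo(Y\times_WX)$ to $\mathcal C_S$. This differs from $I\yo Y\times_{I\yo W}I\yo X=\yo(Y\times_WX)$ whenever $Y\times_WX\to S$ is not quasi-admissible. For example, take smooth $k$-schemes with $Y=W=\aff^1_k$, $Y\to W$ given by $x\mapsto x^2$, and $X=\Spec k$ the origin: the first is $\pt$ and the second is $\yo(\Spec k[x]/x^2)$. This is the phenomenon behind \Cref{rmk:the mistake}. Universality in $\Psh(\mathcal C_{/S})$ concerns the fibre products $I(\varinjlim_aF_a)\times_{Ii_*i^*F}\yo(T)$ for $T\in\mathcal C_{/S}$. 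By the above, these are not $I$ of the corresponding fibre products in $\Psh(\mathcal C_S)$, so the hypothesis on $g$ says nothing about them. Consequently, your 2-out-of-3 argument does not reach the statement that Step 1 reduces to, namely that the base change of $\hat\Theta_i(IF)$ along $Ii_*i^*F\to i_*i^*IF$ is a $\tau$-local equivalence in $\Psh(\mathcal C_{/S})$.

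The fix is the alternative you mention only in passing, and it is what the paper does. Drop that intermediate statement, restrict first, and use universality only in $\Psh(\mathcal C_S)$. Write $\zeta\colon P\to Y$. Step 2 shows that $\zeta$ is a $\tau$-local equivalence, so $R\zeta$ is one as well by \Cref{lem:res local equiv}. Since $R$ preserves pullbacks, $RI\simeq\id$, and $R$ inverts $Ii_*i^*F\to i_*i^*IF$ (\Cref{rmk:right BC Huniv}), you get $R\lambda\simeq g$. Hence $R\zeta$ is the base change along $g$ of $R\hat\Theta_i(IF)\simeq\Theta_i(F)$ (\Cref{rmk:restrict completed gluing}). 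Universality of $g$ in $\Psh(\mathcal C_S)$ makes $RP\to\theta_i(F)$ a $\tau$-local equivalence. Then 2-out-of-3 in this Cartesian square shows that $\Theta_i(F)$ is a $\tau$-local equivalence, that is, $L_\tau\square_i(F)$ is coCartesian. In particular, the comparison map you single out as the main obstacle is harmless, because it only ever appears under $R$. The real subtlety is the order in which restriction and universality are applied.
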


We will need to make some remarks before proving \Cref{lem:pseudotop gluing}.

Write $I : H^\univ \to \hat H^\univ$ for the unique morphism of pullback formalisms $H^\univ \to \hat H^\univ$.
\begin{rmk} \label{rmk:right BC Huniv}
	For any map $f : X \to Y$ in $\mathcal C$, and $M \in H^\univ(X)$, the Beck-Chevalley transformation
	\[
		I f_* M \to f_* I M
	\]
	in $\hat H^\univ(Y) \simeq \Psh(\mathcal C_{/Y})$ restricts to an equivalence on $\mathcal C_Y \subseteq \mathcal C_{/Y}$.
	\begin{proof}
		Write $R$ for a right adjoint $I$. Then the Beck-Chevalley transformation is given by the following composite:
		\[
			I f_* \to I f_* R I \simeq I R f_* I \to f_* I
		.\]
		The first arrow is an equivalence since $I : H^\univ(X) \to \hat H^\univ(X)$ is fully faithful, so $\id_{D(X)} \to R I$ is an equivalence, and the last arrow is an $R$-equivalence since the counit $I R \to \id$ is always an $R$-equivalence.
	\end{proof}
\end{rmk}

\begin{rmk} \label{rmk:restrict completed gluing}
	Let $i : Z \to S$ be a closed map in $\mathcal C$, and let $F \in H^\univ(S)$. Then
	\[
		\hat \Theta_i(I F)|_{\mathcal C_S} \simeq \Theta_i(F)
	.\]
	\begin{proof}
		By \Cref{rmk:right BC Huniv}, we know that the Beck-Chevalley map $I i_* F \to i_* I F$ restricts to an equivalence on $\mathcal C_S$.
		By \cite[Lemma F.5(2)]{TwAmb}, we have that the composite
		\[
			I \to I i_* i^* \to i_* I i^* \simeq i_* i^* I
		\]
		is equivalent to the unit map $I \to i_* i^* I$.
		Since the restriction functor $\hat H^\univ(S) \simeq \Psh(\mathcal C_{/S}) \to \Psh(\mathcal C_S) \simeq H^\univ(S)$ preserves products and pushouts, it follows that
		\[
			\hat \Theta_i(I F)|_{\mathcal C_S} \simeq \Theta_i(F)
		.\]
	\end{proof}
\end{rmk}

\begin{proof}[Proof of \Cref{lem:pseudotop gluing}]
	We need to show that $L_\tau \Theta_i(F)$ is an equivalence. Write
	\[
		R : \hat H^\univ(S) \simeq \Psh(\mathcal C_{/S}) \to \Psh(\mathcal C_S) \simeq H^\univ(S)
	\]
	for the restriction functor along $\mathcal C_S \subseteq \mathcal C_{/S}$. \Cref{rmk:restrict completed gluing} shows that $\Theta_i(F) \simeq R \hat \Theta_i(I F)$, so it suffices to show that $L_\tau R \hat \Theta_i(I F)$ is an equivalence.

	Consider the map $\lambda : Y \to i_* i^* I F$ given by the composite
	\[
		Y \coloneqq I \varinjlim_a F_a \to I i_* i^* F \to i_* I i^* F \simeq i_* i^* I F
	.\]
	Since $RI \simeq \id$, and by \Cref{rmk:right BC Huniv}, we have that $R \lambda$ is equivalent to the map $\varinjlim_a F_a \to i_* i^* F$.

	Since $R$ is limit-preserving, it preserves pullbacks, and since $\varinjlim_a F_a \to i_* i^* F$ is a universal $\tau$-local equivalence, we have that the base change $\zeta : X \to Y$ of $\hat \Theta_i(I F)$ along $\lambda$ is $L_\tau R$-equivalent to $\hat \Theta_i(I F)$, whence it suffices to show that $L_\tau R \zeta$ is an equivalence.

	As in \Cref{lem:res pseudotop}, we may view $\tau$ as a pseudotopology on $\mathcal C_S$ and $\mathcal C_{/S}$, so the $\tau$-acyclic maps in $\Psh(\mathcal C_S)$ and $\Psh(\mathcal C_{/S})$ are precisely the maps lying over $\tau$-acyclic maps in $\Psh(\mathcal C)$. By \cite[Proposition 5.5.4.15(4)]{htt} we have that the collections of $\tau$-local equivalences in $\Psh(\mathcal C_S)$ and $\Psh(\mathcal C_{/S})$ are precisely the strongly saturated classes generated by the $\tau$-acyclic maps, and these are precisely the collections of maps sent to equivalences by $L_\tau$ and $\hat L_\tau$. Thus, by \Cref{lem:res local equiv}, it suffices to show that $\hat L^\tau \zeta$ is an equivalence.

	By \Cref{exa:completed Huniv is LC}, $\hat H^\univ$ extends to a LC pullback formalism $\bar H^\univ_{\Psh(\mathcal C)}$ on $\Psh(\mathcal C)$ equipped with the quasi-admissibility structure consisting of all maps, and by \Cref{exa:completed invar PF}, $\hat L^\tau$ extends to a morphism of pullback formalisms on $\Psh(\mathcal C)$,\footnote{The only reason we need to extend our pullback formalisms to the pullback context $\Psh(\mathcal C)$ where all maps are quasi-admissible, is so that we can allow $\sigma_a$ to be a map that is not necessarily quasi-admissible with respect to the quasi-admissibility structure of $\mathcal C$. This level of generality is never needed in the present work, but we have decided to include it in case it may be useful in the future.}
	so \Cref{lem:check equiv on fibres for LCPF} says that to check that $\hat L_\tau \zeta$ is an equivalence, it suffices to show that for each $a$, we have that $\hat L_\tau(\zeta_a)$ is an equivalence in $\hat H^\tau(S_a)$, where $\zeta_a$ is the base change of $\sigma_a^* \zeta$ along the map $\bar t_a : M_a \to \sigma_a^* Y$ that is adjunct to the map
	\[
		(\sigma_a)_\sharp M_a \simeq I F_a \to I \varinjlim_a F_a = Y
	.\]
	Note that since $t_a$ given by precomposing $\sigma_a^* i_*i^* IF \simeq i_* i^* I \sigma_a^* F$ with the adjunct of the composite
	\[
		\sigma_\sharp(M_a) \simeq IF_a \to I \varinjlim_a F_a = Y \xrightarrow{\lambda} i_* i^* I F
	,\]
	it is equivalent to the following composite
	\[
		M_a \xrightarrow{\bar t_a} \sigma_a^* Y \xrightarrow{\sigma_a^* \lambda} \sigma_a^* i_* i^* I F \simeq (i_a)^* i_a^* I \sigma_a^* F
	.\]

	By \Cref{rmk:pullback gluing}, and since $\sigma_a^* I \simeq I \sigma_a^*$, we have an equivalence
	\[
		\sigma_a^* \hat \Theta_i(I F) \simeq \hat \Theta_{i_a}(\sigma_a^* I F) \simeq \hat \Theta_{i_a}(I \sigma_a^* F)
	.\]

	Thus, we have Cartesian squares
	\[
		\begin{tikzcd}
			\hat \theta_{i_a}(I \sigma_a^* F, t_a) \ar[d, "\zeta_a"'] \ar[r] & \sigma_a^* X \ar[d, "\sigma_a^* \zeta"] \ar[r] & \sigma_a^* \hat \theta_i(I F) \ar[d, "\sigma_a^* \hat \Theta_i(I F)"] \ar[r, no head, "\sim"] & \hat \theta_{i_a}(I \sigma_a^* F) \ar[d, "\hat \Theta_{i_a}(I \sigma_a^* F)"] \\
			M_a \ar[r, "\bar t_a"'] & \sigma_a^* Y \ar[r, "\sigma_a^* \lambda"'] & \sigma_a^* i_* i^* I F \ar[r, no head, "\sim"] & (i_a)_* i_a^* I \sigma_a^* F
		\end{tikzcd}
	,\]
	so $\zeta_a \simeq \hat \Theta_{i_a}(I \sigma_a^* F, t_a)$.
\end{proof}

\subsection{Excision} \label{S:adm excision}


In this section, $\mathcal C$ may be any small category that has a strict initial object, and is equipped with a Grothendieck topology $\gamma$, and collections of maps $\mathcal K, \Sigma$ such that diagonals of maps in $\Sigma$ exist, and $\Sigma$ is stable under taking diagonals.

Assume that for any Cartesian square
\[
	\begin{tikzcd}
		Z \ar[d, equals] \ar[r, "t'"] & X' \ar[d] \\
		Z \ar[r, "t"'] & X
	\end{tikzcd}
\]
in $\mathcal C$, if $t \in \mathcal K$ and $X' \to X$ is in $\Sigma$, then $t' \in \mathcal K$, and there is some covering sieve of $X$ generated by $X' \to X$ and maps $Y \to X$ that are disjoint from $t$.\footnote{This means that $Y \times_X Z$ is an initial object.}

\begin{exa}
	Suppose that $\mathcal C$ is the category of schemes, and $\gamma$ is the Nisnevich topology. We can let $\Sigma$ be the collection of all \'{e}tale morphisms between schemes, and $\mathcal K$ can be the collection of all closed immersions from $Z$.
\end{exa}

\begin{thm} \label{thm:gluing is adm invar}
	Suppose that $\gamma$ is reduced. Let $i : Z \to S$ be a map in $\mathcal C$ that admits a complement, and let
	\[
		\begin{tikzcd}
			Z \ar[d, equals] \ar[r, "t'"] & X' \ar[d] \\
			Z \ar[r, "t"'] & X
		\end{tikzcd}
	\]
	be a Cartesian square in $\mathcal C_{/S}$, where $X' \to X$ is in $\Sigma$, and $Z \to X$ is in $\mathcal K$. Also assume that $X,X',Z$ represent sheaves on $\mathcal C_{/S}$.

	Then the sheafification of
	\[
		\hat \theta_i(X', t') \to \hat \theta_i(X, t)
	\]
	is $\infty$-connective.

	If $X',X \to S$ are truncated maps, then this map is actually a $\gamma$-local equivalence.
	\begin{proof}
		Write $\jmath : \mathcal C_{/S} \to \Shv(\mathcal C_{/S})$ for the sheafified Yoneda embedding.


		Let $\tilde{\mathcal K}$ be the collection of all maps in $\Shv(\mathcal C_{/S})$ of the form $\jmath(f)$, where $f$ is a map $Z \to Y$ in $\mathcal K$ over $S$ such that $Y$ represents a sheaf on $\mathcal C_{/S}$. Similarly define $\tilde \Sigma$ to be the collection of all maps in $\Shv(\mathcal C_{/S})$ of the form $\jmath(f)$, where $f$ is a map $Y' \to Y$ in $\Sigma$, and $Y,Y'$ represent sheaves on $\mathcal C_{/S}$.


		Since objects that represent sheaves are stable under limits, it follows that $\tilde{\Sigma}$ is closed under taking diagonals, and $\tilde{\mathcal K}$ has the necessary stability under base changes along maps in $\tilde{\Sigma}$ for us to apply \Cref{lem:geom gluing connective} in the topos $\Shv(\mathcal C_{/S})$. We also use that $\gamma$ is reduced so that $\jmath$ preserves disjointness of maps.

		Since $\tilde{\Sigma}$ is closed under taking diagonals, it follows that, in the notation of \Cref{defn:topos gluing} where we think of $i$ as the geometric morphism $\Shv(\mathcal C_{/S})_{/\jmath(Z)} \to \Shv(\mathcal C_{/S})$, the map
		\[
			\theta_i(\jmath(X'), t') \to \theta_i(\jmath(X), t)
		\]
		is $\infty$-connective, and that it is an equivalence if $X',X \to S$ are truncated maps.

		Note that since $\gamma$ is reduced, and $Z$ represents a sheaf on $\mathcal C_{/S}$, \Cref{exa:compl of map} shows that $\jmath$ sends the complement of $Z$ to an open complement of the geometric morphism $\Shv(\mathcal C_{/Z}) \simeq \Shv(\mathcal C_{/S})_{/\jmath(Z)} \to \Shv(\mathcal C_{/S})$. Therefore, 
		using the fact that $X,X',Z$ represent sheaves on $\mathcal C_{/S}$, one concludes that the map above is given by sheafifying the map
		\[
			\hat \theta_i(X', t') \to \hat \theta_i(X, t)
		.\]
	\end{proof}
\end{thm}

\section{Applications to Motivic Homotopy Theories on Stacks} \label{S:examples}

We will now consider some applications of our general techniques for showing gluing. More specifically, we will be interested in the contexts of \cite[\S5]{Fundamentals}. The results for the algebraic and differentiable contexts of \cite[\S5.1 and \S5.2]{Fundamentals} have already been addressed in \cite{SixAlgSt, sixopsequiv} and \cite{TwAmb}, but we have chosen to include them to show how our general tools can be applied to diverse geometric contexts.

The general situation in which we are interested is the following: we have a small pullback context $\mathcal C$ equipped with a quasi-admissible pseudotopology $\tau$, and a collection of maps $\mathcal K$ for which we want to show that $H^\tau$ has gluing. Recall that $H^\tau$ is the pullback formalism that sends an object $S \in \mathcal C$ to the category $\Psh^\tau(\mathcal C_S)$ of $\tau$-invariant presheaves on $\mathcal C_S$. In fact, we also have some ``stabilized'' version $\SH^\tau$ of $H^\tau$ that is given by formally adjoining certain $\wedge$-inverses to the pointed version $H^\tau_\bullet$ of $H^\tau$, and we would like to show that maps in $\mathcal K$ are actually $\SH^\tau$-closed. Using \Cref{lem:gluing for ptd,prp:stabilize gluing}, this reduces to showing that maps in $\mathcal K$ are $H^\tau$-closed, that is, if $i : Z \to S$ is in $\mathcal K$, then $H^\tau$ has gluing for $i$, and $i_* : H^\tau(Z) \to H^\tau(S)$ is conservative.

The fact that $i_* : H^\tau(Z) \to H^\tau(S)$ is conservative is generally shown by working locally on $S$ using a suitable quasi-admissible $H^\tau$-pseudocover, and then giving some geometric argument showing roughly that any quasi-admissible map to $Z$ is ``$\tau$-locally'' given by the base change along $i$ of quasi-admissible maps to $S$. It follows that $H^\tau(Z)$ is generated under colimits by the essential image of $i^*$, so $i_*$ is conservative.

We are left with the task of showing that $H^\tau$ has gluing for maps in $\mathcal K$. For this, we pick a suitable collection $\mathcal G$ of well-behaved objects in $\mathcal C$ for which we can apply \Cref{thm:global pseudotop gluing}, allowing us to reduce to showing that for any $i : Z \to S$ in $\mathcal K$ with $S \in \mathcal G$, if $X \in \mathcal G$, $X \to S$ is quasi-admissible, and $t : Z \to X$ is a map over $S$, then $\hat \theta_i(X,t)$ is $\tau$-locally contractible (recall \Cref{nota:completed fibre gluing}).

The choice of $\mathcal G$ can sometimes be delicate, as it needs to be large enough for us to be able to apply \Cref{thm:global pseudotop gluing}, but small enough for us to be able to show that the presheaves $\hat \theta_i(X,t)$ are $\tau$-locally contractible. In some cases, it may be difficult to find a suitable collection $\mathcal G$, and we may use a different combination of results from \Cref{S:red to fib gluing}, but we always end up reducing to the $\tau$-local contractibility of presheaves of the form $\hat \theta_i(X,t)$. One example of a more delicate reduction of this form is given in our proof for the algebraic case in \Cref{thm:alg gluing}.

When $X \to S$ is a quasi-admissible map between objects of $\mathcal G$, $i : Z \to S$ is in $\mathcal K$, and $t : Z \to X$ is a map in $\mathcal C_{/S}$, to show that $\hat \theta_i(X,t)$ is $\tau$-locally contractible, we proceed as follows:
\begin{enumerate}

	\item Using the geometric properties of maps in $\mathcal K$ to objects of $\mathcal G$, we can show that locally near $i$, $t$ factors through a section of $X \to S$. Thus, \Cref{lem:gluing locality and excision on base} lets us reduce to showing that $\hat \theta_i(X,t)$ is $\tau$-locally contractible when $t$ factors through a section of $X \to S$.

	\item Next, we use the properties of objects in $\mathcal G$ once again to show that we can relate $X \to S$ to some $V \to S$ via suitable ``\'{e}tale'' maps, where $V \to S$ is a pointed object of $\mathcal C_S$ that is a ``strict $\tau$-homotopy equivalence relative the section $0 : S \to V$'' (generally this is because $V \to S$ is a type of vector bundle). Then \Cref{thm:gluing is adm invar} shows that $\hat \theta_i(X,t)$ is $\tau$-locally contractible if and only if $\hat \theta_i(V, 0 \circ i)$ is $\tau$-locally contractible.\footnote{Here, the ``\'{e}tale'' maps should be given by some collection of quasi-admissible maps all of whose iterated diagonals are also quasi-admissible, and such that $\tau$ contains some notion of excision with respect to \'{e}tale maps, as described in \Cref{S:adm excision}.}

	\item The fact that $\hat \theta_i(V, 0 \circ i)$ is $\tau$-locally contractible can be shown using an explicit null-homotopy (\cf{} \cite[Lemma 4.17]{sixopsequiv} and \cite[Lemma II.5.2.11]{TwAmb}).

\end{enumerate}

\subsection{Differentiable Motivic Homotopy}

In this section we will consider the motivic homotopy theory of differentiable stacks as introduced in \cite{TwAmb} and discussed in \cite[\S5.2]{Fundamentals}.

We write $\Mfld$ for the category of manifolds. This is equipped with the Grothendieck topology of open coverings, and $\Shv(\Mfld)$ is equipped with the structure of a pullback context, as well as a quasi-admissible pseudotopology $\tau_\diff$ allowing us to define the pullback formalism $H^\diff \coloneqq H^{\tau_\diff}$ on $\Shv(\Mfld)$, and for suitable pullback subcontexts $\DiffStk^\prpr \subseteq \Shv(\Mfld)$, we have a stabilized version $\SH^\diff$.

Our goal in this section is to prove \Cref{thm:diff gluing}, which shows that closed embeddings in $\DiffStk^\prpr$ are $H^\diff$-closed and $\SH^\diff$-closed.

We now recall and supplement the relevant notions from \cite[\S5.2]{Fundamentals} (see there or \cite[Part II]{TwAmb} for more details):
\begin{description}

	\item[Quasi-admissibility structure] The category $\Shv(\Mfld)$ is given the structure of a quasi-small pullback context where the quasi-admissible maps are the representable submersions.

	\item[Pseudotopologies] We have the following small quasi-admissible pseudotopologies on $\Shv(\Mfld)$:
		\begin{description}

			\item[Open covers] An \emph{open cover} in $\Shv(\Mfld)$ is given by a family of open embeddings $\{U_i \to X\}_i$ that pull back to an open cover of $X'$ for any manifold $X'$ and map $X' \to X$. This defines the Grothendieck topology $\tau_{\open^\diff}$ on $\Shv(\Mfld)$.

			\item[Homotopy invariance] The quasi-admissible pseudotopology $\tau_\reals$ on $\Shv(\Mfld)$ has acyclic pseudosieves given by product projections $\yo(\reals \times X \to X)$.

			\item[Motivic equivalence] The quasi-admissible pseudotopology $\tau_\diff$ on $\Shv(\Mfld)$ is the union $\tau_\reals \cup \tau_{\open^\diff}$.

		\end{description}
		
	\item[Unstable homotopy theory] We write $H^\diff$ to denote $H^{\tau_\diff}$, as well as its restriction to any full anodyne pullback subcontext of $\Shv(\Mfld)$. This is a pullback formalism that sends $S \in \Shv(\Mfld)$ to the category of $\reals$-invariant sheaves on the site of representable submersions over $S$.

	\item[Differentiable stacks] Define $\DiffStk^\prpr$ to be the full subcategory of $\Shv(\Mfld)$ consisting of those objects that admit open covers by global quotients of the form $M/G$ for $G$ a compact Lie group acting on a manifold $M$. Note that if $X \to Y$ is a representable submersion, and $Y \in \DiffStk^\prpr$, then $X \in \DiffStk^\prpr$, so $\DiffStk^\prpr$ is a full anodyne pullback subcontext of $\Shv(\Mfld)$.

	\item[Stable homotopy theory] Following \cite[\S II.4.3]{TwAmb}, \cite[Theorem 5.2.9]{Fundamentals} constructs the stabilized version $\SH^\diff$ of $H^\diff$ on $\DiffStk^\prpr$ characterized by the condition that the unique map $H^\diff \to \SH^\diff$ is initial among pointed pullback formalisms $D$ under $H^\diff$ satisfying that Thom spaces of vector bundles over any $S \in \DiffStk^\prpr$ are sent to $\otimes$-invertible objects in $D(S)$.

\end{description}

\begin{defn}
	A map $X \to Y$ in $\Shv(\Mfld)$ is a \emph{closed embedding} if for any representable submersion $N \to Y$, where $N$ is a manifold, the base change $X \times_Y N \to N$ is a closed embedding of manifolds.
\end{defn}

\begin{rmk}
	Closed embeddings in $\DiffStk^\prpr$ are closed under quasi-admissible base change, and are closed in the sense that they have quasi-admissible complements.
\end{rmk}

Our goal is to prove the following, which recovers \cite[Theorem II.5.2.14]{TwAmb}:
\begin{thm} \label{thm:diff gluing}
	Any embedding in $\DiffStk^\prpr$ is $H^\diff$-closed and $\SH^\diff$-closed.
\end{thm}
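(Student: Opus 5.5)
We follow the strategy laid out at the beginning of \Cref{S:examples}. First we reduce the $\SH^\diff$ statement to the unstable one. Closed embeddings in $\DiffStk^\prpr$ are stable under quasi-admissible base change, and every object of $\DiffStk^\prpr$ has an open cover by global quotients $M/G$. So we take $\mathcal G$ to be the objects admitting a representable submersion to some $M/G$ with $G$ compact Lie. By \Cref{lem:gluing for ptd} and \Cref{prp:stabilize gluing} (for $H^\diff_\bullet$, with $A$ the Thom spaces of vector bundles), it suffices to show three things for an embedding $i : Z \to S$ with $S \in \mathcal G$:
\begin{itemize}
	\item $Z \in \mathcal G$;
	\item $i$ has $A$-lifts;
	\item $i$ is $H^\diff$-closed.
\end{itemize}
Membership $Z \in \mathcal G$ is immediate, since a closed $G$-invariant submanifold gives a global quotient. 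For $A$-lifts, we use that on $N/G$ every vector bundle $E$ on $Z/G$ has a complement $E'$ with $E \oplus E'$ pulled back from a $G$-representation, by Mostow--Palais type embedding results. This shows that Thom spaces on $Z$ are $\otimes$-generated by pullbacks of Thom spaces on $S$.

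For conservativity of $i_*$ on $H^\diff$, we argue locally via equivariant tubular neighbourhoods. Any representable submersion $Y \to Z$ is, open-locally on $Y$, the restriction along $i$ of a representable submersion to an open neighbourhood of $Z$ in $S$. Since $i^*$ preserves colimits, $H^\diff(Z)$ is then generated under colimits by the essential image of $i^*$, and so $i_*$ is conservative.

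For gluing we apply \Cref{thm:global pseudotop gluing} with this $\mathcal G$ and $\mathcal K$ the embeddings. Hypothesis (1) holds: $\mathcal G$ is closed under quasi-admissible sources by construction, and open covers are $H^\diff$-pseudocovers. Hypothesis (2a) is the base change stability above. For (2b.i), we need $i_*$ to preserve $\tau_\diff$-local equivalences between reduced presheaves. For the open-cover part we would use the argument of \cite[Lemma 3.1.6]{locspalg}: restriction along $i$ sends open covers to open covers, and every open cover of $Y \times_S Z$ extends to one of $Y$ after adding pieces disjoint from $Z$, which is where reducedness enters. The $\reals$-homotopy part is clear because $i_*$ commutes with $\reals \times -$.

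The main work is (2b.ii): showing that $\hat\theta_i(X,t)$ is $\tau_\diff$-locally contractible for a representable submersion $X \to S$ with $X \in \mathcal G$ and $t : Z \to X$ over $S$. We proceed in three steps.
\begin{enumerate}
	\item Using \Cref{lem:gluing locality and excision on base} with an open cover of $i$, we shrink $S$ to a tubular neighbourhood of $Z$. There we extend $t$, locally, to a section $s : S \to X$; this uses that $t$ is a section of a submersion over the embedded $Z$, which can be extended equivariantly near $Z$ after averaging, or after passing to slices $G \times_H V$.
	\item Now $s$ is a section of a submersion, so its normal bundle gives a vector bundle $V \to S$. We use an equivariant tubular neighbourhood of $s(S)$ in $X$, which is an open embedding $V \hookrightarrow X$ restricting to $s$ on the zero section. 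Open embeddings serve as $\Sigma$ in \Cref{S:adm excision}: they are stable under diagonals, and together with the complement of $s(Z)$ they form an open cover. So \Cref{thm:gluing is adm invar} reduces us to $\hat\theta_i(V, 0 \circ i)$; the maps are $0$-truncated, so we get a genuine local equivalence.
	\item Finally, $\hat\theta_i(V, 0\circ i)$ is contracted by the explicit fibrewise scaling $\reals$-homotopy, as in \cite[Lemma II.5.2.11]{TwAmb}.
\end{enumerate}

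I expect step 1 — producing equivariant local extensions of $t$ to sections and equivariant tubular neighbourhoods compatible with $t$ on stacks — to be the main obstacle. I would first handle it by reducing to slice charts $G \times_H W$ with $W$ a representation, where everything becomes $H$-equivariant linear algebra.
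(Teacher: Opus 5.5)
Your proposal is essentially the paper's proof. It uses the same reduction of the stable statement via \Cref{lem:gluing for ptd} and \Cref{prp:stabilize gluing}, and conservativity of $i_*$ via an equivariant tubular neighbourhood retraction (\Cref{lem:cl emb conservative}). It obtains hypothesis (2b.i) of \Cref{thm:global pseudotop gluing} via \cite[Lemma 3.1.6]{locspalg} (\Cref{lem:diff cl emb}). For the $G$-equivariant data it follows the same three steps: a local equivariant section extending $t$, then \Cref{thm:gluing is adm invar} for a tubular neighbourhood $V \hookrightarrow X$ of that section, then the scaling null-homotopy.

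For the step you single out as the main obstacle, the paper just cites \cite[Proposition 2.7]{Pflaum_2019}. Your slice idea does work, but the averaging has to happen in a linear target:
\begin{enumerate}
	\item Observe that $G_{t(z)} = G_{i(z)} =: H$, because $t$ and $i$ are equivariant and injective.
	\item Take a slice $G \times_H W$ at $i(z)$; it meets $Z$ in $G \times_H W_Z$.
	\item Linearize $X$ $H$-equivariantly at the fixed point $t(z)$. Near $t(z)$ the submersion becomes an $H$-equivariant product projection whose fibre is open in a representation, so local sections are $H$-maps into that representation.
	\item Extend from $W_Z$ and average there. Averaging points of a nonlinear fibre directly makes no sense.
\end{enumerate}

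The one step that is wrong as written is your verification of $A$-lifts. The paper does not do this by hand; it takes it from \cite[Proposition II.4.3.9]{TwAmb}. You claim that every $G$-vector bundle $E$ on $Z$ has a complement $E'$ with $E \oplus E' \cong Z \times W$ for a single finite-dimensional representation $W$. This Mostow--Palais-type statement needs a finiteness hypothesis, such as compact orbit space or finitely many orbit types, and objects of $\DiffStk^\prpr$ need not satisfy it.

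For example, take $G = S^1$ and $Z = \coprod_{n \geq 3} S^1/\mu_n$. Let $E$ be induced on the $n$-th component from the character $\zeta \mapsto \zeta^{\lfloor n/2 \rfloor}$ of $\mu_n$. A splitting as above would force $\lfloor n/2 \rfloor$ to be congruent mod $n$ to one of the finitely many weights of $W$ for every $n$, which fails for large $n$.

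So you should either restrict to a class of objects where the embedding theorem holds and recheck the hypotheses of \Cref{prp:stabilize gluing} for that class, or import the statement from Cnossen as the paper does. Note also that $A$-lifts only requires $\otimes$-generation by $i^*$ of Thom spaces on $S$, not by representation spheres, so you were trying to prove more than needed.
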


\begin{lem} \label{lem:diff cl emb}
	Let $i : Z \to S$ be a closed embedding in $\DiffStk^\prpr$. Then $i_* : H^\univ(Z) \to H^\univ(S)$ preserves $\tau_\diff$-local equivalences between reduced presheaves.
	\begin{proof}
		Note that $\tau_\reals$-local equivalences are always generated by $\reals$-indexed homotopy equivalences, and it is clear that $i_*$ preserves these.

		It remains to show that $i_*$ sends $\tau_{\open^\diff}$-local equivalences to $\tau_\diff$-local equivalences. Indeed, for any open cover of $Z$, \cite[Corollary II.3.2.3]{TwAmb} says that we have an open cover of $i(Z)$ in $S$ whose pullback along $i$ is the original open cover of $Z$, so we may conclude by \cite[Lemma 3.1.6]{locspalg}.
	\end{proof}
\end{lem}

\begin{lem} \label{lem:cl emb conservative}
	Let $i : Z \to S$ be a closed embedding in $\DiffStk^\prpr$, and let $\{X_j \to Z\}_j$ be a $H^\diff$-pseudocover such that for each $j$, the map $X_j \to Z$ is the base change along $i$ of some representable submersion $M/G \to S$, where $G$ is a Lie group acting properly\footnotemark{} (and smoothly) on a manifold $M$. Then $i_* : H^\diff(Z) \to H^\diff(S)$ is conservative.
	\footnotetext{This is automatic if $G$ is compact.}
	\begin{proof}
		By quasi-admissible base change, we may reduce to the case that $S = M/G$ for $G$ a Lie group acting properly on a manifold $M$. It suffices to show that for any quasi-admissible map $X \to Z$, there is a quasi-admissible $\tilde X \to S$ whose base change along $i$ is $X \to S$. The equivariant tubular neighborhood Theorem \cite[Theorem 4.4]{equivdiff} says that $i$ factors as
		\[
			Z \to N \to S
		,\]
		where $Z \to N$ is the zero section of the normal bundle of $i$, and $N \to S$ is a representable open embedding. In particular, for any quasi-admissible map $\tilde X \to N$, we have that $\tilde X \to S$ is quasi-admissible and $\tilde X \times_S N = \tilde X$, so it suffices to replace $S$ by $N$. Therefore, can assume that $i$ has a retraction, and we set $\tilde X \to S$ to be the base change of $X \to S$ along this retraction.
	\end{proof}
\end{lem}

\begin{proof}[Proof of \Cref{thm:diff gluing}]
	First we will show that closed embeddings are $H^\diff$-closed.
	Recalling the construction of $\SH^\diff$ as the stabilization of $H^\diff$, it will then follow from \Cref{lem:gluing for ptd}, \Cref{prp:stabilize gluing}, and \cite[Proposition II.4.3.9]{TwAmb}, that all closed embeddings are also $\SH^\diff$-closed.

	\Cref{lem:cl emb conservative} shows that if $i$ is a closed embedding, then $i_*$ is conservative, so it only remains to show that $H^\diff$ has gluing for closed embeddings.

	By \Cref{thm:global pseudotop gluing,lem:diff cl emb}, we may reduce to showing that if $G$ is a compact Lie group, $Z,S,X$ are $G$-manifolds, $i : Z \to S$ is a $G$-equivariant closed embedding, $X \to S$ is a $G$-equivariant submersion, and $t : Z \to X$ is a $G$-equivariant map over $S$, then $\hat \theta_{i/G}(X/G,t/G) \in \Psh(\DiffStk^\prpr_{/(S/G)})$ is $\tau_\diff$-locally contractible.


	Note that $t$ is necessarily injective, so for any $z \in Z$, since $t$ and $i$ are injective, the map $X \to S$ induces an isomorphism from the stabilizer of $t(z)$ to the stabilizer of $i(z)$. As in \cite[Step 2 of the proof of Proposition II.3.7.5]{TwAmb}, we may use \cite[Proposition 2.7]{Pflaum_2019} to obtain a $G$-invariant open cover of $i(Z)$ in $S$ where $p$ admits an equivariant section that extends $t$. Thus, there is a quasi-admissible $H^\diff$-pseudocover of $i/G$ on which $t/G$ extends through $i/G$ by a section of $(X \to S)/G$. Hence, by \Cref{lem:gluing locality and excision on base}, it suffices to assume $t = si$, where $s : S \to X$ is a section of $X \to S$.

	Note that $s$ is automatically a closed embedding, so by the equivariant tubular neighbourhood theorem \cite[Theorem 4.4]{equivdiff}, we have that $s$ factors as the zero section $0 : S \to V$ of a $G$-equivariant vector bundle on $S$, followed by a $G$-invariant open embedding $V \to X$. By \Cref{thm:gluing is adm invar}, we have that $\hat \theta_{i/G}(V/G, 0/G \circ i/G) \to \hat \theta_{i/G}(X/G, t/G)$ is a $\tau_{\open^\diff}$-local equivalence, so it is also a $\tau_\diff$-local equivalence. This is seen to be $\tau_\diff$-locally contractible by means of an explicit nullhomotopy (\cf{} \cite[Lemma 4.17]{sixopsequiv} and \cite[Lemma II.5.2.11]{TwAmb}).
\end{proof}

\subsection{Algebraic Motivic Homotopy}
Let $\AlgStk$ be the category of qcqs algebraic stacks. The category $\AlgStk$ is equipped with some quasi-admissibility structure making it a quasi-small pullback context, and where all quasi-admissible maps are representable smooth morphisms, and all smooth affine morphisms are quasi-admissible.

As in \cite[\S5.1]{Fundamentals}, we define a pseudotopology $\tau_\alg$ on $\AlgStk$, where the $\tau_\alg$-acyclic pseudosieves are the pseudosieves that are either Nisnevich covering sieves generated by quasi-admissible maps, or vector bundle torsors. We write
\[
	H^\alg \coloneqq H_{\AlgStk}^{\tau_\alg}
.\]
This is the pullback formalism that sends an algebraic stack $S$ to the category of sheaves $F$ on the Nisnevich site $\AlgStk_S$ of stacks smooth over $S$, which are also homotopy invariant in the sense that if $V \to X$ is a vector bundle torsor, then $F(X) \to F(V)$ is an equivalence.

\begin{rmk}
	For the sake of brevity, we only consider the case of \emph{classical stacks}, but our techniques also work in the derived setting: see \cite[Theorem 3.15 and its proof]{SixAlgSt}.
\end{rmk}

We will mainly be interested in the following full anodyne pullback subcontexts of $\AlgStk$:
\begin{exa} \label{exa:alg}
	\hfill
	\begin{enumerate}

		\item $\AlgStk^\nice \subseteq \AlgStk$ is the full subcategory consisting of (qcqs) algebraic stacks with separated diagonal whose stabilizers are nice groups in the sense of \cite[Definition 2.1(i)]{SixAlgSt}, which we recall here for convenience: an fppf affine group scheme $G$ over an affine scheme $S$ is \emph{nice} if it is an extension of a finite \'{e}tale group scheme of order prime to the residue characteristic of $S$, by a group scheme of multiplicative type.

		\item $\AlgStk^\lred \subseteq \AlgStk$ is the full subcategory consisting of (qcqs) algebraic stacks that admit quasi-projective Nisnevich covers by global quotients of the form $X/G$, where $G$ is a linearly reductive group scheme over an affine scheme, and $X/G \to \B G$ is quasi-projective.

	\end{enumerate}
	Note that since the quasi-admissible maps of $\AlgStk$ are and representable, $\AlgStk^\nice$ is a full anodyne pullback subcontext of $\AlgStk$. If the quasi-admissible morphisms of $\AlgStk$ are all quasi-projective, then $\AlgStk^\lred$ is a full anodyne pullback subcontext of $\AlgStk$.

	Also note that by \cite[Theorem 2.12]{SixAlgSt}, the category $\AlgStk^\nice$ is precisely the category of \emph{nicely scalloped algebraic stacks} of \cite{SixAlgSt}.
\end{exa}

If $\mathcal C \subseteq \AlgStk$ is a full anodyne pullback subcontext, \cite[Theorem 5.1.11]{Fundamentals} (or \cite{SixAlgSt}) constructs the stabilized version $\SH^\alg$ of $H^\alg$ in the following cases: 
\begin{enumerate}

	\item The quasi-admissible maps in $\AlgStk$ are the representable smooth morphisms, $\mathcal C \subseteq \AlgStk^\nice$.

	\item The quasi-admissible maps in $\AlgStk$ are the quasi-projective smooth morphisms, and $\mathcal C \subseteq \AlgStk^\lred$.

\end{enumerate}

Under these hypotheses, we have the following:
\begin{thm} \label{thm:alg gluing}
	All closed immersions in $\mathcal C$ are $H^\alg$-closed and $\SH^\alg$-closed.
\end{thm}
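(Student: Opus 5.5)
We follow the template of the differentiable case. It suffices to show that closed immersions in $\mathcal C$ are $H^\alg$-closed: once this is known, \Cref{lem:gluing for ptd} gives gluing for $H^\alg_\bullet$. \Cref{prp:stabilize gluing} then gives $\SH^\alg$-closedness. For that step we take $\mathcal G$ to be the global quotients $X/G$ (with $G$ nice, \resp{} linearly reductive with $X/G \to \B G$ quasi-projective) and $A$ the Thom spaces of vector bundles. The hypotheses of \Cref{prp:stabilize gluing} hold as follows. The domain of a closed immersion into a global quotient is again such a quotient. Every vector bundle over such a quotient is a summand up to $\otimes$-generation of a pullback, which supplies the $A$-lifts (as in \cite{SixAlgSt,sixopsequiv}). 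Pointed categories have zero objects.

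Conservativity of $i_*$ is argued as in \Cref{lem:cl emb conservative}. Work Nisnevich-locally on $S$, reducing to affine (or quasi-projective) $S=X/G$. Then a smooth (quasi-projective) representable $Y \to Z$ lifts Nisnevich-locally on $Y$ to a smooth map to $S$. This uses the equivariant lifting results of \cite{sixopsequiv} (\'etale-local lifting of smooth maps along closed immersions, made equivariant via the linear reductivity or niceness of stabilizers). It follows that $H^\alg(Z)$ is generated under colimits by the image of $i^*$.

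For gluing we apply \Cref{thm:global pseudotop gluing}, or more flexibly \Cref{prp:global pseudotop gluing}, with $\mathcal G$ the stacks admitting a quasi-admissible map to an affine global quotient $\Spec A/G$ of the relevant type. Condition 1(b) follows from the existence of Nisnevich covers by such quotients: \cite[Theorem 2.12]{SixAlgSt} in the nice case, and the definition in the linearly reductive case. Condition 2(b)(i) is \cite[Lemma 3.1.6]{locspalg} combined with the fact that Nisnevich covers of $Z$ extend to Nisnevich covers of a neighbourhood of $Z$ in $S$; vector bundle torsor equivalences are clearly preserved by $i_*$. For 2(b)(ii), fix $i : Z \to S$, smooth $X \to S$ and $t : Z \to X$. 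First, Nisnevich-locally near $Z$ (via \Cref{lem:gluing locality and excision on base}), extend $t$ to a section $s$ of $X \to S$; this uses smoothness together with henselian/equivariant lifting, where the stabilizer hypotheses are used. Next, $s$ is a regular closed immersion. We produce an \'etale neighbourhood zigzag, in $\Sigma$ = representable \'etale maps, relating $(X,s)$ to $(N_s, 0)$: Nisnevich-locally choose \'etale coordinates $X \supseteq W \to N_s$ agreeing with $s$ and $0$. This is \cite[Lemma 4.16]{sixopsequiv}-type equivariant linearization, using $\mathcal K$ = closed immersions from $Z$.

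\Cref{thm:gluing is adm invar} then identifies $\hat\theta_i(X,t)$ with $\hat\theta_i(N_s,0\circ i)$ up to Nisnevich-local equivalence, since all maps involved are truncated (representable). Finally, $\hat\theta_i(V,0\circ i)$ is contracted by the scaling homotopy $\aff^1\times V\to V$, as in \cite[Lemma 4.17]{sixopsequiv}.

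The main obstacle is the equivariant linearization step in the algebraic setting. There is no tubular neighbourhood theorem, so the \'etale neighbourhood relating $X$ near $s(S)$ to the normal bundle must be built $G$-equivariantly. We expect this to need the full nice/linearly reductive hypotheses, possibly with a more delicate choice of $\mathcal G$ and a direct use of \Cref{prp:pseudotop gluing} rather than \Cref{thm:global pseudotop gluing}.
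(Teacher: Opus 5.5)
Your pointing and stabilization steps follow the paper, which gets the $A$-lifts from \cite[A.3.1]{SixAlgSt}. The gap is in the contractibility step of the gluing argument.

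Your $\mathcal G$ is closed under quasi-admissible maps. So \Cref{thm:global pseudotop gluing} requires $\hat\theta_i(X,t)$ to be $\tau_\alg$-locally contractible for every quasi-admissible $X \to S$, over every $S$ that is merely quasi-admissible over some $\Spec A/G$. Your two geometric inputs are extending $t$ to a section over a pseudocover of $i$, and linearizing near that section. These are \cite[Theorem 2.22]{sixopsequiv} and \cite[Proposition 2.25]{sixopsequiv}. The paper applies them only once $S$ admits an affine morphism to $\B G$ with $G$ embeddable linearly reductive, and $X \to S$ is affine. Nothing in your sketch reaches that situation. It also cannot be arranged inside \Cref{thm:global pseudotop gluing}, because being affine over such a $\B G$ is not preserved by smooth quasi-projective maps.

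This is the ``more delicate reduction'' you anticipated. It, not the linearization (which is just a citation), is the real work. The missing steps are:
\begin{enumerate}
\item Use \Cref{lem:gluing locality on base} to reduce to $S$ affine over $\B G$ with $G$ embeddable, via \cite[Corollary 6.2]{AHR}.
\item Use \Cref{lem:alg loc str}, the equivariant Jouanolou trick (vector bundle torsors are $\tau_\alg$-acyclic), to see that $H^\alg(S)$ is generated by objects $\cls{X}$ with $X \to S$ affine.
\item Use \Cref{cor:gluing locality globally on base} to reduce gluing to these $X$.
\item Apply \Cref{prp:pseudotop gluing} directly rather than \Cref{thm:global pseudotop gluing}. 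Its hypothesis only involves base changes $X \times_S S' \to S'$ along quasi-admissible $S' \to S$, which remain affine, so \Cref{lem:alg loc gluing} applies.
\end{enumerate}
The same reduction to affine $X \to Z$, followed by \cite[Corollary 2.23]{sixopsequiv}, is what makes your conservativity sketch (``lifts Nisnevich-locally on $Y$'') actually go through.

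A second gap: you run both cases in parallel, but in the nice case the quasi-admissible maps are all representable smooth morphisms. Your argument for condition 2(b)(i), that Nisnevich covers of $Z$ extend, is in this equivariant setting exactly the content of \cite[Theorem 4.12]{sixopsequiv}, which the paper uses for smooth quasi-projective maps. Likewise, your justification of the $A$-goodness needed in \Cref{prp:stabilize gluing} (every quasi-admissible map into $\mathcal G$ has $A$-lifts) does not address non-quasi-projective maps. The paper avoids this. It uses \cite[Theorem 2.12]{SixAlgSt} and \Cref{lem:gluing locality on base} to reduce to $X/G$ with $X$ affine and $G$ nice embeddable. There, by \cite[A.2.2]{SixAlgSt}, $H^\alg$ and $\SH^\alg$ coincide with their quasi-projective versions, so the nice case follows from the linearly reductive one.
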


The proof of \Cref{thm:alg gluing} will be given at the end of the section.

\begin{lem} \label{lem:alg loc str}
	Suppose $S \in \AlgStk$ admits a quasi-projective morphism $S \to \B G$ for some linearly reductive group scheme $G$ over an affine scheme. Then $S$ admits an affine Nisnevich cover $\{S_i \to S\}_i$ such that for any indices $i_1, \dotsc, i_n$ with $n \geq 1$, there is an \emph{embeddable} linearly reductive group scheme $G_{i_1}$ over an affine scheme, a quasi-projective morphism $S_{i_1, \dotsc, i_n} \coloneqq S_{i_1} \times_S \dotsb \times_S S_{i_n} \to \B G_{i_1}$, and a vector bundle torsor $V \to S_{i_1, \dotsc, i_n}$ such that $V \to \B G_{i_1}$ is affine.

	In particular, there is a small diagram $\tilde S : K \to \Psh(\AlgStk_{/S})$ such that
	\begin{enumerate}

		\item $\varinjlim \tilde S \to \yo(S)$ is a $\tau_\alg$-local equivalence, and

		\item for each $a$, $\tilde S(a) \in \Psh(\AlgStk_{/S})$ is $\tau_\alg$-locally equivalent to $\yo$ of a smooth affine morphism $S_a \to S$, such that

		\item there exists an affine morphism $S_a \to Y_a/G = \B(Y_a \times_B G)$, where $Y_a$ is an affine $G$-scheme such that $G$ is embeddable over $Y_a$ (and linearly reductive).

	\end{enumerate}
	Furthermore, $S$ admits an $H^\alg$-pseudocover by affine smooth maps from stacks that admit affine morphisms to stacks of the form $\B G'$, where $G'$ is an embeddable linearly reductive group scheme over an affine scheme.
	\begin{proof}
		Write $B$ for the affine base scheme of $G$. Using \cite[Corollary 6.2]{AHR}, we can find an affine Nisnevich cover $\{B_i \to B\}_i$ of $B$ such that $G_i \coloneqq G \times_B B_i$ is embeddable (and linearly reductive). As in \cite[Remark 6.3 or 2.5]{AHR}, we have that for each $i$, $B_i$ has the $G$-resolution property of \cite[Definition 2.7]{sixopsequiv}.

		Since $S \to \B G$ is quasi-projective, \cite[Lemma A.0.8]{Fundamentals} shows that it is of the form $(X \to B)/G$, where $X \to B$ is $G$-quasi-projective in the sense of \cite[Definition 2.5]{sixopsequiv}. In particular, for each $i$, $S_i \coloneqq S \times_{\B G} \B G_i \to \B G_i$ is of the form $(X \times_B B_i \to B_i)/G_i$, and $X_i \coloneqq X \times_B B_i \to B_i$ is $G$-quasi-projective as a morphism of $G$-schemes (since it is $G_i$-quasi-projective as a morphism of $G_i$-schemes).

		Therefore, \cite[Lemma 2.13]{sixopsequiv} shows that for indices $i_1, \dotsc, i_n$ with $n \geq 1$, we also have that $X_{i_1, \dotsc, i_n} \coloneqq X_{i_1} \times_X \dotsb \times_X X_{i_n}$ admits a $G$-quasi-projective morphism to $B_{i_1}$, and therefore \cite[Proposition 2.20]{sixopsequiv} shows that there is a vector bundle torsor $V \to S_{i_1, \dotsc, i_n} = X_{i_1, \dotsc, i_n}/G$ such that $V \to B_{i_1}/G = \B G_{i_1}$ is affine.

		The statement about colimits in $\Psh(\AlgStk_{/S})$ follows form the fact that affine Nisnevich covering sieves and vector bundle torsors give $\tau_\alg$-acyclic pseudosieves, and by taking \v{C}ech nerves. The final statement then follows from \Cref{prp:cover is pseudocover}.
	\end{proof}
\end{lem}

In view of \Cref{exa:alg,lem:alg loc str}, we fix a full subcategory $\mathcal C^\alg \subseteq \AlgStk$ such that every object of $\mathcal C^\alg$ admits an $H^\alg$-pseudocover by quasi-admissible maps from stacks that admit quasi-projective morphisms to stacks of the form $\B G$, where $G$ is a linearly reductive group scheme over an affine scheme. \Cref{exa:alg} shows that $\mathcal C^\alg$ can be taken to be any full subcategory of $\AlgStk^\nice$ or $\AlgStk^\lred$, and also describes when it actually defines an anodyne pullback subcontext of $\AlgStk$.

\Cref{lem:alg loc str} immediately shows that any object of $\mathcal C^\alg$ actually admits an $H^\alg$-pseudocover by quasi-admissible maps from stacks that admit \emph{affine} morphisms to stacks of the form $\B G$, where $G$ is an \emph{embeddable} linearly reductive group scheme over an affine scheme.

\begin{lem} \label{lem:alg loc gluing}
	Let $X \to S$ be an affine quasi-admissible morphism, and let $i : Z \to S$ be a closed immersion.

	If $S \in \mathcal C^\alg$, then for any map $t : Z \to X$ over $S$, we have that
	\[
		\hat \theta_i(X,t) \in \Psh(\AlgStk_{/S})
	\]
	is $\tau_\alg$-locally contractible.
	\begin{proof}
		By \Cref{lem:gluing locality and excision on base}, it suffices to assume that $S$ admits an affine morphism to $\B G$, where $G$ is some embeddable linearly reductive group scheme over an affine scheme. \Cref{thm:gluing is adm invar} implies that it suffices to replace $X$ by any affine \'{e}tale neighbourhood of $t$. Note that $X$ has the resolution property by \cite[Remark A.2]{SixAlgSt}, so \cite[Theorem 2.22]{sixopsequiv} says that, after replacing $X$ by an affine \'{e}tale neighbourhood of $t$, there is a $H^\alg$-pseudocover of $i$ consisting of a quasi-admissible map of the form $S' \to S$, where $S' \to \B G$ is affine, and where $t \times_S S'$ extends through $i \times_S S'$ by a section $S' \to X \times_S S'$ of $X \times_S S' \to S'$. Note that $X \times_S S' \to \B G$ is still affine. By \Cref{lem:gluing locality and excision on base}, it suffices to replace $S$ by $S'$, so we may assume that $t = s \circ i$, where $s : S \to X$ is a section of $X \to S$.

		By \cite[Proposition 2.25]{sixopsequiv}, in an open neighborhood of $s$, there is an \'{e}tale map $X \to V$ where $V$ is a vector bundle on $S$, and $S \to X \to V$ is the zero section. Thus, \Cref{thm:gluing is adm invar} tells us that it suffices to show that $\hat \theta_i(V, 0 \circ i)$ is $\tau_\alg$-contractible, but this is easily shown by means of an explicitly nullhomotopy (\cf{} \cite[Lemma 4.17]{sixopsequiv} and \cite[Lemma II.5.2.11]{TwAmb}).
	\end{proof}
\end{lem}

\begin{prp} \label{prp:qproj gluing}
	Suppose that for every quasi-admissible map $X \to S$, if $S \in \mathcal C^\alg$, then the map is quasi-projective, and $X \in \mathcal C^\alg$. Then all closed immersions in $\mathcal C^\alg$ are $H^\alg$-closed.
	\begin{proof}
		Note that by \Cref{lem:gluing locality on base}, it suffices to show that if $S \to \B G$ is an affine morphism, where $G$ is a linearly reductive embeddable group scheme, then any closed immersion $i : Z \to S$ is $H^\alg$-closed, \ie{} we need to show that $i_* : H^\alg(Z) \to H^\alg(S)$ is conservative, and that $H^\alg$ has gluing for $i$.

		To see that $i_* : H^\alg(Z) \to H^\alg(S)$ is conservative, it suffices to show that $H^\alg(Z)$ is generated under small colimits by the essential image of $i^*$. Since any quasi-admissible map to $Z$ admits a quasi-projective morphism to $\B G$, \Cref{lem:alg loc str} shows that $H^\alg(Z)$ is generated under small colimits by objects of the form $\cls{X}$, where $X \to Z$ is affine. Now we conclude by \cite[Corollary 2.23 and Lemma 2.3]{sixopsequiv}.

		Now we show that $H^\alg$ has gluing for $i$. First we note that since all quasi-admissible maps to $S$ and $Z$ are smooth and quasi-projective, so by \cite[Lemma A.0.8]{Fundamentals}, \cite[Theorem 4.12]{sixopsequiv} says that $i_* : H^\univ(Z) \to H^\univ(S)$ preserves $\tau_\alg$-local equivalences between reduced presheaves. As before, \Cref{lem:alg loc str} shows that $H^\alg(S)$ is generated under small colimits by objects of the form $\cls{X}$ such that $X \to S$ is affine. By \Cref{cor:gluing locality globally on base}, we reduce to showing that for any such $X \to S$, $\square_i^{H^\univ}(\cls{X})$ is sent to a coCartesian square in $H^\alg(S)$.

		Since any quasi-admissible map to $S$ is from an object of $\mathcal C^\alg$, we conclude by \Cref{prp:pseudotop gluing,lem:alg loc gluing}.
	\end{proof}
\end{prp}

\begin{proof}[Proof of \Cref{thm:alg gluing}]
	In the second case, the fact that all closed immersions are $H^\alg$-closed follows immediately from \Cref{prp:qproj gluing}. It follows from \Cref{lem:gluing for ptd} that they are also $H^\alg_\bullet$-closed. Finally, it follows from \cite[A.3.1]{SixAlgSt} and \Cref{prp:stabilize gluing} that closed immersions are $\SH^\alg$-closed.

	In the first case, we use \cite[Theorem 2.12]{SixAlgSt} and \Cref{lem:gluing locality on base} to reduce to the case of closed immersions to global quotients of the form $X/G$, where $G$ is a nice embeddable group scheme over an affine scheme, and $X$ is an affine $G$-scheme. As in \cite[A.2.2(a,b)]{SixAlgSt}, we find that in this case, $H^\alg$ and $\SH^\alg$ coincide with the versions where the quasi-admissible morphisms are the quasi-projective smooth morphisms, so we reduce to the previous case.
\end{proof}

\subsection{Holomorphic Motivic Homotopy}

In this section we will consider the motivic homotopy theory of complex analytic stacks as introduced in \cite[\S5.3]{Fundamentals}.

We write $\An_\comps$ for the category of complex spaces. This is equipped with the Grothendieck topology of open coverings, and $\Shv(\An_\comps)$ is equipped with the structure of a pullback context, as well as a quasi-admissible pseudotopology $\tau_\hol$ allowing us to define the pullback formalism $H^\hol \coloneqq H^{\tau_\hol}$ on $\Shv(\An_\comps)$, and for suitable pullback subcontexts $\HolStk^{\fin,\red} \subseteq \Shv(\An_\comps)$, we have a stabilized version $\SH^\hol$. The relevant notions are introduced in \cite[\S5.3]{Fundamentals}, and are reviewed in \Cref{S:hol gluing}.

Our goal in this section is to prove \Cref{thm:hol gluing}, which shows that embeddings between suitable objects of $\Shv(\An_\comps)$ are $H^\hol$-closed and $\SH^\hol$-closed.

In the present work we restrict ourselves to objects of $\Shv(\An_\comps)$ with finite stabilizers, although eventually we hope to generalize to the case of reductive stabilizers.

\subsubsection{Preliminaries on complex geometry}

We will need to recall some definitions following \cite[0.23]{Fischer} expressed in terms of the usual terminology for maps of locally ringed spaces.
\begin{defn} \label{defn:emb imm hol}
	A map $f : X \to Y$ in $\An_\comps$ is an \emph{embedding} if it is a closed immersion of locally ringed spaces (as in \stackscite{01HK}), and it is an \emph{immersion at $x \in X$} if it becomes an embedding after restricting to neighbourhoods of $x$ and $f(x)$.
\end{defn}

\begin{rmk} \label{rmk:invariance of sub locus}
	Let $G$ be a complex group, and let $p : X \to S$ be a $G$-equivariant holomorphic map of complex spaces with $G$-action. Then for any $n \geq 0$, the ``relative dimension $n$ submersive locus'' of $p$,
	\[
		U_n = \{x \in X \mid \text{$p$ is a submersion at $x$ of relative dimension $n$}\}
	,\]
	is a $G$-invariant open of $X$.
	\begin{proof}
		Following \cite[2.18]{Fischer}, recall that for $x \in X$, we have that $p$ is a submersion at $x$ of relative dimension $n$ if and only if there are open neighborhoods $U$ of $x$ and $V$ of $p(x)$ with $p(U) \subseteq V$, an open subset $Z \subseteq \comps^n$, and a map $U \to Z$ such that the induced map $U \to Z \times V$ is a biholomorphism. It follows immediately that $U_n$ is open, and since $p$ is $G$-equivariant, if this condition holds at $x$, then it does at $gx$ as well since we can simply take $gU, gV$ as the neighborhoods of $gx$ and $p(gx) = gp(x)$, and the map $gU \to Z$ is simply $gU \xrightarrow{g^{-1}} U \to Z$, since then $gU \to Z \times gV$ is equivalent to $U \to Z \times V$, so it is a biholomorphism.
	\end{proof}
\end{rmk}

\begin{lem} \label{lem:extn of subm is subm}
	Let
	\[
		\begin{tikzcd}
			X' \ar[d] \ar[r] & X \ar[d] \\
			S' \ar[r] & S
		\end{tikzcd}
	\]
	be a Cartesian square of complex spaces. Let $x' \in X'$ be a point, and write $s',x,s$ for its images in $S', X, S$ respectively.

	Suppose $X \to S$ is flat at $x$, and $X' \to S'$ is a submersion at $x'$. Then $X \to S$ is a submersion at $x$.
	\begin{proof}
		Since $X \to S$ is flat at $x$, \cite[3.21]{Fischer} says that it suffices to show that the fibre over the image $s$ of $x$ in $S$ is a manifold at $x$.
		Note that the fibre in $X$ over $s$ is the fibre in $X'$ over $s'$, so since $X' \to S'$ is a submersion at $x'$, this fibre is a manifold at $x$ as desired (using \cite[3.21]{Fischer} again).
	\end{proof}
\end{lem}

\begin{lem} \label{lem:imm between hol subm}
	Let $Z \to X$ be a holomorphic map over a complex space $S$. Let $z_0 \in Z$ that is sent to $x_0 \in X$. Assume $Z \to X$ is an immersion at $z_0$, and assume the maps $Z,X \to S$ are submersions at $z_0,x_0$. Then the kernel of $\mathcal O_X \to \mathcal O_Z$ is generated by a regular sequence in an open neighbourhood of $x_0$.
	\begin{proof}
		Write $s_0$ for the image of $z_0$ (or $x_0$) in $S$. By working locally near $z_0,x_0,s_0$, we may replace the map $Z \to X$ by a map $S \times \comps^n \to S \times \comps^n \times \comps^k$ over $S$, and $x_0$ corresponds to $(s_0,0,0)$, and $z_0$ corresponds to $(s_0,0)$. It follows that the map $\mathcal O_{X,x_0} \to \mathcal O_{Z,z_0}$ of local $\mathcal O_{S,s_0}$-algebras is some surjection of convergent power series rings $\mathcal O_{S,s_0}\{u_1, \dotsc, u_{n + k}\} \to \mathcal O_{S,s_0}\{t_1, \dotsc, t_n\}$. By \Cref{lem:regular local}, it suffices to show that the kernel of this map is a regular ideal.

		Indeed, by Nakayama's Lemma and \stackscite{00MG}, we may show this after quotienting out by the maximal ideal of $\mathcal O_{S,s_0}$, so we need to show that the kernel of a surjective homomorphism of local $\comps$-algebras $f : \comps\{u_1, \dotsc, u_{n + k}\} \to \comps\{t_1, \dotsc, t_n\}$ is regular.

		Indeed, since the map
		\[
			\comps^{n + k} \cong (u_1, \dotsc, u_{n + k})/(u_1, \dotsc, u_{n + k})^2 \to (t_1, \dotsc, t_n)/(t_1, \dotsc, t_n)^2 \cong \comps^n
		\]
		is surjective, and $f$ is a morphism of local $\comps$-algebras, after applying some linear change of variables to $u_1, \dotsc, u_{n + k}$, we have $p_1, \dotsc, p_{n + k} \in (t_1, \dotsc, t_n)^2$ such that for $1 \leq i \leq n$, $f(u_i) = t_i + p_i(t_1, \dotsc, t_n)$, and for $n < i \leq n + k$, we have $f(u_i) = p_i(t_1, \dotsc, t_n)$.

		For $1 \leq i \leq n$,
		\[
			f(u_i - p_i(u_1, \dotsc, u_n)) = t_i + p_i(t_1, \dotsc, t_n) - p_i(t_1 + p_1(t_1, \dotsc t_n), \dotsc, t_n + p_n(t_1, \dotsc, t_n))
		.\]
		Applying $\partial/\partial t_j$ gives
		\[
			\delta_{ij} + \frac{\partial p_i}{\partial t_j}(t_1, \dotsc, t_n) - (1 - \frac{\partial p_i}{\partial t_j}(t_1, \dotsc, t_n)) \frac{\partial p_i}{\partial t_j}(t_1 + p_1(t_1, \dotsc t_n), \dotsc, t_n + p_n(t_1, \dotsc, t_n))
		.\]
		Since $p_1, \dotsc, p_n \in (t_1, \dotsc, t_n)^2$, this evaluates to $\delta_{ij}$ at $0$. By applying the analytic Inverse Function Theorem (\eg{} \cite[Theorem 1.21]{GLS2007}) in $\comps\{t_1, \dotsc, t_n\}$, we get a change of variables such that $f(u_i) = t_i$ for $1 \leq i \leq n$, and $f(u_i)  = p_i \in (t_1, \dotsc, t_n)^2$ as before. Note that
		\[
			\comps\{u_1, \dotsc, u_n, u_{n + 1} - p_{n + 1}(u_1, \dotsc, u_n), \dotsc, u_{n + k} - p_{n + k}(u_1, \dotsc, u_n)\} = \comps\{u_1, \dotsc, u_{n + k}\}
		,\]
		so after a suitable change of variables, the kernel is the ideal $(u_{n + 1}, \dotsc, u_{n + k})$, which is clearly regular.
	\end{proof}
\end{lem}

\begin{lem} \label{lem:extn hol submersion}
	Let
	\[
		\begin{tikzcd}
			V \ar[d, "\nu"'] \ar[r, "t"] & \hat V \ar[d, "\hat \nu"] \\
			X_Z \ar[d] \ar[r] & X \ar[d] \\
			Z \ar[r, "i"'] & S
		\end{tikzcd}
	\]
	be a commutative diagram of complex spaces where the horizontal arrows are embeddings, and the bottom square is Cartesian. Write $\mathcal N, \hat{\mathcal N}$ for the conormal sheaves of $\nu, \hat \nu$ respectively.

	Let $p \in V$ be a point. By abuse of notation, we also write $p$ for the image of this point in $\hat V, X_Z, X$, and we write $z$ for the image of this point in $Z,S$. If
	\begin{itemize}

		\item $\nu$ and $\hat \nu$ are immersions at $p$,

		\item $X \to S$ is a submersion at $p$,

		\item the map $(t^* \hat{\mathcal N} \to \mathcal N)_p$ is invertible,

		\item $\hat V \to X$ is finitely presented at $p$, and

		\item $V \to Z$ is a submersion at $p$,

	\end{itemize}
	then $\hat V \to S$ is a submersion at $p$.
	\begin{proof}
		Consider the diagram
		\[
			\begin{tikzcd}
				\mathcal O_{V,p} & \ar[l] \mathcal O_{\hat V, p} \\
				\mathcal O_{X_Z, p} \ar[u] & \ar[l] \mathcal O_{X,p} \ar[u] \\
				\mathcal O_{Z,z} \ar[u] & \ar[l] \mathcal O_{S,z} \ar[u]
			\end{tikzcd}
		.\]
		Since $X \to S$ is a map of complex spaces which is a submersion at $p$, the bottom right arrow is a flat map of Noetherian local rings. 
		Furthermore, $\nu$ is an immersion at $p$, $V \to Z$ is a submersion at $p$, and $X_Z \to Z$ is a submersion at $p$ since it is a base change of $X \to S$, so \Cref{lem:imm between hol subm} says that the top left arrow is a surjection whose kernel is a quasi-regular ideal.

		By Lemma \ref{lem:check flatness of extn of qreg imm}, we have that $\mathcal O_{S,z} \to \mathcal O_{\hat V,p}$ is flat, so we conclude by Lemma \ref{lem:extn of subm is subm}.
	\end{proof}
\end{lem}

\begin{rmk}
	A holomorphic map between complex spaces is a submersion of relative dimension $0$ at a point $p$ if and only if it is biholomorphic at $p$.
\end{rmk}

\begin{rmk} \label{rmk:averaging}
	Note that when $G$ is a finite discrete group, we can use the ``averaging trick'' to get that for every complex $G$-representation $V$, we have a surjection
	\[
		\frac1{\lvert G \rvert}\sum_{g \in G} g \cdot : V \to V^G
	,\]
	which is a retract of the inclusion $V^G \to V$.

	If $X$ is a complex space on which $G$ acts, then for any $G$-equivariant coherent $\mathcal O_X$-modules $M,N \in \Coh^G(X)$, $\Coh(X)(M,N)$ naturally has the structure of a complex $G$-representation such that $\Coh^G(X)(M,N) \simeq \Coh(X)(M,N)^G$. In particular, there is a natural retraction of the inclusion $\Coh^G(X)(M,N) \to \Coh(X)(M,N)$.

	It follows that if $M$ is projective in $\Coh(X)$, then it is projective in $\Coh^G(X)$. In particular, if $X$ is a Stein space, and $M$ is finite locally free, then $M$ is projective in $\Coh^G(X)$.
\end{rmk}

\begin{prp} \label{prp:hol sec has bihol nbhd in vb}
	Let $G$ be a finite discrete group, let $p : X \to S$ be a $G$-equivariant submersion of complex $G$-spaces, and let $s : S \to X$ be a $G$-equivariant section of $p$. If $X$ is Stein, then after replacing $X$ by a $G$-invariant open neighborhood of $s$,\footnotemark{} there is a $G$-equivariant locally biholomorphic map $h : X \to V$ over $S$, where $V$ is a $G$-equivariant holomorphic vector bundle on $S$, and such that $s$ is the vanishing locus of $h$. That is, we have a Cartesian square of complex $G$-spaces over $S$ of the following form:
	\footnotetext{Using \cite{SteinNbhd}, this open neighbourhood can chosen to be Stein if $S$ is Stein.}
	\[
		\begin{tikzcd}
			S \ar[d, equals] \ar[r, "s"] & X \ar[d, "h"] \\
			S \ar[r, "0"'] & V
		\end{tikzcd}
	.\]
	\begin{proof}
		Let $\mathcal I$ be the ideal sheaf of $s$. Since $s$ is quasi-regular, we have that $s^* \mathcal I$ is locally finite free, so $p^* s^* \mathcal I$ is locally finite free. Since $G$ is finite discrete, and $X$ is Stein, we can use \Cref{rmk:averaging} to see that $p^* s^* \mathcal I$ is projective in $\Coh^G(X)$, so we have an equivariant lift $f : p^* s^* \mathcal I \to \mathcal I$ of the surjection
		\[
			p^* s^* \mathcal I \to s_* s^* p^* s^* \mathcal I = s_* s^* \mathcal I
		\]
		along the surjection $\mathcal I \to s_* s^* \mathcal I$.

		By \Cref{lem:sec has flat nbhd in conormal}, the equivariant map $S \to Z(f)$ through which $s$ factors is a clopen embedding, and since $p$ is flat,
		\[
			(\Sym s^* \mathcal I)_{p(x)} \to \mathcal O_{X,x}
		\]
		is flat for any $x$ in the image of $s$. It follows that if $V$ is the conormal bundle of $S$, then $f$ induces an equivariant map $X \to V$ over $S$ which is flat at points in the image of $s$, and such that we have a clopen section $S \to S \times_V X = Z(f)$.

		Now, since $S \times_V X \to X$ is the inclusion of a $G$-invariant closed subspace of $X$ and $S \to S \times_V X$ is the inclusion of a $G$-invariant open subspace, then after shrinking $X$ to a $G$-invariant open neighbourhood of $s$, we may assume that $S \to S \times_V X$ is an equivalence (it is clear that some such non-invariant open open neighbourhood exists, and we can make it invariant since $G$ is finite). Since $X \to V$ is flat at points of $s$, \Cref{lem:extn of subm is subm} shows that it is a submersion of relative dimension $0$ at points of $s$. Thus, by \Cref{rmk:invariance of sub locus}, we may again replace $X$ by a $G$-invariant open neighborhood of $s$ where $X \to V$ is a local biholomorphism.
	\end{proof}
\end{prp}

\begin{prp} \label{prp:lifting qsections}
	Let $G$ be a finite discrete group, let $X \to S$ be a $G$-equivariant submersion of Stein $G$-spaces, and let $i : Z \to S$ be a $G$-equivariant embedding. Let
	\[
		\nu : V \to X_Z \coloneqq X \times_S Z
	\]
	be a $G$-equivariant map that is an immersion at $p \in V$, and such that the composite $V \to Z$ is a submersion at $p$. Then there is a $G$-equivariant immersion $\hat \nu : \hat V \to X$ whose base change along $X_Z \to X$ is the restriction of $\nu$ to a $G$-invariant open neighborhood of $p$, and such that $\hat V \to S$ is a submersion.
	\begin{proof}
		Note that it suffices to replace $X$ by any $G$-invariant Stein open neighbourhood of the image of $p$, so by \Cref{lem:imm between hol subm} and \cite[Lemma B.0.2]{Fundamentals},\footnote{Note that $p$ has a neighbourhood basis in $V$ consisting of neighbourhoods of the form $(V \to X)^{-1}(U)$, where $U$ is a Stein open neighbourhood of the image of $p$ in $X$.}
		we may assume that $\nu$ is an embedding, and the kernel $\mathcal I_\nu$ of $\mathcal O_{X_Z} \to \mathcal O_V$ is generated by a regular sequence. In particular, the conormal sheaf $\nu^* \mathcal I_\nu$ is finite free, and $\mathcal I_\nu$ is of finite type.

		Since $\nu^* \mathcal I_\nu$ is finite free, it is given as $(V \to \pt)^*(\mathcal N_0)$, for $\mathcal N_0$ a $G$-equivariant vector bundle over the point, that is, finite dimensional complex representation of $G$. In particular, there is a $G$-equivariant finite free $\mathcal O_X$-module $\mathcal N \simeq (X \to \pt)^*(\mathcal N_0)$ such that $(V \to X)^*(\mathcal N) \simeq \nu^* \mathcal I_\nu$.

		Recall that by \stackscite{08KS}, and since $\nu$ is an embedding, the functor $(X_Z \to X)_* : \Coh(X_Z) \to \Coh(X)$ is fully faithful and exact, so we can make sense of the following solid diagram in $\Coh(X)$:
		\[
			\begin{tikzcd}
				{} & & \mathcal O_X \ar[d, two heads] \\
				\mathcal N \ar[d] \ar[r, dashed] \ar[urr, dashed, "f"] & \mathcal I_\nu \ar[d, two heads] \ar[r] & \mathcal O_{X_Z} \\
				(V \to X)^* \mathcal N \ar[r, no head, "\sim"] & \nu^* \mathcal I_\nu
			\end{tikzcd}
		.\]
		Since $X$ is Stein, $\mathcal N$ is projective in $\Coh(X)$, so the dashed completions of this diagram exist, and by \Cref{rmk:averaging}, we can assume that $f$ is actually a map in $\Coh^G(X)$.

		Thus, $(X_Z \to X)^* f$ is a map $\mathcal N_Z \coloneqq (X_Z \to X)^* \mathcal N \to \mathcal O_{X_Z}$ that factors through a map $\mathcal N_Z \to \mathcal I_\nu$ in $\Coh^G(X_Z)$ such that $\mathcal N_Z \to \mathcal I_\nu \to \nu_* \nu^* \mathcal I_\nu$ is surjective. It follows from \Cref{lem:describe closed as vanishing locus in a nbhd} that
		\[
			V \to Z(f) \times_X X_Z = Z(f) \times_S Z
		\]
		is an open embedding. After shrinking $X$ again, we get that $V \to Z(f) \times_S Z$ is an equivalence, and \Cref{lem:extn hol submersion} says that $Z(f) \to S$ is a submersion at the image of $p$, so we conclude by \Cref{rmk:invariance of sub locus}.
	\end{proof}
\end{prp}

\begin{lem} \label{lem:restricting lin equiv maps on Stein}
	Let $G$ be a finite group, let $Z,S$ be complex $G$-spaces, and let $i : Z \to S$ be a $G$-equivariant embedding. If $S$ is Stein, then for any finite dimensional complex representation $V$ of $G$, the induced map
	\[
		\An_\comps(S,V) \to \An_\comps(Z,V)
	\]
	induces a surjection from the set of $G$-equivariant maps $S \to V$ to the set of $G$-equivariant maps $Z \to V$.
	\begin{proof}
		Since $V$ is a finite dimensional complex vector space, we have that
		\[
			\An_\comps(S,V) \to \An_\comps(Z,V)
		\]
		is equivalent to
		\[
			\mathcal O_S(S)^n \to \mathcal O_Z(Z)^n
		\]
		for some $n$, which is
		\[
			\Gamma(S, \mathcal O_S \to i_* \mathcal O_Z)^n
		.\]
		Since $i$ is an embedding, the map $\mathcal O_S \to i_* \mathcal O_Z$ is a surjection, so since $S$ is Stein, it is also surjective after taking global sections, which implies that the morphism of complex representations of $G$
		\[
			\An_\comps(S,V) \to \An_\comps(Z,V)
		\]
		is surjective.

		We conclude by \Cref{rmk:averaging}.

	\end{proof}
\end{lem}

\begin{lem} \label{lem:lifting submersions}
	Let $G$ be a finite group, let $i : Z \to S$ be a $G$-equivariant embedding of Stein $G$-spaces, and let $p : X \to Z$ be a $G$-equivariant submersion of Stein $G$-spaces. Then for any $x \in X$, there is a $G$-equivariant submersion $\tilde p : \tilde X \to S$ such that $Z \times_S \tilde X \to Z$ factors through $p : X \to Z$ by the inclusion of a $G$-invariant open neighborhood of $x$.
	\begin{proof}
		Since $p$ is a submersion of Stein spaces, its fibres are Stein manifolds. Since $G$ is finite, $p^{-1}(Gp(x))$ is a finite disjoint union of Stein manifolds, and we have a short exact sequence
		\[
			0 \to T_x p^{-1}(Gp(x)) \to T_x X \to T_{p(x)} Z \to 0
		.\]

		By \cite{Heinzner1988}, there is some finite dimensional complex $G$-representation $V$ and $G$-equivariant embedding $\psi : p^{-1}(Gp(x)) \to V$.

		Since $p^{-1}(Gp(x)) \to X$ is a $G$-equivariant embedding, and $X$ is Stein, \Cref{lem:restricting lin equiv maps on Stein} says there is a $G$-equivariant map $f : X \to V$ extending $\psi$. Observe that the induced map $(p,f) : X \to Z \times V$ is an immersion at $x$: indeed, by \cite[Proposition 2.4]{Fischer}, it suffices to show that the kernel of
		\[
			T_x X \to T_{p(x)} Z \times T_{f(x)} V
		\]
		is zero. Suppose $v \in T_x X$ is sent to $0$ in $T_{p(x)} Z$. We have already observed that $v$ must be in the subspace $T_x p^{-1}(Gp(x))$ of $T_x X$, but then if $v$ is also sent to $0$ in $T_{f(x)} V$, it must be in the kernel of $T_x p^{-1}(Gp(x)) \to T_{\psi(x)} V$, but this map is injective since $\psi$ is an embedding.

		Thus, \Cref{prp:lifting qsections} says that there is a $G$-equivariant embedding $\tilde X \to S \times V$ whose base change along $Z \times V \to S \times V$ is the restriction of $X \to Z \times V$ to a $G$-invariant open neighborhood of $x$, and such that $\tilde X \to S$ is a submersion. In particular, $\tilde X \times_S Z \to Z$ factors through $p$ by the inclusion of a $G$-invariant open neighborhood of $x$.
	\end{proof}
\end{lem}

\subsubsection{The gluing property for complex analytic stacks} \label{S:hol gluing}

Recall that $\An_\comps$ is equipped with the Grothendieck topology of open coverings. We recall and supplement the relevant notions from \cite[\S5.3]{Fundamentals} (see there for more details):
\begin{description}

	\item[Quasi-admissibility structure] The category $\Shv(\An_\comps)$ is given the structure of a quasi-small pullback context where the quasi-admissible maps are the representable submersions.

	\item[Pseudotopologies] We have the following small quasi-admissible pseudotopologies on $\Shv(\An_\comps)$:
		\begin{description}

			\item[Open covers] An \emph{open cover} in $\Shv(\An_\comps)$ is given by a family of open subspaces $\{U_i \to X\}_i$ that pull back to an open cover of $X'$ for any complex space $X'$ and map $X' \to X$. This defines the Grothendieck topology $\tau_{\open^\an}$ on $\Shv(\An_\comps)$.

			\item[Analytic Nisnevich covers] The \emph{analytic Nisnevich topology} $\tau_{\Nis^\an}$ on $\Shv(\An_\comps)$ is the Grothendieck topology generated by $\tau_{\open^\an}$, and families of the form $\{U,\tilde X \to X\}$, where $U \to X$ is an open subspace, and $\tilde X \to X$ is a representable local biholomorphism that is an equivalence away from $U$.

			\item[Homotopy invariance] The quasi-admissible pseudotopology $\tau_\htpy$ on $\Shv(\An_\comps)$ has acyclic pseudosieves given by vector bundle torsors.

			\item[Motivic equivalence] The quasi-admissible pseudotopology $\tau_\hol$ on $\Shv(\An_\comps)$ is the union $\tau_\htpy \cup \tau_{\Nis^\an}$.

		\end{description}
		
	\item[Unstable homotopy theory] We write $H^\hol$ to denote $H^{\tau_\hol}$, as well as its restriction to any full anodyne pullback subcontext of $\Shv(\An_\comps)$. This is a pullback formalism which sends any $S \in \Shv(\An_\comps)$ to the category of sheaves $F$ on the site of representable submersions over $S$, which are also homotopy invariant in the sense that for any vector bundle torsor $V \to X$, the map $F(X) \to F(V)$ is an equivalence.

	\item[Complex analytic stacks] Define $\HolStk^\fin$ (\resp{} $\HolStk^{\fin,\red}$) to be the full subcategory of $\Shv(\An_\comps)$ consisting of those objects that admit analytic Nisnevich covers by global quotients of the form $X/G$ for $G$ a finite discrete group acting on a (\resp{} reduced) complex space $X$. Note that if $X \to Y$ is a representable submersion, and $Y \in \HolStk^\fin$ (\resp{} $\HolStk^{\fin,\red}$), then $X \in \HolStk^\fin$ (\resp{} $\HolStk^{\fin,\red}$), so $\HolStk^\fin$ and $\HolStk^{\fin,\red}$ are full anodyne pullback subcontexts of $\Shv(\An_\comps)$.

	\item[Stable homotopy theory] \cite[Theorem 5.3.10]{Fundamentals} constructs the stabilized version $\SH^\hol$ of $H^\hol$ on $\HolStk^{\fin,\red}$ characterized by the condition that the unique map $H^\hol \to \SH^\hol$ is initial among pointed pullback formalisms $D$ under $H^\hol$ satisfying that Thom spaces of vector bundles over any $S \in \HolStk^{\fin,\red}$ are sent to $\otimes$-invertible objects in $D(S)$.

\end{description}

\begin{defn}
	Say a map $X \to Y$ in $\Shv(\An_\comps)$ is an \emph{embedding} if for any $Y' \in \An_\comps$, and map $Y' \to Y$, the map $X \times_Y Y' \to Y'$ is an embedding of complex spaces (see \Cref{defn:emb imm hol}).
\end{defn}

Our goal is to prove the following:
\begin{thm} \label{thm:hol gluing}
	Any embedding in $\HolStk^\fin$ is $H^\hol$-closed, and there is a unique extension of $\SH^\hol$ to $\HolStk^\fin$ such that every embedding is $\SH^\hol$-closed.
\end{thm}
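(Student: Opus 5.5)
The plan mirrors the proof of \Cref{thm:diff gluing}. First I show every embedding $i : Z \to S$ in $\HolStk^\fin$ is $H^\hol$-closed. Then I deduce the stable statement from \Cref{lem:gluing for ptd} and \Cref{prp:stabilize gluing}. By \Cref{lem:gluing locality on base}, with analytic Nisnevich covers being $H^\hol$-pseudocovers, I may work locally on $S$. So I take $\mathcal G$ to be the objects admitting representable submersions to global quotients $S_0/G$, with $G$ finite and $S_0$ a Stein $G$-space. Every object of $\HolStk^\fin$ has a quasi-admissible pseudocover from $\mathcal G$, because a complex $G$-space near a $G$-orbit has $G$-invariant Stein neighbourhoods; this uses the finiteness of $G$. $\mathcal G$ is also stable under quasi-admissible maps, as required in \Cref{thm:global pseudotop gluing}. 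Condition (2)(b)(i), that $i_*$ preserves $\tau_\hol$-local equivalences between reduced presheaves, I treat as in \Cref{lem:diff cl emb}. $\tau_\htpy$-equivalences are generated by homotopies that $i_*$ visibly preserves. For the Nisnevich part I invoke \cite[Lemma 3.1.6]{locspalg}, which requires that analytic Nisnevich covers of $Z$ extend locally to covers of $S$. For open covers this holds because $Z$ carries the subspace topology. For local biholomorphisms $\tilde Z \to Z$, \Cref{lem:lifting submersions} with relative dimension $0$ supplies an equivariant extension to $S$.

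Conservativity of $i_*$ follows by the argument of \Cref{lem:cl emb conservative}. Working on $S = S_0/G$ with $S_0$ Stein, \Cref{lem:lifting submersions} shows that $H^\hol(Z)$ is generated under colimits by objects $\cls{\tilde X \times_S Z}$. These come from representable submersions $\tilde X \to S$, obtained after restricting to $G$-invariant open neighbourhoods and refining to Stein pieces, which form an open cover. Hence $H^\hol(Z)$ is generated by the essential image of $i^*$, and $i_*$ is conservative.

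For gluing I apply \Cref{thm:global pseudotop gluing}. It then suffices to show $\hat\theta_i(X,t)$ is $\tau_\hol$-locally contractible in the following setting: $G$ finite, $X \to S$ an equivariant submersion of Stein $G$-spaces, $i$ an equivariant embedding, and $t : Z \to X$ an equivariant map over $S$. I first reduce to the case $t = s \circ i$ for a section $s$, using \Cref{lem:gluing locality and excision on base}. Locally near points of $i(Z)$, the map $t$ is an immersion into $X$ whose composite to $Z$ is a submersion, namely the identity. So \Cref{prp:lifting qsections}, applied to $\nu = (t,\id) : Z \to X_Z$, yields an equivariant $\hat V \to X$ with $\hat V \to S$ a submersion whose base change to $Z$ is an open piece of $Z$. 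Since $\hat V \to S$ is then biholomorphic near $Z$, shrinking gives a $G$-invariant open $S' \supseteq i(Z)$ and a section $s$ of $X$ over $S'$ extending $t$. These $S'$, together with the complement $U$, pseudocover $i$. Next, \Cref{prp:hol sec has bihol nbhd in vb} gives a $G$-invariant neighbourhood $X^\circ$ of $s$ with a local biholomorphism $h : X^\circ \to V$ to a $G$-vector bundle, satisfying $h^{-1}(0)=s$. Applying \Cref{thm:gluing is adm invar} twice, with $\Sigma$ the representable local biholomorphisms and $\gamma = \tau_{\Nis^\an}$, identifies $\hat\theta_i(X,t)$, $\hat\theta_i(X^\circ,s i)$ and $\hat\theta_i(V,0\circ i)$ up to $\tau_\hol$-local equivalence. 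The maps involved are truncated, being representable into $0$-truncated objects over $S$. To apply the theorem I must verify its standing hypothesis: a local biholomorphism $X' \to X$ that is an isomorphism over $t(Z)$, together with maps disjoint from $t$, generates a Nisnevich covering sieve. That is the defining Nisnevich square after shrinking $X'$ so that it is an isomorphism onto its image near $t(Z)$. The vector bundle case is then contractible by the explicit nullhomotopy of \cite[Lemma 4.17]{sixopsequiv}, using the scaling homotopy, which is $\tau_\htpy$-acyclic.

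For the stable part, \Cref{lem:gluing for ptd} gives gluing for $H^\hol_\bullet$. For \Cref{prp:stabilize gluing}, Thom spaces on $Z$ must be $\otimes$-generated by pullbacks from $S$. Locally, on Stein quotients, an equivariant vector bundle on $Z$ is a summand of a trivial bundle $\mathcal O\otimes W$ for a representation $W$, using \Cref{rmk:averaging} and Cartan's theorems. Such trivial bundles visibly come from $S$, which gives $A$-lifts. Extending $\SH^\hol$ from $\HolStk^{\fin,\red}$ to $\HolStk^\fin$ uniquely should follow from closedness of the reduction $X_\red \to X$: this map is an embedding with empty complement, so it is an $\SH$-equivalence, and I define $\SH^\hol(X) := \SH^\hol(X_\red)$. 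The main obstacle I expect is the Nisnevich extension step for (2)(b)(i) and the careful equivariant shrinking in the section-extension reduction.
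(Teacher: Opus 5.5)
Your proposal is correct and follows the paper's proof essentially step for step. It uses \Cref{thm:global pseudotop gluing} over finite quotients of Stein spaces, \Cref{prp:lifting qsections} with \Cref{lem:gluing locality and excision on base} to make $t$ factor through a section, \Cref{prp:hol sec has bihol nbhd in vb} with \Cref{thm:gluing is adm invar} to pass to a vector bundle, and \Cref{lem:lifting submersions} for conservativity. For $\SH^\hol$ it uses \Cref{lem:gluing for ptd}, \Cref{prp:stabilize gluing} and the $(-)_\red$ trick, just as the paper does.

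The deviations are minor. The paper handles the Nisnevich part of the $i_*$-preservation step by reducing $\tau_{\Nis^\an}$ to open covers (\cite[Lemma 5.3.9]{Fundamentals} with \Cref{lem:hol emb are incisions}), which is lighter than your lifting of local biholomorphisms via \Cref{lem:lifting submersions}. The paper also uses the lifted $\hat V \to S$ directly as a Nisnevich neighbourhood rather than shrinking it to open subsets; each of your opens contains only a piece of $i(Z)$, so you get a family, not a single $S' \supseteq i(Z)$. Finally, your enlarged $\mathcal G$ leaves an unstated but routine reduction to Stein $X$ and $S$, via \Cref{lem:gluing locality and excision on base} and \Cref{thm:gluing is adm invar}.
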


First we need some preliminary results about embeddings.
\begin{lem} \label{lem:hol emb are incisions}
	Let $i : Z \to S$ be an embedding in $\Shv(\An_\comps)$, and suppose that $S$ is of the form $X/G$ for some finite discrete group $G$ acting on a complex space $X$. Then for any analytic Nisnevich cover of $Z$, there is an open cover of $S$ that pulls back to a refinement of it, and consists of maps of the form $(U \to X)/G$, where $U \to X$ is the inclusion of a $G$-invariant Stein open subpsace.

	\begin{proof}
		We may use \cite[Lemma 5.3.9]{Fundamentals} to reduce to the case of an open cover of $Z$. If we write $\bar Z \to X$ for the $G$-equivariant embedding corresponding to $i$, then we must show that for any $G$-invariant open cover of $\bar Z$, there is a $G$-invariant open cover of $X$ that pulls back to a refinement of it. This follows easily from \cite[Lemma B.0.2]{Fundamentals}, which shows that
		\begin{itemize}

			\item every point of $X \setminus \bar Z$ has a $G$-invariant Stein open neighbourhood that is disjoint from $\bar Z$, and

			\item every point $x$ of $\bar Z$ has a neighbourhood basis in $\bar Z$ consisting of neighbourhoods of the form $U \cap \bar Z$, where $U$ is a $G$-invariant Stein open neighbourhood of $x$ in $X$.

		\end{itemize}
	\end{proof}
\end{lem}

\begin{lem} \label{lem:hol emb}
	Let $i : Z \to S$ be an embedding in $\Shv(\An_\comps)$, where $S$ is a global quotient $X/G$ for some finite group $G$ acting on a complex space $X$.
	Then $i_* : H^\univ(Z) \to H^\univ(S)$ preserves $\tau_\hol$-local equivalences between reduced presheaves.
	\begin{proof}
		Recall that $\tau_\hol = \tau_{\Nis^\an} \cup \tau_\htpy$, where $\tau_{\Nis^\an}$ is given by open covers and \'{e}tale excision, and $\tau_\htpy$ is given by vector bundle torsors.

		By \cite[Lemma 5.3.9]{Fundamentals}, it suffices to replace $\tau_{\Nis^\an}$ by $\tau_{\open^\an}$, and $\tau_\htpy$ by the pseudotopology whose acyclic pseudosieves are maps of the form $\comps \times X \to X$.

		It is easy to show that maps of the form $i_*(\comps \times X \to X)$ are $\tau_\htpy$-local equivalences using explicit $\comps$-indexed homotopies, so we find that $i_*$ preserves $\tau_\htpy$-local equivalences.

		It remains to show that $i_*$ sends $\tau_{\Nis^\an}$-local equivalences to $\tau_\hol$-local equivalences. In fact, since embeddings are stable under base change, \Cref{lem:hol emb are incisions} says that we may conclude by \cite[Lemma 3.1.6]{locspalg}.
	\end{proof}
\end{lem}

\begin{proof}[Proof of \Cref{thm:hol gluing}]
	First we show that $H^\hol$ has gluing for embeddings to objects of $\HolStk^\fin$.

	Let $\mathcal G$ be the collection of objects of $\Shv(\An_\comps)$ of the form $X/G$, where $G$ is a finite discrete group, and $X$ is a Stein $G$-space. Note that by \cite[Lemma B.0.2 or 5.3.9(2)]{Fundamentals}, if $G$ is a finite discrete group acting on a complex space $X$, then $X/G$ has an open cover by objects of $\mathcal G$. Therefore, any object of $\HolStk^\fin$ has a $\tau_{\Nis^\an}$-cover by objects of $\mathcal G$, so by \Cref{lem:hol emb}, we may apply \Cref{thm:global pseudotop gluing} to reduce to showing that if $G$ is a finite discrete group, $Z,S,X$ are Stein spaces with $G$-action, $i : Z \to S$ is a $G$-equivariant embedding, $X \to S$ is a $G$-equivariant submersion, and $t : Z \to X$ is a $G$-equivariant holomorphic map over $S$, then $\hat \theta_{i/G}(X/G,t/G) \in \Psh(\HolStk^\fin_{/(S/G)})$ is $\tau_\hol$-locally contractible.

	If we write $\nu : Z \to X_Z \coloneqq X \times_S Z$ for the map induced by $t$, then \Cref{prp:lifting qsections} says that $Z$ has an open cover $\mathcal R$ by $G$-invariant open subspaces of $Z$ such that each $U \to Z$ in $\mathcal R$ is the base change along $X_Z \to X$ of a $G$-equivariant immersion $\hat V \to X$ such that $\hat V \to X \to S$ is a submersion.

	By \Cref{lem:hol emb are incisions}, there is an open cover $\mathcal R'$ of $S$ consisting of $G$-invariant Stein open subsets, and such that $i^* \mathcal R'$ refines $\mathcal R$. Since open covering sieves are $\tau_\hol$-acyclic, it follows that $\mathcal R'$ is an $H^\hol$-pseudocover of $S$, so by \Cref{lem:gluing locality and excision on base}, it suffices to work after base change from $S$ to the members of this cover. Thus, we may assume that the map $Z \to X_Z$ is the base change along $X_Z \to X$ of some ($G$-equivariant) map $S' \to X$ such that $S' \to X \to S$ is a submersion.

	In particular, $\{S'/G, (S \setminus Z)/G \to S/G\}$ is an analytic Nisnevich cover of $S/G$, so $S'/G \to S/G$ is a $H^\hol$-pseudocover of $i$, and by \Cref{lem:gluing locality and excision on base}, it suffices to show $\hat \theta_{i/G}(X/G,t/G)$ is $\tau_\hol$-locally contractible after pulling back along $(S' \to S)/G$, in which case the map $t$ factors through a section $s : S \to X$ of $X \to S$. By \Cref{prp:hol sec has bihol nbhd in vb}, there is a $G$-invariant open neighborhood of $s$ in $X$ that is also a locally biholomorphic neighborhood of the zero section of a vector bundle in $S$. Thus, by \Cref{thm:gluing is adm invar}, $\hat \theta_i(X,t)$ is $\tau_{\Nis^\an}$-locally equivalent to $\hat \theta_i(V, 0 \circ i)$ where $0 : S \to V$ is the zero section of a vector bundle. This is seen to be $\tau_\hol$-locally contractible by means of an explicit nullhomotopy (\cf{} \cite[Lemma 4.17]{sixopsequiv} and \cite[Lemma II.5.2.11]{TwAmb}).

	We have shown that if $i : Z \to S$ is an embedding in $\HolStk^\fin$, then $H^\hol$ has gluing for $i$. To show that $i$ is $H^\hol$-closed, it remains to show that $i_*$ is conservative. Once again, we can use \Cref{lem:gluing locality on base} reduce to the case that $S = X/G$ for $G$ a finite group, and $X$ a Stein $G$-space, in which case we can conclude by \Cref{lem:lifting submersions}, which shows that $H^\tau(Z)$ is generated under colimits by the essential image of $i^*$.
	
	Finally, we must address the statements about $\SH^\hol$. The fully faithful inclusion $\HolStk^{\fin,\red} \to \HolStk^\fin$ admits a right adjoint $(-)_\red$, and for any $X \in \HolStk^\fin$, the counit $X_\red \to X$ is an embedding with empty complement, so any extension of $\SH^\hol$ to $\HolStk^\fin$ such that all embeddings are $\SH^\hol$-closed must send all of these maps $X_\red \to X$ to equivalences. It follows that if $f$ is a map such that $f_\red$ is an equivalence, then any such extension of $\SH^\hol$ must send $f$ to an equivalence. Since $(-)_\red$ preserves quasi-admissible maps and pullbacks, \cite[Proposition 5.2.7.12]{htt} shows that there is a unique such extension of $\SH^\hol$ to a pullback formalism on $\HolStk^\fin$, and it only remains to show any embedding $i$ is then $\SH^\hol$-closed.

	Recalling the construction of $\SH^\hol$ as the stabilization of $H^\hol$, It follows from \Cref{lem:gluing for ptd}, \Cref{prp:stabilize gluing}, and \cite[Remark 5.3.8]{Fundamentals}, that the embedding $i_\red$ is $\SH^\hol$-closed. We can then deduce that $i$ is $\SH^\hol$-closed using \cite[Proposition 4.17]{6FF}.
\end{proof}

\appendix
\section{Locally Ringed Spaces}

%

The following is a pleasing version of Nakayama's Lemma:
\begin{lem}[Geometric Nakayama] \label{lem:geom Nak}
	Let $i : Z \to X$ be an immersion of locally ringed spaces. If $M$ is an $\mathcal O_X$-module of finite type such that $i^*(M) = 0$, then there is an open neighborhood of $i(Z)$ where $M$ vanishes.

	In fact, this holds as long as $i$ is any map of ringed spaces such that for all $z \in Z$, $\mathcal O_{X,i(z)} \to \mathcal O_{Z,z}$ is a surjective map of (nonzero) local rings.
	\begin{proof}
		Note that since $M$ is of finite type, its support is closed by \stackscite{01B9}, so it suffices to show that $i(Z)$ is disjoint from the support of $M$, \ie{} $M_{i(z)} = 0$ for all $z \in Z$.

		Let $\mathcal I \coloneqq \ker(\mathcal O_X \to i_* \mathcal O_Z)$ be the ideal sheaf of $i$. Note that if $z \in Z$, then since
		\[
			\mathcal O_{Z,z} \cong (\mathcal O_X/\mathcal I)_{i(z)} \cong \mathcal O_{X,i(z)}/\mathcal I_{i(z)}
		\]
		is nonzero, it follows that $\mathcal I_{i(z)}$ is a proper ideal of the local ring $\mathcal O_{X,i(z)}$. Since $M_{i(z)}$ is finitely generated, $\mathcal O_{X, i(z)}$ is a local ring, and
		\[
			M_{i(z)}/\mathcal I_{i(z)} M_{i(Z)} = i^*(M)_z = 0
		,\]
		Nakayama's Lemma says that $M_{i(z)} = 0$, as desired.
	\end{proof}
\end{lem}

\begin{lem} \label{lem:regular local}
	Let $X$ be a ringed space, let $\mathcal F$ be a coherent $\mathcal O_X$-module, $\mathcal I \subseteq \mathcal O_X$ a finitely generated ideal sheaf, and let $x \in X$ be a point such that $\mathcal I_x$ is a $\mathcal F_x$-regular ideal.\footnotemark{} Then there is some open neighbourhood $U \subseteq X$ of $x$ such that $\mathcal I|_U$ is a $\mathcal F|_U$-regular ideal.\footnotemark[\thefootnote]{}
	\footnotetext{If $X$ is a ringed space, $\mathcal F$ is a $\mathcal O_X$-module, and $\mathcal I \subseteq \mathcal O_X$ is an ideal sheaf, we say that $\mathcal I$ is $\mathcal F$-regular if $\mathcal I$ is generated by a $\mathcal F$-regular sequence, that is, a sequence $f_1, \dotsc, f_c \in \mathcal I(X)$ such that for $1 \leq i \leq c$, the map $f_i : \mathcal F/(f_1, \dotsc, f_{i - 1}) \mathcal F \to \mathcal F/(f_1, \dotsc, f_{i - 1})\mathcal F$ is injective.}
	\begin{proof}
		See \stackscite{061L} for an argument in the case of schemes.

		After shrinking $X$ to some open neighbourhood of $x$, we may assume that there are global sections $f_1, \dotsc, f_c$ of $\mathcal I$ that are sent to a $\mathcal F_x$-regular sequence that generates $\mathcal I_x$. We note that since $\mathcal I$ is of finite type, and the stalk of $\mathcal I/(f_1, \dotsc, f_c)$ vanishes at $x$, \stackscite{01B9} shows that after shrinking $X$ again, we may assume that $\mathcal I$ is generated by $f_1, \dotsc, f_c$, so it suffices to show that there is some open neighbourhood $U$ of $x$ where $f_1|_U, \dotsc f_c|_U$ define a $\mathcal F|_U$-regular sequence.

		For $1 \leq i \leq c$, write
		\[
			K_i \coloneqq \ker(\mathcal F/(f_1, \dotsc, f_{i - 1})\mathcal F \to \mathcal F/(f_1, \dotsc, f_{i - 1})\mathcal F)
		.\]
		Since $\mathcal F$ is coherent, we have that $K_i$ is of finite type for all $i$ by \stackscite{01BY}. We have assumed that the stalk of $K_i$ at $x$ is zero for all $i$, so by \stackscite{01B9}, there is an open neighbourhood of $x$ where $K_i$ vanishes for all $i$.
	\end{proof}
\end{lem}

\begin{lem} \label{lem:check flatness and regularity at point of flat closed}
	Let $A \to B$ be a flat local homomorphism of local rings. Let $A \to A'$ be a surjective nonzero ring homomorphism, and
	\[
		f_1, \dotsc, f_c \in B' \coloneqq A' \otimes_A B
	\]
	be a quasi-regular sequence in $B'$ such that
	\[
		A \to B'/(f_1, \dotsc, f_c)
	\]
	is flat. If $A \to B$ is essentially of finite presentation, or $B$ is Noetherian, then for any $g_1, \dotsc, g_c \in B$ such that $g_i \mapsto f_i$, we have that $g_1, \dotsc, g_c$ is a regular sequence in $B$ and $B/(g_1, \dotsc, g_c)$ is flat over $A$.
	\begin{proof}
		Note that since $A \to A'$ is a surjective nonzero ring homomorphism, it factors the residue field projection $A \to \kappa$. By \stackscite{0CEP}, since $B'/(f_1, \dotsc, f_c)$ is flat over $A'$, and $f_1, \dotsc, f_c$ is quasi-regular, we have that the image of this sequence in $B' \otimes_{A'} \kappa$ is quasi-regular.

		Next we will show that $B' \otimes_{A'} \kappa$ is Noetherian:
		\begin{itemize}

			\item If $A \to B$ is essentially of finite presentation, so is
				\[
					\kappa \to B' \otimes_{A'} \kappa = B \otimes_A \kappa
				,\]
				so since $\kappa$ is a field, $B' \otimes_{A'} \kappa$ is Noetherian (\stackscite{00FN}).
		
			\item Otherwise, we have assumed $B$ is Noetherian, so $B' \otimes_{A'} \kappa$ must also be Noetherian, as it is a quotient of $B$.

		\end{itemize}
		Thus, any quasi-regular sequence in $B' \otimes_{A'} \kappa$ is regular, and in particular, the image of $f_1, \dotsc, f_c$ in $B \otimes_A \kappa$ is regular, so by
		\begin{itemize}

			\item \stackscite{0470} in the case that $A \to B$ is essentially of finite presentation,
			
			\item or \stackscite{00MG}\footnotemark{} in the case that $B$ is Noetherian,
				\footnotetext{This result also asks for $A$ to be Noetherian, but this hypothesis is not necessary, as the result is proven by induction using \stackscite{00ME}.}
				
		\end{itemize}
		if the sequence $g_1, \dotsc, g_c \in B$ maps to $f_1, \dotsc, f_c$, then it is a regular sequence, and $B/(g_1, \dotsc, g_c)$ is flat over $A$.
	\end{proof}
\end{lem}

\begin{lem} \label{lem:check flatness of extn of qreg imm}
	Let
	\[
		\begin{tikzcd}
			\mathcal O_{V,v} & \ar[l] \mathcal O_{\hat V, v} \\
			\mathcal O_{X_Z, v} \ar[u] & \ar[l] \mathcal O_{X,v} \ar[u] \\
			\mathcal O_{Z,z} \ar[u] & \ar[l] \mathcal O_{S,s} \ar[u]
		\end{tikzcd}
	\]
	be a commutative diagram of local rings where the horizontal arrows and the top two vertical arrows are surjective, and the bottom square is coCartesian. Write $I, \hat I$ for the ideals of the left and right top vertical arrows respectively.

	Assume
	\begin{itemize}

		\item $\mathcal O_{S,s} \to \mathcal O_{X,v}$ is flat and either that $\mathcal O_{X,v}$ is Noetherian, or that the map is essentially of finite presentation,

		\item the ideal $I$ is an ideal generated by a quasi-regular sequence $f_1, \dotsc, f_c \in \mathcal O_{X_Z, v}$,

		\item the map $\hat I/\hat I^2 \otimes_{\mathcal O_{\hat V, v}} \mathcal O_{V,v} \to I/I^2$ is invertible,

		\item $\hat I$ is finitely generated, and

		\item $\mathcal O_{Z,z} \to \mathcal O_{V,v}$ is flat.

	\end{itemize}
	Then $\mathcal O_{S,s} \to \mathcal O_{\hat V,v}$ is flat.
	\begin{proof}
		Since $f_1, \dotsc, f_c$ is a quasi-regular sequence, these elements form a basis of $I/I^2$ over $\mathcal O_{X_Z, v}/\mathcal I_v \cong \mathcal O_{V,v}$. 

		Since $\hat I$ is finitely generated, by Nakayama's Lemma\footnote{For example, part (6) of \stackscite{00DV}.} we have that $\hat I \to I$ is surjective, so we can pick $\hat f_1, \dotsc, \hat f_c \in \hat{\mathcal I}_v$ which are sent to $f_1, \dotsc, f_c$. The sequence
		\[
			1 \otimes \hat f_1, \dotsc, 1 \otimes \hat f_c \in \mathcal O_{V,v} \otimes_{\mathcal O_{X,v}} \hat I = \mathcal O_{V,v} \otimes_{\mathcal O_{\hat V, v}} \mathcal O_{\hat V,v} \otimes_{\mathcal O_{X,v}} \hat I = \mathcal O_{V,v} \otimes_{\mathcal O_{\hat V,v}} \hat I/\hat I^2 \xrightarrow{\sim} I/I^2
		\]
		is sent to the basis $f_1, \dotsc, f_c$ of $I/I^2$ over $\mathcal O_{V,v}$, so $1 \otimes \hat f_1, \dotsc, 1 \otimes \hat f_c$ is a basis of $\mathcal O_{V,v} \otimes_{\mathcal O_{X,v}} \hat I$. Since $\hat I$ is finitely generated, and $\mathcal O_{X,v} \to \mathcal O_{V,v}$ is a surjection of local rings, Nakayama's Lemma says that $\hat f_1, \dotsc, \hat f_c$ is a set of generators for $\hat I$.

		Finally, if $\mathcal O_{Z,z} \to \mathcal O_{V,v}$ and $\mathcal O_{S,s} \to \mathcal O_{X,v}$ are flat, and $\mathcal O_{X,v}$ is Noetherian or the map is essentially of finite presentation, we can conclude by \Cref{lem:check flatness and regularity at point of flat closed} since $\hat f_1, \dotsc, \hat f_c \in \mathcal O_{X,v}$ are sent to the quasi-regular sequence $f_1, \dotsc, f_c \in \mathcal O_{X_Z, v}$ (by construction).
	\end{proof}
\end{lem}

\begin{lem} \label{lem:describe closed as vanishing locus in a nbhd}
	Let $i : Z \to X$ be a closed immersion of locally ringed spaces. If the ideal sheaf $\mathcal I$ of $i$ is of finite type, and $f : \mathcal N \to \mathcal I$ is a map such that
	\[
		\mathcal N \to \mathcal I \to i_* i^* \mathcal I
	\]
	is surjective, then $Z \to Z(f)$ is a clopen immersion of locally ringed spaces.
	\begin{proof}
		Note that we have a commutative square of $\mathcal O_X$-algebras
		\[
			\begin{tikzcd}
				\Sym_{\mathcal O_X} \mathcal N \ar[d] \ar[r] & \mathcal O_X \ar[d] \\
				\mathcal O_X \ar[r] & i_* \mathcal O_Z
			\end{tikzcd}
		,\]
		where the bottom and right arrows are the unit of $i^* \dashv i_*$, and the top and left arrows are induced by the maps $f,0 : \mathcal N \to \mathcal O_X$.

		We need to show that this square becomes coCartesian after restricting to an open neighborhood of $i$, which amounts to showing that $f : \mathcal N \to \mathcal I$ is surjective in some open neighborhood of $i$. Since $\mathcal I$ is of finite type, Nakayama's Lemma (\Cref{lem:geom Nak}) says that it suffices to show that $i^* f$ is surjective.

		Indeed, note that $i^*(\id \to i_*i^*)$ is the identity of $i^*$, so $i^* f$ is the same as $i^*$ of the surjection $\mathcal N \to \mathcal I \to i_* i^* \mathcal I$. We conclude since $i^*$ is right exact.
	\end{proof}
\end{lem}

The following \lcnamecref{lem:sec has flat nbhd in conormal} can be seen as giving criteria for when a section $s : S \to X$ of a map $p : X \to S$ is locally given as the vanishing locus of a flat map $X \to V$ over $S$, where $V$ is a vector bundle over $S$. We do not state the result in these terms because we would like to be able to apply it in various geometric contexts that can be modeled by locally ringed spaces, in which the total space of a locally free sheaf may be given by different constructions (\eg{} the cases of schemes or complex spaces).
\begin{lem} \label{lem:sec has flat nbhd in conormal}
	Let $s : Z \to X$ be a quasi-regular immersion that is a section of a map $p : X \to Z$. Let $\mathcal I$ be the ideal sheaf of $s$, and suppose $f : p^* s^* \mathcal I \to \mathcal I$ is a map such that the composite with $\mathcal I \to s_* s^* \mathcal I$ is surjective. Then $s$ factors through $Z(f)$ by a clopen immersion, and if $p : X \to Z$ is flat at a point $x$ in the image of $s$, then the composite
	\[
		(\Sym s^* \mathcal I)_{p(x)} \to p^*(\Sym s^* \mathcal I)_x \cong (\Sym p^* s^* \mathcal I)_x \to \mathcal O_{X,x}
	\]
	is flat as long as $\mathcal O_{X,x}$ is Noetherian, or $\mathcal O_{Z,p(x)} \to \mathcal O_{X,x}$ is essentially of finite presentation.
	\begin{proof}
		Note that it suffices to work in any open neighbourhood of $s$, so we may assume that $s$ is a closed immersion. Since $s$ is quasi-regular, $\mathcal I$ is of finite type, so by \Cref{lem:describe closed as vanishing locus in a nbhd}, we have that $s$ factors through $Z(f)$ by a clopen immersion $Z \to Z(f)$.
		Thus, we have a commutative diagram of local rings:
		\[
			\begin{tikzcd}
				\mathcal O_{Z,z} & \ar[l] \mathcal O_{X, x} & \\
				\mathcal O_{X,s(z)} \ar[u] & \ar[l] (\Sym p^* s^* \mathcal I)_x \ar[u] & \ar[l] \mathcal O_{X, x} \\
				\mathcal O_{Z,p(s(z))} \ar[u] & \ar[l] (\Sym s^* \mathcal I)_z \ar[u] & \ar[l] \mathcal O_{Z, z} \ar[u]
			\end{tikzcd}
		.\]
		The bottom right square is coCartesian, and the horizontal composites on the bottom are identities, so the bottom left square is also coCartesian. Since $\mathcal O_{Z,z} \to \mathcal O_{X, x}$ is flat, so is the bottom middle vertical arrow. If $\mathcal O_{Z, z} \to \mathcal O_{X,x}$ is essentially of finite presentation, then so is the bottom middle vertical arrow. Since $s^* \mathcal I$ is finite over $\mathcal O_Z$, $p^* s^* \mathcal I$ is also finite over $\mathcal O_X$, so if $\mathcal O_{X,x}$ is Noetherian, then $(\Sym p^* s^* \mathcal I)_x$ is Noetherian. Thus, we may apply \Cref{lem:check flatness of extn of qreg imm} using that
		\begin{enumerate}

			\item We have already shown that $(\Sym s^* \mathcal I)_{z} \to (\Sym p^* s^* \mathcal I)_{x}$ is flat, and that it is either essentially of finite presentation, or the codomain is Noetherian.

			\item $s$ is quasi-regular, so $\mathcal I_{x}$ is generated by a quasi-regular sequence in $\mathcal O_{X, x}$.

			\item If $\hat I$ is the kernel of the map $(\Sym p^* s^* \mathcal I)_{x} \to \mathcal O_{X, x}$, then the map
				\[
					\lambda : \hat I/\hat I^2 \otimes_{\mathcal O_{X, x}} \mathcal O_{Z,z} \to (\mathcal I/\mathcal I^2)_{z}
				\]
				is the map
				\[
					(p^* s^* \mathcal I \otimes_{\mathcal O_X} \mathcal O_Z \to s^* \mathcal I)_{z}
				,\]
				which is the map
				\[
					(s^* p^* s^* \mathcal I \to s^* \mathcal I)_{z}
				,\]
				where
				\[
					s^* p^* s^* \mathcal I \to s^* \mathcal I
				\]
				is the map 
				\[
					s^*(p^* s^* \mathcal I \xrightarrow{f} \mathcal I)
				,\]
				which is a surjection since we have assumed that the composite $p^* s^* \mathcal I \to \mathcal I \to s_* s^* \mathcal I$ is a surjection. (\cf{} the proof of \Cref{lem:describe closed as vanishing locus in a nbhd}.)
				Since $s$ is quasi-regular, this is a surjective map of finite locally free sheaves of the same rank, so it is invertible by \stackscite{089Q}.

				Thus, $\lambda$ is an equivalence since it is the stalk at $z$ of this equivalence.

			\item We have already seen that $p^* s^* \mathcal I$ is finite over $\mathcal O_X$, so $\hat I$, which is the kernel of the map $(\Sym p^* s^* \mathcal I)_{x} \to \mathcal O_{X, x}$, is a finitely generated ideal.

			\item The map $\mathcal O_{Z, p(s(z))} \to O_{Z,z}$ is the identity, so it is flat.

		\end{enumerate}

		Therefore, the vertical composite in the middle is flat, as desired.
	\end{proof}
\end{lem}

\section{Gluing for Topoi} \label{S:excision}

The ideas of \cite[\S 7.3.2]{htt} are relevant to this section.

We will study a version of the gluing property in the setting of topoi. First we will need some preliminary notions.

\begin{defn}
	Given a category $\mathcal C$, write $\Op(\mathcal C)$ for the full subcategory of $\mathcal C$ of $(-1)$-truncated objects in $\mathcal C$, that is, those $U \in \mathcal C$ such that for any $X \in \mathcal C$, the space $\mathcal C(X,U)$ is either empty or contractible.
\end{defn}

\begin{rmk}
	For any category $\mathcal C$, the category $\Op(\mathcal C)$ is equivalent to a partially ordered class. Indeed, the mapping space between any two objects is either empty or contractible.
\end{rmk}

\begin{rmk} \label{rmk:opens of a pres cat}
	If $\mathcal C$ is a presentable category, then $\Op(\mathcal C)$ is equivalent to a (small) complete lattice.
	\begin{proof}
		By \cite[Proposition 5.5.6.18]{htt}, the category $\Op(\mathcal C)$ is presentable, so it is closed under small limits and colimits. We have already remarked that it is equivalent to a partially ordered class, and furthermore, we have that it is generated under small joins by a small subset.
	\end{proof}
\end{rmk}

\subsection{Trivializations}

cf. \cite[Lemma 7.3.2.4]{htt}
\begin{defn} \label{defn:trivial wrt obj}
	Given a category $\mathcal C$, and $X,U \in \mathcal C$, say $X$ is \emph{$U$-trivial} if the following equivalent conditions are satisfied:
	\begin{itemize}

		\item For every $U'$ admitting a map to $U$, the space $\mathcal C(U',X)$ is contractible.

		\item There is a map $g : U \to X$ so that $\id_U, g$ exhibits $U$ as a product $U \times X$.

	\end{itemize}
	\begin{proof}[Proof that the conditions are equivalent]
		Suppose that for any $U'$ admitting a map to $U$, the space $\mathcal C(U',X)$ is contractible. In particular, the space $\mathcal C(U,X)$ is nonempty, so there is a map $g : U \to X$. Now, for any $X' \in \mathcal C$, we must show that the induced
		\[
			\mathcal C(X',U) \to \mathcal C(X',U) \times \mathcal C(X',X)
		\]
		is an equivalence.

		If $\mathcal C(X',U)$ is empty, then this is automatic, so assume there is a map $X' \to U$, and therefore that $\mathcal C(X',X)$ is contractible, so the projection
		\[
			\mathcal C(X',U) \times \mathcal C(X',X) \to \mathcal C(X',U)
		\]
		is invertible.

		Since the composite
		\[
			\mathcal C(X',U) \to \mathcal C(X',U) \times \mathcal C(X',X) \to \mathcal C(X',U)
		\]
		is the identity, and the second map is invertible, it follows that the first map is also an equivalence, as desired.

		For the converse, the projection
		\[
			\mathcal C(-,U) \times \mathcal C(-,X) \to \mathcal C(-,U)
		\]
		admits a section that is an equivalence, so for any $U' \in \mathcal C$, every fibre of
		\[
			\mathcal C(U',U) \times \mathcal C(U',X) \to \mathcal C(U',U)
		\]
		is an equivalence $\mathcal C(U',X) \to \pt$. Thus, $\mathcal C(U',X)$ is contractible if a fibre exists, that is, if $\mathcal C(U',U)$ is nonempty.
	\end{proof}
\end{defn}

\begin{defn}
	Given $U \in \mathcal C$, say a morphism $f$ in $\mathcal C$ is an \emph{equivalence away from $U$} if for any $U$-trivial $X \in \mathcal C$, $\mathcal C(f,X)$ is an equivalence.

	For any $U,X \in \mathcal C$, say $X \to X \oslash U$ is a \emph{$U$-trivialization of $X$} if $X \oslash U$ is $U$-trivial and the map is an equivalence away from $U$.
\end{defn}

\begin{rmk} \label{rmk:triviality and squares}
	Suppose $Y \in \mathcal C$ is trivial on $U \in \Op(\mathcal C)$. If $X \in \mathcal C$ is such that $X \times U$ exists, then for any map $X \to Y$, there is a unique commutative square
	\[
		\begin{tikzcd}
			X \times U \ar[d] \ar[r] & X \ar[d] \\
			U \ar[r] & Y
		\end{tikzcd}
	\]
	where $X \to Y$ is the prescribed map and the top and left arrows are the projections. Indeed, since $Y$ is $U$-trivial, the space of maps $U \to Y$ is contractible, so the space of such (not necessarily commutative squares) is contractible, and since the space of maps $X \times U \to Y$ is also contractible, every such square is commutative in a unique way.
\end{rmk}

\begin{defn} \label{defn:excision square}
	Given a category $\mathcal C$, a commutative square
	\[
		\begin{tikzcd}
			X_U \ar[d] \ar[r] & X \ar[d] \\
			U \ar[r] & X'
		\end{tikzcd}
	\]
	is an \emph{excision square} if it is Cartesian and $X \to X'$ is a $U$-trivialization.
\end{defn}

\begin{lem} \label{lem:compatibility of trivializations}
	Let $F : \mathcal C \to \mathcal D$ be any functor that preserves binary products. Then for any $X,U \in \mathcal C$, if $X$ is $U$-trivial, then $F(X)$ is $F(U)$-trivial. If $F$ is also fully faithful, then the converse also holds.

	In particular, $X$ is $U$-trivial in $\mathcal C$ if and only if $\mathcal C(-,X)$ is $\mathcal C(-,U)$-trivial in $\Psh(\mathcal C)$.
	\begin{proof}
		Suppose $X \times U$ exists. Then $F(X) \times F(U) \to F(U)$ is equivalent over $F(U)$ to $F(X \times U \to U)$. Assume $X$ is $U$-trivial, so $X \times U \to U$ admits a section that is an equivalence. Then $F(X) \times F(U) \to F(U)$ admits a section $F(U) \to F(X \times U) \simeq F(X) \times F(U)$ that is an equivalence since it is $F$ of the section $U \to X \times U$, which is an equivalence.

		Now assume $F$ is fully faithful. If $F(X)$ is $F(U)$-trivial, then since we have a map $F(U) \to F(X)$ making $F(U) \gets F(U) \to F(X)$ a product, since $F$ is fully faithful, we have a map $U \to X$ that is sent to this $F(U) \to F(X)$, so since $F(U \gets U \to X)$ is sent to the product $F(U) \gets F(U) \to F(X)$, and $F$ is fully faithful, it follows that $U \gets U \to X$ is a product.
	\end{proof}
\end{lem}

\begin{prp} \label{prp:topos trivialization and excision}
	Let $\mathcal E$ be a topos. For any $U \in \Op(\mathcal E)$, we have the following:
	\begin{enumerate}

		\item If $L_U$ is the localization $\mathcal E \to \mathcal E/U$, then for any $X \in \mathcal E$, the map $X \to L_U(X)$ is a $U$-trivialization.

		\item For any $X \in \mathcal E$, there is an excision square of the form
			\[
				\begin{tikzcd}
					X \times U \ar[d] \ar[r] & X \ar[d] \\
					U \ar[r] & L_U(X)
				\end{tikzcd}
			,\]
			where the top and left arrows are product projections.

	\end{enumerate}
	In particular, every excision square in $\mathcal E$ where the bottom map is a monomorphism from a $(-1)$-truncated object is an excision square in the sense of \cite[2.5.2.2]{SAG}, so it is also biCartesian, and for $U \in \Op(\mathcal E)$,
	\[
		L_U \simeq - \coprod_{- \times U} U
	.\]
	\begin{proof}
		Note that by \cite[Lemma 7.3.2.4]{htt}, the $U$-trivial objects of $\mathcal E$ are the objects in $\mathcal E/U$, so this guarantees that for any $X \in \mathcal E$, the map $X \to L_U(X)$ is a $U$-trivialization.

		Next, for any $X \in \mathcal E$, since $L_U(X)$ is $U$-trivial, and $X \times U$ admits a map to $U$, the space of maps $X \times U \to L_U(X)$ is contractible, so every square
		\[
			\begin{tikzcd}
				X \times U \ar[d] \ar[r] & X \ar[d] \\
				U \ar[r] & L_U(X)
			\end{tikzcd}
		\]
		commutes. Consider such a square where all the maps are given by the canonical choices. We will show that this square is biCartesian.

		Recall \cite[Lemma A.5.11]{ha} says that any map that is an equivalence away from $U$ (sent to an equivalence in $\mathcal E/U$), and an equivalence over $U$ (the base change along $U \to \pt$ is an equivalence) is an equivalence. Thus, the map
		\[
			\mathcal E \to \mathcal E_{/U} \times \mathcal E/U
		\]
		is conservative, and preserves biCartesian squares, so it also reflects them. Thus, it suffices to show that the images of our square in $\mathcal E_{/U}$ and $\mathcal E/U$ are biCartesian.

		Indeed, the images are
		\[
			\begin{tikzcd}
				X \times U \times U \ar[d] \ar[r] & X \times U \ar[d] \\
				U \ar[r] & L_U(X) \times U
			\end{tikzcd}
			\text{ and }
			\begin{tikzcd}
				L_U(X \times U) \ar[d] \ar[r] & L_U(X) \ar[d] \\
				L_U(U) \ar[r] & L_U L_U(X)
			\end{tikzcd}
		,\]
		which are
		\[
			\begin{tikzcd}
				X \times U \ar[d] \ar[r, equals] & X \times U \ar[d] \\
				U \ar[r, equals] & U
			\end{tikzcd}
			\text{ and }
			\begin{tikzcd}
				\emptyset \ar[d, equals] \ar[r] & L_U(X) \ar[d, equals] \\
				\emptyset \ar[r] & L_U(X)
			\end{tikzcd}
		,\]
		using that $U$ is $(-1)$-truncated, $L_U$ is a localization,
		and $L_U(U) = \emptyset$ ($U$ is the initial object of $\mathcal E/U$).

		Both of these are biCartesian.

		To see that this is an excision square in the sense of \cite[2.5.2.2]{SAG}, it suffices to note that $X \times U \to X$ is a monomorphism.
	\end{proof}
\end{prp}

\begin{cor} \label{cor:closed complement preserves finite limits}
	If $\mathcal E$ is a topos, and $U \in \Op(\mathcal E)$, then $- \oslash U$ defines a functor that preserves finite limits.
\end{cor}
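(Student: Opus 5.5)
By \Cref{prp:topos trivialization and excision}, for $U \in \Op(\mathcal E)$ the functor $- \oslash U$ is identified with the localization $L_U : \mathcal E \to \mathcal E/U$ followed by the inclusion $\mathcal E/U \subseteq \mathcal E$. Indeed, part (1) shows that $X \to L_U(X)$ is a $U$-trivialization. By the universal property of $U$-trivializations (maps out of $X$ into $U$-trivial objects factor uniquely through any $U$-trivialization), any two $U$-trivializations are canonically equivalent under $X$. This makes $X \mapsto X \oslash U$ a well-defined functor, equivalent to $L_U$ regarded as an endofunctor of $\mathcal E$. So the plan is to prove that $L_U$, viewed as an endofunctor, preserves finite limits.

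\textbf{Step 1: exactness of the localization.} Recall from \cite[\S7.3.2]{htt} that $\mathcal E/U$ is the closed subtopos complementary to the open subtopos $\mathcal E_{/U}$. The localization $L_U : \mathcal E \to \mathcal E/U$ is left exact, being the pullback functor of a geometric morphism. Concretely, the last part of \Cref{prp:topos trivialization and excision} gives $L_U \simeq - \coprod_{- \times U} U$. In a topos, pushouts along the monomorphism $X \times U \to X$ commute with finite limits, because colimits are universal and the pushout is computed compatibly by descent. So either way $L_U : \mathcal E \to \mathcal E/U$ preserves finite limits; I would simply cite \cite[Proposition 7.3.2.3]{htt} that $\mathcal E/U$ is a left exact localization.

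\textbf{Step 2: the inclusion preserves limits.} The inclusion $\mathcal E/U \subseteq \mathcal E$ is a right adjoint, so it preserves all limits. Hence the composite $\mathcal E \to \mathcal E/U \to \mathcal E$, which is $- \oslash U$, preserves finite limits.

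\textbf{Expected obstacle.} The only delicate point is Step 1, namely checking that the terminal object and pullbacks are preserved by $L_U$ rather than merely computed correctly in $\mathcal E/U$. If citing \cite{htt} is not considered enough, I would argue directly, as in the proof of \Cref{prp:topos trivialization and excision}. The functor $\mathcal E \to \mathcal E_{/U} \times \mathcal E/U$ is jointly conservative by \cite[Lemma A.5.11]{ha}. Given a finite limit diagram $X$, the comparison map $L_U(\varprojlim X) \to \varprojlim L_U X$ then needs to be an equivalence only over $U$ and away from $U$. Over $U$, every $L_U X_a \times U \simeq U$ by $U$-triviality, so both sides become $U$. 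Away from $U$, $L_U$ is idempotent, so the map becomes $L_U(\varprojlim X) \to L_U(\varprojlim X)$. This uses that $L_U$ commutes with finite limits into $\mathcal E/U$, which is again the left exactness from \cite{htt}.
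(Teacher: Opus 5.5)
Your proposal is correct and matches the argument the paper leaves implicit. Part (1) of \Cref{prp:topos trivialization and excision} identifies $- \oslash U$ with the reflection $L_U$ onto the closed subtopos $\mathcal E/U$; $L_U$ is left exact by \cite[\S7.3.2]{htt}, and the inclusion preserves limits as a right adjoint.

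Your side justifications are not independent proofs and should be dropped. The claim that the pushout $- \coprod_{- \times U} U$ commutes with finite limits ``by universality and descent'' is not justified as stated, since it depends on $U$ being subterminal and is really a restatement of left exactness of $L_U$. Your ``direct'' conservativity argument is circular, as you yourself note.
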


\subsection{Gluing}

Fix a geometric morphism $i : \mathcal E' \to \mathcal E$. Note that many of the results and definitions actually also work for LC adjunctions between presentable categories with universal colimits.

\begin{defn} \label{defn:open complement}
	Define the \emph{open complement} of $i$ to be the object $i^\complement \in \Op(\mathcal E)$ characterized by the following property: the essential image of the fully faithful functor $\mathcal E_{/i^\complement} \to \mathcal E$ is given by all objects $X \in \mathcal E$ such that $i^*(X)$ is initial. 
	\begin{proof}[Proof of existence]
		%
		Throughout the argument, we will make use of the fact, given by \cite[Lemma 6.1.3.6]{htt}, that any map in $\mathcal E'$ to an initial object is invertible.

		We can define a $(-1)$-truncated presheaf $i^\complement$ on $\mathcal E$ given by
		\[
			i^\complement(X) = \begin{cases}
				\pt & i^*(X) \text{ is initial} \\
				\emptyset & \text{otherwise}
			\end{cases}
		.\]
		It suffices to show that $i^\complement$ is representable, and by \cite[Proposition 5.5.2.2]{htt} this is equivalent to showing that it is limit-preserving.

		Suppose we have some small diagram $\{X_a\}_a$ in $\mathcal C$. If $i^*(X_a)$ is initial for all $a$, then so is $i^*(\varinjlim_a X_a) \simeq \varinjlim_a i^*(X_a)$. The converse also holds, since $i^*(X_a)$ admits a map to $i^*(\varinjlim_a X_a)$. Thus, we find that
		\[
			i^\complement(\varinjlim_a X_a) = \pt \iff i^\complement(X_a) = \pt \text{ for all $a$}
		.\]
		Therefore, it suffices to show that if $\varprojlim_a i^\complement(X_a) = \pt$, then $i^\complement(X_a) = \pt$ for all $a$, but since the limit admits a map to $i^\complement(X_a)$ for all $a$, if the limit is nonempty, none of the $i^\complement(X_a)$ can be empty.
	\end{proof}
\end{defn}

\begin{exa} \label{exa:compl of map}
	Suppose there is some map $Z \to X$ in a topos $\bar{\mathcal E}$ such that $i$ is the induced morphism $\bar{\mathcal E}_{/Z} \to \bar{\mathcal E}_{/X}$. Then $i^\complement \in \bar{\mathcal E}_{/X}$ is the usual complement of $Z \to X$: it is given by the monomorphism $X \setminus Z \to X$ such that a map $X' \to X$ factors through $X \setminus Z$ if and only if $X' \times_X Z$ is initial.
\end{exa}

\begin{lem} \label{lem:open compl in site}
	Let $\mathcal C$ be a site, suppose that $\mathcal E = \Shv(\mathcal C)$, and write $\jmath : \mathcal C \to \mathcal E$ for the sheafified Yoneda embedding. Then $i^\complement \in \Shv(\mathcal C) \subseteq \Psh(\mathcal C)$ is given as follows
	\[
		i^\complement(X) \simeq \begin{cases}
			\pt & i^* \jmath(X) \text{ is initial} \\
			\emptyset & \text{otherwise}
		\end{cases}
	.\]
	\begin{proof}
		Write $\mathcal U$ for the presheaf of $(-1)$-truncated spaces given above (note that this is a presheaf by \cite[Lemma 6.1.3.6]{htt}). We will show that $\mathcal U \in \Shv(\mathcal C)$, and that $\mathcal U$ defines an open complement of $i$.

		Let $\mathcal R$ be a sieve on an object $X \in \mathcal C$. Then
		\[
			\mathcal U(X) \to \varprojlim_{\substack{X' \to X \\ \text{in $\mathcal R$}}} \mathcal U(X')
		\]
		is a map between $(-1)$-truncated spaces, so the only way it could not be invertible is if it is of the form $\emptyset \to \pt$. Thus, it suffices to show that if $\mathcal R$ is a covering sieve, and the right-hand side is $\pt$, then $\mathcal U(X) \simeq \pt$.

		In this case, since the right-hand side admits maps to $\mathcal U(X')$ for every $X' \to X$ in $\mathcal R$, we cannot have $\mathcal U(X') = \emptyset$ for any such $X'$, and therefore $i^* \jmath(X')$ is initial for all $X' \to X$ in $\mathcal R$.

		Since $\mathcal R$ is a covering sieve, and $i^*$ preserves colimits we have that $i^* \jmath(X)$ is a colimit of the $i^* \jmath(X')$, all of which are initial, so $i^* \jmath(X)$ is initial, and $\mathcal U(X) = \pt$, as desired.

		Since $\mathcal U$ is $(-1)$-truncated, to see that it is the open complement of $i$, it only remains to show that if $F \in \Shv(\mathcal C)$ is such that $i^* F$ is initial, then there is a map $F \to \mathcal U$. Indeed, $F$ admits a map to $\mathcal U$ if and only if $F(X) = \emptyset$ for every $X \in \mathcal C$ such that $\mathcal U(X) = \emptyset$, so we must show that if $F(X)$ is nonempty, then $i^* \jmath(X)$ is initial.

		If $F(X)$ is nonempty, then there is a map $\jmath(X) \to F$, and applying $i^*$ to this map gives $i^* \jmath(X) \to i^* F$, so $i^* \jmath(X)$ admits a map to an initial object, and is therefore initial (using \cite[Lemma 6.1.3.6]{htt} again).
	\end{proof}
\end{lem}

\begin{defn} \label{defn:topos gluing}
	There is a unique commutative \emph{gluing square} $\square_i$
	\[
		\begin{tikzcd}
			i^\complement \times - \ar[d] \ar[r] & \id \ar[d] \\
			i^\complement \ar[r] & i_* i^*
		\end{tikzcd}
	\]
	where the top arrow and left arrows are the projections, the right arrow is the unit, and the bottom arrow is adjoint to $i^*(i^\complement) = \emptyset \to i^*$.

	Thus, we obtain a ``gluing morphism''
	\[
		\Theta_i : - \oslash i^\complement \to i_* i^*
	\]
	given by the coCartesian gap.

	Given a map $t : A \to i_* i^* X$, write
	\[
		\Theta_i(X,t) : \theta_i(X,t) \to A
	\]
	for the base change of $\Theta_i(X)$ along $t$.
\end{defn}

\begin{rmk} \label{rmk:describe relative gluing in topos}
	In \Cref{defn:topos gluing}, we have that $\Theta_i(X,t)$ is the unique map
	\[
		(X \times_{i_* i^* X} A) \oslash (i^\complement \times A) \to A
	\]
	extending $X \times_{i_* i^* X} A \to A$.
\end{rmk}

\begin{lem} \label{lem:truncated gluing}
	In the setting of \Cref{defn:topos gluing}, if $A,X \in \mathcal E$ are $n$-truncated, then $\theta_i(X,t)$ is $n$-truncated in $\mathcal E$.
	\begin{proof}
		Note that
		\[
			\theta_i(X, t) = (X \oslash i^\complement) \times_{i_* i^* X} A
		.\]
		By \Cref{cor:closed complement preserves finite limits}, We know that $- \oslash i^\complement$ is a left exact functor, and so is $i_* i^*$, so they preserve $n$-truncated objects by \cite[Proposition 5.5.6.16]{htt}. Furthermore, $A$ is $n$-truncated, so the fibred product is $n$-truncated by \cite[Proposition 5.5.6.5]{htt}.
	\end{proof}
\end{lem}

\begin{lem} \label{lem:gluing lex}
	Given finite diagrams $A,X : K \to \mathcal E$, and a morphism $t : A \to i_*i^* X$, we have that
	\[
		\Theta_i(\varprojlim X, \varprojlim t) \to \varprojlim_{p \in K} \Theta_i(X(p), t(p))
	\]
	is invertible.
	\begin{proof}
		First note that
		\[
			\Theta_i(\varprojlim X) \to \varprojlim_{p \in K} \Theta_i(X(p))
		\]
		is
		\[
			\begin{tikzcd}[column sep=large]
				(\varprojlim X) \oslash i^\complement \ar[d] \ar[r, "\Theta_i(\varprojlim X)"] & i_* i^* \varprojlim X \ar[d] \\
				\varprojlim (X \oslash i^\complement) \ar[r, "\varprojlim \Theta_i X"'] & \varprojlim i_* i^* X
			\end{tikzcd}
		,\]
		but both $- \oslash i^\complement$ and $i_* i^*$ preserve finite limits (recall \Cref{cor:closed complement preserves finite limits}), so the vertical arrows are invertible, whence the morphism of arrows
		\[
			\Theta_i(\varprojlim X) \to \varprojlim_{p \in K} \Theta_i(X(p))
		\]
		is invertible.

		Now we can conclude using the fact that limits commute with limits.
	\end{proof}
\end{lem}

\begin{lem} \label{lem:map on topos theta is eff epi}
	Let $f : X \to Y$ be a map $\mathcal E$, and let
	\[
		\begin{tikzcd}
			A \ar[d, "t"'] \ar[r, "\nu"] & B \ar[d, "u"] \\
			i_*i^* X \ar[r, "i_*i^* f"'] & i_* i^* Y
		\end{tikzcd}
	\]
	be a Cartesian square in $\mathcal E$ such that $i^\complement \times \nu$ is invertible and
	\[
		i^\complement \times Y \times_{i_* i^* Y} B \coprod X \times_{i_* i^* Y} B \to Y \times_{i_* i^* Y} B
	\]
	is an effective epimorphism. Then
	\[
		\theta_i(X,t) \to \theta_i(Y, u)
	\]
	is an effective epimorphism.
	\begin{proof}
		By Remark \ref{rmk:describe relative gluing in topos}, we have a map of coCartesian squares
		\[ \tag{$\square$} \label{eqn:map of squares}
			\begin{tikzcd}
				i^\complement \times X \times_{i_* i^* X} A \ar[dd] \ar[dr] \ar[rr] & & X \times_{i_* i^* X} A \ar[dd] \ar[dr] \\
																			 & i^\complement \times Y \times_{i_* i^* Y} B \ar[rr, crossing over] & & Y \times_{i_* i^* Y} B \ar[dd] \\
				i^\complement \times A \ar[dr, "\sim"] \ar[rr] & & \theta_i(A,t) \ar[dr] \\
														 & i^\complement \times B \ar[from=uu, crossing over] \ar[rr] & & \theta_i(B, u)
			\end{tikzcd}
		.\]

		By Lemma \ref{lem:eff epi from colim vs eff epi from diagram}, we have that
		\[
			i^\complement \times B \coprod Y \times_{i_* i^* Y} B \to \theta_i(B, u)
		\]
		is an effective epimorphism, so by hypothesis (and \cite[Corollary 6.2.3.12 (1)]{htt}), we have that
		\[
			i^\complement \times B \coprod i^\complement \times Y \times_{i_* i^* Y} B \coprod X \times_{i_* i^* Y} B \to \theta_i(B, u)
		\]
		is an effective epimorphism. Thus, it suffices to show that the map from each of these disjoint summands to $\theta_i(B, i_*i^* f \circ t)$ factors through a member of the back square in \eqref{eqn:map of squares}. This will give us that the whole coproduct factors through the map from $\theta_i(A,t)$, whence we can conclude by \cite[Corollary 6.2.3.12 (2)]{htt}.

		This is easy for $i^\complement \times B$, since the map on the bottom left corners is invertible.

		For $i^\complement \times Y \times_{i_* i^* Y} B$, by the commutativity of the front square in \eqref{eqn:map of squares}, the map actually factors through $i^\complement \times B$, so we conclude in the same way.

		Finally, for $X \times_{i_* i^* Y} B$,
		\[
			X \times_{i_* i^* X} A \to X \times_{i_* i^* Y} B
		\]
		is invertible since $i_* i^* X \times_{i_* i^* Y} B = A$, so the map $X \times_{i_* i^* Y} B \to Y \times_{i_* i^* Y} B$ factors through the map between top right corners in \eqref{eqn:map of squares}, as desired.
	\end{proof}
\end{lem}

\begin{lem} \label{lem:map on geom theta is eff epi}
	Let $f : X \to Y$ be a morphism in $\mathcal E$, and let $u : B \to i^*Y$ be a map in $\mathcal E'$.

	Suppose that there is some $V \to Y$ such that $i^*(V \to Y)$ is disjoint from $u$ (\ie{} $i^* V \times_{i^* Y} B$ is initial), and $X \coprod V \to Y$ is an effective epimorphism.
	Then
	\[
		\theta_i(X, i_* t) \to \theta_i(Y, i_* u)
	\]
	is an effective epimorphism, where $t : A \to i^* X$ is the base change of $u$ along $i^* f$.
	\begin{proof}
		Since $i_*$ preserves Cartesian squares,
		\[
			\begin{tikzcd}
				i_* A \ar[d] \ar[r] & i_* B \ar[d] \\
				i_* i^* X \ar[r, "i_* i^* f"'] & i_* i^* Y
			\end{tikzcd}
		\]
		is Cartesian, so by \Cref{lem:map on topos theta is eff epi}, it only remains to show that
		\[
			i^\complement \times Y \times_{i_* i^* Y} i_* B \coprod X \times_{i_* i^* Y} i_* B \to Y \times_{i_* i^* Y} i_* B
		\]
		is an effective epimorphism.

		By our choice of $V \to Y$,
		\[
			X \times_{i_* i^* Y} i_* B \coprod V \times_{i_* i^* Y} i_* B \to Y \times_{i_* i^* Y} i_* B
		\]
		is an effective epimorphism, so it suffices to show that we have a factorization
		\[
			V \times_{i_* i^* Y} i_* B \to i^\complement \times Y \times_{i_* i^* Y} i_* B \to Y \times_{i_* i^* Y} i_* B
		.\]
		Since $i^\complement$ is $(-1)$-truncated, such a factorization exists if and only if $V \times_{i_* i^* Y} i_* B$ admits a map to $i^\complement$, which is equivalent to the condition that $i^*(V \times_{i_* i^* Y} i_* B)$ is initial.

		We have a commutative diagram
		\[
			\begin{tikzcd}
				i^*(V \times_{i_* i^* Y} i_* B) \ar[d] \ar[rr] & & i^* i_* B \ar[d] \ar[r] & B \ar[d, "u"] \\
				i^* V \ar[r] & i^* Y \ar[r] & i^* i_* i^* Y \ar[r] & i^* Y
			\end{tikzcd}
		.\]

		Note that the composite $i^* Y \to i^* Y$ is the identity, so the map $i^* V \to i^* Y$ is $i^*$ of the map $V \to Y$, which is disjoint from $u$. Since the outermost rectangle commutes, we have that the top left corner must be initial, as desired.
	\end{proof}
\end{lem}

\begin{cor} \label{cor:map on geom theta is eff epi}
	Suppose there is some object $Z \in \mathcal E$ such that $i$ is the induced \'{e}tale geometric morphism $\mathcal E_{/Z} \to \mathcal E$. Let
	\[
		\begin{tikzcd}
			X' \ar[d] \ar[r] & X \ar[d] \\
			Y' \ar[r] & Y
		\end{tikzcd}
	\]
	be a Cartesian square in $\mathcal E$. Any map $Y' \to Z$ induces maps $u : Y' \to Z \times Y = i^* Y$ and $t : X' \to Z \times X = i^* X$ in $\mathcal E_{/Z}$.

	Recall the notation $Y \setminus Y'$ from \Cref{exa:compl of map}, and suppose that $X \coprod (Y \setminus Y') \to Y$ is an effective epimorphism. Then
	\[
		\theta_i(X, i_* t) \to \theta_i(Y, i_* u)
	\]
	is an effective epimorphism.
	\begin{proof}
		Note that we have Cartesian squares
		\[
			\begin{tikzcd}
				X' \ar[d] \ar[r] & Z \times X \ar[d] \ar[r] & X \ar[d] \\
				Y' \ar[r] & Z \times Y \ar[r] & Y
			\end{tikzcd}
		\]
		in $\mathcal E$, where the top square is the image under the slice projection $\mathcal E_{/Z} \to \mathcal E$ of a square 
		\[
			\begin{tikzcd}
				A \ar[d] \ar[r, "t"] & i^* X \ar[d] \\
				B \ar[r, "u"'] & i^* Y
			\end{tikzcd}
		.\]
		Since the slice projection creates pullbacks, this is a Cartesian square in $\mathcal E_{/Z}$.

		It follows that $i^*(Y \setminus Y')$ is disjoint from $u$, so we conclude by \Cref{lem:map on geom theta is eff epi}.
	\end{proof}
\end{cor}

Now let us fix an object $Z \in \mathcal E$ such that $i$ is the induced \'{e}tale geometric morphism $\mathcal E_{/Z} \to \mathcal E$, and we fix collections of maps $\mathcal K$ and $\Sigma$ in $\mathcal E$ such that if
\[
	\begin{tikzcd}
		Z \ar[d, equals] \ar[r, "t'"] & X' \ar[d] \\
		Z \ar[r, "t"'] & X
	\end{tikzcd}
\]
is a Cartesian square where $t \in \mathcal K$ and $X' \to X$ is in $\Sigma$, then $t' \in \mathcal K$, and the map
\[
	X' \coprod (X \setminus Z) \to X
\]
is an effective epimorphism.

For each $n \geq -1$, inductively define $\Sigma_n$ as follows: $\Sigma_{-1}$ is the collection of all maps, and $\Sigma_{n + 1}$ is the collection of maps in $\Sigma$ whose diagonal is in $\Sigma_n$. Define
\[
	\Sigma_\infty = \bigcap_n \Sigma_n
.\]

\begin{exa}
	Suppose that $\mathcal E = \Shv^\Nis(\Sch)$ is the topos of sheaves on the Nisnevich site, $Z$ is representable by a scheme, and let $\Sigma$ be the collection of all \'{e}tale morphisms between schemes. Then $\Sigma_\infty = \Sigma$, and $\mathcal K$ can be the collection of all closed immersions from $Z$.
\end{exa}

\begin{lem} \label{lem:geom gluing connective}
	Let
	\[
		\begin{tikzcd}
			Z \ar[d, equals] \ar[r] & X' \ar[d] \\
			Z \ar[r] & X
		\end{tikzcd}
	\]
	be a Cartesian square, where $Z \to X$ is in $\mathcal K$, and $X' \to X$ is in $\Sigma_n$. Write $t : \pt \to i^* X$ and $t' : \pt \to i^* X'$ for the maps in $\mathcal E_{/Z}$ corresponding to $Z \to X$ and $Z \to X'$ above.

	Then the induced map
	\[
		Q : \theta_i(X', i_* t') \to \theta_i(X, i_* t)
	\]
	is $n$-connective.

	In particular, if
	\begin{itemize}

		\item $\mathcal E$ is hypercomplete and $X' \to X$ is in $\Sigma_\infty$, or

		\item $X$ and $X'$ are $(n - 1)$-truncated,

	\end{itemize}
	then $Q$ is invertible.
	\begin{proof}
		We proceed by induction on $n$. The case $n = -1$ is vacuous, so assume $n \geq 0$, and that the result holds for $n - 1$.

		Since $n \geq 0$, we have that $X' \to X$ is in $\Sigma$, so $X' \coprod (X \setminus Z) \to X$ is an effective epimorphism. \Cref{cor:map on geom theta is eff epi} then shows that $Q$ is an effective epimorphism, so by \cite[Proposition 6.5.1.18]{htt}, it suffices to show that the diagonal of $Q$ is $(n - 1)$-connective.

		Consider the following commutative diagram in $\mathcal E$ where all squares are Cartesian:
		\[
			\begin{tikzcd}
				Z \ar[d, equals] \ar[r] & X' \ar[d] & \\
				Z \ar[d, equals] \ar[r] & X' \times_X X' \ar[d] \ar[r] & X' \ar[d] \\
				Z \ar[r] & X' \ar[r] & X
			\end{tikzcd}
		.\]
		By our assumptions on $\mathcal K$, all of the horizontal arrows from $Z$ are in $\mathcal K$.

		\Cref{lem:gluing lex} shows that the diagonal of $Q$ is the map
		\[
			 \theta_i(X', i_* t') \to \theta_i(X' \times_X X'), i_* (t',t'))
		\]
		induced by the top Cartesian square of the above diagram. Since $X' \to X' \times_X X'$ is in $\Sigma_{n - 1}$, we can apply the inductive hypothesis to the top square to show that the diagonal of $Q$ is $(n - 1)$-connective.

		If $X' \to X$ is in $\Sigma_\infty$-admissible, then $Q$ is $\infty$-connective, so if $\mathcal E$ is hypercomplete, then $Q$ is invertible. On the other hand, if $X,X'$ are $(n - 1)$-truncated, then by \Cref{lem:truncated gluing}, we have that $Q$ is an $n$-connective map between $(n - 1)$-truncated objects, so it must be invertible.
	\end{proof}
\end{lem}

\section{Effective Epimorphisms}

\begin{lem} \label{lem:eff epi from colim vs eff epi from diagram}
	Let $\mathcal E$ be a semitopos, and let $\alpha : K \to \mathcal E$ be a small diagram. Suppose $S$ is a set of zero-simplices in the simplicial set $K$ such that for any $p \in K$, there is a $q \in S$ and a map $p \to q$ in $K$. Then for any
	\[
		f : \varinjlim \alpha \to X
	,\]
	we have that $f$ is an effective epimorphism if and only if
	\[
		\coprod_{q \in S} \alpha(q) \to X
	\]
	is an effective epimorphism.
	\begin{proof}
		We will show that
		\[
			\coprod_{q \in S} \alpha(q) \to \varinjlim \alpha
		\]
		is an effective epimorphism, whence the result follows from \cite[Corollary 6.2.3.12]{htt}.

		Consider a colimiting extension $\bar \alpha : K^\triangleright \to \mathcal E$ of $\alpha$, so $\varinjlim \alpha = \bar \alpha(\infty)$, where $\infty \in K^\triangleright$ is the cone point.

		By \cite[Lemma 6.2.3.13]{htt}, the map
		\[
			\coprod_{p \in K_0} \alpha(p) \to \alpha(\infty)
		\]
		is an effective epimorphism, so by \cite[Corollary 6.2.3.12 (2)]{htt}, it suffices to show that it lifts through the map
		\[
			\coprod_{q \in S} \alpha(q) \to \alpha(\infty)
		.\]

		Indeed, define
		\[
			\coprod_{p \in K_0} \alpha(p) \to \coprod_{q \in S} \alpha(q)
		\]
		by picking for each $p \in K_0$ a $q \in S$ and a map $p \to q$, using the map
		\[
			\alpha(p) \xrightarrow{\alpha(p \to q)} \alpha(q) \to \coprod_{q \in S} \alpha(q)
		.\]

		Since the map $p \to \infty$ lifts through $q \to \infty$ by any map $p \to q$, this gives the desired lift. 
	\end{proof}
\end{lem}

\begin{lem} \label{lem:characterize covering families by eff epi}
	Let $\mathcal C$ be a site, and let $\{X_i\}_i$ be a diagram in $\mathcal C_{/X}$ for some $X \in \mathcal C$. Then $\{X_i \to X\}_i$ is a covering family if and only if it induces an effective epimorphism
	\[
		\varinjlim_i j(X_i) \to j(X)
	\]
	where $j$ is the sheafified Yoneda embedding.
	\begin{proof}
		By Lemma \ref{lem:eff epi from colim vs eff epi from diagram} it suffices to show that the family is covering if and only if it induces an effective epimorphism
		\[
			\coprod_i j(X_i) \to j(X)
		.\]

		Recall that the family is covering if and only if it generates a covering sieve. By \cite[Lemma 6.2.3.18]{htt}, the sieve generated by the family can be identified with the $(-1)$-truncation of the map
		\[
			\coprod_i \yo(X_i) \to \yo(X)
		.\]
		By \cite[Lemma 6.2.2.16]{htt}, this truncation is a covering sieve if and only if its sheafification is an equivalence.

		Since sheafification is a left exact presentable functor which sends $\yo(X)$ to $j(X)$, the functor $\Psh(\mathcal C)_{/\yo(X)} \to \Shv(\mathcal C)_{/j(X)}$ is also a left exact presentable functor, so by \cite[Proposition 5.5.6.28]{htt}, the sheafification of the $(-1)$-truncation of above map is the $(-1)$-truncation of the map
		\[
			\coprod_i j(X_i) \to j(X)
		.\]
		Thus, we have reduced to showing that the $(-1)$-truncation of this map is an equivalence if and only if it is an effective epimorphism, but this is by definition (see \cite[Corollary 6.2.3.5]{htt}).
	\end{proof}
\end{lem}

\begin{lem} \label{lem:crit for pt to be colim of family}
	Let $\mathcal E$ be a semitopos, and let $\mathcal O$ be a small collection of objects in $\mathcal E$. If the terminal object of $\mathcal E$ is a small colimit of objects in $\mathcal O$, then
	\[
		\coprod_{x \in \mathcal O} x \to \pt
	\]
	is an effective epimorphism, and the converse holds if $\mathcal O$ is closed under binary products.
	\begin{proof}
		Note that for any diagram $F : K \to \mathcal E$ such that for each $p \in K$, $F(p)$ is equivalent to an object in $\mathcal O$, we have a map
		\[
			\coprod_{p \in K} F(p) \to \coprod_{x \in \mathcal O} x
		.\]

		By \cite[Lemma 6.2.3.13]{htt}, there is an effective epimorphism from the domain to the terminal object, so by \cite[Corollary 6.2.3.13 (2)]{htt}, the map from the codomain to the terminal object is also an effective epimorphism.

		For the converse, we assume that the $(-1)$-truncation of $\coprod_{x \in \mathcal O} x$ is terminal, and note that by \cite[Proposition 6.2.3.4]{htt}, the $(-1)$-truncation of this map is a colimit of objects which are of the form $x_0 \times \dotsb \times x_n$ for $n \geq 0$, where $x_0, \dotsc, x_n \in \mathcal O$. Thus, if $\mathcal O$ is closed under binary products, all of these objects are in $\mathcal O$, and we are done.
	\end{proof}
\end{lem}

\section{Local Cartesianness} \label{S:LC}

cf. \cite[6.3.5.11]{htt}
\begin{defn} \label{defn:LC adj}
	Say an adjunction $j_! : \mathcal U \rightleftharpoons \mathcal X : j^*$ is locally Cartesian (LC) if for any $f : X \to Y$ in $\mathcal X$, and $g : Z \to j^* Y$ in $\mathcal U$, the pullback $j^* X \times_{j^* Y} Z$ exists, and the square
	\[
		\begin{tikzcd}
			j_!(j^* X \times_{j^* Y} Z) \ar[d] \ar[r] & X \ar[d, "f"]
			\\
			 j_! Z \ar[r] & Y
		\end{tikzcd}
	\]
	is Cartesian.

	We say that $j_!$ is a left Cartesian functor and $j^*$ is a right Cartesian functor.
\end{defn}

\begin{rmk} \label{rmk:LC adj has Cart counit}
	If $j_! \dashv j^*$ is LC, then the counit is Cartesian.
	\begin{proof}
		Take $g : Z \to f^*Y$ to be the identity in the definition of LC adjunctions.
	\end{proof}
\end{rmk}

\begin{lem} \label{lem:pullbacks+Cart-counit→proj-form}
	Consider an adjunction $j_! : \mathcal U \rightleftharpoons \mathcal X : j^*$ where $\mathcal U$ admits pullbacks, and $j_!$ preserves pullbacks. Then the counit is Cartesian if and only if the adjunction is LC.
	\begin{proof}
		Given $f : X \to Y$ in $\mathcal X$, and $g : Z \to j^* Y$ in $\mathcal
		U$, we have
		\[
			\begin{tikzcd}
				j_!(j^* X \times_{j^* Y} Z) \ar[d] \ar[r] & j_! j^* X \ar[d,
				"j_! j^* f"] \ar[r] &  X \ar[d, "f"] \\
				 j_! Z \ar[r, "j_! g"'] & j_! j^* Y \ar[r] &  Y
			\end{tikzcd}
		,\]
		where the left square is Cartesian since $j_!$ preserves pullbacks. If the counit is Cartesian, then the right square is also Cartesian, so the outer square is Cartesian, whence the adjunction is LC. Conversely, if the adjunction is LC, then taking $g = \id_{j^* Y}$, we find that the counit is Cartesian.
	\end{proof}
\end{lem}

\begin{cor} \label{cor:slice proj is LC}
	Slice projections for categories with pullbacks are left Cartesian.
\end{cor}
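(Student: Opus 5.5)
This follows directly from \Cref{lem:pullbacks+Cart-counit→proj-form}. Let $\mathcal D$ be a category with pullbacks and $D \in \mathcal D$. The slice projection $j_! : \mathcal D_{/D} \to \mathcal D$ has right adjoint $j^* = D \times -$, given by $Y \mapsto (D \times Y \to D)$, whenever products with $D$ exist. If products with $D$ do not exist, I would still verify the LC condition of \Cref{defn:LC adj} directly. Either way, I check the two hypotheses of that lemma and then verify that the counit is Cartesian.

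\emph{Hypotheses.} First, $\mathcal D_{/D}$ admits pullbacks, and $j_!$ preserves them. Both hold because pullbacks in a slice are computed in $\mathcal D$: the slice projection creates limits of weakly contractible diagrams, and cospans are weakly contractible.

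\emph{Cartesian counit.} For $f : X \to Y$ in $\mathcal D$, the counit is the projection $D \times X \to X$. The naturality square
\[
	\begin{tikzcd}
		D \times X \ar[d] \ar[r] & X \ar[d, "f"] \\
		D \times Y \ar[r] & Y
	\end{tikzcd}
\]
is Cartesian, since $D \times X \simeq (D \times Y) \times_Y X$ by pasting of product squares. The lemma then gives that the adjunction is LC.

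The only subtle point is the case where $D \times Y$ need not exist. Here the LC condition should be read with $j^* X \times_{j^* Y} Z$ interpreted as $Z \times_Y X$, for $Z \to D$ together with a map $Z \to Y$. That object exists because $\mathcal D$ has pullbacks. Its image under $j_!$ is $Z \times_Y X$, so the required square
\[
	\begin{tikzcd}
		Z \times_Y X \ar[d] \ar[r] & X \ar[d, "f"] \\
		Z \ar[r] & Y
	\end{tikzcd}
\]
is tautologically Cartesian in $\mathcal D$. I expect no real obstacle: the whole content is that pullbacks in slices are computed underlying.
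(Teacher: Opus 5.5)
Your proof is correct and matches the paper: the paper gives no separate argument, treating the corollary as an immediate consequence of the lemma just before it (an adjunction whose left adjoint preserves pullbacks, and whose counit is Cartesian, is LC), applied exactly as you do with the projections $D \times X \to X$ as counit. Your extra case where $D \times Y$ does not exist is harmless but unnecessary, since Definition~\ref{defn:LC adj} only calls a functor left Cartesian when it already has a right adjoint.
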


\begin{prp} \label{prp:LC iff slice localizations}
	If $\mathcal U$ is a category with pullbacks, then a left adjoint functor $j_! : \mathcal U \to \mathcal X$ is left Cartesian if and only if for all $U \in \mathcal U$, the functor $\mathcal U_{/U} \to \mathcal X_{/j_! U}$ is a reflective localization.
	\begin{proof}
		Let $j^*$ be a right adjoint of $j_!$. \cite[Proposition 5.2.5.1]{htt} tells us that in general, the functor $F_U : \mathcal U_{/U} \to \mathcal X_{/j_! U}$ admits a right adjoint $G_U$ given by the composite $\mathcal X_{/j_! U} \to \mathcal U_{/j^* j_! U} \to \mathcal U_{/U}$, which is applying $j^*$ and then base changing along the unit $U \to j^* j_! U$.

		If $j_! \dashv j^*$ is LC, then for any $X \in \mathcal X_{/j_! U}$, the counit of $F_U \dashv G_U$ at $X \to j_! U$ in $\mathcal X_{/j_! U}$ gives a square
		\[
			\begin{tikzcd}
				j_!(j^* X \times_{j^* j_! U} U) \ar[d] \ar[r] & X \ar[d]
				\\
				 j_! U \ar[r, equals] & j_! U
			\end{tikzcd}
		,\]
		which is Cartesian since the adjunction $j_! \dashv j^*$ is LC (so the top arrow, which is given by the counit, is invertible).

		Conversely, for any maps $X \to Y \gets j_! U$ in $\mathcal X$, the counit of $F_U \dashv G_U$ at $X \times_Y j_! U \in \mathcal X_{/j_! U}$ is
		\[
			j_!(j^*(X \times_Y j_! U) \times_{j^* j_! U} U) \to X \times_Y j_! U
		,\]
		so if the counit is invertible, then the composite
		\[
			j_!(j^* X \times_{j^* Y} U) \simeq j_!(j^* X \times_{j^* Y} j^* j_! U \times_{j^* j_! U} U) \simeq j_!(j^*(X \times_Y j_! U) \times_{j^* j_! U} U) \to X \times_Y j_! U
		\]
		is invertible, whence the square
		\[
			\begin{tikzcd}
				j_!(j^* X \times_{j^* Y} Z) \ar[d] \ar[r] & X \ar[d, "f"]
				\\
				j_! Z \ar[r] & Y
			\end{tikzcd}
		\]
		is Cartesian.
	\end{proof}
\end{prp}

\begin{rmk} \label{rmk:conservative iff conservative on slices}
	A functor $F : \mathcal C \to \mathcal D$ is conservative if and only if for all $X \in \mathcal C$, the functor $F_X : \mathcal C_{/X} \to \mathcal D_{/FX}$ is conservative.
	\begin{proof}
		For all $X \in \mathcal C$ we have a commutative square
		\[
			\begin{tikzcd}
				\mathcal C_{/X} \ar[d] \ar[r, "F_X"] & \mathcal D_{/FX} \ar[d] \\
				\mathcal C \ar[r, "F"'] & \mathcal D
			\end{tikzcd}
		\]
		where the columns are conservative. Thus, if $F$ is conservative, then $F_X$ is conservative, and the converse holds since any map in $\mathcal C$ is the image of a map in $\mathcal C_{/X}$ for some $X$ (indeed, a map $\phi : X' \to X$ is the image of $\phi \to \id_X$),
	\end{proof}
\end{rmk}

\begin{prp} \label{prp:characterize slice proj}
	Given an adjunction $j_! : \mathcal U \rightleftharpoons \mathcal X : j^*$ where $\mathcal U$ admits pullbacks, the following are equivalent
	\begin{enumerate}

		\item $j_!$ is conservative, preserves pullbacks, and the counit of $j_! \dashv j^*$ is Cartesian

		\item $j_!$ is conservative and left Cartesian.

		\item For all $U \in \mathcal U$, the functor $\mathcal U_{/U} \to \mathcal X_{/j_! U}$ is an equivalence.

	\end{enumerate}
	Furthermore, $j_!$ is equivalent to a slice projection if and only if these conditions hold and $\mathcal U$ has a terminal object (still assuming $\mathcal U$ admits pullbacks).
	\begin{proof}
		We know that if $j_!$ preserves pullbacks and the counit of $j_! \dashv j^*$ is Cartesian, then $j_!$ is left Cartesian by Lemma \ref{lem:pullbacks+Cart-counit→proj-form}. By Proposition \ref{prp:LC iff slice localizations} and Remark \ref{rmk:conservative iff conservative on slices}, $j_!$ is left Cartesian and conservative if and only if for all $U \in \mathcal U$, the functor $\mathcal U_{/U} \to \mathcal X_{/j_! U}$ is a reflective localization.

		In this case, by Remark \ref{rmk:LC adj has Cart counit}, the counit of $j_! \dashv j^*$ is Cartesian, and since $\mathcal U_{/U} \to \mathcal X_{/j_! U}$ preserves binary products for all $U \in \mathcal U$, we have that $j_!$ also preserves pullbacks.

		Now, note that any slice category has a terminal object, so if $j_!$ is equivalent to a slice projection, then $\mathcal U$ has a terminal object. Furthermore, slice projections are conservative, preserve pullbacks, and have Cartesian counits. For the converse, if $\pt$ is a terminal object of $\mathcal U$, $j_!$ factors as
		\[
			\mathcal U \simeq \mathcal U_{/\pt} \simeq \mathcal X_{/j_! \pt} \to \mathcal X
		,\]
		so it is equivalent to the slice projection $\mathcal X_{/j_! \pt} \to \mathcal X$.
	\end{proof}
\end{prp}

\section{Linear Colimits}

\begin{defn} \label{defn:linear diag}
	Given a monoidal category $\mathcal C$, a linear diagram in $\mathcal C$ is a diagram $\alpha : K \to \mathcal C$ such that for all $X \in \mathcal C$, $\alpha \otimes X$ admits a colimit, and
	\[
		\varinjlim (\alpha \otimes X) \to (\varinjlim \alpha) \otimes X
	\]
	is invertible for all $X$.
\end{defn}

\begin{rmk}
	In the setting of \Cref{defn:linear diag}, if $\mathcal C$ admits $K$-indexed colimits, then for a diagram $\alpha : K \to \mathcal C$ to be linear is equivalent to the condition that for all $X \in \mathcal C$, the square
	\[
		\begin{tikzcd}
			\mathcal C \ar[d, "\otimes X"'] \ar[r] & \Fun(K, \mathcal C) \ar[d, "\otimes \underline{X}"] \\
			\mathcal C \ar[r] & \Fun(K, \mathcal C)
		\end{tikzcd}
	\]
	is horizontally left adjointable \emph{at $\alpha$}, that is, that the left base change transformation is invertible when evaluated at $\alpha$.
\end{rmk}

\begin{rmk} \label{rmk:mon closed linear}
	Any diagram in a monoidal-closed category is linear if and only if it admits a colimit.
\end{rmk}

\begin{rmk} \label{rmk:char of presentable CCC}
	For a category $\mathcal C$ with finite products, consider the following conditions.
	\begin{enumerate}

		\item $\mathcal C$ is Cartesian closed

		\item For any object $X$, the functor $X \times - : \mathcal C \to \mathcal C_{/X}$ has a right adjoint.

		\item For any object $X$, the endofunctor $X \times -$ preserves colimits

		\item For any map $X \to Y$, and (small) diagram $\alpha : K \to \mathcal C$
			\[
				\begin{tikzcd}
					\varinjlim_K (\alpha \times X) \ar[d] \ar[r] & X \ar[d] \\
					\varinjlim_K (\alpha \times Y) \ar[r] & Y
				\end{tikzcd}
			\]
			is Cartesian.

	\end{enumerate}
	The first two are always equivalent (almost immediately by definition). The third is also equivalent to the first two if $\mathcal C$ is presentable by \cite[Corollary 5.5.2.9]{htt}. The last condition is also equivalent to the third: consider the diagram
	\[
		\begin{tikzcd}
			\varinjlim_K (\alpha \times X) \ar[d] \ar[r] & X \ar[d] \\
			\varinjlim_K (\alpha \times Y) \ar[d] \ar[r] & Y \ar[d] \\
			\varinjlim_K \alpha \ar[r] & \pt
		\end{tikzcd}
	.\]
	If the third condition holds, then the bottom and outer squares are Cartesian, so the top square is Cartesian. Conversely, if the fourth condition holds, then the outer square is Cartesian for all $X$, which gives the third condition since the Cartesian gap of the outer square is
	\[
		\varinjlim_K (\alpha \times X) \to (\varinjlim_K \alpha) \times X
	.\]
\end{rmk}

\begin{rmk} \label{rmk:Cart-lin}
	If $\mathcal C$ is a Cartesian monoidal category with small colimits, then a small diagram $\alpha : K \to \mathcal C$ is linear if and only if for every $X \to Y$ in $\mathcal C$, the square
	\[
		\begin{tikzcd}
			\varinjlim (\alpha \times X) \ar[d] \ar[r] & X \ar[d] \\
			\varinjlim (\alpha \times Y) \ar[r] & Y
		\end{tikzcd}
	\]
	is Cartesian.
\end{rmk}

\begin{prp} \label{prp:monoidal crit for cons family}
	Let $\mathcal C$ be a monoidal category, and let $\{f_i^* : \mathcal C \to \mathcal C_i\}_i$ be a collection of functors such that for each $i$, there is a functor $f_i : \mathcal C_i \to \mathcal C$ and object $I_i \in \mathcal C_i$ such that $f_i(I_i) \otimes -$ factors through $f_i^*$. If the monoidal unit of $\mathcal C$ is a linear colimit of a diagram all of whose vertices are of the form $f_i(I_i)$ for some $i$, then
	\[
		\mathcal C \to \prod_i \mathcal C_i
	\]
	is conservative.
	\begin{proof}
		Let $\alpha : K \to \mathcal C$ be a linear diagram whose colimit is the monoidal unit $1$. For any $\phi : X \to Y$, we have a commutative diagram
		\[
			\begin{tikzcd}
				\varinjlim (\alpha \otimes X) \ar[d] \ar[r] & X \ar[d]  \\
				\varinjlim (\alpha \otimes Y) \ar[r] & Y
			\end{tikzcd}
		\]
		with invertible rows.

		For each $p \in K$, there is some $i$ such that $\alpha(p) \simeq f_i(I_i)$,
		\[
			\alpha(p) \otimes \phi \simeq f_i(I_i) \otimes \phi
		,\]
		but since $f_i(I_i) \otimes -$ factors through $f_i^*$, and $f_i^*(\phi)$ is an equivalence, this map is an equivalence.

		Thus, the left arrow in the square is a colimit of equivalences, whence it is an equivalence. Therefore, $\phi$ -- the right arrow in the square -- is an equivalence.
	\end{proof}
\end{prp}

\section{Miscellaneous Categorical Results}
In this section we record some unsorted abstract results about categories.

\begin{lem} \label{lem:compat loc ind}
	Let $\Ind : \mathcal C' \to \mathcal C$ be a fully faithful functor that has a right adjoint $\Res$. Let $W,W'$ be collections of maps in $\mathcal C, \mathcal C'$ such that
	\begin{enumerate}

		\item $W$ is stable under base change along maps sent to equivalences by $\Res$.

		\item If $Q' \in \mathcal C'$, and $f \in W$ is a map to $\Ind Q'$, then $f \simeq \Ind f'$ for some $W'$-equivalence\footnotemark{} $f'$.

	\end{enumerate}
	Then $\Res : \mathcal C' \to \mathcal C$ sends maps in $W$ to $W'$-equivalences.\footnotemark[\thefootnote]{}
	\footnotetext{See \cite[Definition 5.5.4.1]{htt}: a $W'$-equivalence is a map $X \to Y$ such that for any $W'$-local object $Z$, the map of spaces $\mathcal C'(Y,Z) \to \mathcal C'(X,Z)$ is an equivalence. This contains the strongly saturated class generated by $W'$.}
	\begin{proof}
		We need to show that if $f : P \to Q$ is in $W$, and $F \in \mathcal C'$ is $W'$-local, then 
		\[
			\mathcal C'(\Res f, F) : \mathcal C'(\Res Q, F) \to \mathcal C'(\Res P, F)
		\]
		is an equivalence.

		Write $Q' \to Q$ for the counit $\Ind \Res Q \to Q$ of $\Ind \dashv \Res$ at $Q$. Since $\Ind$ is fully faithful, it follows that $\Res(Q' \to Q)$ is an equivalence.

		We may form a Cartesian square
		\[
			\begin{tikzcd}
				P' \ar[d] \ar[r, "f'"] & Q' \ar[d] \\
				P \ar[r, "f"'] & Q
			\end{tikzcd}
		.\]

		Since $\Res$ has a left adjoint, it preserves limits, so $\Res(P' \to P)$ is an equivalence since it is a base change of the equivalence $\Res(Q' \to Q)$. Thus, $\Res(f) \simeq \Res(f')$, so it suffices to show that
		\[
			\mathcal C'(\Res f', F) : \mathcal C'(\Res Q', F) \to \mathcal C'(\Res P', F)
		\]
		is an equivalence.

		Since $W$ is stable by base change along maps sent to equivalences by $\Res$, we have that $f' \in W$. Since $Q'$ is in the image of $\Ind$, it follows that $f' \simeq \Ind(f'')$ for some $W'$-equivalence $f''$. Thus, since $\Ind$ is fully faithful, we have that
		\[
			\Res f' \simeq \Res \Ind f'' \simeq f''
		,\]
		so
		\[
			\mathcal C'(\Res f', F) \simeq \mathcal C'(f'', F)
		,\]
		but $F$ is $W'$-local, and $f''$ is a $W'$-equivalence, so this is an equivalence.
	\end{proof}
\end{lem}

\begin{lem} \label{lem:crit for equiv of psh cats}
	Let $f : \mathcal C \to \mathcal D$ be a functor. Then the induced colimit-preserving functor $f_! : \Psh(\mathcal C) \to \Psh(\mathcal D)$ is an equivalence if and only if $f$ is fully faithful, and every object of $\mathcal D$ is a retract of an object in the essential image of $f$.
	\begin{proof}
		Note that we have a commutative diagram
		\[
			\begin{tikzcd}
				\mathcal C \ar[d, "\yo"'] \ar[r, "f"] & \mathcal D \ar[d, "\yo"] \\
				\Psh(\mathcal C) \ar[r, "f_!"'] & \Psh(\mathcal D)
			\end{tikzcd}
		,\]
		where the vertical arrows are given by Yoneda embeddings. In particular, they are fully faithful by \cite[Propsition 5.1.3.1]{htt}. It follows that
		\begin{enumerate}

			\item $f_! \circ \yo$ is fully faithful if and only if $f$ is fully faithful, and

			\item for every $x \in \mathcal C$, the object $f_!(\yo(x)) \simeq \yo(f(x))$ is completely compact (by \cite[Proposition 5.1.6.8]{htt}).

		\end{enumerate}

		Thus, \cite[Corollary 5.1.6.11]{htt} says that $f_!$ is an equivalence if and only if $f$ is fully faithful, and $\Psh(\mathcal D)$ is generated under colimits by objects of the form $\yo(f(x))$ for $x \in \mathcal C$.

		Therefore, if $f_!$ is an equivalence, for any $y \in \mathcal D$, since $\yo(y)$ is completely compact, $f_!^{-1} \yo(y)$ is a completely compact object of $\Psh(\mathcal C)$, so \cite[Proposition 5.1.6.8]{htt} says there is an object $x \in \mathcal C$ such that $f_!^{-1} \yo(y)$ is a retract of $\yo(x)$. Applying $f_!$, we find that $\yo(y)$ is a retract of $f_! \yo(x) \simeq \yo(f(x))$, so since $\yo : \mathcal D \to \Psh(\mathcal D)$ is fully faithful, we have that $y$ is a retract of $f(x)$.

		Conversely, suppose that every object of $\mathcal D$ is a retract of an object in the essential image of $f$, and that $f$ is fully faithful. For any $y \in \mathcal D$, let $x \in \mathcal C$ such that $y$ is a retract of $f(x)$. So we have maps
		\[
			y \xrightarrow{s} f(x) \xrightarrow{r} y
		\]
		such that $rs \simeq \id$. Now, $f_!$ has a right adjoint $f^*$ given by precomposition by $f$, so we have a commutative diagram
		\[
			\begin{tikzcd}
				f_! f^* \yo(y) \ar[d] \ar[r] & f_! f^* \yo(f(x)) \ar[d] \ar[r] & f_! f^* \yo(y) \ar[d] \\
				\yo(y) \ar[r] & \mathcal \yo(f(x)) \ar[r] & \yo(y)
			\end{tikzcd}
		,\]
		where the vertical arrows are induced by the counit of the adjunction $f_! \dashv f^*$, the left horizontal arrows are induced by $s$, and the right horizontal arrows are induced by $r$. In particular, both horizontal composites are equivalent to identities, and since $f$ is fully faithful, the middle vertical arrow is an equivalence. Thus, we find that $f_! f^* \yo(y) \to \yo(y)$ is a retract of an equivalence, so it is an equivalence, so $\yo(y)$ is in the essential image of $f_!$. It follows that $f_!$ is essentially surjective, and by \cite[Proposition 5.1.6.10]{htt}, it is fully faithful.
	\end{proof}
\end{lem}

\phantomsection
\bibliography{refs}
\bibliographystyle{amsalpha}

\end{document}